\newtheorem{thm}{Theorem}[section]
\newtheorem{lem}[thm]{Lemma}
\newtheorem{cor}[thm]{Corollary}
\newtheorem{prop}[thm]{Proposition}
\newcommand{\R}{\mathbb R}
\newcommand{\stable}{\mathrm{st}}
\title{Equivalent definitions of Arthur packets for real 
  unitary groups} 
\author{Nicolas Arancibia Robert, Paul Mezo}
\begin{document}
\maketitle

\begin{abstract}
Mok and Moeglin-Renard have defined Arthur packets for unitary
groups.  Their definitions follow Arthur's work on classical groups and rely on harmonic analysis.
For real groups there is an alternative definition of Arthur packets, due to Adams-Barbasch-Vogan. It relies on sheaf-theoretic techniques instead of harmonic analysis. We prove that these two definitions of Arthur packets
are equivalent in the case of real unitary groups.
\end{abstract}

\tableofcontents

\section{Introduction}

In an effort to characterize the automorphic spectrum of a connected
reductive group, Arthur introduced a set of parameters together with a
collection of conjectures concerning them (\cite{Arthur84},
\cite{Arthur89}).  The parameters are commonly called
Arthur-parameters or simply A-parameters.  These automorphic A-parameters are global objects.  Conjecturally, each global A-parameter gives rise to a local A-parameter for every valuation of the underlying field.

In the present work, we study local A-parameters only for a real
valuation and only for unitary groups.  Moreover, our first main theorem concerns real quasisplit unitary groups.  A real quasisplit unitary group is a real form of a general linear group, which we denote here as $G(\mathbb{R})$.  An \emph{A-parameter} for $G$ is a group homomorphism
\begin{equation}
  \label{aparameter}
  \psi_{G}: W_{\mathbb{R}} \times \mathrm{SL}_{2} \rightarrow {^\vee}G^{\Gamma}
\end{equation}
in which, $\Gamma = \mathrm{Gal}(\mathbb{C}/\mathbb{R})$, ${^\vee}G^{\Gamma} = {^\vee}G \rtimes \Gamma$ is the Galois form of the L-group of $G$, and $W_{\mathbb{R}}$ is the real Weil group.
\nomenclature{$\psi_{G}$}{A-parameter for $G$}
\nomenclature{$W_{\mathbb{R}}$}{real Weil group}
\nomenclature{${^\vee}G^{\Gamma}$}{The Galois form of the L-group of $G$}
\nomenclature{$\Gamma$}{Galois group of $\mathbb{C}/\mathbb{R}$}
In addition, $\psi_{G}\mid_{W_{\mathbb{R}}}$ is a tempered L-parameter
and $\psi_{G}\mid_{\mathrm{SL}_{2}}$ is algebraic (see \cite{borel}
for the definitions).  Arthur conjectured the existence of a stable
virtual character $\eta_{\psi_{G}}$ of $G(\mathbb{R})$ with several
properties.  The set of irreducible characters appearing in
$\eta_{\psi_{G}}$ with non-zero multiplicity are, by definition, the
Arthur-packet (or A-packet) $\Pi_{\psi_{G}}$.    Arthur conjectured the
irreducible characters in $\Pi_{\psi_{G}}$ to be characters of irreducible unitary representations.

In this limited scope, Arthur's conjectures have been proven to a
great extent.  However, there have been two disparate methods
employed, and it has been unknown whether the two methods lead to the
same conclusions. The first method relies on global harmonic analysis
(\cite{rog}, \cite{Arthur}, \cite{Kaletha-Minguez}, \cite{Mok}).  We
denote the stable virtual character defined by Mok in \cite{Mok} by
$\eta_{\psi_{G}}^{\mathrm{Mok}}$.  The second method is local and
relies on sheaf theory (\cite{ABV}).  We denote the stable virtual
character defined by Adams, Barbasch and Vogan in \cite{ABV} by
$\eta^{\mathrm{ABV}}_{\psi_{G}}$.  Our main theorem for quasisplit
unitary groups is
\begin{equation}
  \label{mainthm}
  \eta_{\psi_{G}}^{\mathrm{Mok}} = \eta^{\mathrm{ABV}}_{\psi_{G}}
\end{equation}
(Theorem \ref{finalthm1}).
\nomenclature{$\eta_{\psi_{G}}^{\mathrm{Mok}}$}{Mok's stable virtual character}
An immediate consequence of this theorem is that the sets of irreducible characters appearing in each of the virtual characters are the same, that is
$$\Pi_{\psi_{G}}^{\mathrm{Mok}} = \Pi_{\psi_{G}}^{\mathrm{ABV}}.$$
The irreducible characters in $\Pi_{\psi_{G}}^{\mathrm{Mok}}$ are all
unitary (\cite{Mok}*{Theorem 3.2.1 (b)}), whereas this was not known
for $\Pi_{\psi_{G}}^{\mathrm{ABV}}$.  These identities have consequences
for all real forms of unitary groups, not just the quasisplit forms.
To explain why this is so requires some background which is also present in
the proof of (\ref{mainthm}).  For this reason we first provide an overview of
the proof of (\ref{mainthm}) for quasisplit forms, and return to the
remaining real forms thereafter.

The principal difficulty in proving (\ref{mainthm}) lies in the disparate manner in which the two virtual characters are defined.  Let us examine the definitions beginning with the virtual character $\eta_{\psi_{G}}^{\mathrm{Mok}}$.  The main idea here is to express the unitary group $G$ as a \emph{twisted endoscopic group} of a pair $(\mathrm{R}_{\mathbb{C}/\mathbb{R}} \mathrm{GL}_{N}, \vartheta)$. In this pair $\mathrm{R}_{\mathbb{C}/\mathbb{R}} \mathrm{GL}_{N}$ is the algebraic group obtained from the general linear group $\mathrm{GL}_{N}$ by restriction of scalars from $\mathbb{C}$ to $\mathbb{R}$ (\cite{springer}*{Proposition 11.4.22}, \cite{borel}*{I.5}). It may be regarded as $\mathrm{GL}_{N} \times \mathrm{GL}_{N}$ together with a real structure whose real points determine the real form $\mathrm{GL}_{N}(\mathbb{C})$.  The other member of the pair is an automorphism $\vartheta$ of $\mathrm{R}_{\mathbb{C}/\mathbb{R}} \mathrm{GL}_{N}$.  It is defined by
\begin{equation}
  \label{vartheta}
  \vartheta(g_{1}, g_{2}) =  (\tilde{J} (g_{2}^{-1})^{\intercal} \tilde{J}^{-1}, \tilde{J} (g_{1}^{-1})^{\intercal} \tilde{J}^{-1}),  \quad g_{1}, g_{2} \in \mathrm{GL}_{N},
\end{equation}
where $\tilde{J}$ is the anti-diagonal matrix\small
\begin{equation*}
  \tilde{J} = \begin{bmatrix}0 & & & 1 \\
    & & -1 & \\
    & \iddots & & \\
  (-1)^{N-1} & & &  0 
  \end{bmatrix}.
\end{equation*}
\normalsize
Clearly, $\vartheta$ is an automorphism of order two, and as a real form, the semidirect product $\mathrm{GL}_{N}(\mathbb{C}) \rtimes \langle \vartheta
\rangle$ is a disconnected algebraic group.
\nomenclature{$\vartheta$}{automorphism of $\mathrm{R}_{\mathbb{C}/\mathbb{R}} \mathrm{GL}_{N}$}
\nomenclature{$\tilde{J}$}{anti-diagonal matrix}
The group $G$ is attached to the
pair $(\mathrm{R}_{\mathbb{C}/\mathbb{R}}\mathrm{GL}_{N}, \vartheta)$ in that  ${^\vee}G$ is isomorphic to the 
identity component of the fixed-point set $({^\vee}\mathrm{R}_{\mathbb{C}/\mathbb{R}}\mathrm{GL}_{N})^{\vartheta}$.  This furnishes an inclusion
\begin{equation*}
\epsilon : {^\vee}G^{\Gamma} \hookrightarrow
{^\vee}\mathrm{R}_{\mathbb{C}/\mathbb{R}}\mathrm{GL}_{N}^{\Gamma}
\end{equation*}
which allows us to define the A-parameter
\begin{equation}
\label{prepsitilde}
\psi = \epsilon \circ \psi_{G}
\end{equation}
for $\mathrm{R}_{\mathbb{C}/\mathbb{R}}\mathrm{GL}_{N}$ using (\ref{aparameter}).
\nomenclature{$\psi$}{A-parameter for $\mathrm{R}_{\mathbb{C}/\mathbb{R}}\mathrm{GL}_{N}$}
Since the real form $\mathrm{GL}_{N}(\mathbb{C})$ is particularly well-understood, there is an obvious choice of stable virtual character $\eta_{\psi}^{\mathrm{Mok}}$.  It is in fact a single irreducible character
$$\eta_{\psi}^{\mathrm{Mok}} = \pi_{\psi}$$
of $\mathrm{GL}_{N}(\mathbb{C})$.  Furthermore, as a representation,
$\pi_{\psi}$ is stable under composition with $\vartheta$. This allows
one to extend $\pi_{\psi}$ to a representation
$\pi_{\psi}^{\thicksim}$ of the disconnected group
$\mathrm{GL}_{N}(\mathbb{C})
\rtimes \langle \vartheta \rangle$.  At this stage some care must be taken, as the extension is only unique up to a sign.  If we ignore this wrinkle for the time being, then we obtain the \emph{twisted character} of $\pi_{\psi}^{\thicksim}$ as the restriction of the character of $\pi_{\psi}^{\thicksim}$ to the non-identity component $\mathrm{GL}_{N}(\mathbb{C}) \rtimes \vartheta$.  If we identify $\pi_{\psi}^{\thicksim}$ with its twisted character then the stable virtual character $\eta_{\psi_{G}}^{\mathrm{Mok}}$ is  uniquely determined by the twisted endoscopic transfer identity
\begin{equation}
  \label{spectrans}
\pi_{\psi}^{\thicksim} = \mathrm{Trans}_{G(\mathbb{R})}^{\mathrm{GL}_{N}(\mathbb{C}) \rtimes \vartheta} (\eta_{\psi_{G}}^{\mathrm{Mok}})
\end{equation}
(\cite{Mok}*{Proposition 8.2.1}).
The twisted endoscopic transfer map $\mathrm{Trans}_{G(\mathbb{R})}^{\mathrm{GL}_{N}(\mathbb{C}) \rtimes \vartheta}$ is
defined on the space of stable virtual characters of $G(\mathbb{R})$.  It is defined for real reductive groups in \cite{Shelstad12}, \cite{Mezo} and \cite{Mezo2}.

Turning now to the definition of $\eta_{\psi_{G}}^{\mathrm{ABV}}$, we come upon completely different methods.  A remarkable innovation of Adams, Barbasch and Vogan is their introduction of a complex variety $X({^\vee}G^{\Gamma})$, together with a ${^\vee}G$-action, such that the ${^\vee}G$-orbits are in bijection with the equivalence classes of L-parameters (\cite{ABV}*{Section 6}).  The orbits stratify the variety.  Thus, one may consider ${^\vee}G$-equivariant local systems on the orbits, and ${^\vee}G$-equivariant \emph{constructible sheaves} or \emph{perverse sheaves} on $X({^\vee}G)$. 
We define a  {\it complete geometric parameter}  to be a pair 
\begin{equation}
\label{cgp}
\xi = (S, \mathcal{V})
\end{equation}
consisting of an orbit $S \subset
X({^\vee}G^{\Gamma})$, together with a ${^\vee}G$-equivariant local system
$\mathcal{V}$ on $S$ 
(\cite{ABV}*{Definition 7.6}).
The set of complete geometric parameters is denoted by $\Xi({^\vee}G^{\Gamma})$.
This definition ignores the more general local systems in \cite{ABV}, which are
equivariant for an algebraic cover of ${^\vee}G$.
By \cite{ABV}*{Theorem 10.11} there is  a canonical bijection
\begin{equation}
\label{Gparameters}
\Xi({^\vee}G^{\Gamma})\longleftrightarrow\Pi(G/\mathbb{R})
\end{equation}
which may be regarded as a more precise version of the local Langlands correspondence.
The set on the right is the set of (equivalence classes of)
irreducible admissible representations of  \emph{pure  forms} of $G$.  The pure forms include the quasisplit form $G(\mathbb{R})$. 
We write bijection (\ref{Gparameters}) as 
\begin{equation}
\label{pimap}
\xi\mapsto \pi(\xi).
\end{equation}
Let $K\Pi(G/\mathbb{R})$ be the Grothendieck group of the
finite-length admissible representations of pure forms of $G$.  This  Grothendieck group has $\{ \pi(\xi) \}$ as a $\mathbb{Z}$-basis.  It contains the virtual characters of $G(\mathbb{R})$ as a subgroup.

There is a similar picture for sheaves when considering the dual
group ${^\vee}G$. Suppose 
$\xi = (S, \mathcal{V})\in\Xi({^\vee}G^{\Gamma})$. The local system $\mathcal{V}$ is a ${^\vee}G$-equivariant sheaf on $S \subset X({^\vee}G^{\Gamma})$. 
Applying the functor of intermediate extension to the closure of $S$, and then taking the direct image to $X({^\vee}G)$ produces an 
irreducible ${^\vee}G$-equivariant  perverse sheaf
$P(\xi)$, and a bijection 
$$
\xi\mapsto P(\xi)
$$
(\cite{ABV}*{Section 7}).  Let $KX({^\vee}G^{\Gamma})$ be the Grothendieck group of the category of ${^\vee}G$-equivariant perverse sheaves on $X({^\vee}G^{\Gamma})$.  It has $\{ P(\xi) \}$ as a $\mathbb{Z}$-basis.

There is a perfect pairing 
\begin{equation}
\label{prepair}
\langle \cdot, \cdot \rangle_{G} : K \Pi(G/\mathbb{R}) \times K X({^\vee}
G^{\Gamma}) \rightarrow 
\mathbb{Z} 
\end{equation}
which satisfies
\begin{equation}
\label{irredpair}
\langle \pi(\xi), P(\xi') \rangle_{G} = e(\xi)\, (-1)^{d(\xi)} \,
\delta_{\xi, \xi'}, \quad \xi,\xi' \in \Xi({^\vee}G^{\Gamma})
\end{equation}
 (\cite{ABV}*{Theorem 1.24}).
Here, $d(\xi)$ is the dimension of the orbit $S$ in $\xi = (S,
\mathcal{V})$, $e(\xi)$ is the Kottwitz sign (\cite{ABV}*{Definition 15.8}), and
$\delta_{\xi, \xi'}$ is the Kronecker delta.

Using pairing (\ref{prepair}), we identify virtual characters as $\mathbb{Z}$-valued linear functionals on $K X({^\vee}
G^{\Gamma})$.  The theory of microlocal geometry provides a family of linear functionals
\begin{equation}
\label{mmm}
\chi^{\mathrm{mic}}_{S} : K X({^\vee}G^{\Gamma}) \rightarrow \mathbb{Z}
\end{equation}
parameterized by  the ${^\vee}G$-orbits $S \subset
X({^\vee}G^{\Gamma})$.
These microlocal multiplicity maps appear in the
theory of \emph{characteristic cycles} (\cite{ABV}*{Chapter 19},
\cite{Boreletal}), and are associated with ${^\vee}G$-equivariant
local systems on a conormal bundle over $X({^\vee}G^{\Gamma})$ (\cite{ABV}*{Section 24}, \cite{GM}). 
The virtual characters associated by the pairing to these linear functionals
are stable (\cite{ABV}*{Theorems 1.29 and 1.31}).

The stable virtual character $\eta_{\psi_{G}}^{\mathrm{ABV}}$ is defined from (\ref{mmm}) as follows.  There is an L-parameter $\phi_{\psi_{G}}$ associated to $\psi_{G}$ (\cite{ABV}*{Definition 22.4}).  Let $S_{\psi_{G}} \subset X({^\vee}G^{\Gamma})$ be the unique ${^\vee}G$-orbit associated to $\phi_{\psi_{G}}$, and  $\eta_{\psi_{G}}^{\mathrm{mic}}$ be the unique virtual character satisfying
\begin{equation*}
  \langle \eta^{\mathrm{mic}}_{\psi_{G}}, \mu \rangle_{G} = \chi_{S_{\psi_{G}}}^{\mathrm{mic}}(\mu),
\quad \mu \in KX({^\vee}G^{\Gamma}).
\end{equation*}
As a distribution, the stable virtual character $\eta^{\mathrm{mic}}_{\psi_{G}}$
is supported on real forms of $G$ which include the quasisplit form
$G(\mathbb{R})$.   For the purpose of proving (\ref{mainthm}), it
suffices to consider the restriction to the quasisplit form.  We
therefore define
\begin{equation}
  \label{eq:etaABVIntro}
\eta^{\mathrm{ABV}}_{\psi_{G}} = \eta^{\mathrm{mic}}_{\psi_{G}}|_{G(\mathbb{R})}.
\end{equation}

Recall that the definition of $\eta_{\psi_{G}}^{\mathrm{Mok}}$ in
(\ref{spectrans}) relies on the theory of twisted endoscopy.  One
might hope to find a bridge between $\eta_{\psi_{G}}^{\mathrm{Mok}}$
and  $\eta_{\psi_{G}}^{\mathrm{ABV}}$ by working in a theory of twisted endoscopy for $\eta_{\psi_{G}}^{\mathrm{ABV}}$.  Fortunately, the theory of standard endoscopy already appears in \cite{ABV} and is extended to the  twisted setting in \cite{Christie-Mezo}.  There are two main tasks in this extension.

The first task is the definition of a meaningful pairing
between the $\mathbb{Z}$-modules of {\it twisted characters}
$K\Pi(\mathrm{GL}_N(\mathbb{C}),\vartheta)$
and {\it twisted sheaves} $KX({^\vee}\mathrm{R}_{\mathbb{C}/\mathbb{R}} \mathrm{GL}_{N}^{\Gamma},\vartheta)$ (\cite{LV2014}*{Section 2.3})
\begin{equation}
\label{prepairtwist}
\langle \cdot, \cdot \rangle: K \Pi(\mathrm{GL}_{N}(\mathbb{C}),
\vartheta) \times KX 
({^\vee}\mathrm{GL}_{N}^{\Gamma}, \vartheta ) \rightarrow \mathbb{Z}.
\end{equation}
This is a serious task to which we shall return later in the introduction.

The second task is to define a twisted endoscopic lifting map
\begin{equation}
  \label{lift0}
  \mathrm{Lift}_{0}: K \Pi(G(\mathbb{R}))^{\mathrm{st}} \rightarrow K(\mathrm{GL}_{N}(\mathbb{C}),\vartheta)
  \end{equation}
from stable virtual characters to twisted characters, \emph{i.e.} the counterpart of $\mathrm{Trans}_{G(\mathbb{R})}^{\mathrm{GL}_{N}(\mathbb{C}) \rtimes \vartheta}$ in (\ref{spectrans}).  This is not that serious, for there is an inverse image functor on sheaves
\begin{equation}
  \label{epupstar}
  \epsilon^*:K X({^\vee}\mathrm{R}_{\mathbb{C}/\mathbb{R}} \mathrm{GL}_N^\Gamma,\vartheta)\rightarrow KX({^\vee}G^{\Gamma})
\end{equation}
which allows us to define $\mathrm{Lift}_{0}$ by the identity
\begin{equation}
  \label{Lift0}
\langle \mathrm{Lift}_0(\eta),\mu\rangle = \langle \eta,\epsilon^*(\mu)\rangle_G, \quad \mu\in KX({^\vee}\mathrm{R}_{\mathbb{C}/\mathbb{R}}\mathrm{GL}_N^\Gamma,\vartheta).
\end{equation}
In this identity both pairings (\ref{prepairtwist}) and (\ref{prepair}) are used.

The equality between $\eta_{\psi_{G}}^{\mathrm{Mok}}$ and $\eta_{\psi_{G}}^{\mathrm{ABV}}$  may then be established by returning to (\ref{spectrans}), proving
\begin{equation}
   \label{Lift0trans}
   \mathrm{Lift}_{0} = \mathrm{Trans}_{G(\mathbb{R})}^{\mathrm{GL}_{N}(\mathbb{C}) \rtimes \vartheta},
   \end{equation}
and 
\begin{equation}
  \label{26.25}
  \mathrm{Lift}_{0}(\eta_{\psi_{G}}^{\mathrm{ABV}}) = \pi_{\psi}^{\thicksim}.
  \end{equation}
Equation (\ref{Lift0trans}) ends up being a simple consequence of the definition of $\mathrm{Lift}_{0}$ and \cite{AMR}*{(1.0.3)}. Equation (\ref{26.25}) is to a large extent proven in \cite{ABV}*{Theorem 26.25}.
From these two equations it follows that
$$\mathrm{Lift}_{0}(\eta_{\psi_{G}}^{\mathrm{Mok}}) = \mathrm{Lift}_{0}(\eta_{\psi_{G}}^{\mathrm{ABV}})$$
and then an injectivity result yields the main theorem (\ref{mainthm}).

The proof we have sketched follows \cite{aam} entirely.  The classical
groups in \cite{aam} are twisted endoscopic groups of
$\mathrm{GL}_{N}$.   In the present work the classical groups are
replaced by unitary groups, and $\mathrm{GL}_{N}$ is replaced by
$\mathrm{R}_{\mathbb{C}/\mathbb{R}}\mathrm{GL}_{N}$.  The good
properties of $\mathrm{GL}_{N}$ which were harnessed in \cite{aam}
(\emph{e.g.} connected centralizers) are also properties of
$\mathrm{R}_{\mathbb{C}/\mathbb{R}}\mathrm{GL}_{N}$.  In truth, both
the structure and the representation theory of
$\mathrm{GL}_{N}(\mathbb{C}) =
\mathrm{R}_{\mathbb{C}/\mathbb{R}}\mathrm{GL}_{N}(\mathbb{R})$ are
simpler than for  $\mathrm{GL}_{N}(\mathbb{R})$.  We have made efforts
to highlight the simplifications.  Where we have not been able to
improve on  preliminary material, we have copied passages from
\cite{aam}.

Thus far, we have discussed identity (\ref{mainthm}) which
pertains only to quasisplit unitary groups.  We have indicated how the
theory of twisted endoscopy plays a crucial role in the proof of the
identity.  In the final two sections of this paper, we explore two
variants on the proof of (\ref{mainthm}).  The first
variant still concerns a quasisplit unitary group $G(\mathbb{R})$, but now
for \emph{standard} (non-twisted) endoscopy.  In this setting, $G'(\mathbb{R})$ is a quasisplit
endoscopic group of $G(\mathbb{R})$.  The relationship between 
$\mathrm{R}_{\mathbb{C}/\mathbb{R}}\mathrm{GL}_{N}$ and
$G(\mathbb{R})$ in the twisted setting is replaced by the relationship between
$G(\mathbb{R})$ and $G'(\mathbb{R})$ in the standard setting.  One may
choose $G'(\mathbb{R})$ 
to be a product of smaller unitary groups so that identity
(\ref{mainthm}) holds for $G'(\mathbb{R})$.  One may then examine the
standard endoscopic lifts of the stable virtual characters appearing in
(\ref{mainthm}) for $G'(\mathbb{R})$.  Explicit formulae for these lifts
are given in both \cite{Mok} and \cite{ABV}.  A detailed comparison of
these formulae is presented in Section \ref{consec}.

In the second variant, we again consider standard endoscopy.  However,
in this variant the quasisplit unitary group $G(\mathbb{R})$ is the
endoscopic group of a (pure) form $G(\mathbb{R},\delta) = \mathrm{U}(p,q)$.  Moeglin and Renard define a stable virtual
character $\eta_{\psi_{G}}^{\mathrm{MR}}$ on $G(\mathbb{R},\delta)$ as
the standard endoscopic lift of $\eta_{\psi_{G}}^{\mathrm{Mok}}$
(\cite{MR3}, \cite{MR4}).  The
irreducible characters appearing in $\eta_{\psi_{G}}^{\mathrm{MR}}$
form a packet $\Pi_{\psi_{G}}^{\mathrm{MR}}(G(\mathbb{R},\delta))$ of
unitary representations.
Analogues for these objects, $\eta_{\psi_{G}}^{\mathrm{ABV}}(\delta)$ and
$\Pi_{\psi_{G}}^{\mathrm{ABV}}(G(\mathbb{R},\delta))$, appear in
\cite{ABV}.  In Section \ref{puresec}, we prove our second main
theorem, namely that
$$e(\delta) \, \eta_{\psi_{G}}^{\mathrm{MR}} =
\eta_{\psi_{G}}^{\mathrm{ABV}}(\delta)$$
where $e(\delta) = \pm 1$ is a Kottwitz invariant.  It
is then immediate that
$$\Pi_{\psi_{G}}^{\mathrm{MR}}(G(\mathbb{R},\delta)) =
\Pi_{\psi_{G}}^{\mathrm{ABV}}(G(\mathbb{R},\delta)).$$
In fact, these identities are a special case of Theorem \ref{purethm},
which expresses identities for all standard endoscopic groups of
$G(\mathbb{R},\delta)$.  An immediate consequence of the identity of
packets is that $\Pi_{\psi_{G}}^{\mathrm{ABV}}(G(\mathbb{R},\delta))$
consists of unitary representations.  This extends  the unitarity
results for special unipotent representations of unitary groups in
\cite{BMSZ}. We expect that the methods used in the proof of Theorem
\ref{purethm} should carry over to pure inner forms of special
orthogonal groups--a work in progress.

We have just given a synopsis of the last two sections.   We now give
synopses of the remaining sections.  Section \ref{extgroups} is a review of the preliminary material of \cite{ABV}.  Two important objects appearing here are the extended group $G^{\Gamma}$ which mirrors the L-group ${^\vee}G^{\Gamma}$ and the complex variety $X({^\vee}\mathcal{O}, {^\vee}G^{\Gamma}) \subset X({^\vee}G^{\Gamma})$.  The term ${^\vee}\mathcal{O}$ is a semisimple ${^\vee}G$-orbit in the complex Lie algebra ${^\vee}\mathfrak{g}$, and is to be thought of as an infinitesimal character.  This infinitesimal character accompanies all of the arguments in the sequel and for the most part is assumed to be regular.   The notions of \emph{pure inner form} and \emph{pure strong involution} are also introduced and compared.  The section culminates with a description of the revised local Langlands correspondence (\ref{Gparameters}).

In Section \ref{extended} the specifics of some of this preliminary material are given for quasisplit unitary groups and $\mathrm{R}_{\mathbb{C}/\mathbb{R}} \mathrm{GL}_{N}$.  In particular it is proven that $\mathrm{R}_{\mathbb{C}/\mathbb{R}} \mathrm{GL}_{N}$ has only one pure strong involution which corresponds to the sole inner form, namely $\mathrm{GL}_{N}(\mathbb{C})$.  

Section \ref{atlasparam} presents an alternative  set of parameters to the complete geometric parameters (\ref{cgp}).  They are the parameters introduced  in \cite{Adams-Fokko} and \cite{AVParameters}, and so we call them \emph{Atlas parameters}.  The Atlas parameters are more convenient for the computations appearing in later sections.  The Atlas parameters also have an easily discernible involution which pertains to Vogan duality, a tool used later as well.

A third advantage to the Atlas parameters is that they may be extended to provide a parameterization for the irreducible representations of $\mathrm{GL}_{N}(\mathbb{C}) \rtimes \langle \vartheta \rangle$.  This is the subject of Section \ref{extrepsec}.  One of the favourable features of the Atlas parameters in this special context is the existence of a preferred extension $\pi(\xi)^{+}$ to $\mathrm{GL}_{N}(\mathbb{C}) \rtimes \langle \vartheta \rangle$ of any $\vartheta$-stable irreducible representation $\pi(\xi)$ of $\mathrm{GL}_{N}(\mathbb{C})$.  We refer to this preferred extension as the \emph{Atlas extension} of $\pi(\xi)$.

Section \ref{grothchar} lays out the notation for the Grothendieck group of admissible representations and provides the construction for the related concept of the $\mathbb{Z}$-module of twisted characters of $\mathrm{GL}_{N}(\mathbb{C}) \rtimes \langle \vartheta \rangle$.

Section \ref{sheaves} is devoted to the pairings, (\ref{prepair}) and (\ref{prepairtwist}), and the definitions of the virtual characters, $\eta_{\psi_{G}}^{\mathrm{ABV}}$ and $\eta_{\psi}^{\mathrm{ABV}}$, which are defined through them.  The values of the ordinary pairing (\ref{prepair}) were given in (\ref{irredpair}) for irreducible representations and perverse sheaves. It is equally important to understand the values of this pairing on \emph{standard representations} and irreducible \emph{constructible sheaves}.  Before saying why, we recall that any irreducible representation $\pi(\xi)$ in (\ref{pimap}) is the unique Langlands quotient of a standard representation, which we denote by $M(\xi)$.  If the parameter $\xi$ is $\vartheta$-stable then there is an Atlas extension $M(\xi)^{+}$ which contains $\pi(\xi)^{+}$ as a quotient.  On the other hand, if one replaces the intermediate extension with extension by zero in the construction of $P(\xi)$ above, then one arrives at an irreducible ${^\vee}G$-equivariant constructible sheaf $\mu(\xi)$.
The Grothendieck group of the ${^\vee}G$-equivariant constructible sheaves on $X({^\vee}\mathcal{O},{^\vee}G^{\Gamma})$ is isomorphic to the Grothendieck group  $KX({^\vee}\mathcal{O},{^\vee}G^{\Gamma})$ for the perverse sheaves (\cite{ABV}*{Lemma 7.8}).  It therefore makes sense to evaluate the pairing on these two objects.  As a matter of fact pairing (\ref{prepair}) is \emph{defined} by
$$\langle M(\xi), \mu(\xi') \rangle = e(\xi) \,\delta_{\xi, \xi'}, \quad \xi, \xi' \in \Xi({^\vee}\mathcal{O}, {^\vee}G^{\Gamma})$$
and the content of \cite{ABV}*{Theorem 1.24} is (\ref{irredpair}).  It is important to know the values of the pairing on these objects, since there is a well-known basis for the stable virtual characters (\emph{cf.} (\ref{lift0})) given in terms of standard representations (\cite{shelstad}).  In addition, the inverse image functor (\ref{epupstar}) is computed relative to constructible sheaves.

Much of the technical work in this paper is spent on defining the
twisted pairing (\ref{prepairtwist}) and proving that its values on
standard representations and constructible sheaves are related to its
values on irreducible representations and perverse sheaves as in the
ordinary case.  The first undertaking is to define preferred
$({^\vee}\mathrm{R}_{\mathbb{C}/\mathbb{R}} \mathrm{GL}_{N} \rtimes
\langle \vartheta \rangle)$-equivariant sheaves $\mu(\xi)^{+}$ and
$P(\xi)^{+}$ which restrict to $\mu(\xi)$ and $P(\xi)$ respectively as
${^\vee}\mathrm{R}_{\mathbb{C}/\mathbb{R}}
\mathrm{GL}_{N}$-equivariant sheaves.  These are the twisted sheaves
mentioned above.  We define a $\mathbb{Z}$-module $K(X({^\vee}\mathcal{O}, {^\vee}\mathrm{R}_{\mathbb{C}/\mathbb{R}} \mathrm{GL}_{N}), \vartheta)$ of twisted sheaves akin to the module of twisted characters $K \Pi(({^\vee}\mathcal{O},\mathrm{GL}_{N}\mathbb{C}), \vartheta)$.  We then define the twisted pairing (\ref{prepairtwist}) by setting
\begin{equation}
\label{standpairtwist}
  \langle M(\xi)^{+}, \mu(\xi')^{+} \rangle = (-1)^{l^{I}(\xi) -
  l^{I}_{\vartheta}(\xi)} \, \delta_{\xi, \xi'}
\end{equation}
The definition of the signs on the right appears in (\ref{intlength})
and (\ref{thetalength}).  As we shall see, these signs are crucial in making comparisons with other extensions $M(\xi)^{\thicksim}$ and $\pi(\xi)^{\thicksim}$.  The principal result pertaining to the twisted pairing is
\begin{equation}
  \label{irredpairtwist}
\langle \pi(\xi)^{+}, P(\xi')^{+} \rangle = (-1)^{d(\xi)} \,
(-1)^{l^{I}(\xi)-l^{I}_{\vartheta}(\xi)} \, \delta_{\xi, \xi'}
\end{equation}
(Theorem \ref{twistpairing}).

The sole objective of Section \ref{pairings} is to prove (\ref{irredpairtwist}).
  Our proof in this twisted setting is an adaptation of the proof
  in the ordinary setting (\cite{ABV}*{Sections 16-17}) using the tools of
\cite{AVParameters}.  Hecke operators are among
these tools.  There is a difference between \cite{ABV} and
\cite{AVParameters} in the objects upon which the Hecke operators act.
In \cite{ABV} Hecke operators are defined on both characters and
sheaves.  By contrast, the Hecke operators of \cite{AVParameters}*{Section 7}
are defined only on (twisted) characters.   The links between
characters and sheaves in the Hecke actions are the Riemann-Hilbert and  Beilinson-Bernstein
correspondences (\cite{ABV}*{Theorems 7.9 and 8.3}).  In Section \ref{duality} we describe these correspondences as a bijection 
$$P(\xi) \longleftrightarrow \pi( {^\vee}\xi), \quad \xi \in
\Xi({^\vee}\mathcal{O}, {^\vee}G^{\Gamma}),$$ 
where $\pi({^\vee}\xi)$ is the \emph{Vogan dual} of $\pi(\xi)$ (as the
equivalence class of a Harish-Chandra module) (6.1
\cite{AVParameters}).  For $\mathrm{R}_{\mathbb{C}/\mathbb{R}} \mathrm{GL}_{N}$ the correspondence is
extended to 
$$P(\xi)^{+} \longleftrightarrow \pi({^\vee}\xi)^{+}$$
for $\vartheta$-fixed complete geometric parameters $\xi$.  Once
sheaves are aligned with characters in this manner, the rest of the
proof of (\ref{irredpairtwist}) follows \cite{ABV} and \cite{aam}*{Section 4}.

In Section \ref{endosec} we describe the theory of endoscopy, both
standard and twisted, for $\mathrm{R}_{\mathbb{C}/\mathbb{R}}\mathrm{GL}_{N}$.  The standard theory of endoscopy is included to motivate the twisted
theory and is also used in Section \ref{glnpacket}.  The twisted theory of endoscopy in Section \ref{twistendsec} is a specialization of \cite{Christie-Mezo}*{Section 5.4}. 
We compute the values of the twisted endoscopic lifting map $\mathrm{Lift}_{0}$ on a basis of the stable virtual characters.  This is instrumental in proving (\ref{Lift0trans}) and in  proving that $\mathrm{Lift}_{0}$ is injective.  The value $\mathrm{Lift}_{0}(\eta_{\psi_{G}}^{\mathrm{ABV}})$ is
described as an element $\eta_{\psi}^{\mathrm{ABV}+} \in K
\Pi({^\vee}\mathcal{O}, \mathrm{GL}_{N}(\mathbb{C}), \vartheta)$,
which may be regarded as an extension of $\eta_{\psi}^{\mathrm{ABV}}$.

In Section \ref{glnpacket}  we prove that for \emph{any} A-parameter
$\psi$ of $\mathrm{R}_{\mathbb{C}/\mathbb{R}} \mathrm{GL}_{N}$ (not necessarily of the form
(\ref{prepsitilde})) $\eta_{\psi}^{\mathrm{ABV}} = \pi_{\psi}$. 
The proof begins under the assumption that $\psi$ is an
A-parameter studied by Adams and Johnson (\cite{Adams-Johnson}).
Adams and Johnson defined A-packets for these parameters, and it is
easily shown that their packets are singletons for $\mathrm{R}_{\mathbb{C}/\mathbb{R}} \mathrm{GL}_{N}$.
The equality of the Adams-Johnson packets with the
ABV-packets is proven in \cite{arancibia_characteristic}.  The proof that
ABV-packets are singletons follows from a decomposition of $\psi$ in
terms of Adams-Johnson A-parameters of smaller general linear groups,
and an application of  standard endoscopic lifting  from the direct
product of these smaller general linear groups (Proposition
\ref{prop:singletonGLN}). 

In section  \ref{whitsec} we describe another extension $\pi(\xi)^{\thicksim}$ of the $\vartheta$-stable irreducible representations $\pi(\xi)$ of $\mathrm{GL}_{N}(\mathbb{C})$--the so-called \emph{Whittaker extension}.  Regarded as a twisted character $\pi(\xi)^{\thicksim}$ differs from the Atlas extension $\pi(\xi)^{+}$ by at most a sign.  For special complete geometric parameters $\xi$ it is shown that the Whittaker extension agrees with the Atlas extension.
The key to determining the sign for other complete geometric parameters is to compute the sign  when $\pi(\xi)$ is \emph{generic}, \emph{i.e.} has a Whittaker model.  Indeed, the Whittaker extensions are built from extensions of generic representations and irreducible generic representations occur as subrepresentations of any standard representation.  If one knows the (signed) multiplicity
with which an irreducible twisted generic representation $\pi(\xi_{0})^{+}$
appears in the decomposition of a twisted standard
representation $M(\xi)^{+}$, then one can use this knowledge together with the agreement at the special complete geometric parameters to prove that
$$\pi(\xi)^{+}  = (-1)^{l^{I}(\xi) -
  l^{I}_{\vartheta}(\xi)} \ \pi(\xi)^{\thicksim}.$$
Observe that the sign on the right appears on the right of (\ref{standpairtwist}) and (\ref{irredpairtwist}).  Replacing $\pi(\xi)^{+}$ with $(-1)^{l^{I}(\xi) -  l^{I}_{\vartheta}(\xi)} \ \pi(\xi)^{\thicksim}$ yields a cosmetic simplification to the pairing.  More importantly, it is the Whittaker extension which appears in (\ref{spectrans}). With the substitution of the Whittaker extensions into the computed values of $\mathrm{Lift}_{0}$, the identities (\ref{Lift0trans}) and (\ref{26.25}) are established.

These last observations are spelled out in Section \ref{equalapacketreg}.  This short section assembles the essential ingredients already outlined in the introduction in proving the main theorem (\ref{mainthm}).  However, it works under the assumption that the infinitesimal character is regular in ${^\vee}\mathrm{R}_{\mathbb{C}/\mathbb{R}} \mathrm{GL}_{N}$.  This assumption is removed in Section \ref{equalapacketsing} by applying \emph{Jantzen-Zuckerman translation}.  There is nothing novel in this approach and the ideas are all present in \cite{ABV}*{Section 16}.

In closing, let us briefly mention a different approach to obtaining the equality between the  A-packets (\ref{mainthm}).
Moeglin and Renard give an explicit description of the representations in $\Pi_{\psi_G}^{\mathrm{Mok}}$. 
More precisely, by \cite{MR4}*{Equation (5.1)},
$\psi$ in (\ref{prepsitilde}) decomposes as
$
\psi=\psi_0 \oplus \psi_1
$,
where
$$
\psi_i:\ W_{\mathbb{R}}\times\mathrm{SL}_{2} \longrightarrow
{^\vee}\mathrm{R}_{\mathbb{C}/\mathbb{R}}\mathrm{GL}_{N_i}^{\Gamma},\quad i = 0,1, \ N_{0}+N_1=N.
$$
The A-parameter $\psi_{0}$ corresponds to an irreducible unitary representation  $\pi_{0}$ of $\mathrm{GL}_{N_{0}}(\mathbb{C})$.  In addition, the A-parameter $\psi_1$ factors  to an A-parameter $\psi_{G_1}$ of a 
smaller rank unitary group $G_1$
$$
\psi_1:\ {W_{\mathbb{R}}\times\mathrm{SL}_{2}\xrightarrow{\psi_{G_1}}{^\vee}G_1^{\Gamma} }
\ \hookrightarrow \ {^\vee}\mathrm{R}_{\mathbb{C}/\mathbb{R}}\mathrm{GL}_{N_1}^{\Gamma}.
$$
According to (\cite{MR4}*{Proposition 5.2}), 
every irreducible representation in $\Pi_{\psi_G}^{\mathrm{Mok}}$ is parabolically induced from a representation in 
$$
\left\{
\pi_{0} \otimes \pi_{1}: \pi_{1} \in \Pi_{\psi_{G_1}}^{\mathrm{Mok}} 
\right\}.
$$ 
Furthermore, each $\pi_{1} \in\Pi_{\psi_{G_1}}^{\mathrm{Mok}}
$ is cohomologically induced from a character of an inner form of $G_1$ determined by $\psi_{G_1}$  (\cite{MR4}*{Theorem 4.1}).
One  should also be able to 
prove that each representation in $\Pi_{\psi_G}^{\mathrm{ABV}}$ 
is parabolically induced from a representation in $\left\{
\pi_{0} \otimes \pi_{1}: \pi_{1} \in \Pi_{\psi_{G_1}}^{\mathrm{ABV}} 
\right\}$ by imitating Proposition \ref{injliftord}.
This would reduce the proof of the equality  of A-packets (\ref{mainthm}) 
to showing that
$
\Pi_{\psi_{G_1}}^{\mathrm{ABV}}=\Pi_{\psi_{G_1}}^{\mathrm{Mok}}.
$
The proof of this last equality 
should follow the proof of
\cite{arancibia_characteristic}*{Theorem 4.16},
which asserts the equality of  ABV-packets and packets defined by Adams-Johnson (\cite{Adams-Johnson}). Although to do this, one must first
extend the framework in \cite{arancibia_characteristic} to include unitary groups and  A-parameters with singular infinitesimal character.

\section{The local Langlands correspondence}
\label{llc}

Unless otherwise stated, the group $G$ in this section may be taken to
be an arbitrary connected complex reductive algebraic group.  Our goal
is to review the local Langlands correspondence as developed in
\cite{ABV}.  Our review differs in two ways from \cite{ABV}.  First,
we replace the notion of \emph{strong real form} with the equivalent
notion of \emph{strong involution} (\cite{Adams-Fokko}).  Second, we
limit the theory to the set of \emph{pure strong involutions}
(or equivalently \emph{pure strong real forms}).  This limitation
simplifies the review, while retaining the necessary information for
quasisplit forms of $G$.

\subsection{Extended groups and complete geometric parameters}
\label{extgroups}

The L-group of $G$ is an essential feature of the local Langlands correspondence.  The dual group ${^\vee}G$ is an index two subgroup of (the Galois form of) the L-group of $G$.  One of the innovations in the local Langlands correspondence of \cite{ABV} is the introduction of an \emph{extended
  group} for $G$, which mirrors the L-group in that
$G$, not ${^\vee}G$,  appears as an index two subgroup.

We begin the definition of an extended group for $G$ by fixing a
pinning
\begin{equation*}
  (B,H, \{ X_{\alpha} \})
\end{equation*}
in which $B \subset G$ is a Borel subgroup, $H \subset B$ is a maximal
torus and $\{X_{\alpha} \}$ is a set of simple root vectors relative to
the positive root system $R^{+}(G,H) = R(B,H)$ of $R(G,H)$.  We fix an
inner class of real forms for $G$, or equivalently, an algebraic
involution $\delta_{0}$ of $G$ fixing the pinning
(\cite{ABV}*{Proposition 2.12}, \cite{AVParameters}*{(5)}).  The \emph{(weak) extended group} defined by the inner
class is
$$G^{\Gamma} = G \rtimes  \langle \delta_{0} \rangle$$
(\cite{Adams-Fokko}*{Definition 5.1}, \emph{cf.} \cite{ABV}*{Definition 2.13}).  \nomenclature{$G^{\Gamma}$}{extended group}
\nomenclature{$\delta_{0}$}{strong involution defining extended group}

A \emph{strong involution} is an element $\delta \in G^{\Gamma} - G$
such that $\delta^{2} \in Z(G)$ is central and has finite order
(\cite{Adams-Fokko}*{Definition 5.5}).
\nomenclature{$\delta$}{strong involution}
Two strong involutions are
\emph{equivalent} if they are $G$-conjugate.  There is a surjective
map
\begin{equation}
  \label{formmap}
  \delta \mapsto G(\mathbb{R},\delta)
\end{equation}
from (equivalence classes of) strong involutions to  (isomorphism
classes of) real inner forms of $G$
(\cite{Adams-Fokko}*{Lemma 5.7}).
\nomenclature{$G(\mathbb{R},\delta)$}{real form of $G$ corresponding to strong involution $\delta$}
There is a well-known bijection 
between  the real inner forms of $G$ and $H^{1}(\mathbb{R}, G/Z(G))$ (\cite{springer}*{12.3.7}).  Using this bijection one may also think of (\ref{formmap}) as a surjection onto $H^{1}(\mathbb{R}, G/Z(G))$.
It is natural to juxtapose this surjection with the  the quotient map
\begin{equation}
  \label{puremap}
  H^{1}(\mathbb{R}, G) \rightarrow H^{1}(\mathbb{R}, G/Z(G)).
\end{equation}   The cohomology set  $ H^{1}(\mathbb{R}, G)$ is known as  the set of \emph{pure inner forms} (\cite{vogan_local_langlands}*{Section 2}). The pure inner forms may be realized as strong involutions in the following fashion.  
 Let $\sigma \in \Gamma$ be the nontrivial element of the Galois
 group. \nomenclature{${^\vee}\rho$}{half-sum of positive coroots} Set
 $${^\vee}\rho = \frac{1}{2} \sum_{\alpha \in R^{+}(G,H)}
 {^\vee}\alpha.$$
 Any 1-cocycle $z \in Z^{1}(\mathbb{R}, G)$ is equivalent to a
 1-cocycle taking values in $H$ under the $\delta_{0}$-action
 (\cite{Adams-Taibi}*{Proposition 7.4}). After replacing $z$ with such
 an equivalent cocycle, $z(\sigma) \in H$ and 
 \begin{equation}
   \label{pureinv}
  z(\sigma) \exp(\uppi i \, {^\vee}\rho) \delta_{0} \in G^{\Gamma}
 \end{equation}
is seen to be a strong involution.
 ($\exp(\uppi i \, {^\vee}\rho)\delta_{0}$ is the \emph{large} involution
in \cite{AVParameters}*{(11f)-(11h)}).  This assignment sends classes in
$H^{1}(\mathbb{R}, G)$ to equivalence classes of strong involutions.   
The (equivalence classes of) \emph{pure strong involutions} are
defined as the image of this map. 

The quasisplit real form corresponds to the trivial
cocycle of $H^{1}(\mathbb{R}, G/Z(G))$.  It lies in the image of
(\ref{puremap}) as the image of the trivial cocycle in $H^{1}(\mathbb{R}, G)$.  Equivalently, the quasisplit real form is the image under (\ref{formmap}) of the pure strong involution 
\begin{equation}
  \label{deltaq}
  \delta_{q} = \exp(\uppi i \, {^\vee}\rho)\delta_{0}.
\end{equation}
\nomenclature{$\delta_{q}$}{strong involution corresponding to quasisplit real form}

Given a strong involution $\delta$ we set
\begin{equation}
  \label{kdef}
  K = K_{\delta} \subset G
\end{equation}\nomenclature{$K$}{fixed-point set of a strong involution} to be the fixed-point subgroup of  $\mathrm{Int}(\delta)$.  The real form $G(\mathbb{R}, \delta)$ contains 
\begin{equation*}
K(\mathbb{R})=G(\mathbb{R}, \delta)\cap K
\end{equation*}
as a maximal compact subgroup and is determined by $K$ (\cite{AVParameters}*{(5f)-(5g)}).
By a representation of $G(\mathbb{R}, \delta)$ we usually mean an admissible
$(\mathfrak{g},K)$-module, although we will need veritable admissible group
representations (\cite{greenbook}*{Definition 1.1.5}) in Section \ref{whitsec}.
A \emph{representation of a strong involution} is a pair
$(\pi,\delta)\nomenclature{$(\pi,\delta)$}{representation of a strong involution}$ in which $\delta$ is a strong involution and $\pi$ is
an admissible $(\mathfrak{g},K)$-module. We let $\Pi(G(\mathbb{R}, \delta))$ be the set of
equivalence classes of irreducible representations $(\pi',\delta')$ of strong
involutions in which $\delta'$ is equivalent to $\delta$.
Let 
\begin{equation}
  \label{purereps}
\Pi(G/\mathbb{R}) =  \coprod_{\delta} \Pi(G(\mathbb{R}, \delta))
\nomenclature{$\Pi(G/\mathbb{R})$}{set of equivalence classes of irreducible representations}
\end{equation}
be the disjoint union over the (equivalence classes of) pure strong involutions $\delta$.\footnote{Warning! Identical notation is used in \cite{ABV} in which $\delta$ runs over all, not necessarily pure, involutions.}

Returning to the more familiar territory of L-groups, we fix a pinning
$$\left({^{\vee} B, {^\vee}H, \{ X_{{^\vee}\alpha}} \}\right)$$
of ${^\vee}G$.
The previous two pinnings and the involution $\delta_{0}$ fix an
involution ${^\vee} \delta_{0}$ of ${^\vee} G$ as prescribed in
\cite{AVParameters}*{(12)}. The group
$${^\vee}G^{\Gamma}={^\vee} G\rtimes\langle{^\vee}\delta_{0}\rangle
\nomenclature{${^\vee} G^{\Gamma}$}{L-group of $G$}
$$
is the L-group of our inner class. 

Suppose $\lambda$ is a semisimple element of the complex Lie algebra ${^\vee}\mathfrak{g}$.  After
conjugating by ${^\vee} G$ we may assume $\lambda\in {^\vee}\mathfrak{h}$.  Using the
canonical isomorphism ${^\vee}\mathfrak{h} \simeq \mathfrak{h}^*$ we identify $\lambda$ with an
element of $\mathfrak{h}^*$, and hence via the Harish-Chandra homomorphism, with
an infinitesimal character for $G$. This construction depends only on
the ${^\vee} G$-orbit of $\lambda$. We refer to a semisimple element
$\lambda\in{^\vee} \mathfrak{g}$, 
or a ${^\vee} G$-orbit ${^\vee}\mathcal{O} \subset  {^\vee}\mathfrak{g}$ of semisimple elements, as an \emph{infinitesimal
character} for $G$.  Let $$\Pi({^\vee}\mathcal{O},G/\mathbb{R}) \subset \Pi(G/\mathbb{R})
\nomenclature{$\Pi({^\vee}\mathcal{O},G/\mathbb{R})$}{set of equivalence classes of representations with infinitesimal
character ${^\vee}\mathcal{O}$}
$$ be the
representations (of pure strong involutions) with infinitesimal
character ${^\vee}\mathcal{O}$.
\nomenclature{${^\vee}\mathcal{O}$}{infinitesimal character}

Let $P\left({^\vee}G^{\Gamma} \right)\nomenclature{$P\left({^\vee}G^{\Gamma}\right)$}{set of quasiadmissible homomorphisms}{}$ be the set of \emph{quasiadmissible} homomorphisms
$\phi:W_{\mathbb{R}} \rightarrow {^\vee}G^{\Gamma}$ (\cite{ABV}*{Definition 5.2}). 
There is an infinitesimal character associated to $\phi\in P({^\vee}G^{\Gamma})$
(\cite{ABV}*{Proposition 5.6}).
Let
\begin{equation}
  \label{qlhomomorphisms}
  P\left({^\vee} \mathcal{O}, {^\vee}G^{\Gamma} \right)
\end{equation}
\nomenclature{$P\left({^\vee}\mathcal{O},{^\vee}G^{\Gamma}\right)$}{set of quasiadmissible homomorphisms with infinitesimal character ${^\vee}\mathcal{O}$}
be the set of quasiadmissible homomorphisms with
infinitesimal character ${^\vee}\mathcal{O}$.  The group ${^\vee}G$ acts on
$P\left({^\vee} \mathcal{O},{^\vee}G^{\Gamma} \right)$ by conjugation.  It is to  the set of ${^\vee}G$-orbits
\begin{equation}
  \label{lparameters}
  P({^\vee}\mathcal{O}, {^\vee}G^{\Gamma})/ {^\vee}G
\end{equation}
that the Langlands correspondence, in its original form, assigns L-packets of representations.

Another great innovation of \cite{ABV} is the introduction of the complex variety $X({^\vee}\mathcal{O}, {^\vee}G^{\Gamma})$ of \emph{geometric parameters}, which lies between (\ref{qlhomomorphisms}) and (\ref{lparameters}) (\cite{ABV}*{Definition 6.9}).  It may be regarded as a set of equivalence classes in $P({^\vee}\mathcal{O}, {^\vee}G^{\Gamma})$ upon which ${^\vee}G$ still acts by conjugation with finitely many orbits (\cite{aam}*{Section 2.2}, \cite{ABV}*{Proposition 6.16}).  Furthermore, the quotient map
\begin{equation}
  \label{abv6.17}
  P({^\vee}\mathcal{O}, {^\vee}G^{\Gamma}) \rightarrow X({^\vee}\mathcal{O}, {^\vee}G^{\Gamma})
\end{equation}
passes to a bijection at the level of ${^\vee}G$-orbits   (\cite{ABV}*{Proposition 6.17}).

Let
$S\subset X({^\vee}\mathcal{O},{^\vee}G^{\Gamma})$ be a ${^\vee} G$-orbit,
and for $p\in S$ let ${^\vee} G_p=\mathrm{Stab}_{{^\vee} G}(p)$.
A \emph{pure complete geometric parameter} for $X({^\vee}\mathcal{O},{^\vee}G^{\Gamma})$ is a pair
$(S,\tau_S)\nomenclature{$(S,\tau_S)$}{pure complete geometric parameter}$ where $\tau_S
\nomenclature{$\tau_S$}{irreducible representation of component group}
$
is (an equivalence class of) an irreducible representation of the component group ${^\vee} G_p/({^\vee} G_p)^0$ (\cite{ABV}*{Definitions 7.1 and 7.6}).
We denote the set of pure complete geometric
parameters for $X({^\vee}\mathcal{O}, {^\vee}G^{\Gamma})\nomenclature{$\Xi({^\vee}\mathcal{O}, {^\vee}G^{\Gamma})$}{set of pure complete geometric
parameters}$ by
$\Xi({^\vee}\mathcal{O},{^\vee}G^{\Gamma})$.

A special case of the local Langlands correspondence as stated in \cite{ABV}*{Theorem 10.11} is a bijection
\begin{equation}
\label{localLanglandspure}
\Pi\left({^\vee}\mathcal{O},G/\mathbb{R} \right)\longleftrightarrow \Xi\left({^\vee}\mathcal{O},{^\vee}G^{\Gamma}\right)
\end{equation}
between representations of pure strong involutions and pure complete geometric parameters.
It is important to bear in mind that the left-hand side of (\ref{localLanglandspure})
contains the subset $\Pi({^\vee}\mathcal{O},G(\mathbb{R},\delta_q))$ of representations of the quasisplit form of $G$.

\subsection{Extended groups for unitary groups and the complex general linear group}
\label{extended}

We specialize the discussion of the previous section to two groups, each with a fixed inner class.  The first group is $\mathrm{GL}_{N}$.  When $N$ is even we fix  the inner class whose quasisplit form is the indefinite unitary group $\mathrm{U}(N/2,N/2)$.  When $N$ is odd we fix the inner class to contain the quasisplit form $\mathrm{U}((N-1)/2,(N+1)/2)$.
The second group is
$$\mathrm{R}_{\mathbb{C}/\mathbb{R}} \mathrm{GL}_{N} = \mathrm{GL}_{N} \times \mathrm{GL}_{N}$$
together with the  inner class whose  quasisplit form is $\mathrm{GL}_{N}(\mathbb{C})$. \nomenclature{$\mathrm{R}_{\mathbb{C}/\mathbb{R}} \mathrm{GL}_{N}$}{restriction of scalars of $\mathrm{GL}_{N}$}

Let us begin with the unitary groups.  Fix the usual pinning for
$\mathrm{GL}_{N}$ in which $B$ is the upper-triangular Borel subgroup,
$H$ is the diagonal subgroup, and $X_{\alpha}$ is the matrix with $1$
in the entry corresponding to the simple root $\alpha$ and zeroes elsewhere.  
Regardless of whether $N$ is even or odd the specified inner classes contain the compact real form $\mathrm{U}(N)= \mathrm{U}(N,0)$.  This implies that $\delta_{0}$  acts as the trivial automorphism on $\mathrm{GL}_{N}$ (\cite{AVParameters}*{(5)}).  Consequently, the extended group for the inner class of unitary groups is
$$\mathrm{GL}_{N}^{\Gamma} = \mathrm{GL}_{N} \times \langle \delta_{0} \rangle \cong \mathrm{GL}_{N} \times \mathbb{Z}/ 2 \mathbb{Z}.$$
It follows from  \cite{AVParameters}*{(12c)} that the involution ${^\vee}\delta_{0}$ acts on ${^\vee}\mathrm{GL}_{N}$ as
$${^\vee}\delta_{0}(g) = \tilde{J} \, (g^{-1})^{\intercal} \, \tilde{J}^{-1}, \quad
  g \in {^\vee}\mathrm{GL}_{N}.$$
  The corresponding L-group  is
\begin{equation}
\label{unitarylgroup}
      {^\vee}\mathrm{GL}_{N}^{\Gamma} = {^\vee}\mathrm{GL}_{N} \rtimes \langle {^\vee}\delta_{0} \rangle.
\end{equation}
The strong involutions and pure strong involutions of real unitary
groups are presented in \cite{adams11}*{Section 9}.

For the group $\mathrm{R}_{\mathbb{C}/\mathbb{R}} \mathrm{GL}_{N}$ and the inner class containing $\mathrm{GL}_{N}(\mathbb{C})$, we shall follow \cite{Mok}*{Section 2.1} and begin with the L-group first.  The L-group is defined as
\begin{equation}
  \label{lgroupglnc}
{^\vee} \mathrm{R}_{\mathbb{C}/\mathbb{R}} \mathrm{GL}_{N}^{\Gamma}= {^\vee}(\mathrm{GL}_{N} \times \mathrm{GL}_{N})^{\Gamma} =
({^\vee}\mathrm{GL}_{N} \times {^\vee}\mathrm{GL}_{N}) \rtimes \langle {^\vee}\delta_{0} \rangle,
\end{equation}
where the involution ${^\vee}\delta_{0}$ is now defined by
$${^\vee}\delta_{0} (g_{1}, g_{2}) = (g_{2}, g_{1}), \quad g_{1}, g_{2} \in {^\vee} \mathrm{GL}_{N}.$$

Let us now fix the pinning $(B,H, \{X_{\alpha}\})$ for $\mathrm{GL}_{N} \times \mathrm{GL}_{N}$ (and its dual) by taking $B$ to be the direct product of the upper-triangular subgroups, $H$ to be the direct product of the diagonal subgroups and $\{X_{\alpha}\}$ to be the union of  simple root vectors for each of the two factors in the direct product.  Then,
by the prescription \cite{AVParameters}*{(12c)}, the involution $\delta_{0}$ on  $\mathrm{R}_{\mathbb{C}/\mathbb{R}} \mathrm{GL}_{N} = \mathrm{GL}_{N} \times \mathrm{GL}_{N}$ is defined by
\begin{equation}
  \label{del0action}
  \delta_{0}(g_{1}, g_{2}) = (\tilde{J} (g_{2}^{-1})^{\intercal} \tilde{J}^{-1}, \tilde{J} (g_{1}^{-1})^{\intercal} \tilde{J}^{-1}),  \quad g_{1}, g_{2} \in \mathrm{GL}_{N},
\end{equation}
and the extended group of $\mathrm{R}_{\mathbb{C}/\mathbb{R}} \mathrm{GL}_{N}$ is
\begin{equation}
  \label{egroupglnc}
\mathrm{R}_{\mathbb{C}/\mathbb{R}} \mathrm{GL}_{N}^{\Gamma} = (\mathrm{GL}_{N} \times \mathrm{GL}_{N})^{\Gamma} =
(\mathrm{GL}_{N} \times \mathrm{GL}_{N}) \rtimes \langle \delta_{0} \rangle.
\end{equation}
It is coincidental that $\delta_{0}$ defines the same automorphism as $\vartheta$ (\ref{vartheta}).  The real form $\sigma$ associated to $\delta_{0}$ is given by  composing $\delta_{0}$ with  the compact real form (\cite{AVParameters}*{(5)}).    This turns out to be
\begin{equation}
  \label{galoisglnc}
  \sigma (g_{1}, g_{2}) = (\tilde{J} \bar{g}_{2} \tilde{J}^{-1}, \tilde{J} \bar{g}_{1} \tilde{J}^{-1}), \quad g_{1}, g_{2} \in \mathrm{GL}_{N}.
\end{equation}
The group $\mathrm{R}_{\mathbb{C}/\mathbb{R}} \mathrm{GL}_{N}(\mathbb{R}, \delta_{0})$ from (\ref{formmap}) is by definition the fixed-point subgroup of $\sigma$,
$$\mathrm{R}_{\mathbb{C}/\mathbb{R}} \mathrm{GL}_{N}(\mathbb{R}, \delta_{0}) = \{ (g, \tilde{J} \bar{g} \tilde{J}^{-1}): g \in \mathrm{GL}_{N} \} \cong \mathrm{GL}_{N}(\mathbb{C}).$$
In what follows we will reserve the notation $\mathrm{GL}_{N}(\mathbb{C})$ for this particular \emph{real form}, and reserve the notation $\mathrm{GL}_{N}$ for the absolute theory of reductive groups. 
\begin{lem}
  \label{1cohomology}
  The Galois cohomology sets
  $$H^{1}(\mathbb{R}, \mathrm{R}_{\mathbb{C}/\mathbb{R}} \mathrm{GL}_{N}) \mbox{  and  } H^{1}(\mathbb{R}, \mathrm{R}_{\mathbb{C}/\mathbb{R}} \mathrm{GL}_{N}/ Z( \mathrm{R}_{\mathbb{C}/\mathbb{R}} \mathrm{GL}_{N}))$$
are both trivial.  In particular, $\mathrm{GL}_{N}(\mathbb{C})$ is the only real form in its inner class, there is only one equivalence class of pure strong involutions, and this equivalence class corresponds to  $\mathrm{GL}_{N}(\mathbb{C})$ via (\ref{puremap}).
\end{lem}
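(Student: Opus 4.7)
The plan is to reduce both cohomology computations to Galois cohomology over $\mathbb{C}$ via Shapiro's lemma, and then derive the three corollaries from the earlier-established parameterizations. The key input is that for any complex algebraic group $H$, Shapiro's lemma (equivalently, the adjunction between restriction of scalars and base change) gives a canonical identification
\begin{equation*}
H^{1}(\mathbb{R}, \mathrm{R}_{\mathbb{C}/\mathbb{R}} H) \;=\; H^{1}(\mathbb{C}, H).
\end{equation*}
Since $\mathrm{Gal}(\bar{\mathbb{C}}/\mathbb{C})$ is trivial, the right-hand side is trivial for every $H$.

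Applying this with $H = \mathrm{GL}_{N}$ immediately yields $H^{1}(\mathbb{R}, \mathrm{R}_{\mathbb{C}/\mathbb{R}} \mathrm{GL}_{N}) = 1$. For the second cohomology set, I would first note that the center of $\mathrm{R}_{\mathbb{C}/\mathbb{R}} \mathrm{GL}_{N} = \mathrm{GL}_{N} \times \mathrm{GL}_{N}$ is $Z(\mathrm{GL}_{N}) \times Z(\mathrm{GL}_{N})$, so the quotient identifies, as an algebraic group with its canonical $\mathbb{R}$-structure, with $\mathrm{R}_{\mathbb{C}/\mathbb{R}}(\mathrm{GL}_{N}/Z(\mathrm{GL}_{N})) = \mathrm{R}_{\mathbb{C}/\mathbb{R}} \mathrm{PGL}_{N}$. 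Shapiro again gives $H^{1}(\mathbb{R}, \mathrm{R}_{\mathbb{C}/\mathbb{R}} \mathrm{PGL}_{N}) = H^{1}(\mathbb{C}, \mathrm{PGL}_{N}) = 1$.

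With both cohomology sets trivial, the three consequences fall out directly from the bijections and surjections recalled in Section \ref{extgroups}. The real forms in the inner class of $\mathrm{R}_{\mathbb{C}/\mathbb{R}} \mathrm{GL}_{N}$ are in bijection with $H^{1}(\mathbb{R}, \mathrm{R}_{\mathbb{C}/\mathbb{R}} \mathrm{GL}_{N}/Z)$, so there is a unique one, necessarily the quasisplit form $\mathrm{GL}_{N}(\mathbb{C})$ identified in (\ref{galoisglnc}). The pure strong involutions are the image of $H^{1}(\mathbb{R}, \mathrm{R}_{\mathbb{C}/\mathbb{R}} \mathrm{GL}_{N})$ under the assignment (\ref{pureinv}), and hence form a single equivalence class. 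Finally, the map (\ref{puremap}) is a map between two singletons, so it automatically sends the unique class of pure strong involutions to the unique real form $\mathrm{GL}_{N}(\mathbb{C})$.

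There is no serious obstacle here; the only point requiring care is the identification of the quotient $\mathrm{R}_{\mathbb{C}/\mathbb{R}} \mathrm{GL}_{N} / Z(\mathrm{R}_{\mathbb{C}/\mathbb{R}} \mathrm{GL}_{N})$ with $\mathrm{R}_{\mathbb{C}/\mathbb{R}} \mathrm{PGL}_{N}$ as $\mathbb{R}$-groups (so that Shapiro applies), and that the bijections from Section \ref{extgroups} are used in the appropriate directions to extract the ``in particular'' statements from the vanishing of the two cohomology sets.
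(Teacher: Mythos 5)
Your argument is correct, but it is a genuinely different route from the paper's. The paper proves the triviality of $H^{1}(\mathbb{R}, \mathrm{R}_{\mathbb{C}/\mathbb{R}} \mathrm{GL}_{N})$ by a hands-on cocycle computation: starting with $z(\sigma)=(g_1,g_2)$, it uses the cocycle condition $z(\sigma)\,\sigma(z(\sigma))=1$ together with the explicit formula (\ref{galoisglnc}) for the Galois action to force $z(\sigma)=(g_1,\tilde{J}\bar{g}_1^{-1}\tilde{J}^{-1})$, and then exhibits this as the coboundary of $(g_1,1)$; the adjoint case is declared to follow ``in the same manner.'' You instead invoke Shapiro's lemma for Weil restriction, $H^1(\mathbb{R},\mathrm{R}_{\mathbb{C}/\mathbb{R}}H)=H^1(\mathbb{C},H)$, together with the vanishing of Galois cohomology over the algebraically closed field $\mathbb{C}$, and the identification $\mathrm{R}_{\mathbb{C}/\mathbb{R}}\mathrm{GL}_N/Z(\mathrm{R}_{\mathbb{C}/\mathbb{R}}\mathrm{GL}_N)\cong\mathrm{R}_{\mathbb{C}/\mathbb{R}}\mathrm{PGL}_N$. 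Your route is more conceptual and immediately handles any group obtained by restriction of scalars from $\mathbb{C}$, at the cost of invoking Shapiro and the compatibility of $\mathrm{R}_{\mathbb{C}/\mathbb{R}}$ with the central quotient (which you correctly flag as the one point needing care); the paper's route is elementary and self-contained but is tied to the specific matrix realization of $\sigma$. The derivation of the ``in particular'' statements from the vanishing is essentially the same in both.
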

\begin{proof}
We begin with a cocycle  $z \in Z^{1}(\mathbb{R}, \mathrm{R}_{\mathbb{C}/\mathbb{R}} \mathrm{GL}_{N})$.  
Here, the implicit  action of the non-trivial element  $\sigma \in \Gamma$ on $\mathrm{R}_{\mathbb{C}/\mathbb{R}} \mathrm{GL}_{N}$ is given by (\ref{galoisglnc}).
Suppose $z(\sigma) = (g_{1}, g_{2})$.  Then by definition,
$$(1,1) = z(\sigma^{2}) = z(\sigma) \, \sigma(z(\sigma)) = (g_{1} \tilde{J} \bar{g}_{2} \tilde{J}^{-1}, g_{2} \tilde{J} \bar{g}_{1} \tilde{J}^{-1}),$$
which implies
$$z(\sigma) = (g_{1}, \tilde{J} \bar{g}_{1}^{-1} \tilde{J}^{-1}).$$
The cocycle $z$ is trivial in  $H^{1}(\mathbb{R}, \mathrm{R}_{\mathbb{C}/\mathbb{R}} \mathrm{GL}_{N})$ since
$$z(\sigma) = (g_{1},1) \ \sigma((g_{1},1)^{-1}).$$
This proves the triviality of $H^{1}(\mathbb{R}, \mathrm{R}_{\mathbb{C}/\mathbb{R}} \mathrm{GL}_{N})$.  The proof of the triviality of 
the second cohomology set follows in the same manner.
\end{proof}
Lemma \ref{1cohomology} reduces the set of representations in (\ref{purereps}) to the unique real form $\mathrm{GL}_{N}(\mathbb{C})$. Thus, we  write
\begin{equation}
  \label{reducepi}
  \Pi( {^\vee}\mathcal{O}, (\mathrm{R}_{\mathbb{C}/\mathbb{R}} \mathrm{GL}_{N})/ \mathbb{R}) = \Pi({^\vee} \mathcal{O}, \mathrm{GL}_{N}(\mathbb{C})).
  \nomenclature{$\Pi({^\vee} \mathcal{O}, \mathrm{GL}_{N}(\mathbb{C}))$}{set of infinitesimal equivalence classes of irreducible representations of $\mathrm{GL}_{N}(\mathbb{C})$ with infinitesimal character ${^\vee}\mathcal{O}$}
\end{equation}

This is an opportune moment to bring up two peculiarities of $\mathrm{R}_{\mathbb{C}/\mathbb{R}} \mathrm{GL}_{N}$ that will be of importance later.  The first is that we could equally well have reversed the roles of the L-group and extended group by defining the L-group as (\ref{egroupglnc}) and the extended group as (\ref{lgroupglnc}).  Indeed, with this reversal the (unique) real form corresponding to the extended group is the fixed-point set of
$$(g_{1}, g_{2}) \mapsto ((\bar{g}_{2}^{\intercal})^{-1}, (\bar{g}_{1}^{\intercal})^{-1})$$
(\cite{AVParameters}*{(5)}), which is easily seen to be
$$\{ (g, (\bar{g}^{\intercal})^{-1}) : g \in \mathrm{GL}_{N}\} \cong \mathrm{GL}_{N}(\mathbb{C}).$$
The recovery of the same inner class, namely $\mathrm{GL}_{N}(\mathbb{C})$, under this reversal shall be useful when we explore Vogan duality in Section \ref{duality}.  

The second peculiarity has to do with the connectedness of centralizers in the dual group ${^\vee}\mathrm{R}_{\mathbb{C}/\mathbb{R}} \mathrm{GL}_{N} = {^\vee}\mathrm{GL}_{N} \times {^\vee}\mathrm{GL}_{N}$.  It is well-known that the centralizer of the image of any L-parameter for the general linear group $\mathrm{GL}_{N}(\mathbb{R})$ is connected.  This peculiarity is shared by $\mathrm{R}_{\mathbb{C}/\mathbb{R}} \mathrm{GL}_{N}(\mathbb{R}) = \mathrm{GL}_{N}(\mathbb{C})$ and may be seen as follows.  To lighten the notation take $N=2$.  It is a simple exercise to show that after conjugation any L-parameter $\phi \in P({^\vee} \mathrm{R}_{\mathbb{C}/\mathbb{R}} \mathrm{GL}_{2}^{\Gamma})$ may be taken to have the form
$$\phi(z) =\left( \begin{bmatrix} z^{\lambda_{1}}\bar{z}^{\lambda_{1}'} & 0 \\
0 & z^{\lambda_{2}}  \bar{z}^{\lambda_{2}'} \end{bmatrix},  \begin{bmatrix} z^{\lambda'_{1}}\bar{z}^{\lambda_{1}} & 0 \\
  0 & z^{\lambda_{2}'}  \bar{z}^{\lambda_{2}} \end{bmatrix}, 1 \right), \quad z \in \mathbb{C}^{\times}$$
$$\phi(j) = \left( \begin{bmatrix} (-1)^{(\lambda_{1} - \lambda_{1}')/2} & 0 \\
0 & (-1)^{(\lambda_{2} - \lambda_{2}')/2} \end{bmatrix}, \begin{bmatrix} (-1)^{(\lambda_{1} - \lambda_{1}')/2} & 0 \\
0 & (-1)^{(\lambda_{2} - \lambda_{2}')/2} \end{bmatrix} , {^\vee}\delta_{0} \right)$$
for some $\lambda_{1}, \lambda_{1}', \lambda_{2}, \lambda_{2}' \in \mathbb{C}$.  It is not difficult to see that the centralizer of $\phi(j)$ in ${^\vee}\mathrm{GL}_{2} \times {^\vee}\mathrm{GL}_{2}$ reduces 
the further computation of the centralizer of
$\phi(\mathbb{C}^{\times})$ to the well-known case of
${^\vee}\mathrm{GL}_{2}$, where it is known to be connected.  The
connectedness of the centralizer of $\phi$ is the same as the
triviality of the component group, which we write as 
\begin{equation}
  \label{compgptriv}
  ({^\vee}\mathrm{R}_{\mathbb{C}/\mathbb{R}} \mathrm{GL}_{N})_{\phi}/ ({^\vee}\mathrm{R}_{\mathbb{C}/\mathbb{R}} \mathrm{GL}_{N})_{\phi}^{0} = \{ 1\}.
  \end{equation}

\subsection{Atlas parameters for $\mathrm{R}_{\mathbb{C}/\mathbb{R}} \mathrm{GL}_{N}$}
\label{atlasparam}

We introduce another set of parameters for representations of $\mathrm{R}_{\mathbb{C}/\mathbb{R}} \mathrm{GL}_{N}$ which may be used in place of the complete geometric parameters of Section \ref{extgroups}.  These \emph{Atlas parameters} are convenient for Hecke algebra computations and are
well-suited to a description of Vogan duality  (see Section \ref{duality}). 
The main references for this section are \cite{Adams-Fokko} and
\cite{AVParameters}*{Section 3}. 

We start by working in the context of the extended group \eqref{egroupglnc}.  We take $H \subset \mathrm{R}_{\mathbb{C}/\mathbb{R}} \mathrm{GL}_{N}$ to be the direct product of the diagonal subgroups as in the pinning of the previous section.  
Following \cite{AVParameters}*{Section 3} we set
$$
\mathcal{X}_{{^\vee}\rho}=\left\{\delta \in \mathrm{Norm}_{(\mathrm{GL}_N \times \mathrm{GL}_{N}) \delta_{0}}(H):
\delta^2=\exp(2\uppi i\, {^\vee}\rho)\right\}/H 
\nomenclature{$\mathcal{X}_{{^\vee}\rho}$}{
set of conjugacy classes of strong involutions 
 with infinitesimal
cocharacter ${^\vee}\rho$}
$$
where the quotient is by the conjugation action of $H$.  This is a set
of $H$-conjugacy classes of strong involutions with \emph{infinitesimal
cocharacter} ${^\vee}\rho$ (\cite{AVParameters}*{(16e)}, \emph{cf.} (\ref{deltaq})).  By Lemma \ref{1cohomology}, these strong
involutions are all pure and correspond to the real form
$\mathrm{GL}_{N}(\mathbb{C})$. 

We fix a $\vartheta$-fixed, regular, integrally dominant
element $\lambda \in {^\vee} \mathfrak{h}$. This means
\begin{equation}\label{regintdom}  
\begin{aligned}
     \vartheta(\lambda) &= \lambda\\
   \langle \lambda, {^\vee}\alpha
   \rangle &\neq  0, \quad \alpha\in R(\mathrm{R}_{\mathbb{C}/\mathbb{R}} \mathrm{GL}_N,H)\\
    \langle \lambda, {^\vee} \alpha \rangle &\notin \{ -1, -2, -3,
   \ldots \}, \ \alpha \in R^{+}(\mathrm{R}_{\mathbb{C}/\mathbb{R}} \mathrm{GL}_{N},H). 
\end{aligned}
\end{equation}
The ${^\vee}\mathrm{R}_{\mathbb{C}/\mathbb{R}} \mathrm{GL}_{N}$-orbit ${^\vee}\mathcal{O}$ of $\lambda$ will be the infinitesimal character of our representations of $\mathrm{GL}_N(\mathbb{C})$. 
The assumption of integral dominance is harmless 
(\cite{AVParameters}*{Lemma 4.1}). We 
shall remove the regularity assumption at the beginning of Section
\ref{equalapacketsing}.

The action of $\delta_{0}$ in the extended group (\ref{egroupglnc}) induces an action on the Weyl group
$W(\mathrm{R}_{\mathbb{C}/\mathbb{R}} \mathrm{GL}_{N},H)
\nomenclature{$W(\mathrm{R}_{\mathbb{C}/\mathbb{R}} \mathrm{GL}_{N},H)$}{Weyl group}
$.
Consider the set
\begin{equation}
\label{twistinv}
\left\{w\in W(\mathrm{R}_{\mathbb{C}/\mathbb{R}} \mathrm{GL}_N,H) : w\, \delta_{0}(w)=1\right\}.
\end{equation}
If $x\in\mathcal{X}_{{^\vee} \rho}$ then the action by conjugation of $x$ on
$H$ is equal to $w\delta_0$ for some $w$ in the set \eqref{twistinv}.
The map $x \mapsto w$ is surjective (\cite{AVParameters}*{Proposition 3.2}). Let
$\mathcal{X}_{{^\vee}\rho}^w$ be the fibre of this map over $w$, \emph{i.e.}
\begin{equation}
\label{e:p}
\mathcal{X}_{{^\vee}\rho}^w=\left\{x\in\mathcal{X}_{{^\vee}\rho} : xhx^{-1} =w\delta_0 \cdot h, \ \mbox{ for all }  h\in H \right\}.
\nomenclature{$\mathcal{X}_{{^\vee}\rho}^w$}{}
\end{equation}

Turning to the L-group (\ref{lgroupglnc}), we have an analogous set 
in which the infinitesimal cocharacter ${^\vee}\rho$
is replaced by the infinitesimal character $\lambda$, and $\delta_{0}$ is replaced by ${^\vee}\delta_{0}$, namely
$$
{^\vee}\mathcal{X}_{\lambda}=\left\{{^\vee}\delta \in \mathrm{Norm}_{{^\vee}\mathrm{GL}_N \, {^\vee}\delta_{0}}({^\vee}
H): {^\vee}\delta^2=\exp(2\uppi i\lambda)\right\}/\, {^\vee} H. 
\nomenclature{${^\vee}\mathcal{X}_{\lambda}$}{} 
$$
The analogue of the set (\ref{twistinv}) is
\begin{equation}
  \label{twistinvdual}
\{ w \in
W({^\vee}\mathrm{R}_{\mathbb{C}/\mathbb{R}} \mathrm{GL}_{N}, {^\vee}H): w {^\vee}\delta_{0}(w) =1\}
\end{equation}
and there is an obvious analogue of (\ref{e:p}), which we denote by  ${^\vee}\mathcal{X}_\lambda^w 
\nomenclature{${^\vee}\mathcal{X}_\lambda^w$}{}$.

If we identify ${^\vee}\mathrm{GL}_{N}$ with $\mathrm{GL}_{N}$ then the actions of $\delta_{0}$ and ${^\vee}\delta_{0}$ on $H$ are related by
$${^\vee}\delta_{0}(h) = w_{0} \delta_{0} (h^{-1}), \quad h \in H,$$
where $w_{0}$ is the long Weyl group element (\cite{AVParameters}*{(12c)}).
From this it is easily verified that for all $h \in H$ and  $w \in
W(\mathrm{R}_{\mathbb{C}/\mathbb{R}} \mathrm{GL}_{N},H)$
\begin{equation*}
w\, \delta_{0}(w) \cdot h = w\delta_{0} w \delta_{0} \cdot h = w w_{0} {^\vee}\delta_{0} ww_{0} {^\vee}\delta_{0} \cdot h =  w w_{0} {^\vee}\delta_{0}(ww_{0}) \cdot h.
\end{equation*}
It follows that  
$$w \mapsto ww_{0}$$
defines a bijection from (\ref{twistinv}) onto (\ref{twistinvdual}).  
This map allows us to pair any set $\mathcal{X}_{{^\vee}\rho}^{w}$ with the set ${^\vee}\mathcal{X}_{\lambda}^{ww_{0}}$.

The next result follows from \cite{Adams-Fokko}, \cite{ABV} and
\cite{AVParameters}*{Theorem 3.11}. Our proof follows that of \cite{aam}*{Lemma 2.2}.
\begin{lem}
  \label{XXXi}
  There is a canonical bijection
  $$\coprod_{\{w: w\delta_0(w)=1\}} \mathcal{X}_{{^\vee}\rho}^{w} \times
  {^\vee}\mathcal{X}_{\lambda}^{ww_{0}} \longleftrightarrow \Xi\left({^\vee}\mathcal{O},{^\vee}\mathrm{R}_{\mathbb{C}/\mathbb{R}} \mathrm{GL}_N^{\Gamma} \right). 
  $$
\end{lem}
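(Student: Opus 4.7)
My plan is to obtain the bijection by composing two existing correspondences, using the set $\Pi({^\vee}\mathcal{O}, \mathrm{GL}_{N}(\mathbb{C}))$ as an intermediary. By Lemma \ref{1cohomology} (via the identification (\ref{reducepi})), this set coincides with $\Pi({^\vee}\mathcal{O}, (\mathrm{R}_{\mathbb{C}/\mathbb{R}} \mathrm{GL}_N)/\mathbb{R})$, so it suffices to put each side of the claimed bijection in canonical correspondence with this common target and then compose.

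For the Atlas side, I would invoke \cite{AVParameters}*{Theorem 3.11}. A pair $(x,{^\vee}x) \in \mathcal{X}_{{^\vee}\rho}^{w} \times {^\vee}\mathcal{X}_{\lambda}^{ww_{0}}$ packages an extended-group strong involution $x$ (automatically pure by Lemma \ref{1cohomology}, so that only the real form $\mathrm{GL}_N(\mathbb{C})$ occurs) together with a dual-side strong involution ${^\vee}x$, and by \cite{AVParameters}*{Theorem 3.11} this data is precisely what is needed to specify an irreducible Harish-Chandra module at infinitesimal character $\lambda$. The constraint $w\,\delta_{0}(w)=1$ on the extended side is matched with the constraint $ww_{0}\,{^\vee}\delta_{0}(ww_{0})=1$ on the dual side by the bijection $w \mapsto ww_{0}$ established just above the lemma, so that summing over all such $w$ exhausts the whole module category at that infinitesimal character.

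For the geometric side, I would apply the local Langlands correspondence (\ref{localLanglandspure}), i.e. \cite{ABV}*{Theorem 10.11}, to obtain a canonical bijection between $\Pi({^\vee}\mathcal{O}, (\mathrm{R}_{\mathbb{C}/\mathbb{R}}\mathrm{GL}_N)/\mathbb{R})$ and $\Xi({^\vee}\mathcal{O},{^\vee}\mathrm{R}_{\mathbb{C}/\mathbb{R}}\mathrm{GL}_N^{\Gamma})$. Composing the Atlas-side and geometric-side bijections through $\Pi({^\vee}\mathcal{O}, \mathrm{GL}_N(\mathbb{C}))$ yields the required canonical bijection.

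The step I expect to be most delicate is verifying that the two parameterizations are compatible not only on the ``orbit'' data but also on the ``local system'' data. On the geometric side a pure complete geometric parameter is $(S,\tau_{S})$ with $\tau_{S}$ an irreducible representation of the component group ${^\vee}G_{p}/({^\vee}G_{p})^{0}$; on the Atlas side, the element ${^\vee}x \in {^\vee}\mathcal{X}_{\lambda}^{ww_{0}}$ implicitly carries both an L-parameter and a character of an appropriate component group. I would verify this compatibility by following the template of \cite{aam}*{Lemma 2.2}, checking that under the translation between Atlas data and Langlands data of \cite{AVParameters}*{Section 3}, the dual strong involution ${^\vee}x$ recovers exactly the local system $\tau_{S}$ demanded by \cite{ABV}*{Definition 7.6}. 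Once this matching of the finer data is in place, the assembly of the bijection is formal.
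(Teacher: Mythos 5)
Your approach—compositing the bijection from \cite{AVParameters}*{Theorem 3.11} (Atlas parameters $\leftrightarrow$ representations) with the local Langlands correspondence \cite{ABV}*{Theorem 10.11} (representations $\leftrightarrow$ complete geometric parameters)—is valid, but it is genuinely different from the paper's. The paper proves the bijection \emph{directly} on the geometric side: it first shows that $|\mathcal{X}_{{^\vee}\rho}^w|=1$ for every $w$ (via \cite{Adams-Fokko}*{Proposition 12.19(5)}, using the connectedness of the dual Cartan $\prod\mathbb{C}^\times$), thereby reducing the left side to $\coprod_w {^\vee}\mathcal{X}_\lambda^{ww_0}$; then it uses the triviality of component groups (\ref{compgptriv}) to reduce $\Xi({^\vee}\mathcal{O},{^\vee}\mathrm{R}_{\mathbb{C}/\mathbb{R}}\mathrm{GL}_N^{\Gamma})$ to the set of ${^\vee}\mathrm{R}_{\mathbb{C}/\mathbb{R}}\mathrm{GL}_N$-orbits of quasiadmissible homomorphisms, which matches the Atlas description. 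Representations never enter. The paper's proof has the collateral benefit of establishing $|\mathcal{X}_{{^\vee}\rho}^w|=1$ as a standalone fact, which the paper reuses later (notably in the discussion of preferred extended parameters after (\ref{pextparam})); your route does not deliver that fact as a by-product.

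Your worry about the ``delicate'' matching of local-system data is, on reflection, not where the work actually lives. If both cited correspondences are canonical, their composite is canonical by definition, and you do not need to verify coincidence of the $\tau_S$-data separately; the lemma only asserts a canonical bijection, not a bijection in any prescribed explicit form. What \emph{does} make the statement clean for $\mathrm{R}_{\mathbb{C}/\mathbb{R}}\mathrm{GL}_N$ (and what you should cite rather than defer to a general compatibility check) is precisely the triviality of the relevant component groups (\ref{compgptriv}): because of it, $\tau_S$ is always the trivial character, so $\Xi$ is identified with the set of orbits, and the second coordinate ${^\vee}x$ carries no ``extra'' local-system information to reconcile. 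Noting this makes your last paragraph a short observation rather than an open verification.
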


\begin{proof}
  First of all $|\mathcal{X}_{{^\vee}\rho}^w|=1$  for all $w$. This follows from
  \cite{Adams-Fokko}*{Proposition 12.19(5)} which equates the cardinality with that of the component group of a dual Cartan subgroup.  As indicated in the first peculiarity near the end of Section \ref{extended}, the dual inner class consists of (products of) complex general linear groups.  The dual Cartan subgroup is therefore isomorphic to $N$ copies of $\mathbb{C}^{\times}$ and is evidently connected, so its component group is trivial.

The lemma is now reduced to defining a canonical bijection
  $$
  \coprod_{w\delta_0(w)=1} {^\vee}\mathcal{X}^{ww_{0}}_\lambda\longleftrightarrow \Xi\left({^\vee}\mathcal{O},{^\vee}\mathrm{R}_{\mathbb{C}/\mathbb{R}} \mathrm{GL}_N^{\Gamma}\right).
  $$
As explained in \cite{aam}*{Lemma 2.2}, the triviality of the component groups (\ref{compgptriv}) further reduces the task to defining a canonical bijection with ${^\vee} \mathrm{R}_{\mathbb{C}/\mathbb{R}} \mathrm{GL}_N$-orbits in  $X\left({^\vee}\mathcal{O}, {^\vee}\mathrm{R}_{\mathbb{C}/\mathbb{R}} \mathrm{GL}_{N}^{\Gamma} \right)$, or equivalently, with ${^\vee} \mathrm{R}_{\mathbb{C}/\mathbb{R}} \mathrm{GL}_N$-orbits of quasiadmissible homomorphisms.  The definition of the latter bijection is almost identical to the one in \cite{aam}*{Lemma 2.2} and is left to the reader.
\end{proof}
Together with  \eqref{localLanglandspure} this gives
\begin{thm}
  \label{paramsGLn}
Let ${^\vee}\mathcal{O}$ be the ${^\vee}\mathrm{R}_{\mathbb{C}/\mathbb{R}} \mathrm{GL}_N$-orbit of $\lambda$.
  There are canonical bijections
  $$
\coprod_{\{ w : w \delta_{0}(w) = 1\}}
\mathcal{X}^{w}_{{^\vee}\rho} \times {^\vee}\mathcal{X}^{ww_{0}}_{\lambda} \longleftrightarrow
\Xi\left({^\vee}\mathcal{O},{^\vee}\mathrm{R}_{\mathbb{C}/\mathbb{R}} \mathrm{GL}_N^{\Gamma}\right) \longleftrightarrow \Pi\left({^\vee}\mathcal{O},\mathrm{GL}_N(\mathbb{C})\right).
$$
\end{thm}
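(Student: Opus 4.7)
The plan is to package three preceding results. The first bijection in the statement is nothing but Lemma \ref{XXXi}, which has just been established, so no further work is needed for that half. For the second bijection, I will invoke the local Langlands correspondence for pure strong involutions, namely \eqref{localLanglandspure}, specialized to $G = \mathrm{R}_{\mathbb{C}/\mathbb{R}}\mathrm{GL}_{N}$. This yields a canonical bijection
\[
\Xi\left({^\vee}\mathcal{O},{^\vee}\mathrm{R}_{\mathbb{C}/\mathbb{R}}\mathrm{GL}_{N}^{\Gamma}\right) \longleftrightarrow \Pi\left({^\vee}\mathcal{O}, (\mathrm{R}_{\mathbb{C}/\mathbb{R}}\mathrm{GL}_{N})/\mathbb{R}\right)
\]
at the chosen infinitesimal character. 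The regularity and integral dominance hypotheses on $\lambda$ in \eqref{regintdom} ensure that ${^\vee}\mathcal{O}$ behaves as a genuine infinitesimal character in the sense of Section \ref{extgroups}, so the hypotheses of \cite{ABV}*{Theorem 10.11} are met.

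The second step is to collapse the right-hand side to $\Pi({^\vee}\mathcal{O}, \mathrm{GL}_{N}(\mathbb{C}))$. For this I appeal to Lemma \ref{1cohomology}: both $H^{1}(\mathbb{R}, \mathrm{R}_{\mathbb{C}/\mathbb{R}}\mathrm{GL}_{N})$ and $H^{1}(\mathbb{R}, \mathrm{R}_{\mathbb{C}/\mathbb{R}}\mathrm{GL}_{N}/Z(\mathrm{R}_{\mathbb{C}/\mathbb{R}}\mathrm{GL}_{N}))$ are trivial, so there is a unique equivalence class of pure strong involutions and, via \eqref{formmap}, it realizes the single real form $\mathrm{GL}_{N}(\mathbb{C})$. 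Therefore the disjoint union defining $\Pi({^\vee}\mathcal{O},(\mathrm{R}_{\mathbb{C}/\mathbb{R}}\mathrm{GL}_{N})/\mathbb{R})$ in \eqref{purereps} reduces to the single term recorded in \eqref{reducepi}, providing the desired identification with $\Pi({^\vee}\mathcal{O},\mathrm{GL}_{N}(\mathbb{C}))$.

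Concatenating these two canonical bijections gives the theorem. Since every ingredient is already in place, there is no real obstacle to overcome; the substantive work was done in Lemma \ref{1cohomology}, which removes the complications of multiple pure inner forms that would otherwise appear, and in Lemma \ref{XXXi}, which identifies the complete geometric parameters with pairs of strong involutions. The only point that merits a brief check is that the identification in \eqref{reducepi} is compatible with \eqref{localLanglandspure} in the sense of preserving infinitesimal characters; this is immediate because \eqref{reducepi} is just a reindexing of \eqref{purereps}, and \eqref{localLanglandspure} is stated precisely at a fixed infinitesimal character ${^\vee}\mathcal{O}$.
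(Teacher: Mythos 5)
Your proposal is correct and matches the paper's own argument: the paper states Theorem~\ref{paramsGLn} as an immediate consequence of Lemma~\ref{XXXi} combined with the local Langlands correspondence~\eqref{localLanglandspure}, with the reduction to the single real form $\mathrm{GL}_{N}(\mathbb{C})$ coming from Lemma~\ref{1cohomology} via~\eqref{reducepi}, exactly as you lay out. You have simply made explicit the steps the paper leaves implicit in the phrase ``Together with \eqref{localLanglandspure} this gives.''
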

As in \cite{AVParameters}*{Theorem 3.11} 
 the bijection of  Theorem \ref{paramsGLn} is written as
\begin{equation}
\label{abvij}
\mathcal{X}_{{^\vee}\rho}^w\times {^\vee}\mathcal{X}_{\lambda}^{w w_0}\ni(x,y) \mapsto J(x,y,\lambda)
\end{equation}
We call the pair $(x,y)\nomenclature{$(x,y)$}{Atlas parameter}$ on the left the \emph{Atlas parameter} of the
irreducible representation $J(x,y, \lambda)\nomenclature{$J(x,y,\lambda)$}{Langlands quotient}$. 
By Lemma \ref{XXXi}, the Atlas parameter $(x,y)$ is equivalent to a unique complete geometric parameter $\xi \in \Xi({^\vee}\mathcal{O}, {^\vee}\mathrm{R}_{\mathbb{C}/\mathbb{R}} \mathrm{GL}_{N}^{\Gamma})$, and accordingly we define
$$\pi(\xi) = J(x,y, \lambda). \nomenclature{$\pi(\xi)$}{(equivalence class of the) irreducible Harish-Chandra module (representation) associated to $\xi$}$$
The representation $\pi(\xi)$ is the Langlands quotient of a standard representation which we denote by $M(\xi)$ or $M(x,y)$.  \nomenclature{$M(\xi), \ M(x,y)$}{(equivalence class of the) standard module (representation) associated to $\xi$}

\subsection{Twisted Atlas parameters for $\mathrm{R}_{\mathbb{C}/\mathbb{R}} \mathrm{GL}_N$}
\label{extrepsec}

We next describe the generalization of 
Theorem \ref{paramsGLn}
to the $\vartheta$-twisted setting, which involves
representations of the group
$\mathrm{GL}_N(\mathbb{C})\rtimes\langle\vartheta\rangle$.
We specialize the results of \cite{AVParameters}*{Sections 3-5}
to this case. Some of the more
complicated issues that arise in \cite{AVParameters}
do not occur for $\mathrm{R}_{\mathbb{C}/\mathbb{R}} \mathrm{GL}_{N}$.

We continue with the hypotheses of (\ref{regintdom}). Recall that both
${^\vee}\rho$
and $\lambda$ are fixed by $\vartheta$.
By Clifford theory, an irreducible representation of
$\mathrm{GL}_N(\mathbb{C})\rtimes\langle\vartheta\rangle$ restricted to $\mathrm{GL}_N(\mathbb{C})$
is either an irreducible $\vartheta$-fixed representation, or the direct
sum of two irreducible representations which are exchanged  by the action of $\vartheta$.
Since we shall be restricting our attention to twisted characters, we only need representations of the first type. 

By \cite{Christie-Mezo}*{Theorem 4.1} and Lemma \ref{XXXi}, the map
(\ref{abvij}) is $\vartheta$-equivariant.  Therefore $J(x,y, \lambda)$ is
$\vartheta$-stable if and only if
$(x,y)\in \mathcal{X}^{w}_{{^\vee}\rho} \times
{^\vee}\mathcal{X}^{{ww_{0}}}_{\lambda} $ is fixed by $\vartheta$.
Let
$$\Pi({^\vee}\mathcal{O}, \mathrm{GL}_{N}(\mathbb{C}))^\vartheta \subset \Pi({^\vee}\mathcal{O}, \mathrm{GL}_{N}(\mathbb{C}))
\nomenclature{$\Pi({^\vee}\mathcal{O}, \mathrm{GL}_{N}(\mathbb{C}))^\vartheta$}{
$\vartheta$-fixed equivalence classes of representations
}
$$
be the subset of $\vartheta$-stable irreducible representations and set
\begin{equation*}
W(\delta_{0},\vartheta)=\{w\in W(\mathrm{R}_{\mathbb{C}/\mathbb{R}} \mathrm{GL}_{N},H)   : w\delta_{0}(w)=1, w=\vartheta(w)\}
\end{equation*}
(\emph{cf.} (\ref{twistinv})).  The $\vartheta$-equivariance of (\ref{abvij})  allows us to restrict
Theorem \ref{paramsGLn} to these sets and we obtain 
\begin{cor}
\label{twistclass}
Suppose $\lambda$ satisfies the hypotheses of (\ref{regintdom}) and let ${^\vee}\mathcal{O}$ be its ${^\vee}\mathrm{R}_{\mathbb{C}/\mathbb{R}} \mathrm{GL}_{N}$-orbit.  Then there is a canonical bijection
$$
\coprod_{\{w\in W(\delta_{0},\vartheta)\}}\mathcal{X}_{{^\vee}\rho}^w\times {^\vee}\mathcal{X}^{ww_0}_\lambda
\longleftrightarrow
\Pi\left({^\vee}\mathcal{O},\mathrm{GL}_N(\mathbb{C})\right)^{\vartheta} 
$$
written $(x,y)\mapsto J(x,y,\lambda)$. 
\end{cor}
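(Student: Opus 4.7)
The plan is to obtain Corollary \ref{twistclass} as a direct restriction of the bijection of Theorem \ref{paramsGLn} to $\vartheta$-fixed points on both sides. The right-hand side of Theorem \ref{paramsGLn} is $\Pi({^\vee}\mathcal{O},\mathrm{GL}_N(\mathbb{C}))$, on which $\vartheta$ acts in the obvious way; its fixed set is by definition $\Pi({^\vee}\mathcal{O},\mathrm{GL}_N(\mathbb{C}))^\vartheta$. So the task reduces to identifying the $\vartheta$-fixed subset of $\coprod_{w\delta_0(w)=1} \mathcal{X}_{{^\vee}\rho}^w\times {^\vee}\mathcal{X}^{ww_0}_\lambda$ and checking that the bijection $(x,y)\mapsto J(x,y,\lambda)$ is $\vartheta$-equivariant.

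The equivariance is the cited input: \cite{Christie-Mezo}*{Theorem 4.1}, applied via Lemma \ref{XXXi}, tells us that (\ref{abvij}) intertwines the natural $\vartheta$-actions on the Atlas parameters and on the irreducible representations. In particular, $\vartheta$ permutes the pieces of the disjoint union on the left by $w\mapsto \vartheta(w)$, carrying $\mathcal{X}_{{^\vee}\rho}^w\times {^\vee}\mathcal{X}^{ww_0}_\lambda$ to $\mathcal{X}_{{^\vee}\rho}^{\vartheta(w)}\times {^\vee}\mathcal{X}^{\vartheta(w)w_0}_\lambda$. Consequently a $\vartheta$-fixed Atlas parameter $(x,y)$ must lie over some $w$ with $\vartheta(w)=w$ and $w\delta_0(w)=1$, i.e.\ with $w\in W(\delta_0,\vartheta)$.

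For such $w$, I still need to rule out any nontrivial $\vartheta$-action on the fiber $\mathcal{X}_{{^\vee}\rho}^w\times {^\vee}\mathcal{X}^{ww_0}_\lambda$ itself. This is where the proof of Lemma \ref{XXXi} does the heavy lifting: in the present setting (products of complex general linear groups on both sides), the relevant dual Cartan subgroups are products of $\mathbb{C}^\times$'s, hence connected, so their component groups are trivial. Thus $|\mathcal{X}_{{^\vee}\rho}^w|=1$ and, by the dual version of the same argument, $|{^\vee}\mathcal{X}^{ww_0}_\lambda|=1$. A singleton set is automatically fixed pointwise by $\vartheta$, so for every $w\in W(\delta_0,\vartheta)$ the whole fiber consists of $\vartheta$-fixed parameters. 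Combining with the previous paragraph, the $\vartheta$-fixed subset of the left-hand side of Theorem \ref{paramsGLn} is precisely $\coprod_{w\in W(\delta_0,\vartheta)} \mathcal{X}_{{^\vee}\rho}^w\times {^\vee}\mathcal{X}^{ww_0}_\lambda$, and the bijection $(x,y)\mapsto J(x,y,\lambda)$ restricts to the desired bijection with $\Pi({^\vee}\mathcal{O},\mathrm{GL}_N(\mathbb{C}))^\vartheta$.

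The only nontrivial ingredient is the $\vartheta$-equivariance of (\ref{abvij}), which is imported from \cite{Christie-Mezo}; the rest is bookkeeping that is made essentially trivial by the singleton property of the fibers established in Lemma \ref{XXXi}. I would briefly verify that the action of $\vartheta$ on $W(\mathrm{R}_{\mathbb{C}/\mathbb{R}}\mathrm{GL}_N,H)$ preserves the condition $w\delta_0(w)=1$ (which is immediate, since $\vartheta$ commutes with $\delta_0$ by construction in this case), so that the decomposition into pieces indexed by $W(\delta_0,\vartheta)$ makes sense as written.
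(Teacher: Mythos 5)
Your argument follows the paper's route exactly: the corollary is obtained by restricting the bijection of Theorem~\ref{paramsGLn} to $\vartheta$-fixed points, with the $\vartheta$-equivariance of (\ref{abvij}) supplied by \cite{Christie-Mezo}*{Theorem 4.1}. Your added check that the full fiber over each $w\in W(\delta_0,\vartheta)$ is pointwise $\vartheta$-fixed (because $\mathcal{X}_{{^\vee}\rho}^w$ and ${^\vee}\mathcal{X}^{ww_0}_\lambda$ are both singletons) is correct and usefully spells out a step the paper leaves implicit; note that only $|\mathcal{X}_{{^\vee}\rho}^w|=1$ is stated in Lemma~\ref{XXXi}, so you are right to invoke the dual form of the Adams--du Cloux counting argument (Cartan subgroups of $\mathrm{GL}_N(\mathbb{C})$ itself are connected) to get $|{^\vee}\mathcal{X}^{ww_0}_\lambda|=1$.
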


We now introduce the \emph{extended parameters} of
\cite{AVParameters}*{Sections 3-5},
and summarize some facts.
Fix $w\in W(\delta_{0},\vartheta)$. 
An \emph{extended parameter} for $w$ is a set 
\begin{equation*}
E  = (\uplambda,\uptau,\ell,t), \quad \uplambda,\uptau\in X^*(H),
\ \ell,t \in X_*(H)
\nomenclature{$(\uplambda,\uptau,\ell,t)$}{extended parameter}
\end{equation*}
satisfying certain conditions depending on $w$ (see
\cite{AVParameters}*{Definition 5.4}).\footnote{Warning!  The symbols
  $\uplambda$ and $\uptau$ here are not to be confused with symbols
  $\lambda$ and $\tau$ appearing elsewhere.  Note the slight
  difference in font.  We have chosen to use $\uplambda$ and $\uptau$
  for ease of comparison with \cite{AVParameters}.}  There is a
surjective map 
\begin{equation}
\label{extsurj}
E \mapsto (x(E), y(E))
\end{equation}
carrying extended parameters for $w$ to $\mathcal{X}_{{^\vee}\rho}^{w} \times {^\vee} \mathcal{X}_{\lambda}^{ww_{0}}$.  This map only 
depends on $\uplambda$ and $\ell$.
In addition,
$$
J(x(E),y(E),\lambda)\in\Pi({^\vee}\mathcal{O},\mathrm{GL}_N(\mathbb{C}))^\vartheta, 
$$
and every $\vartheta$-fixed irreducible representation arises this way.
The remaining parameters $\uptau$ and $t$ in $E$ define an irreducible representation $J(E,\lambda)$ of 
$\mathrm{GL}_N(\mathbb{R})\rtimes\langle\vartheta\rangle$ satisfying
$$
J(E,\lambda)|_{\mathrm{GL}_N(\mathbb{C})}=J(x(E),y(E),\lambda).
$$
The representation $J(x(E),y(E), \lambda)$ is determined by a
quasicharacter of a  Cartan subgroup of $\mathrm{GL}_{N}(\mathbb{C})$.
The representation $J(E,\lambda)$ is determined by the semidirect
product of this Cartan subgroup with an element $h \vartheta \in
\mathrm{R}_{\mathbb{C}/\mathbb{R}} \mathrm{GL}_{N} \rtimes \vartheta$ (\cite{AVParameters}*{(24e)}) and a
  choice of extension of the 
quasicharacter to the semidirect product.  The value of the extended
quasicharacter on the element $h\vartheta$ depends on a choice of sign
\cite{AVParameters}*{Definition 5.2}, and the square root of this sign is
given by 
\begin{equation*}
z(E)=i^{\langle \uptau,(1+w)t\rangle} \, (-1)^{\langle \uplambda,t\rangle}.
\end{equation*}
The preceding discussion is a specialization of a general framework to $\mathrm{R}_{\mathbb{C}/\mathbb{R}} \mathrm{GL}_{N} \rtimes \langle \vartheta \rangle$.  One of the special properties of $\mathrm{R}_{\mathbb{C}/\mathbb{R}} \mathrm{GL}_N$ is that the preimage of any $(x,y) \in \mathcal{X}_{{^\vee}\rho}^{w} \times {^\vee}\mathcal{X}_{\lambda}^{ww_{0}}$ under (\ref{extsurj}) has a \emph{preferred extended parameter} of the form 
\begin{equation}
  \label{pextparam}
  (\uplambda,\uptau,0,0).
\end{equation}
This comes down to the fact that $X_{{^\vee}\rho}^{w}$ is a singleton (see the proof of Lemma \ref{XXXi}).
In taking $t=0$ we see $z(\uplambda, \uptau,0,0)=1$, and this in turn amounts to taking the aforementioned semidirect product of the Cartan subgroup with $h\vartheta = \vartheta$, and setting the value of the extended quasicharacter at $\vartheta$ equal to $1$.  In this way, the preferred extended parameter defines a canonical extension 
\begin{equation}
\label{canext}
J(x,y, \lambda)^{+} = J((\uplambda, \uptau,0,0), \lambda)
\nomenclature{$J(x,y, \lambda)^{+}$}{Atlas extension}
\end{equation}
of $J(x,y,\lambda)$ to $\mathrm{GL}_{N}(\mathbb{C}) \rtimes \langle \vartheta \rangle$. 
We call this extension the \emph{Atlas extension} of $J(x,y, \lambda)$.

Going back to Theorem \ref{paramsGLn} and Corollary \ref{twistclass},
we may formulate the result as follows.
\begin{cor}
  \label{twistclass2}
  There is a natural bijection of $\vartheta$-fixed sets
  $$
  \coprod_{\{w\in W(\delta_{0},\vartheta)\}}\mathcal{X}_{{^\vee}\rho}^w\times {^\vee}\mathcal{X}^{ww_0}_\lambda
 \longleftrightarrow \Xi({^\vee}\mathcal{O},{^\vee}\mathrm{R}_{\mathbb{C}/\mathbb{R}} \mathrm{GL}_{N}^{\Gamma})^\vartheta \longleftrightarrow \Pi({^\vee}\mathcal{O},\mathrm{GL}_N(\mathbb{C}))^\vartheta.
$$ 
Furthermore, if $\xi \in \Xi({^\vee}\mathcal{O},{^\vee}\mathrm{R}_{\mathbb{C}/\mathbb{R}} \mathrm{GL}_{N}^{\Gamma})^\vartheta$  is identified with $(x,y)$ under the first bijection, then there is a canonical representation 
$$
\pi(\xi)^+ = J(x,y,\lambda)^{+}
\nomenclature{$\pi(\xi)^+$}{(equivalence class of) the Atlas extension of $\pi(\xi)$}
$$
extending $\pi(\xi)$ to $\mathrm{GL}_N(\mathbb{C})\rtimes\langle\vartheta\rangle$.
\end{cor}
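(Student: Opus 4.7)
The plan is to assemble Corollary \ref{twistclass2} from material already in place, treating the two bijections first and then producing the canonical extension $\pi(\xi)^+$.

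For the two bijections, the strategy is to restrict the bijections of Theorem \ref{paramsGLn} to $\vartheta$-fixed points on all three sides. The bijection between $\coprod_{w}\mathcal{X}_{{^\vee}\rho}^w\times {^\vee}\mathcal{X}^{ww_0}_\lambda$ and $\Xi({^\vee}\mathcal{O},{^\vee}\mathrm{R}_{\mathbb{C}/\mathbb{R}}\mathrm{GL}_N^\Gamma)$ from Lemma \ref{XXXi} is natural and thus $\vartheta$-equivariant; the $\vartheta$-fixed points on the left are precisely the summands indexed by $w\in W(\delta_0,\vartheta)$ (by definition, and since $\mathcal{X}_{{^\vee}\rho}^w$ is a singleton, the action of $\vartheta$ within a fixed summand is automatically trivial on the $\mathcal{X}$ factor). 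The $\vartheta$-equivariance of the geometric/Langlands bijection $\Xi({^\vee}\mathcal{O},{^\vee}\mathrm{R}_{\mathbb{C}/\mathbb{R}}\mathrm{GL}_N^\Gamma)\leftrightarrow \Pi({^\vee}\mathcal{O},\mathrm{GL}_N(\mathbb{C}))$ is precisely \cite{Christie-Mezo}*{Theorem 4.1}. Together these restrict to the two bijections of the corollary, recovering Corollary \ref{twistclass} in the process.

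For the canonical extension, the idea is to single out a preferred extended parameter in the fibre of (\ref{extsurj}) above a given $(x,y)$. Given $(x,y)\in\mathcal{X}_{{^\vee}\rho}^w\times{^\vee}\mathcal{X}^{ww_0}_\lambda$ with $w\in W(\delta_0,\vartheta)$, choose any extended parameter $E=(\uplambda,\uptau,\ell,t)$ mapping to $(x,y)$. Since $\mathcal{X}_{{^\vee}\rho}^w$ is a singleton (noted in the proof of Lemma \ref{XXXi} and originating from \cite{Adams-Fokko}*{Proposition 12.19(5)}), the map (\ref{extsurj}) depends only on $\uplambda$, so we are free to adjust $\ell$ and $t$ without changing $(x(E),y(E))$. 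In particular we may take $\ell=0$ and $t=0$, producing a preferred extended parameter $(\uplambda,\uptau,0,0)$. With $t=0$ the sign $z(E)=i^{\langle\uptau,(1+w)t\rangle}(-1)^{\langle\uplambda,t\rangle}=1$, the relevant semidirect product element is $h\vartheta=\vartheta$, and the extended quasicharacter is normalized to take the value $1$ at $\vartheta$. This yields a canonically defined extension $J(x,y,\lambda)^+:=J((\uplambda,\uptau,0,0),\lambda)$ of $J(x,y,\lambda)$ to $\mathrm{GL}_N(\mathbb{C})\rtimes\langle\vartheta\rangle$, and we set $\pi(\xi)^+=J(x,y,\lambda)^+$ under the first bijection.

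The main obstacle I anticipate is the well-definedness of the preferred extended parameter: one must check that while the components $\uplambda$ and $\uptau$ are not uniquely determined by $(x,y)$, any two choices that extend to preferred parameters with $\ell=t=0$ yield the same representation of $\mathrm{GL}_N(\mathbb{C})\rtimes\langle\vartheta\rangle$. This is where the singleton property of $\mathcal{X}_{{^\vee}\rho}^w$ and the vanishing of $z(E)$ do the real work: the only ambiguity in the Clifford-theoretic extension (a sign) is killed by the normalization $z=1$, so the assignment $(x,y)\mapsto J(x,y,\lambda)^+$ is unambiguous. Once this is in place, the naturality of the resulting extension follows from the corresponding naturality statements in \cite{AVParameters}*{Sections 3--5}, and no further calculation is needed.
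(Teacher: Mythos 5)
Your proposal reconstructs the paper's own (largely implicit) argument: restrict the bijections of Theorem \ref{paramsGLn} to $\vartheta$-fixed data using the $\vartheta$-equivariance of (\ref{abvij}) coming from \cite{Christie-Mezo}*{Theorem 4.1}, thereby recovering Corollary \ref{twistclass} and the middle $\Xi$-set; then single out the preferred extended parameter $(\uplambda,\uptau,0,0)$ in the fibre of (\ref{extsurj}), using the singleton property of $\mathcal{X}_{{^\vee}\rho}^{w}$ to justify $\ell=0$ and the computation $z(\uplambda,\uptau,0,0)=1$ to pin down the extension as in (\ref{canext}). This is the same route the paper takes, and your identification of the well-definedness of $J(x,y,\lambda)^+$ as the only point needing care (resolved exactly because $z(E)=1$ normalizes the sign ambiguity in the Clifford-theoretic extension) is the right one.
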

The irreducible representation $\pi(\xi)^{+}$ is defined as the unique (Langlands) quotient of a representation 
$M(\xi)^+$
\nomenclature{$M(\xi)^+$}{(equivalence class of) the Atlas extension of $M(\xi)$}
such that $M(\xi)^{+}_{|\mathrm{GL}_{N}(\mathbb{C})} = M(\xi)$.  We call $\pi(\xi)^{+}$ and $M(\xi)^{+}$ the \emph{Atlas extensions} of $\pi(\xi)$ and $M(\xi)$ respectively.

\subsection{Grothendieck groups of characters and twisted characters}
\label{grothchar}

The setting for studying characters of reductive groups is the
Grothendieck group of admissible representations.
There is  a corresponding notion in the twisted setting.
In this section we establish the notation for these objects.

For the moment let $G$ be an arbitrary complex connected reductive group.  Fix a semisimple orbit ${^\vee}\mathcal{O}\subset{^\vee}\mathfrak{g}$.
Recall from Section \ref{extgroups} that $\Pi({^\vee}\mathcal{O},G/\mathbb{R})$ is the set of equivalence classes of irreducible 
representations $(\pi, \delta)$ of pure strong involutions with infinitesimal
character ${^\vee}\mathcal{O}$.  We
define $K\Pi({^\vee}\mathcal{O},G/\mathbb{R})
\nomenclature{$K\Pi({^\vee}\mathcal{O},G/\mathbb{R})$}{Grothendieck group of
representations of pure strong involutions}
$ to be the Grothendieck group of admissible
representations of pure strong involutions with infinitesimal
character ${^\vee}\mathcal{O}$ (see \cite{ABV}*{(15.5)-(15.6)}). We identify this with
the $\mathbb{Z}$-span of distribution characters of the irreducible
representations in $\Pi({^\vee}\mathcal{O},G/\mathbb{R})$, and refer to elements of this $\mathbb{Z}$-module as virtual characters.

For the unitary groups we often refer to the submodule of stable
characters for the quasisplit form. So we define
$$
K\Pi({^\vee}\mathcal{O},G(\mathbb{R},\delta_q))^{\mathrm{st}} \subset K\Pi({^\vee}\mathcal{O},G(\mathbb{R},\delta_q))
\nomenclature{$K\Pi({^\vee}\mathcal{O},G(\mathbb{R},\delta_q))^\stable$}{stable virtual characters}
$$
to be the subspace spanned by the (strongly) stable  virtual characters.
If we identify virtual characters with functions on $G(\mathbb{R},\delta_{q})$
these are the virtual characters $\eta$ which satisfy $\eta(g)=\eta(g')$
whenever  strongly regular semisimple elements $g,g' \in G(\mathbb{R},\delta_q)$ are $G$-conjugate.
See \cite{shelstad}*{Section 5} or \cite{ABV}*{Definition 18.2}.

We now turn the twisted setting of the inner class $\mathrm{GL}_{N}(\mathbb{C})$ of $\mathrm{R}_{\mathbb{C}/\mathbb{R}}\mathrm{GL}_N$, equipped with the involution $\vartheta$.
An immediate consequence of (\ref{reducepi})  is the expression of the Grothendieck group
$$
K\Pi(\mathrm{R}_{\mathbb{C}/\mathbb{R}} \mathrm{GL}_{N}/\mathbb{R}) = K \Pi(\mathrm{GL}_{N}(\mathbb{C}))
\nomenclature{$K \Pi(\mathrm{GL}_{N}(\mathbb{C}))$}{}
$$
in terms of the unique real form.
We define
$$
K\Pi({^\vee}\mathcal{O},\mathrm{GL}_N(\mathbb{C}))^\vartheta\subset K\Pi({^\vee}\mathcal{O},\mathrm{GL}_N(\mathbb{C}))
\nomenclature{$K\Pi({^\vee}\mathcal{O},\mathrm{GL}_N(\mathbb{C}))^\vartheta$}{}
$$
to be the submodule spanned by $\Pi({^\vee}\mathcal{O}, \mathrm{GL}_{N}(\mathbb{C}))^\vartheta$.  This is not the Grothendieck group of $\vartheta$-stable representations of $\mathrm{GL}_{N}(\mathbb{C})$, but we retain the ``$K$" to help align the object with its ambient Grothendieck group.
On the other hand we let 
\begin{equation}
\label{KPiOGLext}
K\Pi({^\vee}\mathcal{O},\mathrm{GL}_{N}(\mathbb{C}) \rtimes \langle \vartheta \rangle)
\end{equation}
be the veritable Grothendieck group of admissible representations of $\mathrm{GL}_{N}(\mathbb{C}) \rtimes \langle \vartheta \rangle$ with infinitesimal character ${^\vee}\mathcal{O}$.

We are ready to construct the $\mathbb{Z}$-module of twisted characters of $\mathrm{GL}_N(\mathbb{C})$.
An irreducible character in $K\Pi({^\vee}\mathcal{O},\mathrm{GL}_{N}(\mathbb{C}) \rtimes \langle \vartheta \rangle)$
is the usual distribution on $\mathrm{GL}_{N}
  (\mathbb{C}) \rtimes \langle \vartheta \rangle$ given by  $\mathrm{Tr}\,\pi$, where $\pi$ is an
irreducible representation of $\mathrm{GL}_{N}(\mathbb{C}) \rtimes
\langle \vartheta \rangle$.  The restriction of such a distribution
character to the non-identity component $\mathrm{GL}_{N}(\mathbb{C}) \rtimes
\vartheta$ (when non-zero) is what we we mean by an irreducible \emph{twisted character} of $\mathrm{GL}_{N}(\mathbb{C})$ (\emph{cf.}  \cite{aam}*{(42)}) .  We define
$$K\Pi({^\vee}\mathcal{O}, \mathrm{GL}_{N}(\mathbb{C}), \vartheta)$$
to be the
$\mathbb{Z}$-module generated by the irreducible twisted
characters of $\mathrm{GL}_{N}(\mathbb{C})$ of
infinitesimal character ${^\vee}\mathcal{O}$.
\nomenclature{$K\Pi({^\vee}\mathcal{O}, \mathrm{GL}_{N}(\mathbb{C}), \vartheta)$}{virtual twisted characters}

As noted in Section \ref{extrepsec}, an irreducible representation of $\mathrm{GL}_{N}(\mathbb{C}) \rtimes \langle \vartheta \rangle$
restricts either to an irreducible $\vartheta$-fixed representation of $\mathrm{GL}_N(\mathbb{C})$,
or to a direct sum $\pi\oplus (\pi \circ \vartheta)$ of inequivalent irreducible representations. In the second case the twisted character is $0$, so we only need to consider the first case.  The first case describes the irreducible representations in $K\Pi({^\vee}\mathcal{O},\mathrm{GL}_N(\mathbb{C}))^\vartheta$.
If $\pi\in\Pi({^\vee}\mathcal{O},\mathrm{GL}_N(\mathbb{C}))^\vartheta$ then
it has two extensions $\pi^{\pm}$ to $\mathrm{GL}_{N}(\mathbb{C}) \rtimes \langle \vartheta \rangle$, satisfying
\begin{equation}
  \label{minuspi}
\pi^-(\vartheta)=-\pi^+(\vartheta).
\nomenclature{$\pi^{-}$}{}
\end{equation}
Consequently the twisted characters of $\pi^{\pm}$ agree up to sign.  If we set $U_{2} = \{ \pm 1\}\nomenclature{$U_2=\{\pm 1\}$}{}$ then it follows that the homomorphism
$$K \Pi({^\vee}\mathcal{O}, \mathrm{GL}_{N}(\mathbb{C}))^\vartheta
\otimes _{\mathbb{Z}} \mathbb{Z}[U_{2}] \rightarrow K
\Pi({^\vee}\mathcal{O},\mathrm{GL}_{N}(\mathbb{C}), \vartheta),$$
which restricts the distribution character of $\pi(\xi)^{+}$ to the
non-identity component, is surjective.  By (\ref{minuspi}), the homomorphism
passes to an isomorphism 
\begin{subequations}
\renewcommand{\theequation}{\theparentequation)(\alph{equation}}  
\label{twistgroth1}  
\begin{equation}
K\Pi({^\vee}\mathcal{O}, \mathrm{GL}_{N}(\mathbb{C}),
\vartheta)\cong K \Pi({^\vee}\mathcal{O}, \mathrm{GL}_{N}(\mathbb{C}))^\vartheta
\otimes _{\mathbb{Z}} \mathbb{Z}[U_{2}]/ \langle (\pi \otimes 1 ) +
  (\pi\otimes -1)\rangle,
\end{equation}
where the quotient runs over $\pi\in \Pi({^\vee}\mathcal{O},\mathrm{GL}_N(\mathbb{C}))^\vartheta$.
The map carrying
$\pi(\xi) \in \Pi({^\vee}\mathcal{O},
\mathrm{GL}_{N}(\mathbb{C}))^\vartheta$ to the twisted character of the Atlas extension $\pi(\xi)^{+}$
extends to an isomorphism
\begin{equation*}
 K\Pi({^\vee}\mathcal{O},\mathrm{GL}_N(\mathbb{C}))^\vartheta \cong K\Pi({^\vee}\mathcal{O},\mathrm{GL}_N(\mathbb{C}),\vartheta).
\end{equation*}
\end{subequations}
We again
remind the reader  that the $\mathbb{Z}$-modules appearing
in (\ref{twistgroth1}) are not Grothendieck groups in any natural fashion, notwithstanding the
appearance of the ``$K$''.

\section{Sheaves, Pairings and Characteristic Cycles}
\label{sheaves}

Suppose $\psi_G$ is an Arthur parameter for $G$ as in
(\ref{aparameter}).  In this section we give more details on the
definition of the ABV-packet $\Pi_{\psi_{G}}^{\mathrm{ABV}}$ and its stable
virtual character $\eta_{\psi_{G}}^{\mathrm{ABV}}$ (\ref{mainthm}).  The results apply in the more general context of
complex connected reductive groups $G$ (\cite{ABV}*{Sections 19, 22}).
However, for this section $G$ will be
$\mathrm{GL}_{N}$ or $\mathrm{R}_{\mathbb{C}/\mathbb{R}} \mathrm{GL}_{N}$, with the setup of Section \ref{extended}.

The definitions depend on a pairing between characters and sheaves. 
We also define a pairing between \emph{twisted} characters and \emph{twisted}
sheaves for $\mathrm{R}_{\mathbb{C}/\mathbb{R}}\mathrm{GL}_{N}$
(\cite{Christie-Mezo}*{Sections 5-6}, \cite{aam}*{Section 3}). The key properties of this
twisted pairing are listed in this section and shall be proved in
Section \ref{pairings}. 

\subsection{The pairing and the ABV-packets in the non-twisted case}
\label{sec:ABV-packetsdef}
Let $\phi_{\psi_G}$ be the Langlands parameter associated to $\psi_{G}$
(\cite{ABV}*{Definition 22.4}), ${^\vee}\mathcal{O}$ be the infinitesimal character of $\phi_{\psi_G}$, and
$S_{\psi_{G}} \subset X({^\vee}\mathcal{O},{^\vee}G^{\Gamma})$ be the corresponding orbit
(\ref{abv6.17}).  
Recall that  $\Xi({^\vee}\mathcal{O},{^\vee}G^{\Gamma})$ is the set of
pure complete geometric parameters.

Let $\mathcal{C}(X({^\vee}\mathcal{O},{^\vee}G^{\Gamma}))$ be
\nomenclature{$\mathcal{C}(X({^\vee}\mathcal{O},{^\vee}G^{\Gamma}))$}{category
  of constructible sheaves} 
the category of ${^\vee}G$-equivariant constructible sheaves of
complex vector spaces on $X({^\vee}\mathcal{O},{^\vee}G^{\Gamma})$. This is
an abelian category and its simple objects are parameterized by the
set of complete geometric parameters  $\xi = (S, \tau_{S}) \in
\Xi({^\vee}\mathcal{O}, {^\vee}G^{\Gamma})$ as follows.
Choose $p\in S$, let  ${^\vee}G_p=\mathrm{Stab}_{{^\vee}G}(p)$, and choose 
a character $\tau_\xi$  of the component group of ${^\vee}G_p$ so that
$(p,\tau_\xi)$ is a representative of $\xi$. 
Then $\tau_\xi$ pulled back to ${^\vee}G_p$ defines 
an algebraic vector bundle
\begin{equation}
  \label{bundle}
        {^\vee}G \times_{{^\vee}G_{p} }V \rightarrow S.
\end{equation}
The sheaf of
sections of this vector bundle is, by definition, a
${^\vee}G$-equivariant local system on $S$ (\cite{ABV}*{Section 7, Lemma 7.3}). Extend this local
system to the closure $\bar{S}$ by zero and then take the direct image
into $X({^\vee}\mathcal{O},{^\vee}G^{\Gamma})$ to obtain an irreducible
(\emph{i.e.} simple) ${^\vee}G$-equivariant constructible sheaf
denoted by $\mu(\xi)$ (\cite{ABV}*{(7.10)(c)}). 
\nomenclature{$\mu(\xi)$}{irreducible constructible sheaf}

Now let
$\mathcal{P}(X({^\vee}\mathcal{O},{^\vee}G^{\Gamma}))$ be the abelian category of
\nomenclature{$\mathcal{P}(X({^\vee}\mathcal{O},{^\vee}G^{\Gamma}))$}{category of perverse sheaves}
${^\vee}G$-equivariant perverse sheaves of complex vector spaces on
$X({^\vee}\mathcal{O},{^\vee}G^{\Gamma})$  (\cite{Lunts}*{Section 5}).  The simple
objects of $\mathcal{P}(X({^\vee}\mathcal{O},{^\vee}G^{\Gamma}))$ are defined
from $\xi = (S, \tau_{S}) \in \Xi({^\vee}G^{\Gamma}, {^\vee}\mathcal{O})$
and the algebraic vector bundle (\ref{bundle}) by taking the
\emph{intermediate} extension (\cite{bbd}*{Section 2}) to the closure $\bar{S}$
instead of the extension by zero.  This is denoted $P(\xi)$
 (\cite{ABV}*{(7.10)(d)}).
It is an irreducible ${^\vee}G$-equivariant
perverse sheaf on $X({^\vee}\mathcal{O},{^\vee}G^{\Gamma})$.
\nomenclature{$P(\xi)$}{irreducible perverse sheaf}

The Grothendieck groups of the two categories
$\mathcal{C}(X({^\vee}\mathcal{O},{^\vee}G^{\Gamma}))$ and
$\mathcal{P}(X({^\vee}\mathcal{O},{^\vee}G^{\Gamma}))$ are canonically isomorphic
(\cite{bbd}, \cite{ABV}*{Lemma 7.8}).  We identify the two Grothendieck
groups via this 
isomorphism and denote them by $KX({^\vee}\mathcal{O},{^\vee}G^{\Gamma})$.
\nomenclature{$KX({^\vee}\mathcal{O},{^\vee}G^{\Gamma})$}{Grothendieck group}
This Grothendieck group has two natural bases
$$\{\mu(\xi) : \xi \in
\Xi({^\vee}\mathcal{O},{^\vee}G^{\Gamma})\} 
\quad \mbox{ and }\quad \{P(\xi) : \xi \in \Xi({^\vee}\mathcal{O},{^\vee}G^{\Gamma})\}.$$

Suppose $\xi=(S,\tau)\in \Xi({^\vee}\mathcal{O},{^\vee}G^{\Gamma})$.
We define two invariants associated to $\xi$.
First, let $d(\xi)\nomenclature{$d(\xi)$}{dimension of the orbit attached to a complete geometric parameter}$ be the dimension of $S_\xi$.
Second, associated to $\xi$
is the representation $\pi(\xi)$ of a pure strong involution of
$G$ (\ref{localLanglandspure}). Let $e(\xi) = \pm1 \nomenclature{$e(\xi)$}{Kottwitz invariant attached to a complete geometric parameter}$ be the Kottwitz invariant of the underlying real
form of this strong involution (\cite{ABV}*{Definition 15.8}).

As stated in the introduction, we define a perfect pairing
\begin{equation}
\label{pairing}
\langle\,\cdot,\cdot\rangle:K\Pi({^\vee}\mathcal{O},G/\mathbb{R})\times KX({^\vee}\mathcal{O},{^\vee}G^{\Gamma}) \rightarrow \mathbb{Z}
\end{equation}
by
\begin{equation*}
\langle M(\xi), \mu(\xi') \rangle = e(\xi)\, \delta_{\xi, \xi'}.
\end{equation*}
The pairing  takes a deceptively simple form relative to the bases given by
$\pi(\xi)$ and $P(\xi')$ (\cite{ABV}*{Theorem 1.24, Sections
  15-17}).  We state it as a theorem.
\begin{thm}
  \label{ordpairing}
The pairing \eqref{pairing} satisfies
$$
\langle \pi(\xi), P(\xi') \rangle = (-1)^{d(\xi)} \, e(\xi) \,
\delta_{\xi, \xi'}, \quad \xi,\xi'\in \Xi({^\vee}\mathcal{O},{^\vee}G^{\Gamma}).
$$
\end{thm}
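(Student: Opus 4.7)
The plan is to establish Theorem \ref{ordpairing} by changing basis on both sides of the pairing and invoking the Kazhdan--Lusztig--Vogan duality that relates the two change-of-basis matrices. This is precisely the strategy of \cite{ABV}*{Sections~16--17}, which we are free to cite here because the ambient $G$ is $\mathrm{GL}_N$ or $\mathrm{R}_{\mathbb{C}/\mathbb{R}}\mathrm{GL}_N$, and the preliminary machinery of \cite{ABV} applies without modification.

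First, I would expand $\pi(\xi)$ in the basis of standard representations:
$$
\pi(\xi) = \sum_{\xi' \leq \xi} m(\xi,\xi')\, M(\xi'),
$$
where the coefficients $m(\xi,\xi') \in \mathbb{Z}$ are the Kazhdan--Lusztig--Vogan multiplicities and the sum is upper triangular with respect to the Bruhat-type order coming from closure of orbits in $X({^\vee}\mathcal{O},{^\vee}G^\Gamma)$. The diagonal entry is $m(\xi,\xi) = 1$, since $\pi(\xi)$ is the unique Langlands quotient of $M(\xi)$. In parallel, the stalk-filtration formula for the intermediate extension yields
$$
P(\xi') = \sum_{\xi'' \leq \xi'} n(\xi',\xi'')\, \mu(\xi''),
$$
with $n(\xi',\xi') = 1$ and again upper triangular. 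The integers $n(\xi',\xi'')$ encode (alternating) dimensions of stalk cohomologies of the IC complex.

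Next, I would compute
$$
\langle \pi(\xi), P(\xi') \rangle
= \sum_{\xi'' } m(\xi,\xi'')\, n(\xi',\xi'')\, e(\xi''),
$$
using the definition $\langle M(\xi''),\mu(\xi''')\rangle = e(\xi'')\delta_{\xi'',\xi'''}$. The main step is then to invoke the Kazhdan--Lusztig--Vogan reciprocity, in the form proved in \cite{ABV}*{Corollary~16.15 and Theorem~16.22} (and reformulated via the Riemann--Hilbert and Beilinson--Bernstein correspondences, to be reviewed in Section \ref{duality}). This reciprocity asserts that, after incorporating the normalizing signs $(-1)^{d(\xi)}$ and $(-1)^{d(\xi')}$ coming from the shift between perverse and constructible conventions, the matrices $(m(\xi,\xi''))$ and $(n(\xi',\xi''))$ are inverse transposes of each other, up to the Kottwitz sign $e(\xi)$:
$$
\sum_{\xi''} m(\xi,\xi'')\, n(\xi',\xi'')\, e(\xi'') \;=\; (-1)^{d(\xi)}\, e(\xi)\, \delta_{\xi,\xi'}.
$$
Substituting this identity yields the theorem.

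The main obstacle will be the sign bookkeeping in the reciprocity formula: the two factors of $(-1)^{d(\cdot)}$ enter with opposite parities because perverse sheaves and constructible sheaves differ by the cohomological shift $[\dim S]$, and one must carefully track that an inverse-matrix identity over the Grothendieck group becomes a signed identity once the normalizations are unwound. All of this is handled in \cite{ABV}*{Sections~15--17}, and since \S\ref{extended} guarantees that pure strong involutions exhaust the real forms relevant to $\mathrm{R}_{\mathbb{C}/\mathbb{R}}\mathrm{GL}_N$, the specialization to our setting requires no new work. I would therefore present the argument by carefully extracting the two triangular expansions, quoting the reciprocity from \cite{ABV}, and verifying the resulting sign on the diagonal.
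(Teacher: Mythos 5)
The paper itself does not give a proof of Theorem~\ref{ordpairing}: it cites it directly from \cite{ABV}*{Theorem~1.24, Sections~15--17}, and records the equivalent form (your final displayed identity) in Proposition~\ref{ordpairingequiv}, which is exactly \cite{ABV}*{Corollary~15.13}. Your sketch is therefore not a different route so much as an unfolding of that citation: the triangular expansions in the bases $\{M(\xi)\}$ and $\{\mu(\xi)\}$ plus the reciprocity $m_r(\xi',\xi)=(-1)^{d(\xi)-d(\xi')}c_g(\xi,\xi')$ is precisely the content of Corollary~15.13. What you do elide, however, is that the passage from the reciprocity identity to $\langle\pi(\xi),P(\xi')\rangle=(-1)^{d(\xi)}e(\xi)\delta_{\xi,\xi'}$ is \emph{not} pure linear algebra on triangular matrices: writing $D$ for the diagonal matrix $(-1)^{d(\xi)}$, the manipulation $(m_r^{\mathsf t})^{-1}Dm_r^{\mathsf t}=D$ requires that $D$ commute with $m_r$, i.e.\ that $m_r(\xi',\xi)\neq 0$ forces $d(\xi)\equiv d(\xi')\pmod 2$. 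This orbit-dimension parity constraint is a genuine theorem in \cite{ABV} (it is a consequence of perversity/the structure of the Jantzen filtration), not a formal consequence of upper-triangularity, and your ``sign bookkeeping'' remark does not isolate it. You should also flag that \cite{ABV} does not first prove the reciprocity and then deduce the pairing as you describe; both are established simultaneously via the Hecke-algebra self-dual basis argument in Sections~16--17, and Corollary~15.13 is merely the statement that the two formulations are equivalent. So as a citation chain your plan is fine, but as a self-contained argument it would be circular unless the Hecke-module input is supplied.
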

This pairing allows us to regard elements of $K\Pi({^\vee}\mathcal{O},G/\mathbb{R})$ as
$\mathbb{Z}$-linear functionals of $ KX({^\vee}\mathcal{O},{^\vee}G^{\Gamma})$.
The microlocal multiplicity maps
$\chi^{\mathrm{mic}}_S$  discussed in (\ref{mmm}) are $\mathbb{Z}$-linear
functionals on  $KX({^\vee}\mathcal{O},{^\vee}G^{\Gamma})$.  Before making the  connection with the
pairing \eqref{pairing}, we review some facts needed to define $\chi_{S}^{\mathrm{mic}}$.
To begin, we consider the category of
${^\vee} G$-equivariant coherent $\mathcal{D}$-modules on
$X({^\vee}\mathcal{O},{^\vee}G^{\Gamma})$. We
denote this category by
$\mathcal{D}(X({^\vee}\mathcal{O},{^\vee}G^{\Gamma}))$.
Here,
$\mathcal{D}$ is the sheaf of algebraic differential operators on
$X({^\vee}\mathcal{O},{^\vee}G^{\Gamma})$ 
(\cite{Boreletal}*{VIII.14.4}, \cite{ABV}*{Section 7},
\cite{Hotta}*{Part I}).
\nomenclature{$\mathcal{D} (X({^\vee}\mathcal{O},
  {^\vee}G^{\Gamma}))$}{category of $\mathcal{D}$-modules} 

The equivariant Riemann-Hilbert correspondence (\cite{Boreletal}*
{Theorem VIII.14.4}) induces an isomorphism
\begin{equation}
\label{dr}
DR : K\mathcal{D}(X({^\vee}\mathcal{O},{^\vee}G^{\Gamma}))\rightarrow K X({^\vee}\mathcal{O},{^\vee}G^{\Gamma}).
\nomenclature{$DR$}{Riemann-Hilbert correspondence}
\end{equation}
For simplicity we write $X=X({^\vee}\mathcal{O},{^\vee}G^{\Gamma})$, and $\mathcal{D} X=\mathcal{D}(X({^\vee}\mathcal{O},{^\vee}G^{\Gamma}))$.

The sheaf $\mathcal{D}$ is filtered by the order of the differential operators, and
the associated graded ring is canonically isomorphic to
$\mathcal{O}_{T^{*}(X)}$, the coordinate ring of the cotangent bundle
of $X$ (\cite{Hotta}*{Section 1.1}).
Suppose $\mathcal M\in\mathcal{D} X$. Then $\mathcal M$ has a filtration 
such that the resulting graded sheaf 
$\mathrm{gr} \mathcal{M} \nomenclature{$\mathrm{gr} \mathcal{M}$}{graded sheaf of a $\mathcal D$-module $\mathcal M$}$ is a
coherent $\mathcal{O}_{T^{*}(X)}$-module (\cite{Hotta}*{Section 2.1}).

The support of $\mathrm{gr} \mathcal{M}$ is a
closed subvariety of $T^{*}(X)$ (\cite{ABV}*{Definition 19.7}).  Each minimal ${^\vee}G$-invariant
component
of this closed subvariety is the closure of a conormal
bundle $T^{*}_{S}(X)$, where
\nomenclature{$T^{*}_{S}(X)$}{conormal bundle}
 $S \subset X$ is a ${^\vee}G$-orbit (\cite{ABV}*{Proposition 19.12(c)}).
Therefore to each conormal bundle $T^{*}_{S}(X)$ we may
attach a non-negative integer, denoted by
$\chi^{\mathrm{mic}}_{S}(\mathcal{M})$, which (when nonzero) is the length of
the module $\mathrm{gr}\mathcal{M}$ localized at $T^{*}_{S}(X)$
(\cite{Hotta}*{Section 2.2}). 

The
characteristic cycle of $\mathcal{M}$ is defined as
$$\mathrm{Ch}(\mathcal{M}) = \sum_{S\in X/{^\vee} G} \chi^{\mathrm{mic}}_{S}( \mathcal{M})
\ \overline{T_{S}^{*}(X)}.
\nomenclature{$\mathrm{Ch}(\mathcal{M})$}{characteristic cycle
of a $\mathcal D$-module $\mathcal M$}
$$
For a given ${^\vee} G$-orbit $S$  we may regard $\chi_{S}^{\mathrm{mic}}$ as a
function on $\mathcal{D}$-modules which is additive for
short exact sequences (\cite{ABV}*{Proposition 19.12(e)}).  It
 therefore defines a homomorphism
$K \mathcal{D}(X({^\vee}\mathcal{O}, {^\vee}G^{\Gamma}))
\rightarrow \mathbb{Z}$,
called the \emph{microlocal multiplicity} along $S$.
\nomenclature{$\chi^{\mathrm{mic}}_{S}$}{microlocal multiplicity map}
Using the isomorphism (\ref{dr}), we interpret this as a homomorphism
\begin{equation*} 
\chi^{\mathrm{mic}}_{S} : K X({^\vee}\mathcal{O}, {^\vee}G^{\Gamma})
\rightarrow \mathbb{Z}.
\end{equation*}

We now return to the pairing (\ref{pairing}) and its relationship to
$\chi^{\mathrm{mic}}_{S}$.  This relationship defines $\eta^{\mathrm{ABV}}_{\psi_G}$.
We first define $\eta^{\mathrm{mic}}_{\psi_G}\in K\Pi({^\vee}\mathcal{O},G/\mathbb{R})$ to be the element
of $K\Pi({^\vee}\mathcal{O},G/\mathbb{R})$ corresponding via the pairing to
the $\mathbb{Z}$-linear functional $\chi^{\mathrm{mic}}_S$ on
$KX({^\vee}\mathcal{O},{^\vee}G^{\Gamma})$. 
Explicitly working through the identifications in the definition we see
\begin{equation}
  \label{etapsi}
  \eta^{\mathrm{mic}}_{\psi_G}=
 \sum_{\xi \in \Xi({^\vee}\mathcal{O},{^\vee}G^{\Gamma})}
(-1)^{d(S_{\xi}) - d(S_{\psi_{G}})} \ \chi^{\mathrm{mic}}_{S_{\psi_{G}}}(P(\xi)) \, \pi(\xi).
\nomenclature{$\eta^{\mathrm{mic}}_{\psi_{G}}$}{stable virtual character}
\nomenclature{$d(S)$}{dimension of $S$}
\end{equation}

An important result of Kashiwara and Adams-Barbasch-Vogan is
\begin{prop}[\cite{ABV}*{Theorem 1.31, Corollary 19.16}]
The virtual character $\eta^{\mathrm{mic}}_{\psi_G}$ is stable.
\end{prop}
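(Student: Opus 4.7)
The plan is to follow the strategy of Adams--Barbasch--Vogan by translating stability into a sheaf-theoretic condition on the pairing (\ref{pairing}) and then verifying that the microlocal multiplicity functional $\chi^{\mathrm{mic}}_{S_{\psi_{G}}}$ satisfies it. By construction, $\eta^{\mathrm{mic}}_{\psi_G}$ is the unique element of $K\Pi({^\vee}\mathcal{O},G/\mathbb{R})$ with $\langle\eta^{\mathrm{mic}}_{\psi_G},\mu\rangle = \chi^{\mathrm{mic}}_{S_{\psi_G}}(\mu)$ for every $\mu \in KX({^\vee}\mathcal{O},{^\vee}G^{\Gamma})$. Stability of $\eta^{\mathrm{mic}}_{\psi_G}$ therefore amounts, by Shelstad's criterion, to the claim that this linear functional vanishes on the subspace $U \subset KX({^\vee}\mathcal{O},{^\vee}G^{\Gamma})$ that, via the perfect pairing, annihilates the strongly stable virtual characters. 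Using the simple formula $\langle M(\xi),\mu(\xi')\rangle = e(\xi)\delta_{\xi,\xi'}$ together with the bijection (\ref{localLanglandspure}), one can describe $U$ concretely as the $\mathbb{Z}$-span of specific combinations $\mu(\xi) - \mu(\xi')$ that record the difference between character contributions from distinct pure strong involutions whose parameters share the underlying ${^\vee}G$-orbit.

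Next I would invoke the geometric nature of the microlocal multiplicities. The Riemann--Hilbert correspondence (\ref{dr}) identifies every class $\mu \in KX({^\vee}\mathcal{O},{^\vee}G^{\Gamma})$ with a class of equivariant $\mathcal{D}$-modules, and the characteristic cycle $\mathrm{Ch}(\mathcal{M}) = \sum_{S} \chi^{\mathrm{mic}}_{S}(\mathcal{M})\,\overline{T^{*}_{S}(X)}$ is supported on the union of conormal bundles to the finitely many ${^\vee}G$-orbits on $X({^\vee}\mathcal{O},{^\vee}G^{\Gamma})$. Since $\chi^{\mathrm{mic}}_{S_{\psi_G}}$ extracts the coefficient of a single conormal closure, it is manifestly an invariant of the ${^\vee}G$-equivariant geometry of $X$, insensitive to the auxiliary data that distinguish different pure inner forms within the same inner class. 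This gives a plausibility argument that $\chi^{\mathrm{mic}}_{S_{\psi_{G}}}$ annihilates the ``inner-form noise" contained in $U$.

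The main obstacle, and the real content of \cite{ABV}*{Theorem 1.31, Corollary 19.16}, is to turn this plausibility into an actual identity on character values at strongly regular semisimple elements. The bridge is the Kashiwara local index theorem for equivariant constructible sheaves: it expresses the local contribution of $\pi(\xi)$ at a strongly regular semisimple element as an alternating sum of local Euler characteristics of the perverse sheaves $P(\xi')$, weighted precisely by microlocal multiplicities along the conormals through a prescribed Langlands parameter. Reading this index formula in reverse, the virtual character corresponding under the pairing to the geometric functional $\chi^{\mathrm{mic}}_{S_{\psi_G}}$ has character values that depend only on ${^\vee}G$-orbit data, hence only on $G$-conjugacy classes of the regular semisimple elements in question --- which is exactly the definition of stability recalled in Section \ref{grothchar}. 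Carrying out this reduction rigorously, matching the combinatorics of the local index formula to Shelstad's criterion and handling the extension from the quasisplit form to other pure strong involutions, is the heart of the argument and the step I expect to be the hardest; once it is in place, the stability of $\eta^{\mathrm{mic}}_{\psi_G}$ follows immediately from the definition (\ref{etapsi}).
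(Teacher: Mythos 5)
The paper does not prove this proposition --- it is quoted from \cite{ABV}*{Theorem 1.31, Corollary 19.16}, so there is no internal proof to compare against. Your outline correctly identifies the overall shape of the ABV argument: transport the stability condition across the perfect pairing (\ref{pairing}) to a condition on the linear functional $\chi^{\mathrm{mic}}_{S_{\psi_G}}$, and use Kashiwara's index theorem as the bridge. Two points in your sketch, however, are off.

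First, your middle paragraph is not an argument. Saying that $\chi^{\mathrm{mic}}_{S_{\psi_G}}$ is ``manifestly an invariant of the ${^\vee}G$-equivariant geometry, insensitive to the auxiliary data that distinguish pure inner forms'' simply restates what must be proved; nothing in the definition of $\chi^{\mathrm{mic}}_S$ directly references the decomposition of $\Pi(G/\mathbb{R})$ into pure strong involutions, and the whole content of the stability theorem is to produce the intertwining that this paragraph takes for granted.

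Second, your statement of Kashiwara's index theorem is garbled. That theorem relates the local Euler characteristic of the stalk of an equivariant constructible sheaf at a point $p\in X({^\vee}\mathcal{O},{^\vee}G^\Gamma)$ to its characteristic cycle, via a sum over orbits $S$ of $\chi^{\mathrm{mic}}_S$ weighted by purely local geometric multiplicities; it is not a statement about ``the local contribution of $\pi(\xi)$ at a strongly regular semisimple element.'' The representation-theoretic input enters elsewhere: ABV first show (\cite{ABV}*{Lemma 18.11}, cited in this paper in Section \ref{standend}) that the strongly stable virtual characters have the $\eta^{\mathrm{loc}}_S$ as a $\mathbb{Z}$-basis, and that under the pairing these correspond to the stalk Euler characteristic functionals on $KX({^\vee}\mathcal{O},{^\vee}G^\Gamma)$. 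Kashiwara's index theorem then provides an invertible integer transition matrix between those stalk functionals and the $\chi^{\mathrm{mic}}_S$, so each $\chi^{\mathrm{mic}}_S$ lies in the span of the functionals dual to the $\eta^{\mathrm{loc}}_{S'}$; hence $\eta^{\mathrm{mic}}_{\psi_G}$ lies in the span of the $\eta^{\mathrm{loc}}_{S'}$ and is therefore stable. It is the combination of Lemma 18.11 with the inversion of the index formula that does the work, not a direct identity comparing microlocal multiplicities to character values at regular semisimple elements.
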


The \emph{microlocal packet} $\Pi^{\mathrm{mic}}_{\psi_{G}}$ of $\psi_{G}$  is
defined to be the irreducible representations 
in the support of $\eta^{\mathrm{mic}}_{\psi_{G}}$. In other words
$$
\Pi^{\mathrm{mic}}_{\psi_{G}}  = \{ \pi(\xi) : \xi \in \Xi({^\vee}\mathcal{O}, {^\vee}G^{\Gamma}), \ \chi^{\mathrm{mic}}_{S_{\psi_{G}}}(P(\xi)) \neq 0\}.
\nomenclature{$\Pi_{\psi_{G}}^{\mathrm{mic}}$}{microlocal packet}
$$
This is a set of irreducible representations of pure strong involutions of $G$.
We are primarily interested in the packet for the quasisplit strong
involutions.
We therefore define
\begin{equation}
\label{etapsiabv}
  \eta^{\mathrm{ABV}}_{\psi_{G}} = \eta_{\psi_{G}}^{\mathrm{mic}}(\delta_{q})
\nomenclature{$\eta^{\mathrm{ABV}}_{\psi_{G}}$}{
stable virtual character defining the ABV-packet}
\nomenclature{$\eta_{\psi_{G}}^{\mathrm{mic}}(\delta_{q})$}{}
\end{equation}
to be the restriction of $\eta^{\mathrm{mic}}_{\psi_{G}}$ to the submodule of $K
\Pi({^\vee}\mathcal{O}, G/\mathbb{R})$ generated by the representations in
$\Pi({^\vee}\mathcal{O}, G(\mathbb{R}, \delta_{q}))$.  The $\mathrm{ABV}$-packet
$\Pi_{\psi_{G}}^{\mathrm{ABV}}$ 
is defined as the support of $\eta^{\mathrm{ABV}}_{\psi_{G}}$, that is
\begin{equation}
\label{abvdef}
\Pi^{\mathrm{ABV}}_{\psi_{G}} = \{ \pi(\xi) : \xi \in \Xi({^\vee}\mathcal{O}, {^\vee}G^{\Gamma}), 
\chi^{\mathrm{mic}}_{S_{\psi_{G}}}(P(\xi)) \neq 0, \pi(\xi)\in \Pi(G(\mathbb{R},\delta_q))
\}.
\nomenclature{$\Pi_{\psi_{G}}^{\mathrm{ABV}}$}{ABV-packet}
\end{equation}

In definitions (\ref{etapsiabv}) and (\ref{abvdef}) we may easily replace $\delta_{q}$ with any other pure strong
involution $\delta$.  Although we shall only use these further objects in
Section \ref{puresec}, it seems appropriate to define them now.  Let
\begin{equation}
\label{etapsiabvdelta}
  \eta^{\mathrm{ABV}}_{\psi_{G}}(\delta) = \eta_{\psi_{G}}^{\mathrm{mic}}(\delta)
\nomenclature{$\eta^{\mathrm{ABV}}_{\psi_{G}}(\delta)$}{
stable virtual character defining an ABV-packet}
\end{equation}
be the restriction of $\eta^{\mathrm{mic}}_{\psi_{G}}$ to the submodule of $K
\Pi({^\vee}\mathcal{O}, G/\mathbb{R})$ generated by the representations in
$\Pi({^\vee}\mathcal{O}, G(\mathbb{R}, \delta))$.  In addition, let 
\begin{equation}
\label{abvdefdelta}
\Pi^{\mathrm{ABV}}_{\psi_{G}}(G(\mathbb{R},\delta)) = \{ \pi(\xi) : \xi \in \Xi({^\vee}\mathcal{O}, {^\vee}G^{\Gamma}), 
\chi^{\mathrm{mic}}_{S_{\psi_{G}}}(P(\xi)) \neq 0, \pi(\xi)\in \Pi(G(\mathbb{R},\delta))
\}.
\end{equation}

We conclude this section with a restatement of Theorem \ref{ordpairing} which will be valuable later on.
Define the representation-theoretic transition
matrix $m_r$ by
\begin{equation*}
M(\xi)  = \sum_{\xi' \in \Xi({^\vee}\mathcal{O}, {^\vee}G^{\Gamma})}
m_r(\xi',\xi) \, \pi(\xi').
\nomenclature{$m_r(\xi',\xi)$}{representation-theoretic multiplicity}
\end{equation*}
Define the geometric ``transition matrix'' $c_{g}$ by
\begin{equation*}
P(\xi) = \sum_{\xi' \in \Xi({^\vee}\mathcal{O},
  {^\vee}G^{\Gamma})} (-1)^{d(\xi)}\, c_{g}(\xi', \xi) \,
\mu(\xi').
\end{equation*}
(see \cite{ABV}*{(7.11)(c)}).
\nomenclature{$c_{g}(\xi', \xi)$}{signed sheaf-theoretic multiplicity}
Then \cite{ABV}*{Corollary 15.13} says
\begin{prop}
  \label{ordpairingequiv}
Theorem \ref{ordpairing} is
equivalent to the identity
\begin{equation*}
m_r(\xi', \xi) = (-1)^{d(\xi) - d(\xi')} \, c_g(\xi, \xi').
\end{equation*}  
\end{prop}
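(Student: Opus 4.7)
The plan is a change-of-basis calculation exploiting the non-degeneracy of the pairing (\ref{pairing}). The two statements concern the same pairing expressed in different bases: in the $(M(\xi), \mu(\xi'))$ basis it is diagonal with entries $e(\xi)$ by construction, and in the $(\pi(\xi), P(\xi'))$ basis Theorem \ref{ordpairing} demands that it be diagonal with entries $(-1)^{d(\xi)} e(\xi)$. The transition between the two presentations is governed by the matrices $m_{r}$ and $\tilde c(\xi',\xi) := (-1)^{d(\xi)} c_{g}(\xi',\xi)$.

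First, I would invert the relation $M(\xi) = \sum_{\xi'} m_{r}(\xi',\xi) \pi(\xi')$ to express $\pi(\xi)$ in the standard basis, then substitute this together with the expansion $P(\xi') = \sum_{\xi''} (-1)^{d(\xi')} c_{g}(\xi'',\xi') \mu(\xi'')$ into the bilinear pairing.  Using the defining identity $\langle M(\eta), \mu(\xi'')\rangle = e(\eta)\, \delta_{\eta,\xi''}$ to collapse one sum leaves
$$
\langle \pi(\xi), P(\xi')\rangle \;=\; (-1)^{d(\xi')} \sum_{\eta} m_{r}^{-1}(\eta,\xi)\, e(\eta)\, c_{g}(\eta,\xi').
$$
Imposing that this equal $(-1)^{d(\xi)} e(\xi)\, \delta_{\xi,\xi'}$, as in Theorem \ref{ordpairing}, translates to the matrix identity $(m_{r}^{-1})^{T} E\, \tilde c = D E$, where $D = \mathrm{diag}((-1)^{d(\xi)})$ and $E = \mathrm{diag}(e(\xi))$. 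Since $D$ and $E$ are diagonal and $E^{-1} = E$, this rearranges to the entrywise identity $c_{g}(\xi',\xi) = e(\xi)\, e(\xi')\, m_{r}(\xi,\xi')$, and after swapping indices, $m_{r}(\xi',\xi) = e(\xi)\, e(\xi')\, c_{g}(\xi,\xi')$.

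The main obstacle is the reconciliation of the prefactor $e(\xi)\, e(\xi')$ with the asserted $(-1)^{d(\xi) - d(\xi')}$. This step is not formal. Whenever $m_{r}(\xi',\xi)$ or $c_{g}(\xi,\xi')$ is nonzero, $\xi$ and $\xi'$ must lie in the same Kazhdan--Lusztig--Vogan block; a common block forces the underlying real form to coincide, so $e(\xi) = e(\xi')$, and the Vogan parity result proved in \cite{ABV}*{Chapter 15} forces $d(\xi) - d(\xi')$ to be even. Together these give $e(\xi)\, e(\xi') = 1 = (-1)^{d(\xi) - d(\xi')}$ on the support of both transition matrices, upgrading the formal identity derived above to the one in the statement.
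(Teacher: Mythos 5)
Your change-of-basis computation through the pairing is the right formal move, and the matrix identity you reach,
\begin{equation*}
m_{r}(\xi',\xi) = e(\xi)\,e(\xi')\,c_{g}(\xi,\xi'),
\end{equation*}
is a correct reformulation of Theorem \ref{ordpairing}: expanding $\langle M(\xi), P(\xi')\rangle$ once in the $\mu$-basis and once in the $\pi$-basis (the latter using the theorem) gives exactly $e(\xi)\,c_{g}(\xi,\xi') = e(\xi')\,m_{r}(\xi',\xi)$, and you correctly observe that the remaining work is to trade the factor $e(\xi)e(\xi')$ for $(-1)^{d(\xi)-d(\xi')}$. Note, however, that the paper does not prove this proposition at all; it is lifted verbatim from \cite{ABV}*{Corollary 15.13}, so there is no written proof here to compare your argument against step by step.

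The part that is not convincing is precisely the sign reconciliation. You appeal to a ``Vogan parity result proved in \cite{ABV}*{Chapter 15}'' to conclude that $d(\xi)-d(\xi')$ is even on the common support, but you do not name the lemma, and I do not believe the statement has the form you ascribe to it. The orbit dimensions $d(\xi)$ within a single block are governed by the integral length $l^{I}(\xi)$ (the paper itself uses $d(\xi)-d(\xi') \equiv l^{I}(\xi)-l^{I}(\xi') \pmod 2$ in the proof of Proposition \ref{conjq}), and lengths in a block change by one under cross actions and Cayley transforms, so there is no reason to expect $d(\xi)-d(\xi')$ to be even purely from ``same block.'' There is a further issue upstream: you assert that $c_{g}(\xi,\xi') \ne 0$ already implies $\xi$ and $\xi'$ lie in a common block, hence have the same real form, but $c_{g}$ is a sheaf-theoretic multiplicity and the block decomposition is defined on the representation side; transporting it across to sheaves is exactly the kind of content that the pairing theorem is meant to encode, so there is a real risk of circularity. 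What is actually needed, and what \cite{ABV} supplies in the lemmas preceding Corollary 15.13, is an a priori identity of the form $e(\xi)\,e(\xi') = (-1)^{d(\xi)-d(\xi')}$ on the joint support of $m_{r}$ and $c_{g}$, established without assuming Theorem \ref{ordpairing}. Your argument gestures at the right conclusion but does not isolate that lemma, and the two claims you substitute for it (equal Kottwitz signs and even dimension difference) are not individually established in the form you state them.
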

This equation relates the decomposition of characters with the
decomposition of sheaves.

\subsection{The pairing in the twisted case}
\label{conperv}

In the previous section the pairing \eqref{pairing}
plays a fundamental role in the
definition of ABV-packets. We now develop a twisted version of this
pairing  for $\mathrm{R}_{\mathbb{C}/\mathbb{R}} \mathrm{GL}_N$.

We replace $K\Pi({^\vee}\mathcal{O},\mathrm{GL}_N(\mathbb{C}))$ with the $\mathbb{Z}$-module
$K\Pi({^\vee}\mathcal{O},\mathrm{GL}_N(\mathbb{C}),\vartheta)$ of twisted characters 
(\ref{twistgroth1}).
Associated to
$\xi\in\Xi({^\vee}\mathcal{O}, {^\vee}\mathrm{R}_{\mathbb{C}/\mathbb{R}} \mathrm{GL}_{N}^{\Gamma})^{\vartheta}$ are an irreducible
representation $\pi(\xi) \in \Pi({^\vee}\mathcal{O} ,\mathrm{GL}_N(\mathbb{C}))^\vartheta$ as well as the Atlas extension 
$\pi(\xi)^+$ to $\mathrm{GL}_{N}(\mathbb{C}) \rtimes \langle \vartheta \rangle$ (Corollary \ref{twistclass2}). The twisted character of $\pi(\xi)^+$ is an
element of   
$K\Pi({^\vee}\mathcal{O},\mathrm{GL}_N(\mathbb{C}),\vartheta)$, the $\mathbb{Z}$-module of twisted characters,
and this gives a basis of $K\Pi({^\vee}\mathcal{O},\mathrm{GL}_N(\mathbb{C}),\vartheta)$ parameterized
by $\Xi({^\vee}\mathcal{O}, {^\vee}\mathrm{R}_{\mathbb{C }/\mathbb{R}} 
\mathrm{GL}_N^{\Gamma})^{\vartheta}$. 
See \eqref{KPiOGLext} and the end of Section \ref{extrepsec}.

The twisted characters are to be paired with twisted sheaves which are
elements in a $\mathbb{Z}$-module generalizing $KX({^\vee}\mathcal{O},{^\vee}G^{\Gamma})$.
The twisted objects for this pairing are given
in \cite{ABV}*{(25.7)} (see also \cite{Christie-Mezo}*{Section 5.4}).  We
provide a short summary.

  The automorphism $\vartheta$ acts on
$X({^\vee}\mathcal{O}, {^\vee}\mathrm{R}_{\mathbb{C}/\mathbb{R}} \mathrm{GL}_{N}^{\Gamma})$ in a manner which
is compatible with its ${^\vee}\mathrm{R}_{\mathbb{C}/\mathbb{R}}\mathrm{GL}_{N}$-action (\cite{ABV}*{(25.1)}),
and so also acts on $^{\vee}\mathrm{R}_{\mathbb{C}/\mathbb{R}} \mathrm{GL}_{N}$-equivariant sheaves.
\nomenclature{$\vartheta$}{automorphism of $\mathrm{R}_{\mathbb{C}/\mathbb{R}} \mathrm{GL}_{N}$}
Let $$\mathcal{P}(X({^\vee}\mathcal{O}, {^\vee}\mathrm{R}_{\mathbb{C}/\mathbb{R}} \mathrm{GL}_{N}^{\Gamma});
\vartheta)$$
\nomenclature{$\mathcal{P}(X({^\vee}\mathcal{O}, {^\vee}\mathrm{R}_{\mathbb{C}/\mathbb{R}} \mathrm{GL}_{N}^{\Gamma});
\vartheta)$}{category of perverse sheaves with a compatible $\vartheta$-action}
 be the category of ${^\vee}\mathrm{R}_{\mathbb{C}/\mathbb{R}} \mathrm{GL}_{N}$-equivariant perverse
sheaves with a compatible $\vartheta$-action.  An object in this category
is a pair $(P,\vartheta_{P})$ in which $P$ is an equivariant perverse sheaf and
$\vartheta_{P}$ is an automorphism of $P$ which is compatible with
$\vartheta$ (\cite{Christie-Mezo}*{Section 5.4}).    Similarly, we define
$$\mathcal{C}(X({^\vee}\mathcal{O}, {^\vee}\mathrm{R}_{\mathbb{C}/\mathbb{R}} \mathrm{GL}_{N}^{\Gamma}); 
\vartheta)$$ 
\nomenclature{$\mathcal{C}(X({^\vee}\mathcal{O}, {^\vee}\mathrm{R}_{\mathbb{C}/\mathbb{R}} \mathrm{GL}_{N}^{\Gamma}); 
\vartheta)$}{category of constructible sheaves  with a compatible $\vartheta$-action}
to be the category of ${^\vee}\mathrm{R}_{\mathbb{C}/\mathbb{R}} \mathrm{GL}_{N}$-equivariant
constructible sheaves with a compatible $\vartheta$-action.  An object in this category is a pair $(\mu,\vartheta_{\mu})$ in which $\mu$ is an equivariant
constructible sheaf and 
$\vartheta_{\mu}$ is an automorphism of $\mu$ which is compatible with
$\vartheta$.

\nomenclature{$K(X({^\vee}\mathcal{O},
  {^\vee}\mathrm{R}_{\mathbb{C}/\mathbb{R}} \mathrm{GL}_{N}^{\Gamma});\vartheta)$}{Grothendieck group} 
The Grothendieck groups of these two categories are isomorphic
(\cite{Christie-Mezo}*{(35)}).  We identify them and denote  their
Grothendieck groups by 
$K(X({^\vee}\mathcal{O}, {^\vee}\mathrm{R}_{\mathbb{C}/\mathbb{R}} \mathrm{GL}_{N}^{\Gamma});\vartheta)$.
This is the sheaf-theoretic analogue  
of $K \Pi(\mathrm{GL}_{N}(\mathbb{C}) \rtimes \langle \vartheta
\rangle)$.

As with the representations (see (\ref{canext})), we seek a canonical choice of
extension of $P(\xi)$, \emph{i.e.} a canonical automorphism 
$\vartheta_{P(\xi)}$ of $P(\xi)$.  This is achieved by the following lemma, whose proof follows exactly as for \cite{aam}*{Lemma 3.4} by virtue of (\ref{compgptriv}).
\begin{lem}
\label{cansheaf}
Let ${^\vee}G = {^\vee}\mathrm{R}_{\mathbb{C}/\mathbb{R}} \mathrm{GL}_{N}$,  $\xi = (S, \tau_{S}) \in
\Xi({^\vee}\mathcal{O}, 
{^\vee}\mathrm{R}_{\mathbb{C}/\mathbb{R}} \mathrm{GL}_{N}^{\Gamma})^{\vartheta}$, $p \in S$, and
(\ref{bundle}) be the equivariant vector bundle representing
$\mu(\xi)$.  
\begin{enumerate}[label={(\alph*)}] 
\item Suppose $p' \in S$ and $p'= a\cdot p$ for some $a \in {^\vee}
  \mathrm{R}_{\mathbb{C}/\mathbb{R}} \mathrm{GL}_{N}$. Then the maps 
\begin{align*}
(g,v) &\mapsto (ga^{-1},v)\\
\nonumber g\cdot p &\mapsto (ga^{-1}) \cdot p'
\end{align*}
define an isomorphism of equivariant vector bundles 
\begin{equation*}
{^\vee}G \times_{{^\vee}G_{p}} V \rightarrow {^\vee}G \times_{{^\vee}G_{p'}} V.
\end{equation*}
which is  independent of the choice of $a$.

\item There exist canonical choices of pairs
$$
  \mu(\xi)^{+} = (\mu(\xi), \vartheta_{\mu(\xi)}^{+}) \in
  \mathcal{C}(X({^\vee}\mathcal{O}, {^\vee}\mathrm{R}_{\mathbb{C}/\mathbb{R}} \mathrm{GL}_{N}^{\Gamma});
  \vartheta),$$ 
$$P(\xi)^{+} = (P(\xi), \vartheta_{P(\xi)}^{+}) \in
  \mathcal{P}(X({^\vee}\mathcal{O}, {^\vee}\mathrm{R}_{\mathbb{C}/\mathbb{R}} \mathrm{GL}_{N}^{\Gamma}); \vartheta)$$
 
such that  
if $p \in S$ is fixed by $\vartheta$ then  $\vartheta_{\mu(\xi)}^{+}$
(and $\vartheta_{P(\xi)}^{+}$) acts trivially on the stalk  of
$\mu(\xi)$ (and $P(\xi) \in KX({^\vee}\mathcal{O},
{^\vee}\mathrm{R}_{\mathbb{C}/\mathbb{R}}\mathrm{GL}_{N}^{\Gamma})$) at $p$.  
\end{enumerate}
\nomenclature{$\mu(\xi)^{+}$}{twisted constructible sheaf}
 \nomenclature{$P(\xi)^{+}$}{twisted perverse sheaf}
\end{lem}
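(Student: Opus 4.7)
The plan is to follow the proof of \cite{aam}*{Lemma 3.4} verbatim, with the crucial input being the triviality of the component groups (\ref{compgptriv}). That triviality has two consequences: first, the irreducible representation $\tau_\xi$ of ${^\vee}G_p$ giving rise to the local system factors through a trivial group, so the fibre $V$ is one-dimensional with trivial $\tau_\xi$-action; second, and equivalently, elements of ${^\vee}G_p$ always act trivially on the fibre above $p$. This triviality is precisely what converts each construction below, which would otherwise be defined only up to the action of a component group, into something canonical.

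For part (a), I would verify by direct computation that the formula $(g,v)\mapsto(ga^{-1},v)$ descends to a well-defined map of equivariant bundles: for $h\in{^\vee}G_p$ one has $(gh,\tau_\xi(h)^{-1}v)\mapsto(gha^{-1},\tau_\xi(h)^{-1}v)$, and since $aha^{-1}\in{^\vee}G_{p'}$ with $\tau_{\xi'}(aha^{-1})=\tau_\xi(h)$, this element represents the same class as $(ga^{-1},v)$ in the target. Independence of $a$ reduces to the following: if $a,a'\in{^\vee}G$ both send $p$ to $p'$, then $h_0=a^{-1}a'\in{^\vee}G_p$, and the two resulting maps differ by right multiplication by $h_0$ composed with the $\tau_\xi$-action of $h_0$ on $V$. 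Both of these are trivial modulo the equivalence relation by (\ref{compgptriv}).

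For part (b), I would first establish the existence of a $\vartheta$-fixed point $p\in S$. Since $\vartheta$ acts on the connected orbit $S$ as an involution, a standard Borel-type fixed-point argument (applied after passing to an appropriate $\vartheta$-stable Borel in the stabilizer) furnishes such a $p$. Given $p$, the $\vartheta$-fixedness of $\xi$ implies that $\vartheta$ preserves ${^\vee}G_p$ and fixes $\tau_\xi$, so I can define $\vartheta^{+}_{\mu(\xi)}$ on the model bundle ${^\vee}G\times_{{^\vee}G_p}V\to S$ by $(g,v)\mapsto(\vartheta(g),v)$ covering the action $g\cdot p\mapsto\vartheta(g)\cdot p$ on $S$. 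Well-definedness is immediate, and by construction the induced action on the stalk at $p$ is the identity. Part (a) then shows that this automorphism is independent of the chosen $\vartheta$-fixed base point. To obtain $P(\xi)^{+}$, I would feed the automorphism of the local system into the intermediate extension functor, which is functorial and therefore propagates the $\vartheta$-action uniquely to $P(\xi)$.

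The main obstacle is verifying that the constructions in part (b) are genuinely canonical, i.e.~independent of all auxiliary choices of base point and of group element $a$ used to compare base points. Without the triviality statement (\ref{compgptriv}), the best one could hope for is canonicity up to a scalar in the image of $\tau_\xi$ on the component group, and the preferred extension $(+)$ would not be well defined. In the present setting this ambiguity vanishes, so the argument of \cite{aam}*{Lemma 3.4} carries over without modification.
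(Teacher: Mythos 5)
Your overall strategy agrees with the paper's: the paper's proof is literally ``follows exactly as for \cite{aam}*{Lemma 3.4} by virtue of (\ref{compgptriv})'', and you correctly isolate the trivial component group as the input that makes all the auxiliary choices immaterial. Your computations for part (a) are correct, and the definition $(g,v)\mapsto(\vartheta(g),v)$ on the model bundle is the right canonical extension for part (b).

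The one step that does not hold up as stated is the existence of a $\vartheta$-fixed base point $p\in S$ via a ``standard Borel-type fixed-point argument.'' An involution acting on a connected orbit need not have any fixed point (already $z\mapsto -z$ on $\mathbb{C}^{\times}$ has none), and Borel's theorem concerns connected solvable groups on projective varieties, which is not the situation here. The actual content is a twisted cohomological fact specific to the geometric-parameter variety: writing $\vartheta(p)=a\cdot p$, one must solve $b^{-1}\vartheta(b)a\in{^\vee}G_p$ for $b\in{^\vee}G$, and this is possible because the stabilizers for $\mathrm{R}_{\mathbb{C}/\mathbb{R}}\mathrm{GL}_N$ are products of general linear groups with trivial relevant cohomology (the same structural fact underlying Lemma~\ref{1cohomology}). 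This is precisely what \cite{ABV}*{Proposition 26.23(b)} (cited in Section~\ref{twistendsec}) and the corresponding argument in \cite{aam}*{Lemma 3.4} supply, and you should invoke it rather than a fixed-point theorem. With that substitution, your proof coincides with the paper's.
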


We now imitate the definition of $K\Pi({^\vee}\mathcal{O},\mathrm{GL}_N(\mathbb{C}),\vartheta)$
\eqref{twistgroth1} for the sheaves appearing in Lemma \ref{cansheaf}. 
Attached to $\xi\in \Xi({^\vee}\mathcal{O},{^\vee}\mathrm{R}_{\mathbb{C}/\mathbb{R}} \mathrm{GL}_N^\Gamma)^{\vartheta}$ are
perverse sheaves $P(\xi)^{\pm}$, where $P(\xi)^+$ is defined in 
Lemma \ref{cansheaf}, and $P(\xi)^-$ is the unique other choice of extension.
Furthermore, the {\it microlocal traces} of $P(\xi)^\pm$  differ by
sign (\cite{ABV}*{(25.1)(j)}).  Similar comments apply to $\mu(\xi)^{\pm}$.  The microlocal traces are used in Proposition \ref{24.9c} and are the analogues of twisted characters.

We are interested only in irreducible sheaves with non-vanishing microlocal
trace.  We consequently follow the definition of
(\ref{twistgroth1}) in defining the quotient
\begin{equation}
  \label{twistsheafgroth}
  K X({^\vee}\mathcal{O},{^\vee}\mathrm{R}_{\mathbb{C}/\mathbb{R}} \mathrm{GL}_{N}^{\Gamma},\vartheta)=
  K(X({^\vee}\mathcal{O},{^\vee}\mathrm{R}_{\mathbb{C}/\mathbb{R}} \mathrm{GL}_{N}^{\Gamma}))^\vartheta\otimes 
    \mathbb{Z}[U_{2}]/ \langle (P(\xi) \otimes 1) + (P(\xi)\otimes -1)\rangle
\nomenclature{$K (X({^\vee}\mathcal{O}, {^\vee}\mathrm{R}_{\mathbb{C}/\mathbb{R}} \mathrm{GL}_{N}^{\Gamma},
    \vartheta)$}{$\mathbb{Z}$-module of twisted sheaves}   
  \end{equation}
  where the quotient runs over $\xi\in\Xi({^\vee}\mathcal{O},{^\vee}\mathrm{R}_{\mathbb{C}/\mathbb{R}} \mathrm{GL}_{N}^{\Gamma})^\vartheta$.

  This is the $\mathbb{Z}$-module which we shall pair with
  $$K\Pi({^\vee}\mathcal{O}, \mathrm{GL}_{N}(\mathbb{C}))^{\vartheta} \cong
  K\Pi({^\vee}\mathcal{O}, \mathrm{GL}_{N}(\mathbb{C}), \vartheta)$$
  in Section \ref{pairings}.  We call the elements of this module \emph{twisted
  sheaves}, and remind the reader that these modules are not naturally
Grothendieck groups, even though we have kept the ``$K$'' in the notation.
  
For reasons that will only become clear in Section \ref{whitsec}, the
definition of our twisted pairing involves some additional signs. The
signs depend on the integral lengths of parameters, which may be
described as follows.

From now on we assume  $\lambda \in {^\vee}\mathcal{O}$ satisfies the regularity condition (\ref{regintdom}).
Let $\xi\in\Xi({^\vee}\mathcal{O},{^\vee}\mathrm{R}_{\mathbb{C}/\mathbb{R}} \mathrm{GL}_N^\Gamma)^{\vartheta}$. 
 Lemma \ref{XXXi}  tells us that associated to $\xi$ is an element $x\in \mathcal{X}_{{^\vee}\rho}$. Set
 \begin{equation}
   \label{thetax}
   \theta_x=\mathrm{Int}(x)_{|H}.
\end{equation}
Let
\begin{subequations}
\renewcommand{\theequation}{\theparentequation)(\alph{equation}}  
\label{R}  
\begin{equation}
R(\lambda)=\left\{\alpha\in R(\mathrm{R}_{\mathbb{C}/\mathbb{R}} \mathrm{GL}_N,H): \langle\lambda,{^\vee}\alpha\rangle\in\mathbb{Z}\right\}
\nomenclature{$R(\lambda)$}{$\lambda$-integral root system}
\end{equation}
be the $\lambda$-integral roots, with positive $\lambda$-integral roots 
\begin{equation}
R^{+}(\lambda) = \left\{ \alpha \in R(\lambda): \langle\lambda,{^\vee}\alpha\rangle>0\right\}.
\nomenclature{$R^{+}(\lambda)$}{positive $\lambda$-integral roots}
\end{equation}
\end{subequations}
Define the
\emph{integral   length}, following \cite{ABV}*{(16.16)}, as
\begin{equation}
 \label{intlength}
l^{I}(\xi) = -\frac{1}{2} \left( | \{\alpha \in R^{+}(\lambda) :
\theta_x(\alpha) \in R^{+}(\lambda) \}| + \dim(H^{\theta_x}) \right) + \frac{N}{2}.
\nomenclature{$l^{I}(\xi)$}{integral length}
\end{equation}
The integral length takes values in the non-positive integers.

Furthermore define
$$
R_{\vartheta}^{+}(\lambda) = \{ \alpha \in
R((\mathrm{R}_{\mathbb{C}/\mathbb{R}} \mathrm{GL}_{N}^{\vartheta})^{0}, (H^{\vartheta})^{0}):
\langle\lambda,{^\vee}\alpha\rangle\in \mathbb{Z}_{>0}\}.
$$
We define the $\vartheta$\emph{-integral length} by
\begin{equation}
  \label{thetalength}
l^{I}_{\vartheta}(\xi) = -\frac{1}{2} \left( | \{\alpha \in
R^{+}_{\vartheta}(\lambda)  : 
\theta_x(\alpha) \in R^{+}_{\vartheta}(\lambda) \} |+
\dim((H^{\vartheta})^{\theta_x}) \right) + \frac{\lceil N/2 \rceil}{2}, 
\end{equation}
where $\lceil N/2 \rceil$ is the least integer greater than or equal to $N/2$.
This is the integral length for the fixed-point group $(\mathrm{R}_{\mathbb{C}/\mathbb{R}} \mathrm{GL}_N)^\vartheta \cong \mathrm{GL}_{N}$, or more precisely, for a quasisplit unitary group of Section \ref{extended}.
\nomenclature{$l^{I}_{\vartheta}(\xi) $}{$\vartheta$-integral length}

Now we define a perfect pairing (under the assumption \eqref{regintdom})
\begin{equation}
  \label{pair2}
\langle \cdot, \cdot \rangle:  K \Pi({^\vee}\mathcal{O},
\mathrm{GL}_{N}(\mathbb{C}), \vartheta) 
\times K X({^\vee}\mathcal{O}, {^\vee}\mathrm{R}_{\mathbb{C}/\mathbb{R}} \mathrm{GL}_{N}^{\Gamma}, \vartheta)
\rightarrow \mathbb{Z}
\end{equation}
by setting
\begin{equation}
\label{pairdef2}
\langle M(\xi)^{+}, \mu(\xi')^{+} \rangle = (-1)^{l^{I}(\xi) -
  l^{I}_{\vartheta}(\xi)} \, \delta_{\xi, \xi'}
\end{equation}
for $\xi, \xi' \in \Xi({^\vee}\mathcal{O},
{^\vee}\mathrm{R}_{\mathbb{C}/\mathbb{R}} \mathrm{GL}_{N}^{\Gamma})^{\vartheta}$.
 The analogue of Theorem \ref{ordpairing} is
\begin{thm}
  \label{twistpairing}
Suppose $\lambda \in {^\vee}\mathcal{O}$ satisfies (\ref{regintdom}).  Define the pairing (\ref{pair2}) by (\ref{pairdef2}).  Then
  $$\langle \pi(\xi)^{+}, P(\xi')^{+} \rangle = (-1)^{d(\xi)} \,
  (-1)^{l^{I}(\xi)-l^{I}_{\vartheta}(\xi)} \, \delta_{\xi, \xi'}$$
where $\xi, \xi' \in
\Xi({^\vee}\mathcal{O}, {^\vee}\mathrm{R}_{\mathbb{C}/\mathbb{R}} \mathrm{GL}_{N}^{\Gamma})^{\vartheta}$.
\end{thm}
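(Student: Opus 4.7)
The plan is to follow the approach used to prove Theorem \ref{ordpairing} in \cite{ABV}*{Sections 16--17}, adapted to the twisted setting using the tools of \cite{AVParameters}. The first step is to recast the theorem, along the lines of Proposition \ref{ordpairingequiv}, as an identity of transition matrices. On the character side, expand the twisted standard representation in terms of twisted irreducibles:
\begin{equation*}
M(\xi)^{+} = \sum_{\xi'} m_{r}^{\vartheta}(\xi', \xi)\, \pi(\xi')^{+},
\end{equation*}
and on the sheaf side, expand the twisted perverse sheaf in terms of twisted irreducible constructible sheaves:
\begin{equation*}
P(\xi)^{+} = \sum_{\xi'} (-1)^{d(\xi)} c_{g}^{\vartheta}(\xi', \xi)\, \mu(\xi')^{+}.
\end{equation*}
Using the definition \eqref{pairdef2} of the twisted pairing on standard and constructible objects, a direct bookkeeping argument reduces Theorem \ref{twistpairing} to the identity
\begin{equation*}
m_{r}^{\vartheta}(\xi',\xi) = (-1)^{d(\xi)-d(\xi')}\, (-1)^{(l^{I}(\xi)-l^{I}_{\vartheta}(\xi)) - (l^{I}(\xi')-l^{I}_{\vartheta}(\xi'))}\, c_{g}^{\vartheta}(\xi,\xi'),
\end{equation*}
which is the twisted counterpart of the identity in Proposition \ref{ordpairingequiv}.

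The second step is to establish this transition-matrix identity. The bridge between characters and sheaves is supplied by the Riemann-Hilbert and Beilinson-Bernstein correspondences, and the remark near the end of the introduction promises that Section \ref{duality} will promote these to a twisted bijection $P(\xi)^{+} \leftrightarrow \pi({}^\vee\xi)^{+}$ on the $\vartheta$-fixed parameter set. I would use this Vogan duality to translate the sheaf-theoretic multiplicity $c_{g}^{\vartheta}$ into a Kazhdan-Lusztig-Vogan multiplicity for the dual group. At this point the twisted Hecke operators of \cite{AVParameters}*{Section 7} can be used to compute both $m_{r}^{\vartheta}$ and (the Vogan-dual of) $c_{g}^{\vartheta}$ by induction on the integral length, following the inductive scheme of \cite{ABV}*{Sections 16--17}. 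Because $x(E)$ is uniquely determined in the setting of $\mathrm{R}_{\mathbb{C}/\mathbb{R}} \mathrm{GL}_{N}$ (Lemma \ref{XXXi}) and component groups are trivial (\ref{compgptriv}), the Hecke-algebra recursions reduce to the simplest form of \cite{AVParameters}.

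The main obstacle is controlling the signs in the twisted Hecke recursions. In the non-twisted case the signs come out of the dimensions $d(\xi)$ alone, but in the $\vartheta$-twisted case each application of a Hecke operator on the character side is the $\vartheta$-fixed part of an operator on the ambient (non-twisted) parameter set, and the Atlas extension $(\uplambda,\uptau,0,0)$ forces a specific normalization at each step that may or may not agree with the natural sign produced by the sheaf-theoretic recursion. The defect between these two normalizations is exactly what the integers $l^{I}(\xi)$ and $l^{I}_{\vartheta}(\xi)$ in \eqref{intlength} and \eqref{thetalength} are designed to measure: the roots in $R^{+}(\lambda)$ account for the ambient recursion, while those in $R_{\vartheta}^{+}(\lambda)$ account for the $\vartheta$-fixed recursion.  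I therefore expect the key lemma to be a length-additivity identity showing that for each type of Hecke operator (complex, real, compact imaginary, noncompact imaginary), the change in $l^{I} - l^{I}_{\vartheta}$ under the operator matches the discrepancy between the twisted character recursion and the twisted sheaf recursion. Once that lemma is in hand, an induction on $l^{I}$ (with base case the minimal-length parameters, where $M(\xi)^{+}=\pi(\xi)^{+}$ and $P(\xi)^{+}=\mu(\xi)^{+}$) propagates the pairing identity from standard/constructible to irreducible/perverse objects and completes the proof.
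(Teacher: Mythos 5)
Your proposal follows essentially the same path as the paper: reduce to a representation-theoretic statement via Vogan duality (Section \ref{duality}), then use the twisted Hecke operators of \cite{AVParameters} together with sign bookkeeping via $l^{I} - l^{I}_{\vartheta}$ to establish the identity. Your intuition that the roots in $R^{+}_{\vartheta}(\lambda)$ versus $R^{+}(\lambda)$ measure the normalization defect between the two recursions is exactly the role these lengths play in the paper's Proposition \ref{Hisomorphism}, which is proved by a case-by-case analysis of the types of $\kappa$ (which for $\mathrm{GL}_{N}(\mathbb{C})$ are only $\mathtt{2C+, 2C-, 2Ci, 2Cr}$, not the full list of complex/real/imaginary types you mention, since $\vartheta$ produces only type-2 orbits and the real form has only complex roots). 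The one organizational difference: rather than run a direct induction on integral length as you sketch, the paper first reformulates the theorem via Lemma \ref{twistpairing2}, then extends the pairing to Hecke modules over $\mathbb{Z}[q^{1/2},q^{-1/2}]$, shows the pairing is a Hecke module isomorphism (Proposition \ref{Hisomorphism}), and finishes by invoking the uniqueness characterization of twisted KLV bases via Verdier duality (Theorem \ref{theo:defpoly}) and specializing at $q=1$. The characterization theorem subsumes your "induction on length" and packages it more efficiently; you would almost certainly rediscover it if you tried to implement the induction precisely. Note also that the paper's route does not actually pass through the transition-matrix reduction of Proposition \ref{p:twist} (which is stated but deferred to Sections 7 and 9), though starting there is harmless — as you observe, the relation $(-1)^{d(\xi)-d(\xi')}=(-1)^{l^{I}(\xi)-l^{I}(\xi')}$ from \cite{AMR1}*{Proposition B.1} makes your formula for $m_{r}^{\vartheta}$ agree with the one in the paper.
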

The proof of this theorem is the primary purpose of Section
\ref{pairings}. Its proof is modelled on the proof of Theorem \ref{ordpairing} 
in \cite{ABV}*{Sections 15-17}.

We conclude this section by giving a twisted analogue of Proposition \ref{ordpairingequiv}.
This analogue will only be needed in Sections 7 and 9, so the reader may wish to skip this discussion and return to it 
later. 

For $\xi, \xi' \in \Xi({^\vee}\mathcal{O},
{^\vee}\mathrm{R}_{\mathbb{C}/\mathbb{R}} \mathrm{GL}_{N}^{\Gamma})^{\vartheta}$, define $m_{r}(\xi'_{\pm},
\xi_{+})$ to be the multiplicity of the 
representation $\pi(\xi')^{\pm}$ in $M(\xi)^{+}$ as elements of the Grothendieck group
$K\Pi({^\vee}\mathcal{O}, \mathrm{GL}_{N}(\mathbb{C}) \rtimes
\langle \vartheta \rangle)$ (Section \ref{grothchar}).  In other words
\begin{equation*}
M(\xi)^{+}  = \sum_{\xi' \in \Xi({^\vee}\mathcal{O}, {^\vee}G^{\Gamma})^{\vartheta}}
m_{r}(\xi'_{+},\xi_{+}) \, \pi(\xi')^{+}+ m_{r}(\xi'_{-},\xi_{+}) \,
\pi(\xi')^{-}+ \cdots
\end{equation*}
where the omitted summands are irreducible representations of
$\mathrm{GL}_{N}(\mathbb{C}) \rtimes \langle \vartheta \rangle$ which restrict to the sum of two irreducible representations of $\mathrm{GL}_{N}(\mathbb{C})$.  Define the \emph{twisted multiplicity of} $\pi(\xi')^{+}$ in $M(\xi)^{+}$ by
\begin{equation}
  \label{twistmult}
  m^{\vartheta}_{r}(\xi',\xi) = m_{r}(\xi'_{+}, \xi_{+})
  -m_{r}(\xi'_{-}, \xi_{+}), \quad \xi, \xi' \in \Xi({^\vee}\mathcal{O},
{^\vee}\mathrm{R}_{\mathbb{C}/\mathbb{R}} \mathrm{GL}_{N}^{\Gamma})^{\vartheta}
\end{equation}
 (\emph{cf.} 
\cite{AvLTV}*{(19.3d)}).  
\nomenclature{$m^{\vartheta}_{r}(\xi',\xi)$}{}
By construction, the image of $M(\xi)^{+}$ in $K
\Pi({^\vee}\mathcal{O}, \mathrm{GL}_{N}(\mathbb{C}), \vartheta)$
(\ref{twistgroth1}) decomposes as
\begin{equation}
  \label{twistmult1}
M(\xi)^{+} = \sum_{\xi' \in \Xi({^\vee}\mathcal{O},
  {^\vee}\mathrm{GL}_{N}^{\Gamma})^{\vartheta}} m^{\vartheta}_{r}(\xi',\xi) \, \pi(\xi')^{+}.
\end{equation}
The matrix given by (\ref{twistmult}) is invertible (\cite{aam}*{Lemma 3.6}).

In a parallel fashion, we define  $c_{g}(\xi'_{\pm},\xi_{+})$ for $\xi, \xi' \in
\Xi({^\vee}\mathcal{O}, {^\vee}G^{\Gamma})^{\vartheta}$ by
\begin{equation*}
P(\xi)^{+} = \sum_{\xi' \in \Xi({^\vee}\mathcal{O},
  {^\vee}G^{\Gamma})^{\vartheta}} (-1)^{d(\xi')}\, c_{g}(\xi'_{+},
\xi_{+}) \, \mu(\xi')^{+} + (-1)^{d(\xi')}\, c_{g}(\xi'_{-}, \xi_{+}) \,
\mu(\xi')^{-}+ \cdots
\end{equation*}
in the Grothendieck group $K X({^\vee}\mathcal{O}, {^\vee}\mathrm{R}_{\mathbb{C}/\mathbb{R}} \mathrm{GL}_{N};
\vartheta)$ of Section \ref{conperv}.  Setting
\begin{equation}
  \label{twistgmult}
c_{g}^{\vartheta}(\xi',\xi) = c_{g}(\xi'_{+},\xi_{+}) -
c_{g}(\xi'_{-}, \xi_{+}).
\nomenclature{$c_{g}^{\vartheta}(\xi',\xi)$}{}
\end{equation}
we see that the image of $P(\xi)^{+}$ in $K X({^\vee}\mathcal{O},
{^\vee}\mathrm{R}_{\mathbb{C}/\mathbb{R}}\mathrm{GL}_{N}, \vartheta)$ is
\begin{equation*}
\sum_{\xi' \in \Xi({^\vee}\mathcal{O},
  {^\vee}G^{\Gamma})^{\vartheta}} (-1)^{d(\xi')}\, c_{g}^{\vartheta}(\xi',
\xi) \, \mu(\xi')^{+}.
\end{equation*}
Just as Theorem \ref{ordpairing} is equivalent to Proposition \ref{ordpairingequiv}. We 
have the following equivalence. 
\begin{prop}[\cite{aam}*{Proposition 3.7}]
\label{p:twist}
Theorem \ref{twistpairing} is equivalent to the identity
\begin{equation*}
m_{r}^{\vartheta}(\xi', \xi) = (-1)^{l^{I}_{\vartheta}(\xi) -
  l^{I}_{\vartheta}(\xi')}\ c_{g}^{\vartheta}(\xi, \xi')
\end{equation*}
for all $\xi, \xi' \in \Xi({^\vee}\mathcal{O},
{^\vee}\mathrm{R}_{\mathbb{C}/\mathbb{R}} \mathrm{GL}_{N}^{\Gamma})^{\vartheta}$. 
\end{prop}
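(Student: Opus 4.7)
The plan is to mimic, step by step, the proof of the non-twisted analogue (Proposition \ref{ordpairingequiv}, which is \cite{ABV}*{Corollary 15.13}), keeping careful track of signs as they appear in the twisted setting. Both the identity and Theorem \ref{twistpairing} are statements about the pairing (\ref{pair2}) expressed in different $\mathbb{Z}$-bases: the definition (\ref{pairdef2}) records it on the basis $\{M(\xi)^+\}\times\{\mu(\xi')^+\}$, while Theorem \ref{twistpairing} records it on $\{\pi(\xi)^+\}\times\{P(\xi')^+\}$. Since $m_r^{\vartheta}$ and $c_g^{\vartheta}$ are the change-of-basis matrices between these bases (on the character and sheaf sides respectively), the equivalence amounts to a single identity of matrix products.

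First I would use (\ref{twistmult1}) to expand $M(\xi)^+$ in terms of $\pi(\xi')^+$, and the analogous expansion from (\ref{twistgmult}) to expand $P(\xi)^+$ in terms of $\mu(\xi')^+$; both expansions are legitimate identities in the quotient modules (\ref{twistgroth1}) and (\ref{twistsheafgroth}) on which (\ref{pair2}) is defined. Setting $A(\xi,\xi') = \langle \pi(\xi)^+,\mu(\xi')^+\rangle$, the definition (\ref{pairdef2}) becomes
\begin{equation*}
\sum_{\xi''} m_r^{\vartheta}(\xi'',\xi)\, A(\xi'',\xi') \;=\; (-1)^{l^{I}(\xi)-l^{I}_{\vartheta}(\xi)}\,\delta_{\xi,\xi'},
\end{equation*}
while Theorem \ref{twistpairing} becomes
\begin{equation*}
\sum_{\xi''} (-1)^{d(\xi'')}\, c_g^{\vartheta}(\xi'',\xi')\, A(\xi,\xi'') \;=\; (-1)^{d(\xi)}(-1)^{l^{I}(\xi)-l^{I}_{\vartheta}(\xi)}\,\delta_{\xi,\xi'}.
\end{equation*}

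Next, using the invertibility of $m_r^{\vartheta}$ (\cite{aam}*{Lemma 3.6}), I would solve the first equation for $A$ and substitute into the second. The result is a bilinear identity relating $m_r^{\vartheta}$ and $c_g^{\vartheta}$; after cancelling the diagonal sign $(-1)^{l^{I}(\xi)-l^{I}_{\vartheta}(\xi)}$ (which depends only on $\xi$ and matches the role played by $e(\xi)$ in the non-twisted case), what remains is precisely the asserted identity
\begin{equation*}
m_r^{\vartheta}(\xi',\xi) \;=\; (-1)^{l^{I}_{\vartheta}(\xi)-l^{I}_{\vartheta}(\xi')}\, c_g^{\vartheta}(\xi,\xi').
\end{equation*}
Since every step is reversible, this establishes the equivalence. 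Effectively this argument is already encoded in the reference \cite{aam}*{Proposition 3.7}; the content of our work is to check that the translation from the $\GL_N(\R)$-setting of loc.\ cit.\ to the $\GL_N(\C)$-setting here does not disturb the computation.

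The main obstacle is the sign bookkeeping: verifying that when the diagonal factor $(-1)^{l^{I}(\xi)-l^{I}_{\vartheta}(\xi)}$ is transported through the expansion it combines with the $(-1)^{d(\xi'')}$ factors from the $P(\xi)^+$-expansion to leave only $(-1)^{l^{I}_{\vartheta}(\xi)-l^{I}_{\vartheta}(\xi')}$ in the final identity. This cancellation reflects the fact that in the untwisted identity the difference $d(\xi)-d(\xi')$ appears rather than $d(\xi)+d(\xi')$; the analogous phenomenon in the twisted case swaps $d$ for $l^{I}_{\vartheta}$ because of the shift built into the definition (\ref{pairdef2}). Once this sign calculation is verified, the remainder of the argument is a formal manipulation of invertible transition matrices.
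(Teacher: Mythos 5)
Your setup is exactly right, and it matches the scheme behind \cite{aam}*{Proposition 3.7}: the claim is a change-of-basis identity for the pairing (\ref{pair2}), the two displayed equations you write down — expanding (\ref{pairdef2}) via (\ref{twistmult1}) and expanding Theorem \ref{twistpairing} via (\ref{twistgmult}) — are the correct ones, and the invertibility of $m_r^\vartheta$ (\cite{aam}*{Lemma 3.6}) is the technical input that makes the elimination legitimate.

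The step you call ``the main obstacle'' is, however, not actually closed, and your heuristic for it is off. Write $e(\xi) := (-1)^{l^{I}(\xi) - l^{I}_\vartheta(\xi)}$ for the diagonal sign in (\ref{pairdef2}). Eliminating $A(\xi,\xi') = \langle \pi(\xi)^+,\mu(\xi')^+\rangle$ between your two displayed equations yields
\begin{equation*}
m_r^{\vartheta}(\xi',\xi) \;=\; (-1)^{d(\xi)-d(\xi')}\, e(\xi)\, e(\xi')\; c_g^{\vartheta}(\xi,\xi').
\end{equation*}
The $d$'s here do \emph{not} ``swap for'' $l^{I}_\vartheta$ because of the shift in (\ref{pairdef2}); they persist, and the only way to convert $(-1)^{d(\xi)-d(\xi')}e(\xi)e(\xi')$ into $(-1)^{l^{I}_\vartheta(\xi)-l^{I}_\vartheta(\xi')}$ is to invoke the congruence $d(\xi)-d(\xi') \equiv l^{I}(\xi)-l^{I}(\xi') \pmod 2$, which is \cite{AMR1}*{Proposition B.1}. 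With it,
\begin{equation*}
(-1)^{d(\xi)-d(\xi')}\, e(\xi)\, e(\xi')
= (-1)^{l^{I}(\xi)-l^{I}(\xi')}\,(-1)^{l^{I}(\xi)-l^{I}_\vartheta(\xi)}\,(-1)^{l^{I}(\xi')-l^{I}_\vartheta(\xi')}
= (-1)^{l^{I}_\vartheta(\xi)-l^{I}_\vartheta(\xi')}.
\end{equation*}
This is precisely the relation ``between $d(\xi)$ and $l^{I}(\xi)$'' that the paper invokes for the same purpose in the proofs of Lemma \ref{twistpairing2} and Proposition \ref{conjq}, so it is a genuine input to the argument, not an incidental identity. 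You should cite it; without it the reader cannot verify that your sign bookkeeping actually produces $(-1)^{l^{I}_\vartheta(\xi)-l^{I}_\vartheta(\xi')}$ rather than leaving a residual $(-1)^{d(\xi)-d(\xi')+l^{I}(\xi)+l^{I}(\xi')}$.
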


\section{The proof of Theorem \ref{twistpairing}}
\label{pairings}

We continue working with an infinitesimal character $\lambda \in {^\vee}\mathcal{O}$ which satisfies the hypothesis of (\ref{regintdom}).
The proof of Theorem \ref{twistpairing} is nearly identical to the proof of \cite{aam}*{Theorem 3.5}.  In fact, the proof of Theorem \ref{twistpairing} is somewhat simpler.  In order to convince the reader of these claims we review the proof of \cite{aam}*{Theorem 3.5} in our context, calling attention to the points which are simplified.  

The general idea of the proof is already given in the non-twisted context of \cite{ABV}*{Sections 15-17}.
In the proof one first extends the $\mathbb{Z}$-modules appearing in Theorem \ref{twistpairing} to \emph{Hecke modules} acted upon by a \emph{Hecke algebra}. The extended pairing is then shown to furnish an isomorphism between one of the Hecke modules and the dual of the other.   Special bases are chosen for the Hecke modules.  The values of the special bases are explicitly computed under the pairing.  Theorem \ref{twistpairing} then follows by restricting these values to the setting of the original $\mathbb{Z}$-modules.

There are numerous Hecke module computations underlying this proof, and many of them have been completed in \cite{LV2014} and \cite{AVParameters}.  The computations of \cite{AVParameters} are given in  representation-theoretic language and are therefore suitable when working with $K \Pi( {^\vee}\mathcal{O}, \mathrm{GL}_{N}(\mathbb{C}), \vartheta)$.  In order to adapt the computations of \cite{AVParameters} to $K X({^\vee}\mathcal{O}, {^\vee}\mathrm{R}_{\mathbb{C}/\mathbb{R}} \mathrm{GL}_{N}^{\Gamma}, \vartheta)$ we replace this module with an equivalent module of representations.  In so doing, we touch on the notion of \emph{Vogan duality} (\cite{AVParameters}*{Section 6.1}).  We attend to this preliminary work in the next section.

\subsection{Vogan duality and $K X({^\vee}\mathcal{O}, R_{\mathbb{C}/\mathbb{R}} \mathrm{GL}_{N}^{\Gamma}, \vartheta)$}
\label{duality}

We wish to replace the sheaf-theoretic module $K X({^\vee}\mathcal{O}, R_{\mathbb{C}/\mathbb{R}} \mathrm{GL}_{N}^{\Gamma}, \vartheta)$ with an equivalent  module of representations.  In the non-twisted setting this is achieved by \cite{ABV}*{Theorem 8.5}.  This theorem relies on two correspondences.  The first correspondence is the Riemann-Hilbert correspondence, which furnishes an equivalence between the category of equivariant perverse sheaves on $X({^\vee}\mathcal{O}, \mathrm{R}_{\mathbb{C}/\mathbb{R}} \mathrm{GL}_{N}^{\Gamma})$ and a category of equivariant D-modules on the same space (\cite{ABV}*{Theorem 7.9}, \cite{bbd}*{Theorem VIII.14.4}).  The second correspondence is the Beilinson-Bernstein localization theorem, which furnishes an equivalence between the category of equivariant D-modules and a category of Harish-Chandra modules (\cite{ABV}*{Theorem 8.3}, \cite{ICIII}*{Proposition 1.2}).

Combining the two correspondences produces a bijection between a set of irreducible perverse sheaves and a set of irreducible representations (Harish-Chandra modules).   The set of irreducible perverse sheaves is
$$\{P(\xi): \xi \in \Xi({^\vee}\mathcal{O}, {^\vee}\mathrm{R}_{\mathbb{C}/\mathbb{R}} \mathrm{GL}_{N}^{\Gamma})\}$$
(Section \ref{sec:ABV-packetsdef}).
We denote the set of corresponding representations by ${^\vee}\Pi({^\vee}\mathcal{O}, \mathrm{GL}_{N}(\mathbb{C}))$, so that the bijection may be written as
\begin{equation}
  \label{RHBB}
\{P(\xi): \xi \in \Xi({^\vee}\mathcal{O}, {^\vee}\mathrm{R}_{\mathbb{C}/\mathbb{R}} \mathrm{GL}_{N}^{\Gamma})\}
  \longleftrightarrow {^\vee}\Pi({^\vee}\mathcal{O}, \mathrm{GL}_{N}(\mathbb{C})).
\end{equation}

The notation for the set of representations on the right hints at some manner of duality with $\Pi({^\vee}\mathcal{O}, \mathrm{GL}_{N}(\mathbb{C}))$.  The particulars of this duality are given in \cite{aam}*{Section 4.2} for $\mathrm{GL}_{N}(\mathbb{R})$.  The arguments there apply just as well to $\mathrm{GL}_{N}(\mathbb{C})$ and are summarized as follows.  By Lemma \ref{XXXi}  a complete geometric parameter $\xi \in \Xi({^\vee}\mathcal{O}, {^\vee}\mathrm{R}_{\mathbb{C}/\mathbb{R}} \mathrm{GL}_{N}^{\Gamma})$ corresponds to a unique Atlas parameters $(x,y) \in \mathcal{X}_{{^\vee}\rho} \times
{^\vee}\mathcal{X}_{\lambda}$.  By reversing the order of the entries in the Atlas parameter, one obtains an Atlas parameter
$$(y,x)  \in {^\vee}\mathcal{X}_{\lambda} \times \mathcal{X}_{{^\vee}\rho}$$
for the group
\begin{equation}
  \label{vogandualgroup}
        {^\vee}\mathrm{R}_{\mathbb{C}/ \mathbb{R}} \mathrm{GL}_{N}(\lambda) = \mbox{  centralizer in  }{^\vee}\mathrm{R}_{\mathbb{C}/ \mathbb{R}} \mathrm{GL}_{N} \mbox{  of  } \exp(2 \uppi i \lambda)
\end{equation}
(\emph{cf.} \cite{aam}*{Lemma 4.2}).  The irreducible representation $J(y,x, {^\vee}\rho)$ (\emph{cf.} (\ref{abvij})) is a $(({^\vee}\mathfrak{gl}_{N} \times {^\vee}\mathfrak{gl}_{N})(\lambda), {^\vee}\tilde{K}_{y})$-module, where ${^\vee}\tilde{K}_{y}$ is a two-fold cover of ${^\vee}K_{y}$ (\emph{cf.} (\ref{kdef})).  It has infinitesimal character ${^\vee}\rho$.  This representation may be made plainer by computing that ${^\vee}\mathrm{R}_{\mathbb{C}/ \mathbb{R}} \mathrm{GL}_{N}(\lambda)$ is a product of groups $\prod_{i} {^\vee}\mathrm{R}_{\mathbb{C}/ \mathbb{R}} \mathrm{GL}_{n_{i}}$.  As  noted in Section \ref{extended}, this product has only one real form, namely $\prod_{i} \mathrm{GL}_{n_{i}}(\mathbb{C})$.  Consequently, $J(y,x, {^\vee}\rho)$ is a representation of a double-cover of $\prod_{i} \mathrm{GL}_{n_{i}}(\mathbb{C})$.

Bijection (\ref{RHBB}) is given by 
$$P(\xi) \mapsto J(y,x, {^\vee}\rho)$$
(\cite{aam}*{Proposition 4.3}).  We streamline the notation by the identification $\xi = (x,y)$ using Lemma \ref{XXXi}, and defining the dual parameter ${^\vee}\xi$ by
\begin{equation}
  \label{vogandual1}
        {^\vee}\xi = (y,x) \Longleftrightarrow \xi = (x,y).
\end{equation}
We denote the dual representation by
$$\pi({^\vee}\xi) = J(y,x, {^\vee}\rho).$$
In this way, bijection (\ref{RHBB}) takes the form
$$P(\xi) \mapsto \pi({^\vee}\xi), \quad \xi \in \Xi({^\vee}\mathcal{O}, {^\vee}\mathrm{R}_{\mathbb{C}/\mathbb{R}} \mathrm{GL}_{N}^{\Gamma}).$$
Every representation $\pi({^\vee}\xi)$ is the unique irreducible (Langlands) quotient of a standard module, which we denote by $M({^\vee}\xi)$ (\cite{AVParameters}*{(20)}).  Bijection (\ref{RHBB}) extends to an isomorphism of Grothendieck groups
\begin{equation}
    \label{rhbb}
K X\left({^\vee}\mathcal{O}, {^\vee}\mathrm{R}_{\mathbb{C}/\mathbb{R}} \mathrm{GL}_{N}^{\Gamma}\right) \cong K
{^\vee}\Pi\left({^\vee}\mathcal{O}, \mathrm{GL}_{N}(\mathbb{C})\right).
\nomenclature{$K{^\vee}\Pi\left({^\vee}\mathcal{O},\mathrm{GL}_{N}(\mathbb{C})\right)$}{dual Grothendieck group}  
\end{equation}
which satisfies
$$(-1)^{d(\xi)} \mu(\xi) \rightarrow M({^\vee}\xi), \quad \xi \in \Xi({^\vee}\mathcal{O}, {^\vee}\mathrm{R}_{\mathbb{C}/\mathbb{R}} \mathrm{GL}_{N}^{\Gamma})$$
(\cite{aam}*{Proposition 4.3}).  

The isomorphism (\ref{rhbb}) may be generalized to the twisted setting as well.
Recall from (\ref{canext}) that  the Atlas extension $\pi(\xi)^{+}$ is defined from a preferred extended parameter.   The same extended parameter also determines a unique extension $\pi({^\vee}\xi)^{+}$ of $\pi({^\vee}\xi)$ (\cite{AVParameters}*{(39h)}). It is a $(({^\vee}\mathfrak{gl}_{N} \times {^\vee}\mathfrak{gl}_{N})(\lambda), {^\vee}\tilde{K}_{y} \rtimes \langle \vartheta \rangle)$-module, which is the unique irreducible quotient of an extension $M({^\vee}\xi)^{+}$ of $M({^\vee}\xi)$.
Define a bijection
$$P(\xi)^+\mapsto \pi({^\vee}\xi)^+, \quad \xi \in
\Xi({^\vee}\mathcal{O},{^\vee}\mathrm{R}_{\mathbb{C}/\mathbb{R}} \mathrm{GL}_{N}^{\Gamma})^\vartheta.$$
 The bijection induces an isomorphism of $\mathbb{Z}$-modules
\begin{equation*}
K X({^\vee}\mathcal{O}, {^\vee}\mathrm{R}_{\mathbb{C}/\mathbb{R}} \mathrm{GL}_{N}^{\Gamma}, \vartheta) \cong K
       {^\vee}\Pi({^\vee}\mathcal{O}, 
       \mathrm{GL}_{N}(\mathbb{C}), \vartheta)
\end{equation*}
which satisfies
$$(-1)^{d(\xi)} \mu(\xi)^{+} \mapsto M({^\vee}\xi)^{+}, \quad \xi \in \Xi({^\vee}\mathcal{O},{^\vee}\mathrm{R}_{\mathbb{C}/\mathbb{R}} \mathrm{GL}_{N}^{\Gamma})^\vartheta$$
(\cite{aam}*{Proposition 4.5}).

The representation-theoretic replacement for $K X({^\vee}\mathcal{O}, {^\vee}\mathrm{R}_{\mathbb{C}/\mathbb{R}} \mathrm{GL}_{N}^{\Gamma}, \vartheta)$ is the $\mathbb{Z}$-module  $K{^\vee}\Pi({^\vee}\mathcal{O}, \mathrm{GL}_{N}(\mathbb{C}), \vartheta)$.  Making this replacement in Theorem \ref{twistpairing}, and taking into account the fixed relationship between $d(\xi)$ and $l^{I}(\xi)$ (\cite{AMR1}*{Proposition B.1}), we obtain
\begin{lem}[\cite{aam}*{Lemma 4.6}]
\label{twistpairing2}
Theorem \ref{twistpairing} is equivalent to the following assertion.
The pairing
\begin{equation}
  \label{pair3}
\langle \cdot , \cdot \rangle: K\Pi({^\vee}\mathcal{O},
  \mathrm{GL}_{N}(\mathbb{C}), \vartheta) \times K
         {^\vee}\Pi({^\vee}\mathcal{O}, \mathrm{GL}_{N}(\mathbb{C}),
\vartheta) \rightarrow \mathbb{Z}
\end{equation}
  defined by
$$\langle M(\xi)^{+}, M({^\vee}\xi')^{+} \rangle  =  (-1)^{
    l^{I}_{\vartheta}(\xi)} \, \delta_{\xi, \xi'}$$
satisfies
$$\langle \pi(\xi)^{+}, \pi({^\vee}\xi')^{+} \rangle = 
  (-1)^{l^{I}_{\vartheta}(\xi)} \, \delta_{\xi, \xi'}$$
where $\xi, \xi' \in
 \Xi({^\vee}\mathcal{O}, {^\vee}\mathrm{R}_{\mathbb{C}/\mathbb{R}} \mathrm{GL}_{N}^{\Gamma})^\vartheta$.
\end{lem}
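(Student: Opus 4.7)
The plan is to transport the pairing (\ref{pair2}) through the Vogan duality isomorphism
$$
K X({^\vee}\mathcal{O}, {^\vee}\mathrm{R}_{\mathbb{C}/\mathbb{R}} \mathrm{GL}_{N}^{\Gamma}, \vartheta) \;\cong\; K{^\vee}\Pi({^\vee}\mathcal{O}, \mathrm{GL}_{N}(\mathbb{C}), \vartheta)
$$
recalled just above the lemma, which satisfies $(-1)^{d(\xi)}\mu(\xi)^{+}\mapsto M({^\vee}\xi)^{+}$ and $P(\xi)^{+}\mapsto \pi({^\vee}\xi)^{+}$. I will check that, under this identification, the defining formula (\ref{pairdef2}) for the sheaf-theoretic pairing becomes precisely the defining formula of the pairing (\ref{pair3}), and similarly that the irreducible-pairing assertion of Theorem \ref{twistpairing} translates to the irreducible-pairing assertion of the lemma.

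First I would transport the definition. Under the isomorphism $\mu(\xi')^{+}\leftrightarrow (-1)^{d(\xi')}M({^\vee}\xi')^{+}$, identity (\ref{pairdef2}) is equivalent to
$$
\langle M(\xi)^{+}, M({^\vee}\xi')^{+}\rangle \;=\; (-1)^{d(\xi')}\,(-1)^{l^{I}(\xi)-l^{I}_{\vartheta}(\xi)}\,\delta_{\xi,\xi'}.
$$
Since the right-hand side is zero unless $\xi=\xi'$, I may set $\xi=\xi'$ and then apply the parity relation $d(\xi)+l^{I}(\xi)\equiv 0\pmod 2$ supplied by \cite{AMR1}*{Proposition B.1}. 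This collapses the sign to $(-1)^{l^{I}_{\vartheta}(\xi)}\delta_{\xi,\xi'}$, which is exactly the definition of the pairing (\ref{pair3}) in the statement of the lemma. The same substitution of $P(\xi')^{+}\mapsto \pi({^\vee}\xi')^{+}$ (with no extra sign) converts the conclusion of Theorem \ref{twistpairing} into
$$
\langle \pi(\xi)^{+}, \pi({^\vee}\xi')^{+}\rangle \;=\; (-1)^{d(\xi)}\,(-1)^{l^{I}(\xi)-l^{I}_{\vartheta}(\xi)}\,\delta_{\xi,\xi'},
$$
and the same parity argument collapses this to $(-1)^{l^{I}_{\vartheta}(\xi)}\delta_{\xi,\xi'}$, matching the conclusion of the lemma.

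For the converse direction I would simply read the same chain of equivalences in reverse: starting from the lemma's definition and assertion, the Vogan duality identification and the parity relation reintroduce the signs $(-1)^{d(\xi')}$ and $(-1)^{d(\xi)}$ on the right, reproducing (\ref{pairdef2}) and the assertion of Theorem \ref{twistpairing}. No new input is required beyond the fact that the Vogan duality isomorphism is a $\mathbb{Z}$-module isomorphism intertwining the two bases $\{(-1)^{d(\xi)}\mu(\xi)^{+}\}$ and $\{M({^\vee}\xi)^{+}\}$ on one side, and $\{P(\xi)^{+}\}$ and $\{\pi({^\vee}\xi)^{+}\}$ on the other.

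The only non-bookkeeping ingredient is the parity statement $d(\xi)\equiv l^{I}(\xi)\pmod 2$, and this is precisely the content of \cite{AMR1}*{Proposition B.1}, so no genuine obstacle arises. The main thing to verify carefully is that the Vogan duality bijection really does send the preferred extensions to the preferred extensions (so that no stray sign from the $U_{2}$-ambiguity in (\ref{twistgroth1}) and (\ref{twistsheafgroth}) appears when we pass between $P(\xi)^{+}$ and $\pi({^\vee}\xi)^{+}$); this is built into the construction via the common extended parameter $(\uplambda,\uptau,0,0)$ of (\ref{pextparam}) in the discussion preceding the lemma, as used in \cite{aam}*{Proposition 4.5}. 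With this compatibility in hand the equivalence of the two formulations is purely formal.
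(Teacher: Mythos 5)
Your proposal is correct and is essentially the paper's own argument: the paper simply cites the Vogan duality isomorphism $(-1)^{d(\xi)}\mu(\xi)^{+}\mapsto M({^\vee}\xi)^{+}$, $P(\xi)^{+}\mapsto \pi({^\vee}\xi)^{+}$ (from \cite{aam}*{Proposition 4.5}) together with the parity $d(\xi)\equiv l^{I}(\xi)\pmod 2$ from \cite{AMR1}*{Proposition B.1}, and observes that the signs collapse exactly as you have written out. Your remark about needing the compatibility of the preferred extensions with Vogan duality (through the shared preferred extended parameter $(\uplambda,\uptau,0,0)$) is a genuine and correct observation, though the paper treats it as implicit in the construction.
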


\subsection{Twisted Hecke modules}
\label{twisthmodule}

The proof of Theorem \ref{twistpairing} relies on a Hecke algebra
and Hecke modules, which we introduce in the context of (\ref{pair3}).
In the twisted setting, Lusztig and Vogan
define a Hecke algebra which we denote by $\mathcal{H}(\lambda)$
(\cite{LV2014}*{Section 3.1}).  This Hecke algebra
acts on the Hecke modules
$$
\mathcal{K} \Pi({^\vee}\mathcal{O},
\mathrm{GL}_{N}(\mathbb{C}), \vartheta) = K \Pi({^\vee}\mathcal{O},
\mathrm{GL}_{N}(\mathbb{C}), \vartheta) \otimes_{\mathbb{Z}}
\mathbb{Z}[q^{1/2}, q^{-1/2}]
\nomenclature{$\mathcal{K} \Pi({^\vee}\mathcal{O},
\mathrm{GL}_{N}(\mathbb{C}), \vartheta)$}{twisted Hecke module}
$$
and
$$
\mathcal{K} {^\vee} \Pi({^\vee}\mathcal{O}, \mathrm{GL}_{N}(\mathbb{C}), \vartheta) =
K {^\vee} \Pi({^\vee}\mathcal{O}, \mathrm{GL}_{N}(\mathbb{C}), \vartheta)
\otimes_{\mathbb{Z}} \mathbb{Z}[q^{1/2}, q^{-1/2}]
\nomenclature{$\mathcal{K} {^\vee} \Pi({^\vee}\mathcal{O}, \mathrm{GL}_{N}(\mathbb{C}))$}{dual twisted Hecke module}
$$
as in \cite{LV2014}*{Section 7}.  We  extend the pairing
(\ref{pair3}) to these Hecke modules
\begin{equation}
  \label{pair4}
\langle \cdot  , \cdot \rangle:\mathcal{K}\Pi({^\vee}\mathcal{O},
\mathrm{GL}_{N}(\mathbb{C}),\vartheta) \times \mathcal{K} {^\vee}\Pi(  
{^\vee}\mathcal{O},{^\vee} \mathrm{GL}_{N}(\mathbb{C}), \vartheta) \rightarrow 
\mathbb{Z}[q^{1/2},q^{-1/2}],
\end{equation}
by setting
$$\langle M(\xi)^{+}, M({^\vee}\xi')^{+}\rangle  = (-1)^{l^{I}_{\vartheta}(\xi)}
\, q^{\left(l^{I}(\xi)+l^{I}({^\vee}\xi')\right)/2}\, \delta_{\xi, \xi'}$$
for all $\xi, \xi' \in \Xi({^\vee}\mathcal{O},
{^\vee}\mathrm{R}_{\mathbb{C}/\mathbb{R}} \mathrm{GL}_{N}^{\Gamma})^\vartheta$.  In view of the
Kronecker delta, the term
$q^{1/2\left(l^{I}(\xi)+l^{I}({^\vee}\xi')\right)}$ in the pairing
could be replaced by
$q^{1/2\left(l^{I}(\xi)+l^{I}({^\vee}\xi)\right)}$ or
$q^{1/2\left(l^{I}(\xi')+l^{I}({^\vee}\xi')\right)}$.  In fact, both
of the latter terms are independent of $\xi$ or $\xi'$ (\cite{aam}*{Lemma 4.7}).

To say more about the Hecke algebra $\mathcal{H}(\lambda)$, we must examine the set of integral roots $R(\lambda)$ (\ref{R}).
Let $\kappa$ be a $\vartheta$-orbit on the set of simple
\nomenclature{$\kappa$}{$\vartheta$-orbit of simple roots}
roots  of $R^{+}(\lambda)$.  
The orbit $\kappa$ is equal to one of the following:
\begin{align} 
\nonumber\text{one root }&\{\alpha=\vartheta(\alpha)\} &(\text{type }
1)~ & \\  
 \text{two roots
}&\{\alpha, \beta=\vartheta(\alpha)\},~\quad\left<\alpha, {^\vee}\beta\right>=0 
&(\text{type } 2)~ \label{simplekappa}\\  
\nonumber \text{two roots }&\{\alpha,\beta=\vartheta(\alpha)\},~\quad\left<
\alpha, {^\vee}\beta\right>=-1 & (\text{type } 3).
\end{align}
It is clear that our automorphism $\vartheta$ renders all orbits $\kappa$ to be of type 2.  This is a notable simplification in our setting.

Write $W(\lambda)$ for the Weyl group of the integral roots
$R(\lambda)$, and let 
$$W(\lambda)^{\vartheta}=\{w\in W(\lambda):\vartheta(w)=w \}.$$
\nomenclature{$W(\lambda)^{\vartheta}$}{$\vartheta$-fixed integral Weyl group}
The group $W(\lambda)^{\vartheta}$ 
is a Coxeter group (\cite{LV2014}*{Section 4.3}) with generators 
\begin{equation}
\label{simplekappa1}
w_{\kappa}=
s_{\alpha}s_{\vartheta( \alpha)}, \quad \kappa = \{\alpha, \vartheta(\alpha) \}.
\end{equation}
The Hecke algebra $\mathcal{H}(\lambda)$ (\cite{AVParameters}*{Section 10},
\cite{LV2014}*{Section 4.7})  
\nomenclature{$\mathcal{H}(\lambda)$}{twisted Hecke algebra}
is a free  $\mathbb{Z}[q^{1/2},q^{-1/2}]$-algebra with basis 
\begin{equation*}
  \{T_{w}:w\in W(\lambda)^{\vartheta} \}.
  \end{equation*}
It is a consequence of \cite{LV2014}*{Equation 4.7 (a)}
that $\mathcal{H}(\lambda)$ is generated by
the operators  $T_{\kappa}:=T_{w_{\kappa}}
\nomenclature{$T_{\kappa}$}{Hecke operator}
$, where  $\kappa$  is a
$\vartheta$-orbit as in (\ref{simplekappa}).

The action of $T_{\kappa}$  is defined in terms of the types listed in (\ref{simplekappa}), which for us are only of type 2.   The action also depends on the relationship of $\kappa$ relative to a fixed parameter $\xi \in \Xi({^\vee}\mathcal{O},
{^\vee}\mathrm{R}_{\mathbb{C}/\mathbb{R}} \mathrm{GL}_{N}^{\Gamma})^\vartheta$. To say a bit more about this dependence, recall that the parameter $\xi$ 
is equivalent to an Atlas parameter $(x,y)$ as in Lemma \ref{XXXi}.  The
adjoint action of $x$ acts as an involution on $R(\lambda)$ (see (\ref{e:p})).
This action separates the
$\vartheta$-orbits of roots $\kappa$ into various types, \emph{e.g.} real,
imaginary, etc.  Lusztig and Vogan
combine this information with the types of (\ref{simplekappa}) and
also with the types defined by Vogan in \cite{greenbook}*{Section 8.3}.  The list of
combined types may be found in \cite{LV2014}*{Section 7} or
\cite{AVParameters}*{Table 1}.

Very few of  the types that appear in these lists
are relevant for 
$\mathrm{GL}_{N}(\mathbb{C})$.  We have already observed that only type 2 orbits appear in the sense of (\ref{simplekappa}).  Furthermore, $\mathrm{GL}_{N}(\mathbb{C})$ has only complex roots relative to $x$ (\ref{del0action}). 
The only relevant types for
$\mathrm{GL}_{N}(\mathbb{C})$ in \cite{AVParameters}*{Table 1} are labelled as
\begin{equation}
  \label{glntypes}
  \mathtt{2C+, 2C-,2Ci, 2Cr}.\\
\end{equation}
That only these four types are relevant to our setting is another notable simplification.  
Any $\vartheta$-orbit $\kappa$ also has a type relative to the dual
parameter ${^\vee}\xi$ (\ref{vogandual1}).  The dual parameter is
equivalent to the Atlas parameter $(y,x)$ and the adjoint action of
$y$ is essentially the negative of the adjoint action of $x$
(\cite{AVParameters}*{Definition 3.10}). In
consequence, it is easy to compute the types and see that we again recover exactly those listed in  (\ref{glntypes}).

In \cite{LV2014}*{Section 4} and \cite{AVParameters}*{Section7} the Hecke algebra action on 
$\mathcal{K}\Pi({^\vee}\mathcal{O}, \mathrm{GL}_{N}(\mathbb{C}),\vartheta)$
is given by defining the action of the operators $T_{\kappa}$ 
on the generating set $\{M(\xi)^{+}:\xi\in
\Xi({^\vee}\mathcal{O},{^\vee}\mathrm{R}_{\mathbb{C}/ \mathbb{R}} \mathrm{GL}_{N}^{\Gamma})^\vartheta \}$.  The 
actions are presented in terms of extended Atlas parameters in \cite{AVParameters}*{Proposition 10.4}.  A case-by-case summary of the
actions is given in \cite{AVParameters}*{Table 5}.

There are obvious parallel constructions for (\ref{vogandualgroup}) which define a Hecke algebra
${^\vee}\mathcal{H}(\lambda)$ and a Hecke module structure for
$\mathcal{K} {^\vee}\Pi({^\vee}\mathcal{O},
\mathrm{GL}_{N}(\mathbb{C}), \vartheta)$. 
The Hecke algebra 
${^\vee}\mathcal{H}(\lambda)$ for (\ref{vogandualgroup}) is
generated by Hecke operators 
$T_{{^\vee}\kappa}$,  where ${^\vee}\kappa$ runs over the simple coroots
corresponding to $\kappa$.
\nomenclature{$T_{{^\vee}\kappa}$}{Hecke operator}
The bijection between the two sets of operators
$$\{T_{\kappa}: \kappa \in R^{+}(\lambda) \mbox{ simple}\}
\longleftrightarrow \{T_{{^\vee}\kappa} : \kappa \in R^{+}(\lambda) \mbox{
  simple}\}$$
extends to an isomorphism $\mathcal{H}(\lambda) \cong
{^\vee}\mathcal{H}(\lambda)$.
For this reason, we also regard
$\mathcal{K} {^\vee}\Pi({^\vee}\mathcal{O}, 
{^\vee}\mathrm{GL}_{N}(\mathbb{C}), \vartheta)$ as an
$\mathcal{H}(\lambda)$-module.

\subsection{A Hecke module isomorphism}
\label{heckesection}

Let
$$\mathcal{K} {^\vee}\Pi({^\vee}\mathcal{O}, \mathrm{GL}_{N}(\mathbb{C}),\vartheta)^{*} = \mathrm{Hom}_{\mathbb{Z}[q^{1/2} q^{-1/2}]} \left( \mathcal{K} {^\vee}\Pi({^\vee}\mathcal{O}, \mathrm{GL}_{N}(\mathbb{C}),\vartheta),\ \mathbb{Z}[q^{1/2} q^{-1/2}] \right).$$
The extended pairing  (\ref{pair4}) induces a $\mathbb{Z}[q^{1/2}, q^{-1/2}]$-module isomorphism
\begin{align}
\label{eq:dualformalmap}
\mathcal{K} \Pi({^\vee}\mathcal{O}, \mathrm{GL}_{N}(\mathbb{C}),
\vartheta) &\rightarrow   
\mathcal{K} {^\vee}\Pi({^\vee}\mathcal{O}, \mathrm{GL}_{N}(\mathbb{C}),\vartheta)^{*}\\
M(\xi)^{+} &\mapsto \langle M(\xi)^{+}, \cdot \,  \rangle. \nonumber
\end{align}
We endow $\mathcal{K}
{^\vee}\Pi({^\vee}\mathcal{O}, \mathrm{GL}_{N}(\mathbb{C}),\vartheta)^{*}$
with the Hecke module structure given in \cite{AVParameters}*{Section 11} (\emph{cf.} \cite{aam}*{Section 4.5}).  Specifically, for any $\mu \in \mathcal{K}
{^\vee}\Pi({^\vee}\mathcal{O}, \mathrm{GL}_{N}(\mathbb{C}),\vartheta)^{*}$ and $\vartheta$-orbit $\kappa$ as in (\ref{simplekappa})
\begin{equation}
\label{17.15e}
T^{*}_{w_{\kappa}}\cdot \mu = -(T_{w_{\kappa}})^{t}\cdot
\mu+(q^{l(w_{\kappa})}-1)\mu,
\end{equation}
where $l(w)$ is the usual length of $w$ with
respect to the simple reflections, and
\nomenclature{$l(w)$}{length in Weyl group}
$(T_{w})^{t}$ is the transpose of $T_{w}$.   Since both the domain and codomain of (\ref{eq:dualformalmap}) are $\mathcal{H}(\lambda)$-modules it is natural to ask whether  (\ref{eq:dualformalmap}) extends to a $\mathcal{H}(\lambda)$-module isomorphism.
\begin{prop}
  \label{Hisomorphism}
The map (\ref{eq:dualformalmap}) is an isomorphism of
$\mathcal{H}(\lambda)$-modules. 
\end{prop}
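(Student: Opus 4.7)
The plan is as follows. Since (\ref{eq:dualformalmap}) is already a $\mathbb{Z}[q^{1/2},q^{-1/2}]$-module isomorphism by construction, what remains is to verify that it is equivariant for the $\mathcal{H}(\lambda)$-action. By \cite{LV2014}*{Equation~4.7(a)} the algebra $\mathcal{H}(\lambda)$ is generated by the operators $T_{w_{\kappa}}$, where $\kappa$ ranges over the $\vartheta$-orbits of simple roots of $R^{+}(\lambda)$. It therefore suffices to check, for each such $\kappa$ and every pair $\xi, \xi' \in \Xi({^\vee}\mathcal{O}, {^\vee}\mathrm{R}_{\mathbb{C}/\mathbb{R}} \mathrm{GL}_{N}^{\Gamma})^\vartheta$, that
\begin{equation*}
\langle T_{w_{\kappa}}\cdot M(\xi)^{+},\, M({^\vee}\xi')^{+}\rangle = -\langle M(\xi)^{+},\, T_{{^\vee}w_{\kappa}}\cdot M({^\vee}\xi')^{+}\rangle + (q^{l(w_{\kappa})}-1)\,\langle M(\xi)^{+},\, M({^\vee}\xi')^{+}\rangle,
\end{equation*}
which is just (\ref{17.15e}) applied to the functional $\langle M(\xi)^{+},\,\cdot\,\rangle$ and evaluated at $M({^\vee}\xi')^{+}$.

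I would then exploit a substantial simplification peculiar to our group. By the discussion preceding (\ref{glntypes}), only the four types $\mathtt{2C+},\ \mathtt{2C-},\ \mathtt{2Ci},\ \mathtt{2Cr}$ occur, both for the action of $\kappa$ on $\xi$ and for the action of ${^\vee}\kappa$ on ${^\vee}\xi$. Moreover, the comparison of the adjoint actions of $x$ and $y$ in Section~\ref{duality} shows that Vogan duality exchanges the pairs $(\mathtt{2C+}, \mathtt{2C-})$ and $(\mathtt{2Ci}, \mathtt{2Cr})$, so the type of $\kappa$ at $\xi$ determines that of ${^\vee}\kappa$ at ${^\vee}\xi$. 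The next step is to read off from \cite{AVParameters}*{Proposition~10.4, Table~5} the explicit expression for $T_{w_{\kappa}}\cdot M(\xi)^{+}$ in each of the four cases. In types $\mathtt{2C\pm}$ the operator sends $M(\xi)^{+}$ to a power of $q^{1/2}$ times $M(w_{\kappa}\cdot\xi)^{+}$, where $w_{\kappa}\cdot\xi$ denotes the cross-action; in types $\mathtt{2Ci},\ \mathtt{2Cr}$ an additional Cayley-transform summand appears. Since
\begin{equation*}
\langle M(\xi)^{+},\, M({^\vee}\xi')^{+}\rangle = (-1)^{l^{I}_{\vartheta}(\xi)}\, q^{(l^{I}(\xi)+l^{I}({^\vee}\xi'))/2}\, \delta_{\xi,\xi'},
\end{equation*}
both sides of the required identity collapse, in each case, to a single explicit monomial in $q^{1/2}$, and equality becomes the matching of signs and of exponents of $q^{1/2}$.

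The main (and only) obstacle is the bookkeeping for these signs and length shifts across the four types. The relevant inputs are the prescribed $\pm 1$ shifts of $l^{I}$ and $l^{I}_{\vartheta}$ under the cross-action of $w_{\kappa}$, the length $l(w_{\kappa})=2$ of the generator (recall from (\ref{simplekappa}) and (\ref{simplekappa1}) that all orbits are of type~2 and that $s_{\alpha}$ commutes with $s_{\vartheta(\alpha)}$), and the parity with which $(-1)^{l^{I}_{\vartheta}}$ changes. This verification is a strict specialization of the one performed for $\mathrm{GL}_{N}(\mathbb{R})$ in \cite{aam}*{Proposition~4.8}, with the full list in \cite{AVParameters}*{Table~1} truncated to the four rows listed in (\ref{glntypes}). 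For this reason I would not redo the computation from scratch but rather invoke the tabulated data of \cite{AVParameters}*{Table~5}, restricted to these types, and arrange the four cases side by side to confirm that each row yields the desired identity.
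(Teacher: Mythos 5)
Your proposal takes essentially the same route as the paper: reduce to Hecke-equivariance on the generators $T_{w_{\kappa}}$ via the dual-action formula (\ref{17.15e}), observe that only the four types in (\ref{glntypes}) occur and that Vogan duality exchanges them in pairs, and verify the resulting identity case by case from \cite{AVParameters}*{Table 5}, invoking \cite{aam}*{Proposition 4.8} for the bookkeeping. This is exactly the structure of the paper's proof (which writes the identity as (\ref{17.17}), reduces it to (\ref{17.17a}) using $l^{I}(\xi_{1})+l^{I}({^\vee}\xi_{1})=l^{I}(\xi_{2})+l^{I}({^\vee}\xi_{2})$, and spells out the $\mathtt{2Ci}$ case).

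Two small corrections to your summary of the Table~5 formulas, neither fatal since you ultimately defer to the table: in type $\mathtt{2C-}$ the operator $T_{w_{\kappa}}$ produces \emph{two} summands (a multiple of $M(\xi)^{+}$ plus a multiple of the cross action), not a single term; and the extra summand occurring in types $\mathtt{2Ci}$ and $\mathtt{2Cr}$ for $\mathrm{R}_{\mathbb{C}/\mathbb{R}}\mathrm{GL}_{N}$ is again given by a cross action (as in (\ref{crossxx1})), not a Cayley transform --- indeed $\mathrm{GL}_{N}(\mathbb{C})$ has no Cayley transforms. Correspondingly, after pairing one does not get ``a single explicit monomial'' on each side but rather a coefficient polynomial such as $q$ or $q+1$; the verification is matching these polynomials, up to the sign $(-1)^{l^{I}_{\vartheta}(\xi_{2})-l^{I}_{\vartheta}(\xi_{1})}$, which is precisely what the paper works through in the $\mathtt{2Ci}$ example.
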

The proof of this proposition is a simplified version of the proof of \cite{aam}*{Proposition 4.8}. We provide a sketch.
In view of (\ref{17.15e}), Proposition \ref{Hisomorphism} is equivalent to
\begin{equation}
\label{17.17}
\langle T_{w_{\kappa}}M(\xi_{1}) ^{+},  M({^\vee} \xi_{2})^{+} \rangle
= \langle M(\xi_{1})^{+},  -T_{w_{\kappa}} M({^\vee}\xi_{2})^{+}
+(q^{l(w_{\kappa})}-1) M({^\vee}\xi_{2}) ^{+}\rangle 
\end{equation}
for all $\xi_{1}, \xi_{2} \in \Xi({^\vee}\mathcal{O}, 
{^\vee}\mathrm{R}_{\mathbb{C}/\mathbb{R}} \mathrm{GL}_{N}^{\Gamma})^\vartheta$ and $w_{\kappa}$ as
in (\ref{simplekappa1}).  Looking back to the definition of
(\ref{pair4}), the left-hand side of (\ref{17.17})  may be expressed as
\begin{align*}
\langle T_{w_{\kappa}}M(\xi_{1})^{+},  M({^\vee} \xi_{2})^{+} \rangle
 =
(-1)^{l^{I}_{\vartheta}(\xi_{2})} q^{(l^{I}(\xi_{2}) +
    l^{I}({^\vee}\xi_{2}))/2} \cdot ( \mbox{the coefficient of }
  M(\xi_{2})^{+}  \mbox{ in } T_{w_{\kappa}} M(\xi_{1})^{+}).  
\end{align*}
Similarly, the right-hand side of (\ref{17.17}) may be expressed as 
the product of $(-1)^{l^{I}_{\vartheta}(\xi_{1})} q^{(l^{I}(\xi_{1}) +
  l^{I}({^\vee}\xi_{1}))/2}$ with
$$ \mbox{the coefficient of }
M({^\vee}\xi_{1})^{+}  \mbox{ in }   -T_{w_{\kappa}}
M({^\vee}\xi_{2})^{+} +(q^{l(w_{\kappa})}-1) M({^\vee}\xi_{2})^{+}. $$ 
As in \cite{aam}*{Lemma 4.7}, 
$$l^{I}(\xi_{1}) +  l^{I}({^\vee}\xi_{1}) = l^{I}(\xi_{2}) + l^{I}({^\vee}\xi_{2}),$$
so Equation (\ref{17.17}) is equivalent to 
\begin{align}
\label{17.17a}
 &(-1)^{l^{I}_{\vartheta}(\xi_{2}) - l^{I}_{\vartheta}(\xi_{1})} \cdot
( \mbox{the coefficient of } M(\xi_{2})^{+}  \mbox{ in }
T_{w_{\kappa}} M(\xi_{1})^{+})\\ 
 \nonumber
 & = \mbox{ the coefficient of } M({^\vee}\xi_{1})^{+}  \mbox{ in }
 -T_{w_{\kappa}} M({^\vee}\xi_{2})^{+} +(q^{l(w_{\kappa})}-1)
 M({^\vee}\xi_{2})^{+}. 
\end{align}
The values of $T_{w_{\kappa}} M(\xi_{1})^{+}$ and $T_{w_{\kappa}} M({^\vee}\xi_{2})^{+}$ are known (\cite{AVParameters}*{Table 5}, \cite{aam}*{Proposition 4.9}).  
The proof of (\ref{17.17a}) may  therefore be achieved by a case-by-case analysis of the types of $\kappa$ relative to 
$\xi_{1}, \xi_{2}, {^\vee}\xi_{1}$ and ${^\vee}\xi_{2}$  (\ref{glntypes}).

We provide some more detail in the case that $\kappa = \{\alpha, \beta \}$ is of type  \texttt{2Ci} relative to $\xi_{1}$, leaving the remaining cases to the reader.   We identify $\xi_{1}$ with its corresponding Atlas parameter $(x_{1},y_{1})$ (Lemma \ref{XXXi}) and do likewise for all other parameters to come.  Type  \texttt{2Ci} implies $\beta = x_{1} \cdot \alpha$.

Let us start by computing the left-hand side of (\ref{17.17a}). According to \cite{AVParameters}*{Proposition 9.1 and Proposition 10.4}, we have 
\begin{align*}
T_{w_\kappa}M(\xi_1)^+\ &=\ qM(\xi_1)^+ + (q+1)M(\xi)^+,
\end{align*}
where $\xi =(x,y) \in \Xi({^\vee}\mathcal{O}, {^\vee}\mathrm{R}_{\mathbb{C}/\mathbb{R}} \mathrm{GL}_{N}^{\Gamma})$ is a parameter
satisfying
\begin{equation}
  \label{crossxx1}
  x =   s_{\alpha} x_1 s_{\alpha}^{-1} =\ s_{\alpha}s_{\beta} x_1 = x_{1} s_{\alpha} s_{\beta}
  \end{equation}
(\cite{AVParameters}*{(46o)}).
(We note that the signs appearing in \cite{AVParameters}*{Proposition 9.1} are always one for the Atlas extensions.) Consequently,
\begin{align*}
\text{the coefficient of }\ M(\xi_{2})^{+}\  \text{ in }\ 
T_{w_{\kappa}} M(\xi_{1})^{+}\ \text{ is }\ \left\{
\begin{array}{rl}
q,&  \text{if }\xi_2=\xi_1\\
(q+1),& \text{if }\xi_2=\xi\\
0,& \text{otherwise.}
\end{array}
\right.
\end{align*}
To compute the $\vartheta$-integral lengths
(\ref{thetalength}), observe that 
$$
x\cdot \{\alpha,\beta\}\ =\ s_{\alpha}s_{\beta}x_{1}\cdot \{\alpha,\beta\}=\ \{-\alpha,-\beta\},$$
and for any positive root  $\gamma\in R^{+}(\lambda)$ with $\gamma\neq \alpha,\beta$, we have
$$x\cdot \{\gamma,\vartheta\gamma\}\ =\ x_{1} s_{\alpha}s_{\beta}\{\gamma,\vartheta\gamma\}
\ =\ x_1\cdot \{\gamma',\vartheta\gamma'\},$$
where $\gamma'$ is a positive root in $R^{+}(\lambda)$. Hence, after identifying the roots in $R^{+}_{\vartheta}(\lambda)$ 
with $\vartheta$-orbits in $R^{+}(\lambda)$, we deduce
\begin{align*}
\left| \{\gamma \in R^{+}_{\vartheta}(\lambda)  :
x\cdot \gamma \in R^{+}_{\vartheta}(\lambda) \} \right|\ =\   \left| \{\gamma \in
R^{+}_{\vartheta}(\lambda)  :
x_1\cdot \gamma \in R^{+}_{\vartheta}(\lambda) \} \right|-1,
\end{align*}
and 
\begin{align*}
\dim\left((H^{\vartheta})^{x}\right)=\dim\left((H^{\vartheta})^{x_1}\right)-1.
\end{align*}
It follows that $l_{\vartheta}^{I}(\xi)=l_{\vartheta}^{I}(\xi_1)-1$ and $(-1)^{l_{\vartheta}^{I}(\xi) - l_{\vartheta}^{I}(\xi_1)} =
-1$. 
The left-hand side of (\ref{17.17a}) is therefore equal to 
\begin{align}
\label{leftside}
  \left\{
\begin{array}{rl}
q,&  \text{if }\xi_2=\xi_1\\
-(q+1),& \text{if }\xi_2= \xi\\
0,& \text{otherwise.}
\end{array}
\right.
\end{align}
Let us consider the right-hand side of (\ref{17.17a}), in which
${^\vee} \kappa$ is of type \texttt{2Cr} relative to ${^\vee}
\xi_{1}$.  According to \cite{AVParameters}*{Table 5},
$M({^\vee}\xi_{1})^{+}$ occurs in $T_{w_{\kappa}}
M({^\vee}\xi_{2})^{+}$ only if one of the following holds 
\begin{enumerate}
\item $M({^\vee}\xi_{1})^{+} = M({^\vee}\xi_{2})^{+}$,
\item $M({^\vee}\xi_{1})^{+}= s_{{}^{\vee}\alpha}\times M({}^{\vee}\xi_2)^{+}$,
\item $M({^\vee}\xi_{1})^{+}=w_{{^\vee}\kappa} \times M({^\vee}\xi_{2})^{+}$.
\end{enumerate}
The  third equation holds if and only if $M({^\vee}\xi_{2})^{+} =
w_{{^\vee}\kappa} \times M({^\vee}\xi_{1})^{+}$, and for ${^\vee}
\kappa$ of type \texttt{2Cr} relative to ${^\vee} \xi_{1}$ one obtains
$$w_{{^\vee}\kappa} \times M({^\vee}\xi_{1})^{+}\ =\ 
s_{{}^{\vee}\alpha}\times \left(s_{{}^{\vee}\beta} \times M({^\vee}\xi_{1})^{+}\right)\ =\ M({^\vee}\xi_{1})^{+}.$$
Therefore the third equation is equivalent to the first one.

The second equation is equivalent to
${^\vee}\xi_{2}$ being equal to ${^\vee} \xi$ ((\ref{crossxx1}),
\cite{aam}*{Proposition 4.9}).  Moreover, ${^\vee}\kappa$ is of type \texttt{2Ci} relative to ${^\vee}\xi$.  The results
\cite{AVParameters}*{Proposition 9.1 and Proposition 10.4} 
indicate  that
\begin{align*}
T_{w_{{}^{\vee}\kappa}}M\left({}^{\vee}\xi_1\right)^+\ &=\ (q^2 - q - 1)M({}^{\vee}\xi_1)^+ + (q^2 - q)M({}^{\vee}\xi)^+,\\
T_{w_{{}^{\vee}\kappa}}M\left({}^{\vee}\xi\right)^+\ &=\  q M({}^{\vee}\xi)^+ + (q+1)M({}^{\vee}\xi_1)^+.
\end{align*}
Therefore, the right-hand side of (\ref{17.17a}) equals 
\begin{align*}
\left\{
\begin{array}{rl}
q,&  \text{if }{}^{\vee}\xi_2={}^{\vee}\xi_1\\
-(q+1),& \text{if }{}^{\vee}\xi_2={}^{\vee}\xi\\
0,& \text{otherwise.}
\end{array}
\right.
\end{align*}
In all cases we have equality with (\ref{leftside}).

\subsection{Special bases and the proof of Theorem \ref{twistpairing}}

In addition to the $\mathcal{H}(\lambda)$-action on $\mathcal{K} \Pi({^\vee}\mathcal{O}, \mathrm{GL}_{N}(\mathbb{C}),
\vartheta)$, there is a $\mathbb{Z}$-linear involution $D$ on $\mathcal{K} \Pi({^\vee}\mathcal{O}, \mathrm{GL}_{N}(\mathbb{C}),
\vartheta)$ satisfying \nomenclature{$D$}{Verdier duality map}
\begin{equation}
  \label{verdual}
  \begin{aligned}
  D(q^{1/2} M(\xi)^{+}) &= q^{-1/2} D(
  (\xi)^{+})\\
  D((T_{\kappa} + 1) M(\xi)^{+} ) &= q^{-l(w_{\kappa})}(T_{\kappa} +
  1) \, D(M(\xi)^{+})
    \end{aligned}
\end{equation}
for all $\xi \in \Xi({^\vee}\mathcal{O}, 
{^\vee}\mathrm{R}_{\mathbb{C}/\mathbb{R}} \mathrm{GL}_{N}^{\Gamma})^\vartheta$.
This is the \emph{Verdier duality map} (\cite{LV2014}*{4.8 (e)-(f)}).  The Verdier duality map is uniquely determined by the additional conditions given in \cite{LV2014}*{8.1 (a)}.

The special basis we seek for $\mathcal{K} \Pi({^\vee}\mathcal{O}, \mathrm{GL}_{N}(\mathbb{C}), \vartheta)$ is a basis of eigenvectors for $D$. It is defined in terms of the twisted KLV-polynomials
$P^{\vartheta}({\xi'},{\xi})\in \mathbb{Z}[q^{1/2}, q^{-1/2}]$  defined
in \cite{LV2014}*{Section 0.1}.  Its definition and characteristics are summarized in following theorem.
\nomenclature{$P^{\vartheta}({\xi'},{\xi})$}{twisted KLV-polynomials}
\begin{thm}[\cite{LV2014}*{Theorem 5.2}]
\label{theo:defpoly}
For every $\xi \in \Xi({^\vee}\mathcal{O},
{^\vee}\mathrm{R}_{\mathbb{C}/\mathbb{R}} \mathrm{GL}_{N}^{\Gamma})^\vartheta$, define
\begin{equation}
  \label{basis}
C^{\vartheta}(\xi)=\sum_{\xi'\in \Xi({^\vee}\mathcal{O},
{^\vee}\mathrm{R}_{\mathbb{C}/\mathbb{R}} \mathrm{GL}_{N}^{\Gamma})^\vartheta}
(-1)^{l^{I}(\xi)-l^{I}(\xi')} \,P^{\vartheta}(\xi',\xi) 
\ M(\xi')^{+},
\nomenclature{$C^{\vartheta}(\xi)$}{}
\end{equation}
an element in $\mathcal{K}
\Pi({^\vee}\mathcal{O}, \mathrm{GL}_{N}(\mathbb{C}), \vartheta)$.
Then
\begin{enumerate}
\item ${D}({C}^{\vartheta}({\xi}))=q^{-l^{I}(\xi)}\, {C}^{\vartheta}({\xi})$
\item $P^{\vartheta}(\xi,\xi)=1$
\item $P^{\vartheta}(\xi', \xi) = 0$ if $\xi' \nleq {\xi}$
\item $\deg P^{\vartheta}(\xi',{\xi}) \leq
  (l^{I}({\xi})-l^{I}({\xi'})-1)/2$ if ${\xi'}\leq{\xi}$. 
\end{enumerate}
Conversely suppose  $\{\underline C(\xi',\xi)\}$ and 
$\{\underline{P}(\xi', \xi)\}$
satisfy \eqref{basis} and (1)-(4) above. 
Then $\underline P(\xi',\xi)=P^\vartheta(\xi',\xi)$ and $\underline
C(\xi',\xi)=C^\vartheta(\xi',\xi)$ for all $\xi',\xi \in
\Xi({^\vee}\mathcal{O}, {^\vee}\mathrm{R}_{\mathbb{C}/\mathbb{R}} \mathrm{GL}_{N}^{\Gamma})^{\vartheta}$. 
\end{thm}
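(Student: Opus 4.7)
The plan is to prove this by mimicking the Kazhdan--Lusztig construction in the twisted setting, treating $D$ and the Hecke action as the sole inputs and exploiting the fact that the compatibility relations in (\ref{verdual}) together with the degree constraints rigidly pin down the coefficients. Both the existence of the $C^\vartheta(\xi)$ and the uniqueness assertion follow from the same induction on the integral length $l^I(\xi)$, so I would package them together.

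For existence, I would proceed by induction on $l^I(\xi)$ with respect to a suitable partial order on $\Xi({^\vee}\mathcal{O},{^\vee}\mathrm{R}_{\mathbb{C}/\mathbb{R}}\mathrm{GL}_N^\Gamma)^\vartheta$ (the twisted analogue of the Bruhat order, inherited from the $W(\lambda)^\vartheta$-action on Atlas parameters). The base case is $\xi$ with $l^I(\xi)$ minimal: there are no $\xi' < \xi$, so I set $C^\vartheta(\xi) = M(\xi)^+$ and $P^\vartheta(\xi,\xi) = 1$, and check that this is a $D$-eigenvector with eigenvalue $q^{-l^I(\xi)}$ from the normalization of $D$ on $M(\xi)^+$ at minimal parameters. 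For the inductive step, pick a $\vartheta$-orbit $\kappa$ such that $w_\kappa$ lowers the length and set $\xi_0$ to be the corresponding lower parameter so that $C^\vartheta(\xi_0)$ is already constructed. Using the second relation in (\ref{verdual}), the element $(T_\kappa + 1) C^\vartheta(\xi_0)$ is automatically a $D$-eigenvector with the right eigenvalue $q^{-l^I(\xi)}$, but expanded in the basis $\{M(\xi')^+\}$ it will in general violate the degree bound (4) at certain $\xi' < \xi$. I then correct it by subtracting
\[
\sum_{\xi'' < \xi} a_{\xi''}(q)\, C^\vartheta(\xi''),
\]
where the coefficients $a_{\xi''}(q) \in \mathbb{Z}[q^{1/2},q^{-1/2}]$ are chosen to kill the ``excess'' degrees. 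The key point is that $D$-invariance forces each $a_{\xi''}(q)$ to be symmetric under $q^{1/2} \leftrightarrow q^{-1/2}$, while the degree bound demands strict inequality; the intersection of these two constraints yields a unique choice at each step, which is the standard KLV trick.

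For uniqueness (and hence the converse statement), suppose $\underline C(\xi)$ and $\underline P(\xi',\xi)$ satisfy conditions (1)--(4). Then $C^\vartheta(\xi) - \underline C(\xi)$ is still a $D$-eigenvector with eigenvalue $q^{-l^I(\xi)}$, vanishes on the $M(\xi)^+$ coefficient by (2), and its coefficient on $M(\xi')^+$ for $\xi' < \xi$ is $(-1)^{l^I(\xi) - l^I(\xi')}(P^\vartheta(\xi',\xi) - \underline P(\xi',\xi))$, a polynomial satisfying the strict degree bound (4). The $D$-invariance of this coefficient (induced from the $D$-eigenvector relation, which one unwinds using the relations (\ref{verdual}) and the previously-built $C^\vartheta(\xi')$ for $\xi' < \xi$) forces it to be symmetric under $q^{1/2} \leftrightarrow q^{-1/2}$. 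A Laurent polynomial that is both symmetric and has degree strictly less than $(l^I(\xi) - l^I(\xi') - 1)/2$ must vanish, so $\underline P(\xi',\xi) = P^\vartheta(\xi',\xi)$ for all $\xi' \leq \xi$ by downward induction on $l^I(\xi')$.

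The main obstacle I expect is verifying the degree bookkeeping during the inductive correction: one must show that at each step the polynomial coefficients produced by $(T_\kappa+1) C^\vartheta(\xi_0)$ have $q$-degrees that allow the subtraction of $C^\vartheta(\xi'')$'s to simultaneously restore the strict bound (4) \emph{and} preserve $D$-invariance. In the non-twisted case this relies on a case-by-case analysis of root types; in our setting the drastic simplification noted in Section \ref{twisthmodule}---that only orbits of type \texttt{2} occur, and only the types \texttt{2C\textpm}, \texttt{2Ci}, \texttt{2Cr} of (\ref{glntypes}) arise---means the case-check reduces to a handful of calculations of the Hecke action from \cite{AVParameters}*{Table 5}, each of which produces polynomials whose degrees are exactly at the borderline needed for the KLV symmetry argument. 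Once the base case and each of these finitely many cases are handled, the induction closes and Theorem \ref{theo:defpoly} follows.
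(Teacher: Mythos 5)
The paper does not prove this theorem; it is stated with the attribution ``[\cite{LV2014}*{Theorem 5.2}]'' and its proof is entirely deferred to Lusztig--Vogan. So there is no in-paper argument to compare against. What you have written is a reconstruction of the standard Kazhdan--Lusztig existence-and-uniqueness argument in the twisted Hecke-module setting, and this is indeed the strategy used in \cite{LV2014}*{Section 5} (see also \cite{LV2014}*{Sections 8--9} for the recursion via $T_\kappa + 1$ and the reduction to primitive pairs). In that sense your approach is the ``right'' one and is consistent with the cited source.

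Two small points worth tightening. First, the symmetry condition on the coefficients produced by $D$-invariance is not $q^{1/2} \leftrightarrow q^{-1/2}$-symmetry on the nose: if you rewrite a $D$-eigenvector $E$ (with eigenvalue $q^{-l^I(\xi)}$) in the inductively-constructed basis $\{C^\vartheta(\xi')\}$ as $E = \sum c(\xi') C^\vartheta(\xi')$, semi-linearity of $D$ forces the shifted relation $\overline{c(\xi')} = q^{\,l^I(\xi')-l^I(\xi)}\,c(\xi')$, i.e.\ the coefficients of $c(\xi')$ are palindromic about degree $\tfrac{1}{2}(l^I(\xi)-l^I(\xi'))$. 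Combined with the strict degree bound from (4), this is what forces $c(\xi')=0$; a plain symmetry claim is not quite enough. Second, your uniqueness argument implicitly uses that the change of basis from $\{M(\xi')^+\}$ to $\{C^\vartheta(\xi')\}$ (for $\xi'<\xi$, already constructed) is unitriangular, so the top coefficient in the $M$-basis equals the top coefficient in the $C$-basis; this should be said explicitly when you ``unwind'' the $D$-eigenvector relation. With those clarifications, your sketch matches the Lusztig--Vogan proof, including the observation that the root-type simplification (only types \texttt{2C\textpm}, \texttt{2Ci}, \texttt{2Cr} occur here) trims the Hecke-operator case-check.
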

The third property of this theorem uses a partial order on $\Xi({^\vee}\mathcal{O},
{^\vee}\mathrm{R}_{\mathbb{C}/\mathbb{R}} \mathrm{GL}_{N}^{\Gamma} )^\vartheta$, the \emph{Bruhat
  order}, which is defined in \cite{LV2014}*{Section 5.1} (\emph{cf.} \cite{ABV}*{(7.11)(f)}).

There is also a Verdier duality map ${^\vee}D$ for the module $\mathcal{K}
{^\vee}\Pi({^\vee}\mathcal{O}, \mathrm{GL}_{N}(\mathbb{C}),\vartheta)$ which satisfies obvious analogues of (\ref{verdual}) and the additional conditions of \cite{LV2014}*{8.1 (a)}.  Furthermore, there is an obvious analogue  Theorem \ref{theo:defpoly} for the dual basis elements
\begin{equation*}
C^{\vartheta}({^\vee}\xi) =\sum_{\xi' \in \Xi({^\vee}\mathcal{O},
{^\vee}\mathrm{R}_{\mathbb{C}/\mathbb{R}} \mathrm{GL}_{N}^{\Gamma})^\vartheta
  }(-1)^{l^{I}({^\vee}\xi)-l^{I}({^\vee}\xi')}
\ {^\vee}P^{\vartheta}(\, ^{\vee}\xi', {^\vee}\xi) \, 
M({^\vee}\xi')^{+}.
\nomenclature{$C^{\vartheta}({^\vee}\xi)$}{}
\end{equation*}
According to \cite{aam}*{Proposition 4.14}, by setting $q=1$ in the polynomials of the basis elements, one obtains
\begin{equation}
  \label{4.14}
C^{\vartheta}(\xi)(1)  = \pi(\xi)^{+}\  \mbox{   and   }\ C^{\vartheta}({^\vee}\xi)(1)  = \pi({^\vee}\xi)^{+}
\end{equation}
for all $\xi \in \Xi({^\vee}\mathcal{O},
{^\vee}\mathrm{R}_{\mathbb{C}/\mathbb{R}} \mathrm{GL}_{N}^{\Gamma})^\vartheta$.
It is immediate from Equation (\ref{4.14}) and Lemma \ref{twistpairing2} that Theorem \ref{twistpairing} is proved by the following theorem.
\begin{thm}
Pairing (\ref{pair4}) satisfies
\begin{align*}
\langle C^{\vartheta}(\xi), C^{\vartheta}({^\vee}\xi')\rangle =
(-1)^{l^{I}_{\vartheta}(\xi)} \, q^{\left(l^{I}(\xi)+l^{I}({^\vee}\xi')\right)/2} \,
\delta_{\xi,\xi'}.  
\end{align*}
for all $\xi, \xi' \in \Xi({^\vee}\mathcal{O},
{^\vee}\mathrm{R}_{\mathbb{C}/\mathbb{R}}  \mathrm{GL}_{N}^{\Gamma})^\vartheta$.
\end{thm}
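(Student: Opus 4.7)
The plan is to adapt the strategy of \cite{aam}*{Theorem 4.15}, itself modeled on \cite{ABV}*{Chapter 17}, exploiting the Hecke module isomorphism of Proposition \ref{Hisomorphism}, the Verdier duality involutions $D$ and ${^\vee}D$, and the uniqueness characterization of the special bases in Theorem \ref{theo:defpoly}.

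First I would expand both basis vectors using (\ref{basis}) and apply the pairing (\ref{pair4}) to the standard bases. Because $\langle M(\zeta)^+, M({^\vee}\eta)^+\rangle$ vanishes unless $\zeta = \eta$, and because $l^I(\zeta) + l^I({^\vee}\zeta)$ is independent of $\zeta$ by \cite{aam}*{Lemma 4.7}, the expected prefactor $q^{(l^I(\xi) + l^I({^\vee}\xi'))/2}$ emerges as an overall monomial. The theorem reduces to the polynomial identity
\begin{equation*}
g(\xi, \xi') := \sum_{\zeta}\varepsilon(\zeta)\, P^{\vartheta}(\zeta, \xi)\, {^\vee}P^{\vartheta}({^\vee}\zeta, {^\vee}\xi') = (-1)^{l^I_\vartheta(\xi)}\, \delta_{\xi, \xi'}
\end{equation*}
for explicit signs $\varepsilon(\zeta) \in \{\pm 1\}$ depending on integral lengths.

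Next, I would establish a sesquilinear compatibility between the pairing and the pair $(D, {^\vee}D)$ of the form $\langle D(a), {^\vee}D(b)\rangle = \overline{\langle a, b\rangle}$, where the overline denotes the bar involution $q^{1/2} \mapsto q^{-1/2}$. Via Proposition \ref{Hisomorphism} and the Hecke-operator definition (\ref{verdual}) of $D$, this reduces to a check on Hecke generators $T_{\kappa}$. Because of the dramatic simplification (\ref{glntypes}) that only types \texttt{2C+, 2C-, 2Ci, 2Cr} occur in our setting, the case-by-case verification using \cite{AVParameters}*{Table 5} is short and parallels the proof of Proposition \ref{Hisomorphism}. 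Combined with the eigenvector properties $D(C^{\vartheta}(\xi)) = q^{-l^I(\xi)} C^{\vartheta}(\xi)$ and ${^\vee}D(C^{\vartheta}({^\vee}\xi')) = q^{-l^I({^\vee}\xi')} C^{\vartheta}({^\vee}\xi')$ from Theorem \ref{theo:defpoly}(1) and its dual analogue, this compatibility implies that $g(\xi, \xi')$ is bar-invariant.

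Finally, I would analyse the polynomial identity itself. The Bruhat-order vanishing of Theorem \ref{theo:defpoly}(3), together with the fact that Vogan duality reverses Bruhat order, restricts the sum to parameters $\zeta$ with $\xi' \leq \zeta \leq \xi$. When $\xi$ and $\xi'$ are incomparable the sum is empty and $g(\xi, \xi') = 0$; when $\xi = \xi'$ only $\zeta = \xi$ contributes, and the normalization $P^{\vartheta}(\xi, \xi) = {^\vee}P^{\vartheta}({^\vee}\xi, {^\vee}\xi) = 1$ together with a direct computation of $\varepsilon(\xi)$ yields $(-1)^{l^I_\vartheta(\xi)}$. In the remaining case $\xi' < \xi$, the degree bound in Theorem \ref{theo:defpoly}(4) and its dual analogue force each summand to have strictly negative $q$-degree, so bar-invariance collapses the sum to zero. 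The principal obstacle will be the sesquilinear compatibility of the pairing with $(D, {^\vee}D)$ in the second step; the twisted setting prevents any direct identification of $\mathcal{K}\Pi$ and $\mathcal{K}{^\vee}\Pi$ as Hecke modules, but the triviality of the component groups (\ref{compgptriv}) and the type restrictions (\ref{glntypes}) make the required case analysis tractable and essentially parallel to that in Proposition \ref{Hisomorphism}.
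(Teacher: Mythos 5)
Your proposal is correct, and both arguments hinge on the same crucial lemma, but you organize the conclusion differently from the paper. The paper uses the pairing isomorphism (\ref{eq:dualformalmap}) to define the element $\underline{C}^{\vartheta}(\xi) \in \mathcal{K}\Pi({^\vee}\mathcal{O}, \mathrm{GL}_{N}(\mathbb{C}),\vartheta)$ characterized by the desired pairing values against the dual basis $\{C^{\vartheta}({^\vee}\xi')\}$, and then shows that $\underline{C}^{\vartheta}(\xi)$ satisfies all four defining properties of Theorem \ref{theo:defpoly}; the uniqueness clause of that theorem then identifies $\underline{C}^{\vartheta}(\xi)$ with $C^{\vartheta}(\xi)$ and the pairing formula drops out. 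You instead expand both $C$-basis vectors into standard modules, reduce to an explicit polynomial identity in the twisted KLV polynomials, and conclude by the classical inversion-formula pattern: bar-invariance plus the degree bounds of Theorem \ref{theo:defpoly}(3)--(4) force the off-diagonal sum to vanish once Bruhat-order considerations (using that Vogan duality reverses the order) restrict the range of summation. Both approaches depend on the same technical pivot, namely the compatibility (\ref{17.23}) between the bar involution, the pairing, and the pair of Verdier dualities $(D,{^\vee}D)$; you arrive at it in the equivalent form $\langle D(a), {^\vee}D(b)\rangle = \overline{\langle a, b\rangle}$. The paper's proof of this compatibility characterizes ${^\vee}D$ abstractly (via (\ref{verdual}) and \cite{LV2014}*{8.1(a)}) rather than doing a generator-by-generator check against \cite{AVParameters}*{Table 5} as you propose, which is cleaner but logically equivalent; either verification rests on the Hecke-module isomorphism of Proposition \ref{Hisomorphism} and the type simplification (\ref{glntypes}). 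One small point to be careful of in your third step: after normalizing by $q^{(l^I(\xi)+l^I({^\vee}\xi'))/2}$, the degree bounds and the constancy of $l^I + l^I\circ{^\vee}$ give $\deg_{q^{1/2}} g \leq -1$ (not merely negative $q$-degree before normalization), and then bar-invariance of $g$ forces $g=0$; your phrasing glosses over this bookkeeping, but the argument is sound. In short, the paper routes through the uniqueness characterization of the $C$-basis while you carry out the direct inversion calculation; the two proofs buy the same theorem with essentially the same amount of work.
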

\begin{proof}
The isomorphism (\ref{eq:dualformalmap}) allows us to define unique elements
$\underline{C}^{\vartheta}(\xi) \in\mathcal{K}\Pi({^\vee}\mathcal{O},
\mathrm{GL}_{N}(\mathbb{C}),\vartheta)$, $\xi \in \Xi({^\vee}\mathcal{O}, {^\vee}\mathrm{R}_{\mathbb{C}/\mathbb{R}} \mathrm{GL}_{N}^{\Gamma})^\vartheta$, satisfying 
\nomenclature{$\underline{C}^{\vartheta}(\xi)$}{}
\begin{equation*}
\langle \underline{C}^{\vartheta}(\xi), C^{\vartheta}({^\vee}\xi') \rangle =
(-1)^{l^{I}_{\vartheta}(\xi')}\, 
q^{\left(l^{I}(\xi)+l^{I}({^\vee}\xi')\right)/2}\, \delta_{\xi, \xi'}.
\end{equation*}
The proof amounts to showing that
$$\underline{C}^{\vartheta}(\xi) = C^{\vartheta}({^\vee}\xi), \quad \xi \in \Xi({^\vee}\mathcal{O},
{^\vee}\mathrm{R}_{\mathbb{C}/\mathbb{R}}  \mathrm{GL}_{N}^{\Gamma})^\vartheta$$
and we may do so by showing that $\underline{C}^{\vartheta}(\xi)$ satisfies each of the four properties in Theorem \ref{theo:defpoly}.  We outline the proof of the the first property of Theorem \ref{theo:defpoly}, as it is the most involved.  We  leave the reader to verify the remaining three properties by consulting \cite{aam}*{Section 4.7}.

The first step is to prove that the two Verdier dualities are related by 
\begin{equation}
  \label{17.23}
\overline{\langle D (M(\xi)), M({^\vee}\xi') \rangle} = \langle M(\xi), {^\vee}D (M({^\vee}\xi')) \rangle, \quad \xi, \xi' \in \Xi({^\vee}\mathcal{O},
         {^\vee}\mathrm{R}_{\mathbb{C}/\mathbb{R}}  \mathrm{GL}_{N}^{\Gamma})^\vartheta
\end{equation}
where
$$\bar{\vspace{2mm}} \,: \mathbb{Z} [q^{1/2}, q^{-1/2}] \rightarrow
\mathbb{Z} [q^{1/2}, q^{-1/2}] $$ 
is the unique automorphism sending $q^{1/2}$ to $q^{-1/2}$.  To do this, one may define ${^\vee}D'$ by
$$\langle M(\xi), {^\vee}D' (M({^\vee}\xi')) \rangle = \overline{\langle D (M(\xi)), M(^{\vee}\xi') \rangle}, \quad \xi, \xi' \in \Xi({^\vee}\mathcal{O},
{^\vee}\mathrm{R}_{\mathbb{C}/\mathbb{R}}  \mathrm{GL}_{N}^{\Gamma})^\vartheta$$
and prove that ${^\vee}D' = {^\vee}D$ using the properties of (\ref{verdual}) and \cite{LV2014}*{8.1 (a)} which characterize the Verdier duality (\emph{cf.} \cite{ICIV}*{Lemma 13.4}).  Proposition \ref{Hisomorphism} plays a key role in proving the property
\begin{equation}
  \label{verdual1}
{^\vee}D'((T_{\kappa} + 1) M({^\vee}\xi')^{+} ) = q^{-l(w_{\kappa})}(T_{\kappa} +
1) \, {^\vee}D'(M({^\vee}\xi')^{+})
\end{equation}
of (\ref{verdual}), so it seems fitting to supply the arguments for it.  We are to prove that
$$\langle M(\xi),  {^\vee}D'((T_{\kappa} + 1) M({^\vee}\xi')^{+} ) \rangle= \langle M(\xi), q^{-l(w_{\kappa})}(T_{\kappa} +
1) \, {^\vee}D'(M({^\vee}\xi')^{+}) \rangle $$
for all $\xi, \xi' \in \Xi({^\vee}\mathcal{O},
{^\vee}\mathrm{R}_{\mathbb{C}/\mathbb{R}}  \mathrm{GL}_{N}^{\Gamma})^\vartheta$.  According to the definition of ${^\vee}D'$,  and Equation (\ref{17.17}) of Proposition \ref{Hisomorphism}, this is equivalent to
$$(-T_{\kappa} + q^{l(w_{\kappa})}) D (M(\xi)) = D \left((-T_{\kappa} + q^{l(w_{\kappa})}) q^{-l(w_{\kappa})} M(\xi) \right).$$
Beginning with the right-hand side and following (\cite{ICIV}*{Lemma 13.4}) with (\ref{verdual}), we compute
\begin{align*}
 & -D\left(( T_{\kappa} + 1) q^{-l(w_{k})} M(\xi) \right) + D \left( (q^{l(w_{\kappa})} + 1) q^{-l(w_{\kappa})} M(\xi) \right)\\
  & =  -q^{-l(w_{\kappa})} (T_{\kappa} + 1) D(q^{-l(w_{\kappa})} M(\xi)) + (1 + q^{l(w_{\kappa})}) D(M(\xi))\\
  &=- (T_{\kappa} + 1) D( M(\xi)) + (1 + q^{l(w_{\kappa})}) D(M(\xi))\\
  &= (-T_{\kappa} + q^{l(w_{\kappa})}) D (M(\xi)).
\end{align*}
This proves (\ref{verdual1}).  The remaining properties ensuring that ${^\vee}D' = {^\vee}D$ are also easily read from the proof of \cite{ICIV}*{Lemma 13.4}.

We now take for granted Equation (\ref{17.23}),  and may prove that $\underline{C}^{\vartheta}(\xi)$ satisfies the first property of Theorem \ref{theo:defpoly} as follows.
\begin{align*}
\langle D \underline{C}^{\vartheta}(\xi), C^{\vartheta}({^\vee}\xi')
\rangle & = \overline{ \langle \underline{C}^{\vartheta}(\xi),
  \,{^\vee}D C^{\vartheta}({^\vee}\xi') \rangle}\\ 
& = q^{l^{I}({^\vee}\xi')} \, \overline{  \langle
  \underline{C}^{\vartheta}(\xi),  C^{\vartheta}({^\vee}\xi') \rangle}
\\ 
& = (-1)^{l^{I}_{\vartheta}(\xi)} q^{l^{I}(^{\vee}\xi')}
q^{-(l^{I}(\xi) + l^{I}({^\vee}\xi'))/2}  \delta_{\xi,\xi'} \\    
&= \langle q^{-l^{I}(\xi)}\, \underline{C}^{\vartheta}(\xi),
C^{\vartheta}({^\vee}\xi') \rangle. 
\end{align*}
Since the elements $C^{\vartheta}({^\vee}\xi')$ form a basis we conclude that 
$$ D \underline{C}^{\vartheta}(\xi) = q^{-l^{I}(\xi)}\, \underline{C}^{\vartheta}(\xi)$$
and the first property of Theorem \ref{theo:defpoly} is proved.  As mentioned, the remaining properties of  Theorem \ref{theo:defpoly} follow easily from \cite{aam}*{Section 4.7}.
\end{proof}

\section{Endoscopic lifting for complex general linear groups following
  Adams-Barbasch-Vogan} 
  \label{endosec}

We proceed with a review of standard endoscopy and twisted endoscopy
from the perspective of \cite{ABV}, but restricted only to the
particular case of 
the group $\mathrm{R}_{\mathbb{C}/\mathbb{R}}\mathrm{GL}_{N}$.  A similar review was made for $\mathrm{GL}_{N}$ in \cite{aam}*{Section 5}.  The present review contains nothing new and so we shall refer to \cite{aam}*{Section 5} liberally.  The background material for this section is found in \cite{ABV}*{Section 26} and   \cite{Christie-Mezo}*{Section 5}.

\subsection{Standard endoscopy}
\label{standend}

In this section we do not place any restrictions on the infinitesimal
characters.  In addition the general framework applies to any
connected real reductive group.  For the purposes of motivation we
specialize to the group 
${^\vee}\mathrm{R}_{\mathbb{C}/\mathbb{R}}\mathrm{GL}_{N}^{\Gamma} =
({^\vee}\mathrm{GL}_{N} \times {^\vee}\mathrm{GL}_{N}) \rtimes \langle {^\vee}\delta_{0} \rangle$
as in Section \ref{extended}.  In the last two sections, we shall
apply the theory of endoscopy to unitary groups.

An \emph{endoscopic datum} for ${^\vee}\mathrm{R}_{\mathbb{C}/\mathbb{R}}\mathrm{GL}_{N}^{\Gamma}$ is a pair
$$(s, \, {^\vee}G^{\Gamma})$$
which satisfies
\begin{enumerate}
\item $s \in {^\vee}\mathrm{R}_{\mathbb{C}/\mathbb{R}}\mathrm{GL}_{N}$ is semisimple

\item ${^\vee}G^{\Gamma} \subset {^\vee}\mathrm{R}_{\mathbb{C}/\mathbb{R}} \mathrm{GL}_{N}^{\Gamma}$ is
  open in the centralizer of $s$ in ${^\vee}\mathrm{R}_{\mathbb{C}/\mathbb{R}} \mathrm{GL}_{N}^{\Gamma}$ 

\item ${^\vee}G^{\Gamma}$ is an E-group for a group $G$ (\cite{ABV}*{Definition 4.6}).  
\end{enumerate}
\nomenclature{$s$}{semisimple element in endoscopic datum}
This is a specialization of \cite{ABV}*{Definition 26.15} to
${^\vee}\mathrm{R}_{\mathbb{C}/\mathbb{R}}\mathrm{GL}_{N}^{\Gamma}$.  The groups ${^\vee}G$ and $G$ here are
isomorphic Levi subgroups.  They are products of smaller general linear groups. Consequently,
${^\vee}G$ and ${^\vee}\mathrm{R}_{\mathbb{C}/\mathbb{R}}\mathrm{GL}_{N}$ share the maximal torus
${^\vee}H$, which is two copies of the diagonal subgroup.  Similarly, $G$ and $\mathrm{R}_{\mathbb{C}/\mathbb{R}}\mathrm{GL}_{N}$ share the maximal 
torus $H$.  We shall abusively denote by $\delta_{q}$ the
strong involution on both $G$ and $\mathrm{R}_{\mathbb{C}/\mathbb{R}}\mathrm{GL}_{N}$ which correspond to the quasisplit real forms.  The group $G$ in this definition is called the
\emph{endoscopic group}. 
We do not require the concept of an E-group in this section.
From now on we assume that ${^\vee}G^{\Gamma} =
{^\vee}G \rtimes \langle {^\vee}\delta_{0} \rangle$.   In
other words, ${^\vee}G^{\Gamma}$ is an L-group for $G$.

There is a notion of equivalence for endoscopic data, and using this
equivalence we may assume without loss of generality that $s \in
{^\vee}H$.  We fix $\lambda \in {^\vee}\mathfrak{h}$. Let ${^\vee}\mathcal{O}_{G}$ be the
${^\vee}G$-orbit of $\lambda$ and ${^\vee}\mathcal{O}$ be the
${^\vee}\mathrm{R}_{\mathbb{C}/\mathbb{R}}\mathrm{GL}_{N}$-orbit of $\lambda$.    
The second property of the endoscopic datum above allows us to define
the inclusion 
\begin{equation}
\epsilon : \, {^\vee}G^{\Gamma} \hookrightarrow \, {^\vee}\mathrm{R}_{\mathbb{C}/\mathbb{R}}\mathrm{GL}_{N}^{\Gamma}.
\label{epinclusion}
\end{equation}
This inclusion induces another map (\cite{ABV}*{Corollary 6.21}),  which we abusively also denote as
\begin{equation} 
\label{epinclusion1}
\epsilon: \, X\left({^\vee}\mathcal{O}_{G}, {^\vee}G^{\Gamma}\right) \rightarrow
X\left({^\vee}\mathcal{O}, {^\vee}\mathrm{R}_{\mathbb{C}/\mathbb{R}}\mathrm{GL}_{N}^{\Gamma}\right). 
\nomenclature{$\epsilon$}{endoscopic maps}
\end{equation}
It is easily verified that the ${^\vee}G$-action on
$X({^\vee}\mathcal{O}_{G}, {^\vee}G^{\Gamma})$ is compatibly carried under
$\epsilon$ to the ${^\vee}\mathrm{R}_{\mathbb{C}/\mathbb{R}}\mathrm{GL}_{N}$-action on $X({^\vee}\mathcal{O},
{^\vee}\mathrm{R}_{\mathbb{C}/\mathbb{R}}\mathrm{GL}_{N}^{\Gamma})$ (\cite{ABV}*{(7.17)}).  As a result,
the map $\epsilon$ induces a map from the orbits of the space
$X({^\vee}\mathcal{O}_{G}, {^\vee}G^{\Gamma})$ to the orbits of  $X({^\vee}\mathcal{O},
{^\vee}\mathrm{R}_{\mathbb{C}/\mathbb{R}}\mathrm{GL}_{N}^{\Gamma})$.

The inverse image functor of $\epsilon$ on equivariant constructible
sheaves induces a homomorphism 
$$
\epsilon^{*}: K({^\vee}\mathcal{O}, {^\vee}\mathrm{R}_{\mathbb{C}/\mathbb{R}} \mathrm{GL}_{N}^{\Gamma})
\rightarrow K({^\vee}\mathcal{O}_{G}, {^\vee}G^{\Gamma}) 
\nomenclature{$\epsilon^{*}$}{inverse image functor}
$$
(\cite{ABV}*{Proposition 7.18}).
When $\epsilon^{*}$ is combined with the pairings of Theorem
\ref{ordpairing}, we obtain a map 
$$\epsilon_{*} : K_{\mathbb{C}} \Pi({^\vee}\mathcal{O}_{G}, G/\mathbb{R})
\rightarrow K_{\mathbb{C}} \Pi ({^\vee}\mathcal{O},
\mathrm{GL}_{N}(\mathbb{C}))
\nomenclature{$K_{\mathbb{C}} \Pi({^\vee}\mathcal{O}_{G}, G/\mathbb{R})$}{}
\nomenclature{$\epsilon_{*}$}{}
$$ 
defined on $\eta_{G} \in K_{\mathbb{C}} \Pi({^\vee}\mathcal{O}_{G}, G/\mathbb{R})$ by
\begin{equation}
\label{loweps}
\left\langle \epsilon_{*} \eta_{G}, \mu(\xi) \right\rangle = \left\langle \eta_{G},
\epsilon^{*} \mu(\xi)\right\rangle_{G}, \quad \xi \in
\Xi({^\vee}\mathcal{O},{^\vee}\mathrm{R}_{\mathbb{C}/\mathbb{R}}\mathrm{GL}_{N}^{\Gamma}). 
\end{equation}
Here, $K_{\mathbb{C} }= \mathbb{C} \otimes_{\mathbb{Z}} K$ and we have
placed a subscript $G$ beside the pairing on the right to distinguish
it from the pairing for $\mathrm{R}_{\mathbb{C}/\mathbb{R}}\mathrm{GL}_{N}$ on the left.  

The endoscopic lifting map $\mathrm{Lift}_{0}$ is defined to be the restriction of $\epsilon_{*}$ to the 
submodule
$$K_{\mathbb{C}} \Pi({^\vee}\mathcal{O}_{G}, G(\mathbb{R},
\delta_{q}))^{\mathrm{st}} \subset
K_{\mathbb{C}} \Pi({^\vee}\mathcal{O}_{G}, G/\mathbb{R})$$
of stable virtual characters of the quasisplit form $G(\mathbb{R},
\delta_{q})$.
Since
$G(\mathbb{R}, \delta_{q})$ is a product of complex general linear groups, stability is not an issue
and  we have  
\begin{equation*}
K_{\mathbb{C}} \Pi({^\vee}\mathcal{O}_{G}, G( \mathbb{R}, \delta_{q}))^{\mathrm{st}} =
K_{\mathbb{C}} \Pi({^\vee}\mathcal{O}_{G}, G/\mathbb{R}). 
\end{equation*}
This equality will not hold for twisted endoscopic groups in Section
\ref{twistendsec}, and so it is better two write the endoscopic lifting map as
\begin{equation}
\label{endlift}
\mathrm{Lift}_{0}: K_{\mathbb{C}} \Pi({^\vee}\mathcal{O}_{G}, G(\mathbb{R},
\delta_{q}))^{\mathrm{st}} \rightarrow K_{\mathbb{C}} \Pi({^\vee}\mathcal{O},
\mathrm{GL}_{N}(\mathbb{C})). 
\nomenclature{$\mathrm{Lift}_{0}$}{endoscopic lifting map}
\end{equation}

According to \cite{ABV}*{Lemma 18.11} and 
\cite{MW}*{Corollary IV.2.8}), a 
basis for $K_{\mathbb{C}} \Pi({^\vee}\mathcal{O}_{G}, G(\mathbb{R},
\delta_{q}))^{\mathrm{st}}$ is provided by the virtual characters
\begin{equation}
\label{etaloc}
\eta^{\mathrm{loc}}_{S_{1}}(\delta_{q})  = \sum_{\tau_{S_{1}}} M(S_{1}, \tau_{S_{1}}),
\end{equation}
where $(S_{1}, \tau_{S_{1}}) \in
\Xi({^\vee}\mathcal{O}_{G}, {^\vee}G^{\Gamma})$ runs over those complete
geometric parameters which correspond to the strong involution
$\delta_{q}$  under the local Langlands correspondence (\ref{localLanglandspure}).
As indicated at the end of Section \ref{extended}, the relevant component groups for complex general linear groups are trivial.  Therefore the representations $\tau_{S_{1}}$ are all trivial for $G$ and 
(\ref{etaloc}) reduces to  
$$\eta^{\mathrm{loc}}_{S_{1}}(\delta_{q}) = M(S_{1}, 1),
\nomenclature{$\eta^{\mathrm{loc}}_{S_{1}}$}{virtual character of a pseudopacket}$$
a single standard representation.  The following proposition describes the image of  $\eta^{\mathrm{loc}}_{S_{1}}(\delta_{q})$ under endoscopic lifting.  Its proof  follows from \cite{aam}*{Proposition 5.1} by replacing $\mathrm{GL}_{N}$ with $\mathrm{R}_{\mathbb{C}/\mathbb{R}}\mathrm{GL}_{N}$.
\begin{prop}[\cite{aam}*{Proposition 5.1}]\label{injliftord}

\begin{enumerate}[label={(\alph*)}]

\item  Suppose $S_{1} \subset X({^\vee}\mathcal{O}_{G},
  {^\vee}G^{\Gamma})$ is a ${^\vee}G$-orbit which is carried to the
  ${^\vee}\mathrm{R}_{\mathbb{C}/\mathbb{R}}\mathrm{GL}_{N}$-orbit $S$ under $\epsilon$. Then 
  $$\mathrm{Lift}_{0}\left(\eta^{\mathrm{loc}}_{S_{1}}(\delta_{q}) \right) = \eta^{\mathrm{loc}}_{S},$$
or equivalently,
$$\mathrm{Lift}_{0} \left(M(S_{1}, 1)\right)=  M(S, 1).$$

\item The endoscopic lifting map $\mathrm{Lift}_{0}$ is equal to the
  parabolic induction functor $\mathrm{ind}_{G(\mathbb{R},
    \delta_{q})}^{\mathrm{GL}_{N}(\mathbb{C})}$ on 
  $K_{\mathbb{C}} \Pi({^\vee}\mathcal{O}_{G}, G(\mathbb{R},
  \delta_{q}))^{\mathrm{st}}$. 
\end{enumerate}
\end{prop}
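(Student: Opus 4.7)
The proof follows the template of \cite{aam}*{Proposition 5.1}, and the plan is essentially to unwind the definition of $\mathrm{Lift}_0$ on the stable basis $\{\eta^{\mathrm{loc}}_{S_1}(\delta_q)\}$ using the pairing and a direct computation of the inverse-image functor $\epsilon^*$. More precisely, by (\ref{loweps}),
\begin{equation*}
\langle \mathrm{Lift}_0(M(S_1,1)), \mu(\xi)\rangle = \langle M(S_1,1), \epsilon^{*}\mu(\xi)\rangle_{G}
\end{equation*}
for every $\xi = (S',\tau_{S'})\in \Xi({^\vee}\mathcal{O}, {^\vee}\mathrm{R}_{\mathbb{C}/\mathbb{R}}\mathrm{GL}_{N}^{\Gamma})$. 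Because $\mathrm{R}_{\mathbb{C}/\mathbb{R}}\mathrm{GL}_{N}$ and its endoscopic group $G$ have trivial component groups (see (\ref{compgptriv}) and the discussion at the end of Section \ref{extended}), all local systems appearing are trivial; so everything reduces to a calculation with orbits rather than with complete geometric parameters.

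The first (and key) step will be to compute $\epsilon^{*}\mu(S',1)$. Since $\mu(S',1)$ is by construction the extension by zero from $\overline{S'}$ of the constant sheaf on $S'$, and since inverse image commutes with extension by zero along an inclusion, I expect $\epsilon^{*}\mu(S',1)$ to be the constructible sheaf on $X({^\vee}\mathcal{O}_G,{^\vee}G^{\Gamma})$ extended by zero from $\epsilon^{-1}(\overline{S'})$. Using (\ref{epinclusion1}) and the compatibility of the group actions noted there, this means that $\epsilon^{*}\mu(S',1)$ is a $\mathbb{Z}$-linear combination of sheaves $\mu(S_1',1)$ with $\epsilon(S_1')\subset \overline{S'}$, and in fact its ``leading term'' (the only contribution that survives the pairing with a standard module $M(S_1,1)$) is $\mu(S_1,1)$ exactly when $\epsilon(S_1)=S'$. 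Together with the fact that the Kottwitz invariant $e$ is $+1$ for every real form of a product of complex general linear groups, this yields
\begin{equation*}
\langle \mathrm{Lift}_0(M(S_1,1)), \mu(S',1)\rangle = \delta_{S,S'},
\end{equation*}
and hence $\mathrm{Lift}_0(M(S_1,1)) = M(S,1)$ by nondegeneracy of the pairing (Theorem \ref{ordpairing}). This proves (a), in view of the reduction of $\eta^{\mathrm{loc}}_{S_1}(\delta_q)$ to a single $M(S_1,1)$.

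For (b) I would combine (a) with the representation-theoretic meaning of the inclusion $\epsilon$ on geometric parameters. Since $s\in {^\vee}H$ is semisimple, the dual group ${^\vee}G$ is a Levi subgroup of ${^\vee}\mathrm{R}_{\mathbb{C}/\mathbb{R}}\mathrm{GL}_N$, whence $G(\mathbb{R},\delta_q)$ is a (real) Levi of $\mathrm{GL}_N(\mathbb{C})$. Under the Langlands classification, a geometric parameter of $G$ together with its orbit $S_1$ attaches the standard module $M(S_1,1)$ to a (limit of) character of a Cartan of $G(\mathbb{R},\delta_q)$; the image orbit $S=\epsilon(S_1)$ attaches $M(S,1)$ to the same Cartan data viewed in $\mathrm{GL}_N(\mathbb{C})$, which by construction (\cite{ABV}*{Theorem 11.14}) is exactly the parabolically induced standard representation $\mathrm{ind}_{G(\mathbb{R},\delta_q)}^{\mathrm{GL}_N(\mathbb{C})}M(S_1,1)$. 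Since both $\mathrm{Lift}_0$ and $\mathrm{ind}$ are linear and agree on the basis $\{\eta^{\mathrm{loc}}_{S_1}(\delta_q)\}$, they coincide on the whole module.

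The principal obstacle I anticipate is the first step: identifying $\epsilon^{*}\mu(S',1)$ precisely enough to see that its pairing with $M(S_1,1)$ picks out exactly the Kronecker delta $\delta_{\epsilon(S_1),S'}$. This requires understanding how the stratifications of $X({^\vee}\mathcal{O}_G,{^\vee}G^{\Gamma})$ and $X({^\vee}\mathcal{O},{^\vee}\mathrm{R}_{\mathbb{C}/\mathbb{R}}\mathrm{GL}_N^{\Gamma})$ match up under $\epsilon$, and in particular that distinct ${^\vee}G$-orbits on the left are not collapsed on the right in a way that would introduce extra multiplicities. Fortunately, by Lemma \ref{XXXi} the orbits on either side are indexed by Atlas pairs, and the effect of $\epsilon$ on those pairs is transparent; so the bookkeeping is manageable and the argument reduces to the same combinatorial check carried out in \cite{aam}*{Proposition 5.1}.
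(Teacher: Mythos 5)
Your approach is the one in the paper, which simply invokes \cite{aam}*{Proposition 5.1} (unwind the definition of $\mathrm{Lift}_0$ via the pairing, compute $\epsilon^*$ on the $\mu(\xi)$ basis, and deduce (b) by matching parabolic induction on standard modules); your argument correctly reproduces that computation. One small clean-up: since $\mu(S',1)$ is the extension by zero of the constant sheaf on $S'$ itself (it vanishes on $\overline{S'}\setminus S'$), the pullback $\epsilon^*\mu(S',1)$ is supported on $\epsilon^{-1}(S')$, which is a union of ${^\vee}G$-orbits, so in the Grothendieck group it equals $\sum_{S_1'\subset\epsilon^{-1}(S')}\mu(S_1',1)$; pairing with $M(S_1,1)$ then gives the Kronecker delta $\delta_{\epsilon(S_1)\subset S'}=\delta_{S,S'}$ directly, and the worry you raise at the end about $\epsilon$ collapsing orbits is harmless, since the pairing only tests whether the fixed orbit $S_1$ lies in the fibre over $S'$.
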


  It is much more difficult to compute the value of
$\mathrm{Lift}_{0}$ on the stable virtual character $\eta^{\mathrm{mic}}_{\psi_{G}}$
given in (\ref{etapsi}).   Let $\psi = \epsilon \circ \psi_{G}$.
According to \cite{ABV}*{Theorem 26.25}
\begin{equation}
\label{ordmiclift}
\mathrm{Lift}_{0}\left(\eta^{\mathrm{mic}}_{\psi_{G}}\right)= \sum_{ \xi \in
  \Xi({^\vee}\mathrm{GL}_{N}^{\Gamma}, {^\vee}\mathcal{O})}  
(-1)^{d(S_{\xi}) - d(S_{\psi})}
\,\chi_{S_{\psi}}^{\mathrm{mic}} (P(\xi) ) \,\pi(\xi) =
\eta^{\mathrm{mic}}_{\psi}. 
\end{equation}
Recall from (\ref{abvdef}) that the ABV-packets $\Pi^{\mathrm{ABV}}_{\psi_{G}}$ and
$\Pi^{\mathrm{ABV}}_{\psi}$ are defined from $\eta^{\mathrm{mic}}_{\psi_{G}}$
and $\eta^{\mathrm{mic}}_{\psi}$ respectively.  We shall see in Section \ref{glnpacket}
that these ABV-packets are singletons. 

\subsection{Twisted endoscopy}
\label{twistendsec}

Our aim in this section is to lay out the twisted versions of the concepts presented in the previous section.  We  define  twisted
endoscopic data relevant to unitary groups, the twisted endoscopic version of $\mathrm{Lift}_{0}$ (\ref{endlift}),
compute  twisted variants of $\mathrm{Lift}_{0}
(\eta^{\mathrm{loc}}_{S})$  for $S\in  X({^\vee}\mathcal{O}_{G}, {^\vee}G^{\Gamma})$, and compute twisted variants of $\mathrm{Lift}_{0} (\eta^{\mathrm{mic}}_{\psi_{G}})$.   We shall work under the assumption of (\ref{regintdom}) on the infinitesimal characters.  This assumption is made only to accommodate the definition of Atlas extensions.

An \emph{endoscopic datum} for $({^\vee}\mathrm{R}_{\mathbb{C}/\mathbb{R}}\mathrm{GL}_{N}^{\Gamma},
\vartheta)$ is a pair
$$(s, \, {^\vee}G^{\Gamma})$$
which satisfies
\begin{enumerate}
\item $s \in {^\vee}\mathrm{R}_{\mathbb{C}/\mathbb{R}} \mathrm{GL}_{N}$ is $\vartheta$-semisimple (see
\cite{KS}*{(2.1.3)}) 

\item ${^\vee}G^{\Gamma} \subset {^\vee}\mathrm{R}_{\mathbb{C}/\mathbb{R}} \mathrm{GL}_{N}^{\Gamma}$ is
  open in the fixed-point set of $\mathrm{Int}(s) \circ
  \vartheta$ in $^{\vee}\mathrm{R}_{\mathbb{C}/\mathbb{R}}\mathrm{GL}_N^\Gamma \rtimes \langle \vartheta\rangle$ 
\item ${^\vee}G^{\Gamma}$ is an E-group for a group $G$ (\cite{ABV}*{Definition 4.6}).
\end{enumerate}
This is a special case of  \cite{Christie-Mezo}*{Definition 5.1} to
${^\vee}\mathrm{R}_{\mathbb{C}/\mathbb{R}}\mathrm{GL}_{N}^{\Gamma}$.  There is a notion of equivalence for these
endoscopic data (\cite{Christie-Mezo}*{Definition 5.6}, 
\cite{KS}*{(2.1.5)-(2.1.6)}).

Let us take $s=1$ in the endoscopic pair above. Then the fixed-point subgroup
$$({^\vee}\mathrm{R}_{\mathbb{C}/\mathbb{R}}\mathrm{GL}_{N})^{\vartheta} = \{ (g, \tilde{J} (g^{\intercal})^{-1} \tilde{J}^{-1}) : g \in {^\vee}\mathrm{GL}_{N}\} \cong {^\vee}\mathrm{GL}_{N}$$
is a legitimate dual group for an endoscopic datum.  Furthermore, by setting
$${^\vee}G^{\Gamma} = ({^\vee}\mathrm{R}_{\mathbb{C}/\mathbb{R}}\mathrm{GL}_{N})^{\vartheta} \rtimes \langle {^\vee}\delta_{0} \rangle$$
with ${^\vee}\delta_{0}$ as in (\ref{del0action}) we see that ${^\vee}G^{\Gamma}$ is isomorphic to the L-group of a quasisplit unitary group as in (\ref{unitarylgroup}).  
We have just shown that  
$$(1, {^\vee}G^{\Gamma}) = (1,{^\vee}\mathrm{GL}_{N}^{\Gamma})$$
is an endoscopic datum for $({^\vee}\mathrm{R}_{\mathbb{C}/\mathbb{R}}\mathrm{GL}_{N}^{\Gamma},
\vartheta)$ and that the corresponding endoscopic group is the rank $N$ quasisplit unitary group.  This is the \emph{only} endoscopic datum of interest to us here.

Unlike the previous section, we must distinguish between maximal tori
in ${^\vee}\mathrm{R}_{\mathbb{C}/\mathbb{R}}\mathrm{GL}_{N}$ and  ${^\vee}G^{\Gamma}$.   We let ${^\vee}H$ be the
diagonal maximal torus in ${^\vee}\mathrm{R}_{\mathbb{C}/\mathbb{R}}\mathrm{GL}_{N}$, and ${^\vee}H_{G}$ be  a maximal torus in ${^\vee}G \cong {^\vee}\mathrm{GL}_{N}$.
The two tori are related by
$${^\vee}H_{G} = ({^\vee}H)^{\vartheta}.$$  
We fix $\lambda \in
{^\vee}\mathfrak{h}^{\vartheta}$.  Let ${^\vee}\mathcal{O}_{G}$ be the
${^\vee}G$-orbit of 
$\lambda$ and ${^\vee}\mathcal{O}$ be the ${^\vee}\mathrm{R}_{\mathbb{C}/\mathbb{R}}\mathrm{GL}_{N}$-orbit of
$\lambda$.

The $\epsilon$ maps of (\ref{epinclusion})-(\ref{epinclusion1}) have
obvious analogues and are equally valid in the twisted setting.
The analogue of (\ref{epinclusion}) is quite transparent as it takes the form
$$\epsilon(g) = (g, \tilde{J}^{-1} (g^{\intercal})^{-1} \tilde{J}^{-1}), \quad g \in {^\vee}G$$
and carries the element ${^\vee}\delta_{0}$ in (\ref{unitarylgroup}) to the element ${^\vee}\delta_{0}$ in (\ref{lgroupglnc}).

The crucial point in the
twisted setting is to include the action of 
$\vartheta$ into the objects pertinent to endoscopy.  In particular we 
must  extend the sheaf theory of \cite{ABV} for
${^\vee}\mathrm{R}_{\mathbb{C}/\mathbb{R}}\mathrm{GL}_{N}$ to the disconnected group
${^\vee}\mathrm{R}_{\mathbb{C}/\mathbb{R}}\mathrm{GL}_{N} \rtimes \langle \vartheta  \rangle$. This
mimics the extension of the representation theory of $\mathrm{GL}_{N}(\mathbb{C})$
to the disconnected group $\mathrm{GL}_{N}(\mathbb{C}) \rtimes \langle \vartheta
\rangle$ in Section \ref{extrepsec}.  Rather than viewing the sheaves
in $\mathcal{C}({^\vee}\mathcal{O}, {^\vee}\mathrm{R}_{\mathbb{C}/\mathbb{R}}\mathrm{GL}_{N}^{\Gamma}
;\vartheta)$ as ${^\vee}\mathrm{R}_{\mathbb{C}/\mathbb{R}}\mathrm{GL}_{N}$-equivariant with compatible
$\vartheta$-action (Section \ref{conperv}), we view them simply as $({^\vee}\mathrm{R}_{\mathbb{C}/\mathbb{R}}\mathrm{GL}_{N}
\rtimes \langle \vartheta \rangle)$-equivariant sheaves and apply the
theory of \cite{ABV} which is valid in this generality 
(\cite{Christie-Mezo}*{Section 5.4}).

Let $\xi = (S, 1)  \in \Xi({^\vee}\mathcal{O}, {^\vee}\mathrm{R}_{\mathbb{C}/\mathbb{R}}
\mathrm{GL}_{N}^{\Gamma})^\vartheta$ and $p \in S$.  Here,
$1$ is the trivial representation  of the trivial group
$({^\vee}\mathrm{R}_{\mathbb{C}/\mathbb{R}} \mathrm{GL}_{N})_{p}/ (({^\vee} \mathrm{R}_{\mathbb{C}/\mathbb{R}}\mathrm{GL}_{N})_{p} )^{0}$
with representation space $V \cong \mathbb{C}$ as in (\ref{bundle}).
We define $1^{+}$ on  
\begin{equation*}
({^\vee}\mathrm{R}_{\mathbb{C}/\mathbb{R}}\mathrm{GL}_{N})_{p} /(({^\vee} \mathrm{R}_{\mathbb{C}/\mathbb{R}}\mathrm{GL}_{N})_{p} )^{0}
\times \langle \vartheta \rangle 
\end{equation*}
 by
\begin{equation}
\label{oneextend}
1^{+} (\vartheta)  = \vartheta_{\mu(\xi)^{+} }= 1
\end{equation}
In this way, $1^{+}$ defines the local system underlying the
irreducible $({^\vee}\mathrm{R}_{\mathbb{C}/\mathbb{R}}\mathrm{GL}_{N} \rtimes \langle \vartheta
\rangle)$-equivariant constructible sheaf $\mu(\xi)^{+}$ (Lemma
\ref{cansheaf},   \cite{ABV}*{\emph{p.} 83}).   

In a similar, but completely vacuous, fashion we may include  the
trivial action of $\vartheta$ on $\mu(\xi_{1}) \in
\mathcal{C}({^\vee}\mathcal{O}_{G}, {^\vee}G)$ with $\xi_{1} = (S_{1},
\tau_{1})$ and $p_{1} \in S_{1}$.  In other words, we may regard
$\mu(\xi_{1})$ as a $({^\vee}G \times \langle \vartheta
\rangle)$-equivariant sheaf whose underlying local system is defined
by a quasicharacter $\tau_{1}^{+}$ on 
\begin{equation}
\label{twistcompgroup}
{^\vee}G_{p_{1}}/ ({^\vee} G_{p_{1}})^{0} \times \langle \vartheta \rangle
\end{equation}
 by $\tau_{1}^{+}(\vartheta) = 1$.

  The inverse image functor
  \begin{equation*}
  \epsilon^{*}:  KX({^\vee}\mathcal{O}, {^\vee}\mathrm{R}_{\mathbb{C}/\mathbb{R}}\mathrm{GL}_{N}^{\Gamma},
  \vartheta) \rightarrow KX({^\vee}\mathcal{O}_{G}, {^\vee}G^{\Gamma})
\end{equation*}
  in the present twisted setting is
defined on  $({^\vee}\mathrm{R}_{\mathbb{C}/\mathbb{R}}\mathrm{GL}_{N} \rtimes \langle \vartheta
\rangle)$-equivariant sheaves (Section \ref{conperv}).
As in standard endoscopy, we combine $\epsilon^{*}$ with a pairing,
namely the pairing of Theorem \ref{twistpairing}, to define  
$$\epsilon_{*}:  K_{\mathbb{C}} \Pi ( {^\vee}\mathcal{O}_{G} ,G/ \mathbb{R}
) \rightarrow K_{\mathbb{C}} \Pi({^\vee}\mathcal{O},
\mathrm{GL}_{N}(\mathbb{C}), \vartheta).$$ 
To be precise, the image of  any $\eta \in K_{\mathbb{C}}
\Pi({^\vee}\mathcal{O}_{G}, G/\mathbb{R})$ under  $\epsilon_{*}$ is
determined by 
\begin{equation}
\label{twistloweps}
\left\langle \epsilon_{*} \eta, \mu(\xi)^{+}\right \rangle = \left\langle
\eta, \epsilon^{*} \mu(\xi)^{+}\right\rangle_{G}, \quad \xi \in
\Xi({^\vee}\mathcal{O},{^\vee}\mathrm{R}_{\mathbb{C}/\mathbb{R}}\mathrm{GL}_{N}^{\Gamma})^\vartheta. 
\end{equation}
(\emph{cf.} (\ref{loweps})).  The twisted endoscopic lifting map 
\begin{equation}
\label{twistendlift}
\mathrm{Lift}_{0}: K_{\mathbb{C}} \Pi({^\vee}\mathcal{O}_{G}, G(\mathbb{R},
\delta_{q}))^{\mathrm{st}} \rightarrow K_{\mathbb{C}} \Pi({^\vee}\mathcal{O},
\mathrm{GL}_{N}(\mathbb{C}), \vartheta) 
\nomenclature{$\mathrm{Lift}_{0}$}{twisted endoscopic lifting map}
\end{equation}
is the restriction of $\epsilon_{*}$ to the stable submodule $K_{\mathbb{C}}
\Pi({^\vee}\mathcal{O}_{G}, G(\mathbb{R}, \delta_{q}))^{\mathrm{st}}$. 

Now, we wish to evaluate $\mathrm{Lift}_{0}$ on the basis elements
(\ref{etaloc}) of $K_{\mathbb{C}} \Pi({^\vee}\mathcal{O}_{G}, G(\mathbb{R},
\delta_{q}))^{\mathrm{st}}$.  To maintain ease of comparison with \cite{ABV} we
evaluate $\mathrm{Lift}_{0}$ on the virtual representations
$\eta_{S_{1}}^{\mathrm{loc}}(\vartheta)(\delta_{q})$ (\cite{ABV}*{\emph{p.} 279}). These virtual characters are defined by 
\begin{equation*}
\eta_{S_{1}}^{\mathrm{loc}}(\vartheta)(\delta_{q}) = \sum_{\tau_{1} }
\mathrm{Tr}( \tau_{1}^{+}(\vartheta)) \, M(S_{1}, \tau_{1}) =
\sum_{\tau_{1} } M(S_{1}, \tau_{1}), 
\nomenclature{$\eta_{S_{1}}^{\mathrm{loc}}(\vartheta)(\delta_{q})$}{}
\end{equation*}
where $\tau_{1}$ runs over all quasicharacters of ${^\vee}G_{p_{1}}/
({^\vee}G_{p_{1}})^{0}$ as in (\ref{twistcompgroup}) which correspond
to the strong involution $\delta_{q}$ (\cite{ABV}*{Definition 18.9}) under (\ref{localLanglandspure}).
It is immediate from the definition of $\tau_{1}^{+}$ following (\ref{twistcompgroup}) that
\begin{equation*}
\eta_{S_{1}}^{\mathrm{loc}}(\vartheta)(\delta_{q}) =
\eta_{S_{1}}^{\mathrm{loc}}(\delta_{q})
\end{equation*}
and so this virtual character is stable (\cite{ABV}*{Lemma 18.10}).  The proof of the following proposition is the same as the proof of Proposition 5.3 \cite{aam} once $\mathrm{GL}_{N}$ is replaced by $\mathrm{R}_{\mathbb{C}/\mathbb{R}}\mathrm{GL}_{N}$.

\begin{prop}[Proposition 5.3 \cite{aam}]
\label{twistimlift}
Suppose $S_{1} \subset X({^\vee}\mathcal{O}_{G}, {^\vee}G^{\Gamma})$ is a
${^\vee}G$-orbit which is carried to a ${^\vee}\mathrm{R}_{\mathbb{C}/\mathbb{R}}\mathrm{GL}_{N}$-orbit
$S$ under $\epsilon$. Then
$$\mathrm{Lift}_{0}\left( \eta^{\mathrm{loc}}_{S_{1}}(\vartheta)(\delta_{q}) \right) = (-1)^{l^{I}(S, 1) - l^{I}_{\vartheta} (S, 1)}
\, M(S, 1)^{+}$$
\end{prop}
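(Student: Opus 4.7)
The plan is to unwind the definition \eqref{twistloweps} of the twisted endoscopic lift in terms of the twisted pairing \eqref{pair2}--\eqref{pairdef2}, then identify the relevant inverse-image sheaf $\epsilon^{*}\mu(\xi)^{+}$. Writing
$$\mathrm{Lift}_{0}\!\left(\eta^{\mathrm{loc}}_{S_{1}}(\vartheta)(\delta_{q})\right) = \sum_{\xi \in \Xi({^\vee}\mathcal{O},{^\vee}\mathrm{R}_{\mathbb{C}/\mathbb{R}}\mathrm{GL}_{N}^{\Gamma})^{\vartheta}} a_{\xi}\, M(\xi)^{+},$$
the pairing identity \eqref{pairdef2} tells us that the coefficient satisfies
$$a_{\xi'} \, (-1)^{l^{I}(\xi') - l^{I}_{\vartheta}(\xi')} = \left\langle \mathrm{Lift}_{0}\!\left(\eta^{\mathrm{loc}}_{S_{1}}(\vartheta)(\delta_{q})\right),\, \mu(\xi')^{+} \right\rangle,$$
and by \eqref{twistloweps} this reduces to the ordinary (endoscopic-side) pairing $\langle \eta^{\mathrm{loc}}_{S_{1}}(\vartheta)(\delta_{q}), \epsilon^{*}\mu(\xi')^{+} \rangle_{G}$. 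The whole proposition is therefore an assertion about $\epsilon^{*}\mu(\xi')^{+}$.

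The key geometric input is the identification $\epsilon^{*}\mu((S,1))^{+} = \mu(S_{1},1)$. Since $\epsilon$ is (essentially) an inclusion and carries ${^\vee}G$-orbits compatibly into ${^\vee}\mathrm{R}_{\mathbb{C}/\mathbb{R}}\mathrm{GL}_{N}$-orbits (\cite{ABV}*{(7.17)}), the inverse image of the extension-by-zero of the trivial local system on $S$ is the extension-by-zero of the trivial local system on $\epsilon^{-1}(S) = S_{1}$. The triviality of all component groups (\ref{compgptriv}) ensures the local-system data on both sides are vacuous; furthermore the canonical $\vartheta$-structure (\ref{oneextend}) on $\mu((S,1))^{+}$ pulls back to the trivial $\vartheta$-action we have tacitly imposed on $\mu(S_{1},1)$ via the convention following \eqref{twistcompgroup}. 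For orbits $S'$ not in the image of $\epsilon$, or with $\epsilon^{-1}(S') \neq S_{1}$, the right-hand pairing with $M(S_{1},1) = \eta^{\mathrm{loc}}_{S_{1}}(\vartheta)(\delta_{q})$ vanishes.

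Plugging in, only $\xi' = (S,1)$ contributes, and then the right-hand pairing collapses to the standard identity
$$\langle M(S_{1},1),\, \mu(S_{1},1)\rangle_{G} = e(S_{1},1) = 1,$$
the Kottwitz sign being $+1$ because $G(\mathbb{R},\delta_{q})$ is a product of complex general linear groups. Solving for $a_{(S,1)}$ yields the claimed sign, and all other $a_{\xi'}$ vanish, which gives $\mathrm{Lift}_{0}(\eta^{\mathrm{loc}}_{S_{1}}(\vartheta)(\delta_{q})) = (-1)^{l^{I}(S,1) - l^{I}_{\vartheta}(S,1)}\, M(S,1)^{+}$.

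The main obstacle I expect is the sheaf-theoretic verification of the twisted identity $\epsilon^{*}\mu((S,1))^{+} = \mu(S_{1},1)$, specifically the matching of the canonical extension $\vartheta_{\mu(\xi)}^{+}$ of Lemma \ref{cansheaf} with the trivial $\vartheta$-equivariant structure on the endoscopic side at a $\vartheta$-fixed base point. Everything else is bookkeeping from \eqref{pairdef2}, \eqref{twistloweps} and the basis expansion; the sign appearing in the statement comes entirely from the normalization of the twisted pairing \eqref{pairdef2}, not from a sheaf-theoretic subtlety. The argument mirrors \cite{aam}*{Proposition 5.3} with ${\mathrm{GL}}_{N}$ replaced by $\mathrm{R}_{\mathbb{C}/\mathbb{R}}\mathrm{GL}_{N}$, and is in fact simpler here because of (\ref{compgptriv}).
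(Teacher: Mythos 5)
Your approach is the correct one and matches the paper's (which simply defers to \cite{aam}*{Proposition 5.3}): unwind \eqref{twistloweps} against the basis $\{\mu(\xi')^{+}\}$, identify $\epsilon^{*}\mu((S,1))^{+}=\mu(S_{1},1)$ using the orbit-level injectivity of $\epsilon$, and read the coefficient off the normalization \eqref{pairdef2}. However, two of your intermediate justifications conflate the twisted endoscopic group $G$ of Section \ref{twistendsec} with the standard endoscopic group of Section \ref{standend}, and while they do not ultimately invalidate the computation, they should be corrected.

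First, you assert $M(S_{1},1)=\eta^{\mathrm{loc}}_{S_{1}}(\vartheta)(\delta_{q})$. In the twisted setting, $G$ is a quasisplit unitary group, not a product of complex general linear groups (that is only the case in Section \ref{standend}). The relevant component groups ${^\vee}G_{p_{1}}/({^\vee}G_{p_{1}})^{0}$ need not be trivial, so $\eta^{\mathrm{loc}}_{S_{1}}(\vartheta)(\delta_{q})=\sum_{\tau_{1}}M(S_{1},\tau_{1})$ is genuinely a sum over several quasicharacters $\tau_{1}$. The computation is unharmed: pairing this sum against $\mu(S_{1},1)$ kills every summand except $\tau_{1}=1$ by orthogonality, and the vanishing for $S'\neq S_{1}$ still holds term by term. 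But the equality you wrote is false and should not be invoked. Second, your justification for $e(S_{1},1)=1$ — ``because $G(\mathbb{R},\delta_{q})$ is a product of complex general linear groups'' — is again misattributed; the correct reason is simply that $\delta_{q}$ is the quasisplit strong involution of the unitary group $G$, and the Kottwitz sign of any quasisplit form is $+1$. With these two corrections your argument is sound and is indeed simpler than the general \cite{aam} version because of (\ref{compgptriv}) on the $\mathrm{R}_{\mathbb{C}/\mathbb{R}}\mathrm{GL}_{N}$ side — but note that triviality lives only there, not on the unitary endoscopic side.
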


\begin{prop}
\label{injlift2}
The twisted endoscopic lifting map $\mathrm{Lift}_{0}$ is injective.
\end{prop}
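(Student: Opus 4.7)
The plan is to prove injectivity by evaluating $\mathrm{Lift}_{0}$ on an explicit basis of the stable module and observing that the images are, up to nonzero signs, distinct members of a known basis on the twisted side.

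First, I would take the basis of $K_{\mathbb{C}}\Pi({^\vee}\mathcal{O}_{G}, G(\mathbb{R},\delta_{q}))^{\mathrm{st}}$ consisting of the virtual characters $\eta^{\mathrm{loc}}_{S_{1}}(\delta_{q})$, indexed by ${^\vee}G$-orbits $S_{1}\subset X({^\vee}\mathcal{O}_{G},{^\vee}G^{\Gamma})$. By the discussion following (\ref{etaloc}), the triviality of the relevant component groups for the endoscopic group $G$ (a product of complex general linear groups) means $\eta^{\mathrm{loc}}_{S_{1}}(\delta_{q}) = M(S_{1},1)$, so these form a genuine basis. The description of stable characters at the end of Section \ref{standend} guarantees that any element of $K_{\mathbb{C}}\Pi({^\vee}\mathcal{O}_{G}, G(\mathbb{R},\delta_{q}))^{\mathrm{st}}$ is a $\mathbb{C}$-linear combination of these.

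Next, I would apply Proposition \ref{twistimlift} to compute
\begin{equation*}
\mathrm{Lift}_{0}\!\left(\eta^{\mathrm{loc}}_{S_{1}}(\delta_{q})\right) \;=\; (-1)^{l^{I}(S,1)-l^{I}_{\vartheta}(S,1)}\, M(S,1)^{+},
\end{equation*}
where $S=\epsilon(S_{1})$ is the image orbit under the map (\ref{epinclusion1}). Because ${^\vee}G=({^\vee}\mathrm{R}_{\mathbb{C}/\mathbb{R}}\mathrm{GL}_{N})^{\vartheta}$, the image $S=\epsilon(S_{1})$ is automatically $\vartheta$-stable, hence corresponds (together with $\tau=1$) to a parameter in $\Xi({^\vee}\mathcal{O},{^\vee}\mathrm{R}_{\mathbb{C}/\mathbb{R}}\mathrm{GL}_{N}^{\Gamma})^{\vartheta}$. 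Thus the image of every basis element lies in $\{\pm M(\xi)^{+} : \xi\in\Xi({^\vee}\mathcal{O},{^\vee}\mathrm{R}_{\mathbb{C}/\mathbb{R}}\mathrm{GL}_{N}^{\Gamma})^{\vartheta}\}$, which is $\pm$ a basis of $K\Pi({^\vee}\mathcal{O},\mathrm{GL}_{N}(\mathbb{C}),\vartheta)$ by Corollary \ref{twistclass2} combined with the isomorphism (\ref{twistgroth1}).

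It then suffices to check that the assignment $S_{1}\mapsto \epsilon(S_{1})$ is injective from ${^\vee}G$-orbits in $X({^\vee}\mathcal{O}_{G},{^\vee}G^{\Gamma})$ onto ${^\vee}\mathrm{R}_{\mathbb{C}/\mathbb{R}}\mathrm{GL}_{N}$-orbits in $X({^\vee}\mathcal{O},{^\vee}\mathrm{R}_{\mathbb{C}/\mathbb{R}}\mathrm{GL}_{N}^{\Gamma})$. This is the main point of the argument. I would verify it by tracing through the Atlas description of Lemma \ref{XXXi}: a ${^\vee}G$-orbit is determined by a pair $(x,y)$ with $y\in{^\vee}\mathcal{X}^{ww_{0}}_{\lambda}$ attached to the (unitary) dual torus, while $\epsilon(S_{1})$ is determined by the image of $y$ under the inclusion ${^\vee}H_{G}=({^\vee}H)^{\vartheta}\hookrightarrow {^\vee}H$. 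Since $y\in{^\vee}\mathcal{X}^{ww_{0}}_{\lambda}$ normalises a torus inside the $\vartheta$-fixed group, its ${^\vee}\mathrm{R}_{\mathbb{C}/\mathbb{R}}\mathrm{GL}_{N}$-conjugacy class (via a $\vartheta$-fixed Weyl element, using the analogue of Lemma \ref{1cohomology} to eliminate cocycle obstructions) recovers the original ${^\vee}G$-conjugacy class. Hence distinct basis elements map to distinct (up to sign) basis elements of the target, forcing $\mathrm{Lift}_{0}$ to be injective. The one delicate point — and the step I would expect to require the most care — is precisely this orbit-injectivity argument, since it is where the peculiarities of $\mathrm{R}_{\mathbb{C}/\mathbb{R}}\mathrm{GL}_{N}$ (triviality of the component groups in (\ref{compgptriv}) and of the cohomology in Lemma \ref{1cohomology}) are essential.
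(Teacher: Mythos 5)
Your outline matches the paper's: fix the basis $\{\eta^{\mathrm{loc}}_{S_1}(\delta_q)\} = \{M(S_1,1)\}$, compute $\mathrm{Lift}_0$ on it via Proposition \ref{twistimlift}, and reduce injectivity of $\mathrm{Lift}_0$ to injectivity of $S_1 \mapsto \epsilon(S_1)$ at the level of orbits. You also correctly identify that last step as the crux. The problem is that your sketch of that step is not a proof and in places appeals to statements that do not apply.

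The orbit-injectivity you need is: if two ${^\vee}G$-orbits $S_1, S_2 \subset X({^\vee}\mathcal{O}_G, {^\vee}G^{\Gamma})$ (equivalently, two L-parameters of the quasisplit unitary group $G$) are carried by $\epsilon$ to the same ${^\vee}\mathrm{R}_{\mathbb{C}/\mathbb{R}}\mathrm{GL}_N$-orbit, then $S_1 = S_2$. In other words, L-parameters of a unitary group that become conjugate after composition with the standard base-change embedding are already conjugate. This is a nontrivial statement about conjugacy in the twisted centralizer $({^\vee}\mathrm{R}_{\mathbb{C}/\mathbb{R}}\mathrm{GL}_N)^{\vartheta}$ versus in the ambient group, and the paper settles it by citing Gan--Gross--Prasad (\cite{ggp}*{Theorem 8.1}; see also \cite{Mok}*{Lemma 2.2.1}) after translating orbits back to L-parameters via \cite{ABV}*{Proposition 6.17}. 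Your replacement argument --- ``trace through the Atlas description of Lemma \ref{XXXi}'' and ``use the analogue of Lemma \ref{1cohomology} to eliminate cocycle obstructions'' --- does not establish this. Lemma \ref{XXXi} parametrizes $\Xi({^\vee}\mathcal{O}, {^\vee}\mathrm{R}_{\mathbb{C}/\mathbb{R}}\mathrm{GL}_N^{\Gamma})$, not $\Xi({^\vee}\mathcal{O}_G, {^\vee}G^{\Gamma})$ for the unitary endoscopic group $G$, whose dual Cartan has nontrivial component group (so the argument that $\mathcal{X}^w_{{^\vee}\rho}$ is a singleton, which relied on $(\mathbb{C}^{\times})^N$ being connected, does not transfer to the unitary side). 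Similarly, Lemma \ref{1cohomology} concerns Galois cohomology of $\mathrm{R}_{\mathbb{C}/\mathbb{R}}\mathrm{GL}_N$; the relevant triviality for a unitary group is false, and in any event the passage from ``conjugate by an element of ${^\vee}\mathrm{R}_{\mathbb{C}/\mathbb{R}}\mathrm{GL}_N$ normalising the $\vartheta$-fixed torus'' to ``conjugate by an element of $({^\vee}\mathrm{R}_{\mathbb{C}/\mathbb{R}}\mathrm{GL}_N)^{\vartheta}$'' is exactly the content of the GGP injectivity theorem, not something that falls out of a Weyl-conjugacy manipulation. So the right fix is to cite that result (or prove it), rather than appeal to the GL-side lemmas.

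A second, minor issue: the $\vartheta$-stability of $\epsilon(S_1)$ and the identification of the image set with (signs times) a subset of the twisted basis $\{M(\xi)^{+}\}$ is correctly stated, but your phrasing ``onto $\pm$ a basis'' overclaims slightly --- the images form a \emph{linearly independent} subset, not the full basis, since $\epsilon$ is not surjective on orbits. This does not affect injectivity, but it should be stated as linear independence, as the paper does.
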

\begin{proof}
  Suppose $S_{1}, S_{2} \subset X({^\vee}\mathcal{O}_{G}, {^\vee}G^{\Gamma})$
 are ${^\vee}G$-orbits which are carried to the same
 ${^\vee}\mathrm{R}_{\mathbb{C}/\mathbb{R}} \mathrm{GL}_{N}$-orbit under $\epsilon$.  Then, after identifying these orbits with L-parameters (\cite{ABV}*{Proposition 6.17}), \cite{ggp}*{Theorem 8.1}  implies $S_{1} =
 S_{2}$ (\emph{cf.} \cite{Mok}*{Lemma 2.2.1}). 
  It now follows from Proposition \ref{twistimlift} that $\mathrm{Lift}_{0}$ sends the
basis  
$$\left\{\eta^{\mathrm{loc}}_{S_{G}}(\delta_{q}) : S_{G} \mbox{ a }
{^\vee}G\mbox{-orbit of } X({^\vee}\mathcal{O}_{G}, {^\vee}G^{\Gamma}) \right\}$$ 
of $K_{\mathbb{C}} \Pi({^\vee}\mathcal{O}_{G}, G(\mathbb{R},
\delta_{q}))^{\mathrm{st}}$ bijectively onto the  linearly independent subset  
$$\left\{  (-1)^{l^{I}(\epsilon(S_{G}), 1) - l^{I}_{\vartheta} (\epsilon(S_{G}), 1)} M(\epsilon( S_{G}),1)^{+}: S_{G} \mbox{ a }
      {^\vee}G\mbox{-orbit of } X({^\vee}\mathcal{O}_{G}, {^\vee}G^{\Gamma})
      \right\}$$ 
of $K_{\mathbb{C}} \Pi({^\vee}\mathcal{O}, \mathrm{GL}_{N}(\mathbb{C}), \vartheta)$.
\end{proof}

The next and final goal of this section is to provide the twisted
analogue of the endoscopic lifting of the virtual characters attached
to A-parameters as in (\ref{ordmiclift}).  As a guiding principle, it
helps to remember that in moving from $\eta^{\mathrm{loc}}_{S}$ to
$\eta^{\mathrm{loc}}_{S}(\vartheta)(\delta_{q})$ we extended the component
groups by $\langle \vartheta \rangle$ to obtain (\ref{twistcompgroup}),
and then extended the quasicharacters $\tau_{1}$ defined on the
original component groups.  We shall follow the same process with
$\eta^{\mathrm{mic}}_{\psi_{G}}$, doing our best to avoid the theory of microlocal
geometry. 

The stable virtual character (\ref{etapsi}) for the endoscopic group $G$ is 
$$\eta^{\mathrm{mic}}_{\psi_{G}} =  \sum_{\xi \in \Xi({^\vee}\mathcal{O}_{G}, {^\vee}G^{\Gamma})}
(-1)^{d(S_{\xi}) - d(S_{\psi_{G}})} \ \chi^{\mathrm{mic}}_{S_{\psi_{G}}}(P(\xi)) \, \pi(\xi)
\in K\Pi({^\vee}\mathcal{O}_{G}, G/\mathbb{R})^{\mathrm{st}}.$$
Here, $S_{\psi_{G}} \subset X({^\vee}\mathcal{O}_{G}, G^{\Gamma})$  is the
${^\vee}G$-orbit determined by the L-parameter $\phi_{\psi_{G}}$, and $\xi
= (S_{\xi}, \tau_{S_{\xi}})$.
We may rewrite $\eta^{\mathrm{mic}}_{\psi_{G}}$ using the following deep theorem in microlocal analysis.  It is a summary of \cite{ABV}*{Theorem 24.8, Corollary 24.9, Definition 24.15}.
\begin{thm}
  \label{miclocalsys}
For each $\xi \in \Xi({^\vee}\mathcal{O}_{G}, {^\vee}G^{\Gamma})$ there is a
representation $\tau^{\mathrm{mic}}_{S_{\psi_{G}}}(P(\xi))$ of ${^\vee}G_{\psi_{G}}/
({^\vee}G_{\psi_{G}})^{0}$, 
\nomenclature{$\tau^{\mathrm{mic}}_{S_{\psi_{G}}}(P(\xi))$}{representation of component group}
the component group of the centralizer in
${^\vee}G$ of the image of $\psi_{G}$, which satisfies the following
properties
\begin{enumerate}[label={(\alph*)}]

\item  $\tau^{\mathrm{mic}}_{S_{\psi_{G}}}(P(\xi))$ represents  a (possibly
   zero)  ${^\vee}G$-equivariant local system  $Q^{\mathrm{mic}}(P(\xi))$  of complex vector spaces.

\item The degree of  $\tau^{\mathrm{mic}}_{S_{\psi_{G}}}(P(\xi))$
  is equal to $\chi_{S_{\psi_{G}}}^{\mathrm{mic}}(P(\xi))$.  

\item  If  $\xi = (S_{\psi_{G}}, \tau_{S_{\psi_{G}}})$  then 
$\tau^{\mathrm{mic}}_{S_{\psi_{G}}}(P(\xi)) = \tau_{S_{\psi_{G}}} \circ i_{S_{\psi_{G}}}$,
 where 
$$i_{S_{\psi_{G}}}: {^\vee}G_{\psi_{G}}/ ({^\vee}G_{\psi_{G}})^{0}
\rightarrow {^\vee}G_{p}/ ({^\vee}G_{p})^{0}$$
is a surjective  homomorphism for $p \in S_{\psi_{G}}$. 
\nomenclature{$Q^{\mathrm{mic}}(P(\xi))$}{local system}
\nomenclature{$i_{S_{\psi_{G}}}$}{homomorphism of component groups}
\end{enumerate}
\end{thm}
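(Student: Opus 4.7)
The statement is essentially a consolidated summary of \cite{ABV}*{Theorem 24.8, Corollary 24.9, Definition 24.15}, so the plan is to invoke those results directly rather than reprove microlocalization from scratch. Since the reader may have little familiarity with the microlocal machinery, I would nevertheless indicate the geometric construction behind $\tau^{\mathrm{mic}}_{S_{\psi_G}}(P(\xi))$ in broad strokes.

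First I would fix a base point $p\in S_{\psi_G}$ coming from $\phi_{\psi_G}$ and a generic conormal covector $\zeta$ at $p$, so that $(p,\zeta)$ lies in the smooth part of the conormal bundle $T^{*}_{S_{\psi_G}}(X)$. The key geometric fact, established in \cite{ABV}*{Section 24}, is that the ${^\vee}G$-stabilizer of $(p,\zeta)$ is ${^\vee}G_{\psi_G}$, and the surjection $i_{S_{\psi_G}}$ of (c) is induced by forgetting $\zeta$. This identification is what makes the component group ${^\vee}G_{\psi_G}/({^\vee}G_{\psi_G})^0$ the natural recipient for the microlocal data.

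Next I would apply the microlocalization functor of Kashiwara--Schapira (reviewed in \cite{ABV}*{Chapter 24}) to the irreducible perverse sheaf $P(\xi)$. On the smooth locus of $T^{*}_{S_{\psi_G}}(X)$ this produces a ${^\vee}G$-equivariant constructible complex, whose top (nonzero) perverse cohomology is a local system $Q^{\mathrm{mic}}(P(\xi))$. Equivariance together with the identification above translates $Q^{\mathrm{mic}}(P(\xi))$ into a representation of ${^\vee}G_{\psi_G}/({^\vee}G_{\psi_G})^0$, which is $\tau^{\mathrm{mic}}_{S_{\psi_G}}(P(\xi))$. Property (a) is then tautological, while property (b) follows because the total rank (degree) of this local system equals, by the very definition of characteristic cycles, the microlocal multiplicity $\chi^{\mathrm{mic}}_{S_{\psi_G}}(P(\xi))$; this is \cite{ABV}*{Corollary 24.9}.

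The final step is (c), where $\xi=(S_{\psi_G},\tau_{S_{\psi_G}})$ and $P(\xi)$ is the intermediate extension from $S_{\psi_G}$. Here the microlocalization over the smooth part of $T^*_{S_{\psi_G}}(X)$ reduces to the restriction of $P(\xi)$ to $S_{\psi_G}$, which is (up to a shift) the local system defined by $\tau_{S_{\psi_G}}$ pulled back along $i_{S_{\psi_G}}$; this is precisely the content of \cite{ABV}*{Definition 24.15}. The main conceptual obstacle, if one chose to give a self-contained account rather than citing \cite{ABV}, would be to show that the microlocalization is pure in the appropriate sense and hence a local system in a single degree on the smooth locus of the conormal bundle, with equivariant structure matching the stabilizer identification; all of this is already handled in \cite{ABV}*{Sections 19 and 24}, so we simply cite the result.
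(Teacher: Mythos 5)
Your proposal is correct and takes the same route as the paper: the paper simply introduces this theorem as a summary of \cite{ABV}*{Theorem 24.8, Corollary 24.9, Definition 24.15} and offers no independent proof, so citing those ABV results is exactly the intended argument. The geometric gloss you add—identifying the component group via the stabilizer of a generic conormal covector, defining $Q^{\mathrm{mic}}(P(\xi))$ by microlocalization, reading off the rank from the characteristic cycle, and reducing (c) to restriction over $S_{\psi_G}$—is an accurate sketch of what happens inside \cite{ABV}*{Chapter 24}, just spelled out more than the paper chooses to.
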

By Theorem \ref{miclocalsys} (b), we may rewrite $\eta^{\mathrm{mic}}_{\psi_{G}}$ as
\begin{equation}
  \label{etapsitrace}
  \eta^{\mathrm{mic}}_{\psi_{G}} =  \sum_{\xi \in \Xi({^\vee}\mathcal{O}_{G}, {^\vee}G^{\Gamma})}
(-1)^{d(S_{\xi}) - d(S_{\psi_{G}})}
  \ \mathrm{Tr}\left(\tau^{\mathrm{mic}}_{S_{\psi_{G}}}(P(\xi))(1)\right) \, \pi(\xi). 
\end{equation}
Next, we extend ${^\vee}G_{\psi_{G}}/ ({^\vee}G_{\psi_{G}})^{0}$ trivially to
\begin{equation}
\label{artgroupext}
{^\vee}G_{\psi_{G}}/ ({^\vee}G_{\psi_{G}})^{0} \times \langle \vartheta \rangle,
\end{equation}
and extend $\tau^{\mathrm{mic}}_{S_{\psi_{G}}}(P(\xi))$ trivially to (\ref{artgroupext}) by defining $\tau^{\mathrm{mic}}_{S_{\psi_{G}}}(P(\xi))(\vartheta)$ to be the identity map.  We define
\begin{align*}
\eta^{\mathrm{mic}}_{\psi_{G}} (\vartheta) &=  \sum_{\xi \in \Xi({^\vee}\mathcal{O}_{G}, {^\vee}G^{\Gamma})}
(-1)^{d(S_{\xi}) - d(S_{\psi_{G}})} \ \mathrm{Tr}\left(\tau^{\mathrm{mic}}_{S_{\psi_{G}}}(P(\xi))(\vartheta)\right) \, \pi(\xi) \\
&=  \sum_{\xi \in \Xi({^\vee}\mathcal{O}_{G}, {^\vee}G^{\Gamma})}
(-1)^{d(S_{\xi}) - d(S_{\psi_{G}})} \ \dim\left(\tau^{\mathrm{mic}}_{S_{\psi_{G}}}(P(\xi)) \right) \, \pi(\xi). 
\end{align*}
Clearly
\begin{equation}
\label{nosigma}
\eta^{\mathrm{mic}}_{\psi_{G}} (\vartheta) = \eta^{\mathrm{mic}}_{\psi_{G}}.
\end{equation}
Finally,  define
\begin{align*}
\eta^{\mathrm{mic}}_{\psi_{G}} (\vartheta)(\delta_{q}) & =\sum_{(S_{\xi}, \tau_{S_{\xi}}) }
(-1)^{d(S_{\xi}) - d(S_{\psi_{G}})}
\ \mathrm{Tr}\left(\tau^{\mathrm{mic}}_{S_{\psi_{G}}}(P(\xi))(\vartheta)\right) \, \pi(\xi)\\
\nonumber &= \sum_{(S_{\xi}, \tau_{S_{\xi}}) }
(-1)^{d(S_{\xi}) - d(S_{\psi_{G}})}
\ \dim\left(\tau^{\mathrm{mic}}_{S_{\psi_{G}}}(P(\xi)) \right) \, \pi(\xi)
\end{align*}
in which the sum runs over only those $\xi = (S_{\xi}, \tau_{S_{\xi}}) \in
\Xi({^\vee}\mathcal{O}_{G}, {^\vee}G^{\Gamma})$ in which $\tau_{S_{\xi}}$
corresponds to the strong involution $\delta_{q}$ under (\ref{localLanglandspure}).
Therefore, by (\ref{etapsiabv})
\begin{equation*}
\eta^{\mathrm{mic}}_{\psi_{G}} (\vartheta)(\delta_{q})= \eta_{S_{\psi_{G}}}^{\mathrm{mic}}(\delta_{q})=\eta_{S_{\psi_{G}}}^{\mathrm{ABV}}.
\end{equation*}
The virtual
character $\eta^{\mathrm{mic}}_{\psi_{G}}(\vartheta) (\delta_{q})$ is a summand of the
stable virtual character $\eta^{\mathrm{mic}}_{\psi_{G}}$ and is therefore also stable
(\cite{ABV}*{Theorem 18.7}).  Consequently, $\eta^{\mathrm{mic}}_{\psi_{G}}(\vartheta)
(\delta_{q})$ lies in the domain of $\mathrm{Lift}_{0}$.  In addition,
the ABV-packet $\Pi^{\mathrm{ABV}}_{\psi_{G}}$ consists of the irreducible
characters in the support of $\eta^{\mathrm{mic}}_{\psi_{G}}(\vartheta) (\delta_{q})$ (\ref{abvdef}).   

What we have done for $\eta^{\mathrm{mic}}_{\psi_{G}}$ we begin to do for 
$\eta_{\psi}^{\mathrm{mic}+}$, which we define as
\begin{equation}
\label{etatilde}
  \eta_{\psi}^{\mathrm{mic}+} = \sum_{\xi \in \Xi({^\vee}\mathcal{O},
  {^\vee}\mathrm{R}_{\mathbb{C}/\mathbb{R}} \mathrm{GL}_{N}^{\Gamma})^\vartheta} 
(-1)^{d(S_{\xi}) - d(S_{\psi})}
\ \mathrm{Tr}(\chi^{\mathrm{mic}}_{S_{\psi}}(P(\xi))) \,
(-1)^{l^{I}(\xi) - l^{I}_{\vartheta}(\xi)} \pi(\xi)^{+}
\end{equation}
for
\begin{equation*}
\psi  = \epsilon \circ \psi_{G}.
\end{equation*}
The main difference now is that $\vartheta$ does not act trivially on
${^\vee}\mathrm{R}_{\mathbb{C}/\mathbb{R}}\mathrm{GL}_{N}$ and so the extensions require more attention.
The properties of Theorem \ref{miclocalsys} hold for
$\psi$ and $\mathrm{R}_{\mathbb{C}/\mathbb{R}}\mathrm{GL}_{N}$ as they do for $\psi_{G}$ and $G$.   

The first step is writing
$$\eta_{\psi}^{\mathrm{mic}+} =  \sum_{\xi \in \Xi({^\vee}\mathcal{O},
  {^\vee}\mathrm{R}_{\mathbb{C}/\mathbb{R}}\mathrm{GL}_{N}^{\Gamma})^\vartheta} 
(-1)^{d(S_{\xi}) - d(S_{\psi})}
\ \mathrm{Tr}(\tau^{\mathrm{mic}}_{S_{\psi}}(P(\xi))(1)) \,
(-1)^{l^{I}(\xi) - l^{I}_{\vartheta}(\xi)}  \pi(\xi)^{+}.
\nomenclature{$\eta_{\psi}^{\mathrm{mic}+}$}{virtual twisted character}$$ 
This holds from Theorem \ref{miclocalsys} (b)  as  (\ref{etapsitrace}) did for the endoscopic group $G$.  What is  simpler here is that the component group $({^\vee}\mathrm{R}_{\mathbb{C}/\mathbb{R}}\mathrm{GL}_{N})_{\psi}/ (({^\vee}\mathrm{R}_{\mathbb{C}/\mathbb{R}}\mathrm{GL}_{N})_{\psi})^{0}$ is trivial (\cite{Arthur84}*{Section 2.3}). It follows that $\tau^{\mathrm{mic}}_{S_{\psi}}(P(\xi)) $ is either  trivial  or zero.

Let us digress briefly to examine Theorem \ref{miclocalsys} (c) for $\xi = (S_{\psi}, \tau_{S_{\psi}})$.  Since the component group $({^\vee}\mathrm{R}_{\mathbb{C}/\mathbb{R}}\mathrm{GL}_{N})_{p}/ (({^\vee}\mathrm{R}_{\mathbb{C}/\mathbb{R}}\mathrm{GL}_{N})_{p})^{0}$ is trivial, the representation $\tau^{S_{\psi}}$ is trivial.  It follows that 
\begin{equation*}
\tau^{\mathrm{mic}}_{S_{\psi}}(P(S_{\psi}, \tau_{S_{\psi}})) = \tau_{S_{\psi}}\circ i_{S_{\psi}} = 1 \circ i_{S_{\psi} }= 1 \neq 0.
\end{equation*}
In particular, $\pi(S_{\psi}, 1)$ is in the support of $\eta^{\mathrm{mic}}_{\psi}$ and belongs to $\Pi^{\mathrm{ABV}}_{\psi}$.  In the next section we will prove that this is the only representation in $\Pi^{\mathrm{ABV}}_{\psi}$. 

Returning to the matter of extensions, there is an obvious extension
$$({^\vee}\mathrm{R}_{\mathbb{C}/\mathbb{R}}\mathrm{GL}_{N})_{\psi}/ (({^\vee}\mathrm{R}_{\mathbb{C}/\mathbb{R}}\mathrm{GL}_{N})_{\psi})^{0} \times \langle \vartheta \rangle$$
of the trivial component group, as $\vartheta$ fixes the image of $\psi$.  We wish to extend the representation $\tau_{S_{\psi}}^{\mathrm{mic}} (P(\xi))$ to this group for  $\xi \in  \Xi({^\vee}\mathcal{O}, {^\vee}\mathrm{R}_{\mathbb{C}/\mathbb{R}}\mathrm{GL}_{N}^{\Gamma})^\vartheta$.  
The action of $\vartheta$ on $P(\xi) \in \mathcal{P}({^\vee}\mathcal{O},
{^\vee}\mathrm{R}_{\mathbb{C}/\mathbb{R}}\mathrm{GL}_{N}^{\Gamma}; \vartheta)$ determines an action on
the stalks of the local system $Q^{\mathrm{mic}}(P(\xi))$ as in
(\ref{miclocalsys}) (\cite{ABV}*{(25.1)}).   
\cite{ABV}*{Proposition 26.23 (b)} allows us to choose a stalk over a $\vartheta$-fixed point
$p'$ (related to $S_{\psi}$) in the topological space of
$Q^{\mathrm{mic}}(P(\xi))$.  This places us in the same setting as Lemma
\ref{cansheaf}, with
$\tau_{S}$ replaced by$\tau_{S_{\psi}}^{\mathrm{mic}}(P(\xi))$ and 
 $S$ replaced by the
${^\vee}\mathrm{R}_{\mathbb{C}/\mathbb{R}}\mathrm{GL}_{N}$-orbit of $p'$.   As in that lemma,  $\vartheta$
determines a canonical isomorphism of the stalk at $p'$ equal to $1$.
In short, we define  
\begin{equation}
\label{oneextend1}
\tau_{S_{\psi}}^{\mathrm{mic}}(P(\xi)^{+})(\vartheta) =1
\end{equation}
 and extend $\tau_{S_{\psi}}^{\mathrm{mic}}(P(\xi))$ to a representation
 $\tau_{S_{\psi}}^{\mathrm{mic}}(P(\xi)^{+})$. 
 \nomenclature{$\tau_{S_{\psi}}^{\mathrm{mic}}(P(\xi)^{+})$}{}
  The representation
 $\tau_{S_{\psi}}^{\mathrm{mic}}(P(\xi)^{+})$ represents the
 $({^\vee}\mathrm{R}_{\mathbb{C}/\mathbb{R}}\mathrm{GL}_{N} \rtimes \langle \vartheta
 \rangle)$-equivariant local system of the restriction of
 $Q^{\mathrm{mic}}(P(\xi))$ to the orbit of $p'$.   
We may extend  $i_{S_{\psi}}$ in Theorem \ref{miclocalsys} (c) to include
the products with $\langle \vartheta \rangle$.  Definitions
(\ref{oneextend}) and (\ref{oneextend1})  are compatible in that 
$$\tau_{S_{\psi}}^{\mathrm{mic}}(P(S_{\psi},1))^{+}) = 1^{+} \circ i_{S_{\psi}}.$$

Finally, we define 
\begin{equation}
\label{etaplus}
\eta_{\psi}^{\mathrm{mic}+}(\vartheta) = \sum_{\xi \in \Xi({^\vee}\mathcal{O},
  {^\vee}\mathrm{R}_{\mathbb{C}/\mathbb{R}}\mathrm{GL}_{N}^{\Gamma})^\vartheta} 
(-1)^{d(S_{\xi}) - d(S_{\psi})}
\ \mathrm{Tr}(\tau^{\mathrm{mic}}_{S_{\psi}}(P(\xi)^{+})(\vartheta)) \,
(-1)^{l^{I}(\xi) - l^{I}_{\vartheta}(\xi)}  \pi(\xi)^{+}.
\nomenclature{$\eta_{\psi}^{\mathrm{mic}+}(\vartheta)$}{}
\end{equation}
It is clear from definition (\ref{etatilde}) that
$\eta_{\psi}^{\mathrm{mic}+}(\vartheta) = \eta_{\psi}^{\mathrm{mic}+}$.

The obvious definition of the representation
$\tau_{S_{\psi}}^{\mathrm{mic}}(P(\xi)^{-})$ is to take
$$\tau_{S_{\psi}}^{\mathrm{mic}}(P(\xi)^{-})(\vartheta) = -1.$$
With this
definition in place the following proposition is a consequence of
\cite{ABV}*{Corollary 24.9}. 
\begin{prop}
\label{24.9c}
The functor $\tau_{S_{\psi}}^{\mathrm{mic}}(\cdot)$, from
$({^\vee}\mathrm{R}_{\mathbb{C}/\mathbb{R}}\mathrm{GL}_{N} \rtimes \langle \vartheta \rangle
)$-equivariant perverse sheaves to representations of
$\left( {^\vee}G_{\psi}/ ({^\vee}G_{\psi})^{0} \right) \times \langle \vartheta
\rangle$, induces a map from the Grothendieck group 
$K(X({^\vee}\mathcal{O}, {^\vee}\mathrm{R}_{\mathbb{C}/\mathbb{R}}\mathrm{GL}_{N}^{\Gamma}); \vartheta)$ to the
space of virtual representations.    Furthermore the \emph{microlocal
  trace} map   
$$\mathrm{Tr} \, \left(\tau_{S_{\psi}}^{\mathrm{mic}} (\cdot)(\vartheta) \right)$$
induces a homomorphism from $K(X({^\vee}\mathcal{O},
{^\vee}\mathrm{R}_{\mathbb{C}/\mathbb{R}}\mathrm{GL}_{N}^{\Gamma}), \vartheta)$ (as in
(\ref{twistsheafgroth})) to $\mathbb{C}$.  
\end{prop}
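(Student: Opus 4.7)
The plan is to deduce both assertions from \cite{ABV}*{Corollary 24.9} together with the compatibility of the microlocal construction with the $\vartheta$-action recorded in \cite{ABV}*{(25.1)} and \cite{ABV}*{Proposition 26.23(b)}. The first assertion is an exactness/additivity check, and the second follows from the sign behaviour already exploited in the definition of the quotient (\ref{twistsheafgroth}).

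First I would verify additivity on short exact sequences in $\mathcal{P}(X({^\vee}\mathcal{O},{^\vee}\mathrm{R}_{\mathbb{C}/\mathbb{R}}\mathrm{GL}_N^\Gamma);\vartheta)$. Given such a sequence
\[
0\to (P_1,\vartheta_{P_1})\to (P_2,\vartheta_{P_2})\to (P_3,\vartheta_{P_3})\to 0,
\]
forgetting the compatibility data yields a short exact sequence of ${^\vee}\mathrm{R}_{\mathbb{C}/\mathbb{R}}\mathrm{GL}_N$-equivariant perverse sheaves, and \cite{ABV}*{Corollary 24.9} provides a short exact sequence of representations $\tau^{\mathrm{mic}}_{S_\psi}(P_i)$ of ${^\vee}G_\psi/({^\vee}G_\psi)^0$ via the underlying local systems $Q^{\mathrm{mic}}(P_i)$. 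Functoriality of the $\vartheta$-action on stalks along a $\vartheta$-fixed base point (as in \cite{ABV}*{(25.1)} and (\ref{oneextend1})) transfers $\vartheta_{P_i}$ to a compatible action on each $\tau^{\mathrm{mic}}_{S_\psi}(P_i)$, and the connecting maps in the triangle intertwine these actions. We therefore obtain a short exact sequence of representations of $({^\vee}G_\psi/({^\vee}G_\psi)^0)\times\langle\vartheta\rangle$, proving the first assertion.

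Next I would show that $\mathrm{Tr}(\tau^{\mathrm{mic}}_{S_\psi}(\cdot)(\vartheta))$ descends to the quotient (\ref{twistsheafgroth}). The relations being killed have the form $(P(\xi)\otimes 1)+(P(\xi)\otimes -1)$, where $P(\xi)^-$ is the alternative extension with $\vartheta^-_{P(\xi)}=-\vartheta^+_{P(\xi)}$ (\emph{cf.} Lemma \ref{cansheaf}). By \cite{ABV}*{(25.1)(j)} negating the compatibility datum flips the canonical isomorphism on the $\vartheta$-fixed stalk, so
\[
\tau^{\mathrm{mic}}_{S_\psi}(P(\xi)^-)(\vartheta)=-\tau^{\mathrm{mic}}_{S_\psi}(P(\xi)^+)(\vartheta)
\]
as linear endomorphisms of the same underlying vector space. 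Taking traces, the sum on each relation vanishes, so the microlocal trace factors through the quotient $KX({^\vee}\mathcal{O},{^\vee}\mathrm{R}_{\mathbb{C}/\mathbb{R}}\mathrm{GL}_N^\Gamma,\vartheta)$ and lands in $\mathbb{C}$.

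The main obstacle will be the first step: rigorously checking that the canonical $\vartheta$-action on the stalks of $Q^{\mathrm{mic}}(P)$ is genuinely functorial in the pair $(P,\vartheta_P)$, and that it is the same action one obtains from the $\vartheta$-fixed base point supplied by \cite{ABV}*{Proposition 26.23(b)}. Once this compatibility is in hand the rest is a routine bookkeeping exercise, entirely parallel to \cite{aam}*{Section 5} but with $\mathrm{GL}_N$ replaced by $\mathrm{R}_{\mathbb{C}/\mathbb{R}}\mathrm{GL}_N$.
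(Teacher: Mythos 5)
Your proposal is essentially correct and fills in exactly the details that the paper leaves implicit: the paper's entire proof is the sentence ``With this definition in place the following proposition is a consequence of \cite{ABV}*{Corollary 24.9},'' so there is nothing to contradict. Your two-step outline---(i) exactness of $\tau^{\mathrm{mic}}_{S_\psi}$ from \cite{ABV}*{Corollary 24.9} together with the functoriality of the $\vartheta$-compatibility datum on stalks, and (ii) descent of the trace to the quotient $KX({^\vee}\mathcal{O},{^\vee}\mathrm{R}_{\mathbb{C}/\mathbb{R}}\mathrm{GL}_N^\Gamma,\vartheta)$ because the microlocal traces of $P(\xi)^\pm$ differ by a sign---matches the two ingredients the paper itself points to (the second is explicitly recorded after Lemma \ref{cansheaf} with the citation to \cite{ABV}*{(25.1)(j)}). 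The ``main obstacle'' you flag, namely that the $\vartheta$-action one gets from stalk functoriality agrees with the one produced by \cite{ABV}*{Proposition 26.23(b)}, is precisely what the discussion preceding the proposition around (\ref{oneextend1}) is designed to handle; so with that discussion taken as input, your argument closes.
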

A similar statement is true for $\tau_{S_{\psi_{G}}}^{\mathrm{mic}}$ and the
$({^\vee}G \times \langle \vartheta \rangle)$-equivariant sheaves
defined earlier.  The proof of the next theorem is identical to the proof of \cite{aam}*{Theorem 5.6}.   
\begin{thm}{\cite{aam}*{Theorem 5.6}}\label{thm:etaplus1}
\begin{enumerate}[label={(\alph*)}]
\item As a function on $K(X({^\vee}\mathcal{O},
  {^\vee}\mathrm{R}_{\mathbb{C}/\mathbb{R}}\mathrm{GL}_{N}^{\Gamma}), \vartheta)$ we have 
$$\left\langle \eta_{\psi}^{\mathrm{mic}+}(\vartheta), \cdot \right\rangle = (-1)^{d(S_{\psi})} \,
    \mathrm{Tr} \, \left(\tau_{S_{\psi}}^{\mathrm{mic}} (\cdot)(\vartheta) \right).$$

\item The stable virtual character $\eta_{\psi}^{\mathrm{mic}+}(\vartheta)$ is equal to 
  $$ (-1)^{d(S_{\psi})} \sum_{\xi \in \Xi({^\vee}\mathcal{O},
  {^\vee}\mathrm{R}_{\mathbb{C}/\mathbb{R}}\mathrm{GL}_{N}^{\Gamma})^\vartheta} \mathrm{Tr} \,
  \left(\tau_{S_{\psi}}^{\mathrm{mic}} (\mu(\xi)^{+})(\vartheta)
  \right)\,  (-1)^{l^{I}(\xi)-l^{I}_{\vartheta}(\xi)} \, M(\xi)^{+}.$$

\item $\mathrm{Lift}_{0}
  \left(\eta^{\mathrm{mic}}_{\psi_{G}}(\vartheta)(\delta_{q}) \right) =
  \eta_{\psi}^{\mathrm{mic}+}(\vartheta).$ 

\end{enumerate}
\end{thm}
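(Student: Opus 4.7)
The plan is to follow the argument of \cite{aam}*{Theorem 5.6} with only cosmetic changes, drawing on the simplifications available for $\mathrm{R}_{\mathbb{C}/\mathbb{R}}\mathrm{GL}_N$, chiefly the triviality \eqref{compgptriv} of the relevant component groups and the consequent fact that every $\tau_{S_\psi}^{\mathrm{mic}}(P(\xi))$ is either trivial or zero.

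For part (a), I would substitute the defining expression \eqref{etatilde} of $\eta_\psi^{\mathrm{mic}+}(\vartheta)$ into the left-hand side and test it on each basis element $P(\xi')^+$. Theorem \ref{twistpairing} gives
\begin{equation*}
\langle \pi(\xi)^+, P(\xi')^+\rangle = (-1)^{d(\xi)}\,(-1)^{l^I(\xi)-l^I_\vartheta(\xi)}\,\delta_{\xi,\xi'},
\end{equation*}
so the sign $(-1)^{l^I(\xi)-l^I_\vartheta(\xi)}$ built into \eqref{etatilde} cancels against the matching sign in the pairing, and the remaining signs collapse to $(-1)^{d(S_\xi)-d(S_\psi)+d(\xi)}=(-1)^{d(S_\psi)}$ since $d(\xi)=d(S_\xi)$. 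What is left is exactly $(-1)^{d(S_\psi)}\mathrm{Tr}\bigl(\tau_{S_\psi}^{\mathrm{mic}}(P(\xi')^+)(\vartheta)\bigr)$ by Theorem \ref{miclocalsys}(b) and the canonical extension \eqref{oneextend1}. The identification on the full $\mathbb{Z}$-module $K(X({^\vee}\mathcal{O},{^\vee}\mathrm{R}_{\mathbb{C}/\mathbb{R}}\mathrm{GL}_N^\Gamma);\vartheta)$ then follows by $\mathbb{Z}[U_2]$-linearity, using Proposition \ref{24.9c} to confirm that both sides descend to the quotient \eqref{twistsheafgroth} in a sign-consistent way.

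Part (b) is a bookkeeping translation of part (a) to the basis $\{M(\xi)^+\}\leftrightarrow\{\mu(\xi)^+\}$. Write $\eta_\psi^{\mathrm{mic}+}(\vartheta)=\sum_\xi c_\xi\, M(\xi)^+$; pairing against $\mu(\xi')^+$ and using the defining identity \eqref{pairdef2} produces $c_{\xi'}(-1)^{l^I(\xi')-l^I_\vartheta(\xi')}$ on the left and $(-1)^{d(S_\psi)}\mathrm{Tr}\bigl(\tau_{S_\psi}^{\mathrm{mic}}(\mu(\xi')^+)(\vartheta)\bigr)$ on the right by (a), giving exactly the coefficient in the claimed formula.

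For part (c), which is the substantive statement, I would compute by definition of $\mathrm{Lift}_0$ in \eqref{twistloweps}:
\begin{equation*}
\bigl\langle \mathrm{Lift}_0\bigl(\eta^{\mathrm{mic}}_{\psi_G}(\vartheta)(\delta_q)\bigr),\mu(\xi)^+\bigr\rangle = \bigl\langle \eta^{\mathrm{mic}}_{\psi_G}(\vartheta)(\delta_q),\epsilon^*\mu(\xi)^+\bigr\rangle_G.
\end{equation*}
Applying the ordinary ($G$-side) counterpart of part (a), which is precisely the microlocal-trace formulation of \cite{ABV}*{Theorem 26.25} combined with Theorem \ref{miclocalsys}, the right-hand side equals $(-1)^{d(S_{\psi_G})}\mathrm{Tr}\bigl(\tau_{S_{\psi_G}}^{\mathrm{mic}}(\epsilon^*\mu(\xi)^+)(\vartheta)\bigr)$. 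To conclude, I would need the \emph{microlocal compatibility identity}
\begin{equation*}
(-1)^{d(S_{\psi_G})}\mathrm{Tr}\bigl(\tau_{S_{\psi_G}}^{\mathrm{mic}}(\epsilon^*\mu(\xi)^+)(\vartheta)\bigr) = (-1)^{d(S_\psi)}\mathrm{Tr}\bigl(\tau_{S_\psi}^{\mathrm{mic}}(\mu(\xi)^+)(\vartheta)\bigr),
\end{equation*}
which is the $\vartheta$-equivariant refinement of the inverse-image compatibility \cite{ABV}*{Theorem 26.25}. This will be the main obstacle: one must verify that the canonical $\vartheta$-action on the microlocal stalk of $\epsilon^*\mu(\xi)^+$ over a $\vartheta$-fixed point upstairs matches, under $i_{S_\psi}$ and $i_{S_{\psi_G}}$, the canonical $\vartheta$-action downstairs. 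Because $({^\vee}\mathrm{R}_{\mathbb{C}/\mathbb{R}}\mathrm{GL}_N)_\psi$ has trivial component group, the normalization \eqref{oneextend1} of the $\vartheta$-action on the stalk coincides tautologically with its pullback along $\epsilon^*$ applied to the $G$-side normalization, and the parity $d(S_\psi)-d(S_{\psi_G})$ is even by a conormal-bundle dimension count in the smooth embedding $\epsilon\colon X({^\vee}\mathcal{O}_G,{^\vee}G^\Gamma)\hookrightarrow X({^\vee}\mathcal{O},{^\vee}\mathrm{R}_{\mathbb{C}/\mathbb{R}}\mathrm{GL}_N^\Gamma)$. Combined with part (b), this identity matches the coefficients of $M(\xi)^+$ in $\mathrm{Lift}_0\bigl(\eta^{\mathrm{mic}}_{\psi_G}(\vartheta)(\delta_q)\bigr)$ and in $\eta_\psi^{\mathrm{mic}+}(\vartheta)$, proving (c).
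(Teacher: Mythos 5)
Parts (a) and (b) are correct: you substitute the defining expression \eqref{etatilde} into the left-hand side, pair against the $P(\xi')^+$ basis via Theorem \ref{twistpairing}, the sign $(-1)^{l^I-l^I_\vartheta}$ cancels, and $(-1)^{d(S_{\xi'})-d(S_\psi)+d(\xi')}=(-1)^{d(S_\psi)}$; part (b) is then a change of basis using \eqref{pairdef2}. The gaps are in part (c). You write $\bigl\langle\mathrm{Lift}_0(\eta^{\mathrm{mic}}_{\psi_G}(\vartheta)(\delta_q)),\mu(\xi)^+\bigr\rangle=\bigl\langle\eta^{\mathrm{mic}}_{\psi_G}(\vartheta)(\delta_q),\epsilon^*\mu(\xi)^+\bigr\rangle_G$ and then apply the ``$G$-side counterpart of (a)'' to evaluate the right-hand side as $(-1)^{d(S_{\psi_G})}\mathrm{Tr}\bigl(\tau^{\mathrm{mic}}_{S_{\psi_G}}(\epsilon^*\mu(\xi)^+)(\vartheta)\bigr)$. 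But that formula is the pairing with the \emph{full} stable character $\eta^{\mathrm{mic}}_{\psi_G}(\vartheta)=\eta^{\mathrm{mic}}_{\psi_G}$, which runs over all pure strong involutions of $G$; it is not a priori the pairing with the quasisplit restriction $\eta^{\mathrm{mic}}_{\psi_G}(\vartheta)(\delta_q)$. Unlike $\mathrm{R}_{\mathbb{C}/\mathbb{R}}\mathrm{GL}_N$ (Lemma \ref{1cohomology}), the unitary group $G$ has several pure strong involutions, so restricting to $\delta_q$ drops terms, and you have not argued why the dropped terms pair trivially against $\epsilon^*\mu(\xi)^+$. The standard fix is to bypass the full-character formula entirely: decompose $\eta^{\mathrm{mic}}_{\psi_G}(\vartheta)(\delta_q)$ in the basis $\{\eta^{\mathrm{loc}}_{S_G}(\vartheta)(\delta_q)\}$ with coefficients given by the microlocal traces, apply Proposition \ref{twistimlift} to each basis element, and then compare the resulting sum of $M(\epsilon(S_G),1)^+$'s against the expansion of $\eta_\psi^{\mathrm{mic}+}(\vartheta)$ from your part (b).

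Separately, your ``microlocal compatibility identity'' is correctly identified as the crux, but the supporting argument is too thin to carry it. The claim that the normalization \eqref{oneextend1} pulls back ``tautologically'' under $\epsilon^*$ conflates the (genuinely one-dimensional) stalk situation for $\mathrm{R}_{\mathbb{C}/\mathbb{R}}\mathrm{GL}_N$ with the $G$-side, where $A_{\psi_G}$ is a nontrivial finite group and $\tau^{\mathrm{mic}}_{S_{\psi_G}}(\epsilon^*\mu(\xi)^+)$ is a possibly higher-dimensional, possibly reducible, representation of it; the $\vartheta$-action is trivial there, but that does not by itself reduce the comparison to a tautology. Likewise the assertion that $d(S_\psi)-d(S_{\psi_G})$ is even is exactly the kind of parity fact that the proofs in \cite{ABV}*{Sections 23--26} establish by explicit computation of the $\epsilon$-map on conormal directions; it should be cited or proved, not asserted as an aside. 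In short, (c) needs the $\eta^{\mathrm{loc}}$-basis route (or an explicit argument that the non-quasisplit pieces pair to zero with $\epsilon^*\mu(\xi)^+$) plus a real verification of the microlocal compatibility, not just a plausibility sketch.
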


\section{ABV-packets for complex general linear groups}
\label{glnpacket}

In this section we prove that any ABV-packet for $
\mathrm{R}_{\mathbb{C}/\mathbb{R}}\mathrm{GL}(\mathbb{R})=\mathrm{GL}_{N}(\mathbb{C})$ consists of a single (equivalence class of an)
irreducible representation. This implies that such an  
ABV-packet is equal to its corresponding L-packet (\cite{ABV}*{Theorem 22.7 (a)}). From the classification of the unitary dual of
$\mathrm{GL}_{N}(\mathbb{C})$ we shall deduce that the single
representation in the packet is unitary. 

In this section we let
$$\psi:W_{\mathbb{R}} \times \mathrm{SL}_{2} \rightarrow
{}^{\vee}\mathrm{R}_{\mathbb{C}/\mathbb{R}}\mathrm{GL}_N^{\Gamma}$$ 
be an arbitrary A-parameter for
$\mathrm{R}_{\mathbb{C}/\mathbb{R}}\mathrm{GL}_N$. The description of the ABV-packet
$\Pi_{\psi}^{\mathrm{ABV}}$ will be achieved in three steps. We follow the 
same proof as in  \cite{aam}*{Section 6}. First, we
treat the case of an \emph{irreducible} A-parameter. 
Second, we compute the ABV-packet for a Levi subgroup of
$\mathrm{R}_{\mathbb{C}/\mathbb{R}}\mathrm{GL}_N$, whose dual group contains  the image of
$\psi$ minimally. The final result is obtained from the second
step by considering the Levi subgroup as an endoscopic group of $\mathrm{R}_{\mathbb{C}/\mathbb{R}}\mathrm{GL}_N$ and
applying the endoscopic lifting (\ref{ordmiclift}).

According to \cite{Mok}*{Section 2.3}, any
A-parameter $\psi$ for $\mathrm{R}_{\mathbb{C}/\mathbb{R}}\mathrm{GL}_N$ may be
decomposed as a formal direct sum of A-parameters
\begin{align}\label{eq:Arthurparameterdecomposition}
\psi=\boxplus _{i=1}^{r}\ell_{i} \psi_i, 
\end{align}
with $\ell_i\in\mathbb{N}$, $\psi_i$ being an 
A-parameter of $\mathrm{R}_{\mathbb{C}/\mathbb{R}}\mathrm{GL}_{N_{i}}$, and  $N = \sum_{i=1}^{r} \ell_{i} N_{i}$.
We may identify \emph{real} L-parameters of 
$\mathrm{R}_{\mathbb{C}/\mathbb{R}}\mathrm{GL}_{N_{i}}$ with their corresponding \emph{complex} L-parameters of $\mathrm{GL}_{N_{i}}$ (\cite{borel}*{Section I.5}). This correspondence extends in an evident way to an analogous identification between A-parameters. 
The complex A-parameter of 
$\mathrm{GL}_{N_{i}}$ corresponding to each $\psi_{i}$ is given by
\begin{align*}
 \mu_{i} \boxtimes \nu_{N_{i}},
\nomenclature{$\mu_{r} \boxtimes \nu_{n_{r}}$}{}
\end{align*}
where $\nu_{N_{i}}$ is the unique irreducible 
representation of $\mathrm{SL}_{2}$ of dimension $N_{i}$,
and $\mu_{i}$ is an irreducible representation $\mathbb{C}^{\times}$. 

The parameter $\psi$ in
(\ref{eq:Arthurparameterdecomposition}) is said to be
\emph{irreducible} if  $r = 1$ and  $\ell_{1} = 1$. 
For any irreducible A-parameter $\psi$ of $\mathrm{R}_{\mathbb{C}/\mathbb{R}}\mathrm{GL}_N$
the corresponding representation $\nu_{N}$ of $\mathrm{SL}_2$ is irreducible and of dimension $N$.  As a consequence the  image of any unipotent subgroup of $\mathrm{SL}_{2}$ under 
$\nu_{N}$ is \emph{principally unipotent} (\emph{i.e.} regular and unipotent) in ${^\vee}\mathrm{GL}_{N}$.  Equivalently, the image of any unipotent subgroup of $\mathrm{SL}_{2}$ under the real A-parameter $\psi$ is principally unipotent 
in ${}^{\vee}\mathrm{GL}_N\times {}^{\vee}\mathrm{GL}_N$. \cite{arancibia_characteristic}*{Theorem 4.11 (d)}  therefore implies 
the following result  (\emph{cf.} \cite{ABV}*{Theorem 27.18}).
\begin{prop}\label{prop:irreducibleArthurparameter}
  Suppose $\psi$ is an irreducible A-parameter of
  $\mathrm{R}_{\mathbb{C}/\mathbb{R}}\mathrm{GL}_N$. 
  Then $\Pi_{\psi}^{\mathrm{ABV}}$ consists of a single unitary character. 
\end{prop}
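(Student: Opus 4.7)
The plan is to deduce the proposition directly from \cite{arancibia_characteristic}*{Theorem 4.11(d)}, which we have positioned ourselves to apply, and then to identify the unique ABV-representative explicitly using the local Langlands correspondence for $\mathrm{GL}_{N}(\mathbb{C})$.

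First I would record the form of $\psi$ afforded by the irreducibility hypothesis. Since $r=\ell_{1}=1$ in the decomposition \eqref{eq:Arthurparameterdecomposition}, the identification of real and complex A-parameters recalled just above exhibits $\psi$ as $\mu\boxtimes\nu_{N}$ for a single irreducible representation $\mu$ of $\mathbb{C}^{\times}$. Because $\psi|_{W_{\mathbb{R}}}$ is tempered, $\mu$ must be a unitary character of $\mathbb{C}^{\times}$. The discussion preceding the proposition already shows that $\psi$ carries any nontrivial unipotent of $\mathrm{SL}_{2}$ to a principal unipotent element of ${^\vee}\mathrm{GL}_{N}\times {^\vee}\mathrm{GL}_{N}$, which is exactly the hypothesis of \cite{arancibia_characteristic}*{Theorem 4.11(d)}.

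Second, I would invoke \cite{arancibia_characteristic}*{Theorem 4.11(d)} to conclude that $\Pi^{\mathrm{ABV}}_{\psi}$ is a singleton. From the observations following \eqref{etatilde}, the component group $({^\vee}\mathrm{R}_{\mathbb{C}/\mathbb{R}}\mathrm{GL}_{N})_{\psi}/(({^\vee}\mathrm{R}_{\mathbb{C}/\mathbb{R}}\mathrm{GL}_{N})_{\psi})^{0}$ is trivial and $\pi(S_{\psi},1)$ is always in $\Pi^{\mathrm{ABV}}_{\psi}$; what must be extracted from Theorem 4.11(d) is the vanishing $\chi^{\mathrm{mic}}_{S_{\psi}}(P(\xi))=0$ for every orbit $S_{\xi}\neq S_{\psi}$ in the principal-unipotent case. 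Once this vanishing is granted, the ABV-packet reduces to $\{\pi(S_{\psi},1)\}$.

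Third, to verify unitarity I would identify $\pi(S_{\psi},1)$ explicitly. The Langlands parameter $\phi_{\psi}$ attached to $\psi=\mu\boxtimes\nu_{N}$ via the standard shift $\phi_{\psi}(w)=\psi\bigl(w,\mathrm{diag}(|w|^{1/2},|w|^{-1/2})\bigr)$ decomposes as the direct sum of one-dimensional characters $\mu\cdot|\cdot|^{(N-1)/2-k}$ for $0\le k\le N-1$. Under the local Langlands correspondence for $\mathrm{GL}_{N}(\mathbb{C})$, this is the parameter of the standard module normally induced from the corresponding character of the upper-triangular Borel, whose unique irreducible quotient is the one-dimensional representation $\mu\circ\det$ of $\mathrm{GL}_{N}(\mathbb{C})$. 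As $\mu$ is unitary, so is $\mu\circ\det$, establishing the claim. The main obstacle in this plan is the bookkeeping of the third step: one must match the conventions of \cite{ABV} and \cite{arancibia_characteristic} so that the Langlands quotient really is a character rather than a non-tempered induced module, and so that the $|\cdot|$-twists assemble along the determinant into a unitary character. The first two steps are essentially invocations of, respectively, the structural description of irreducible A-parameters and the external singleton theorem.
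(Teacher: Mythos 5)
Your proof is correct and follows essentially the same route as the paper's: observe that the irreducibility of $\psi$ forces the image of any unipotent to be principal unipotent, then invoke \cite{arancibia_characteristic}*{Theorem 4.11 (d)} (cf.\ \cite{ABV}*{Theorem 27.18}) to obtain the singleton conclusion. Your third step, identifying the unique packet member explicitly as the one-dimensional character $\mu\circ\det$ and thereby verifying unitarity directly, is a useful elaboration of what the paper leaves implicit in the citation.
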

Let us proceed to the case of a general A-parameter
$\psi$ as in Equation (\ref{eq:Arthurparameterdecomposition}). 
Define
\begin{equation}\label{eq:Hdecomposition}
  {}^{\vee}G =\prod_{i=1}^{r}\left({}^{\vee}G_{i}\right)^{\ell_{i}}
 \cong \prod_{i=1}^{r} ({}^{\vee}\mathrm{GL}_{N_{i}}\times {}^{\vee}\mathrm{GL}_{N_{i}})^{\ell_{i}}\\
  \end{equation}  
to be the obvious Levi subgroup of ${^\vee} \mathrm{GL}_{N}
\times {^\vee} \mathrm{GL}_{N}$ containing
the image of $\psi$ minimally. 
Set ${^\vee}G^{\Gamma}={}^{\vee}G\rtimes \langle {^\vee}\delta_{0} \rangle$, a subgroup of ${}^{\vee}\mathrm{R}_{\mathbb{C}/\mathbb{R}}\mathrm{GL}_N^{\Gamma}$. 
It is immediate that $\psi$ factors through an A-parameter 
\begin{align*}
 \psi:W_{\mathbb{R}} \times \mathrm{SL}_{2}
 \xrightarrow{\psi_{G}} {}^{\vee}G^{\Gamma}\hookrightarrow
            {}^{\vee}\mathrm{R}_{\mathbb{C}/\mathbb{R}}\mathrm{GL}_N^{\Gamma}, 
\end{align*}
where $\psi_{G} = \times_{i=1}^{r} \ell_{i} \psi_{G_i}$ and each  $\psi_{G_i}$ is an
irreducible A-parameter of ${}^{\vee} 
G_{i}^{\Gamma}={}^{\vee}\mathrm{R}_{\mathbb{C}/\mathbb{R}}\mathrm{GL}_{N_i}^{\Gamma}$.  
The description of the ABV-packet corresponding to $\psi_{G}$ is a straightforward
consequence of
Proposition \ref{prop:irreducibleArthurparameter}.  We must only remind ourselves that the direct product of (\ref{eq:Hdecomposition}) translates into a tensor product of ABV-packets as it passes through the process defining the packets in Section \ref{sec:ABV-packetsdef}. The proof follows exactly as for Corollary 6.2
\cite{aam}.
\begin{cor}
\label{cor:LeviABVpacket}
The $\mathrm{ABV}$-packet $\Pi_{\psi_{G}}^{\mathrm{ABV}}$ consists of a single irreducible unitary representation
$\pi(S_{\psi_{G}}, 1)$.
\end{cor}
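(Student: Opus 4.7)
The plan is to reduce the computation to the irreducible case already handled by Proposition \ref{prop:irreducibleArthurparameter}, exploiting the fact that $G \cong \prod_i G_i^{\ell_i}$ is a direct product. First I would observe that the decomposition ${}^{\vee}G = \prod_{i=1}^{r}\left({}^{\vee}G_{i}\right)^{\ell_{i}}$ induces a product decomposition of the geometric parameter variety $X({}^{\vee}\mathcal{O}_G, {}^{\vee}G^{\Gamma})$ into the corresponding product of the varieties $X({}^{\vee}\mathcal{O}_i, {}^{\vee}G_i^{\Gamma})$. Under this decomposition, ${}^{\vee}G$-orbits, equivariant local systems, and equivariant perverse sheaves all factor as external products, and the local Langlands correspondence (\ref{localLanglandspure}) is multiplicative.

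Next I would verify that the orbit $S_{\psi_G}$ attached to $\psi_G = \times_{i=1}^{r} \ell_i \psi_{G_i}$ factors as the external product of the orbits $S_{\psi_{G_i}}$, each appearing with multiplicity $\ell_i$. The conormal bundles and their closures decompose accordingly, so the microlocal multiplicity map satisfies
\begin{equation*}
\chi^{\mathrm{mic}}_{S_{\psi_G}}(P(\xi)) = \prod_{i=1}^r \prod_{j=1}^{\ell_i} \chi^{\mathrm{mic}}_{S_{\psi_{G_i}}}(P(\xi_{i,j}))
\end{equation*}
whenever $\xi \in \Xi({}^{\vee}\mathcal{O}_G,{}^{\vee}G^{\Gamma})$ decomposes as a product $\xi = \boxtimes_{i,j}\xi_{i,j}$, and vanishes otherwise. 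Consequently $\eta^{\mathrm{mic}}_{\psi_G}$ is itself the external product of the virtual characters $\eta^{\mathrm{mic}}_{\psi_{G_i}}$ (each taken $\ell_i$ times), and $\Pi_{\psi_G}^{\mathrm{ABV}}$ is the external product of the packets $\Pi_{\psi_{G_i}}^{\mathrm{ABV}}$.

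By Proposition \ref{prop:irreducibleArthurparameter} each $\Pi_{\psi_{G_i}}^{\mathrm{ABV}}$ consists of a single unitary character $\pi(S_{\psi_{G_i}},1)$. Taking the external product therefore gives a single irreducible unitary representation, which under the product identification corresponds precisely to $\pi(S_{\psi_G},1)$ because the component group of the centralizer of $\psi_G$ is trivial (\ref{compgptriv}) and the only pure complete geometric parameter supported on $S_{\psi_G}$ is $(S_{\psi_G},1)$.

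The main obstacle is verifying that the external-product decomposition goes through cleanly at every level of the ABV construction for ${}^{\vee}\mathrm{R}_{\mathbb{C}/\mathbb{R}}\mathrm{GL}_{N}$ rather than for ${}^{\vee}\mathrm{GL}_{N}$ as in \cite{aam}*{Corollary 6.2}. The compatibility of the Langlands correspondence and of $\chi^{\mathrm{mic}}_S$ with products is general, but one must also know that the component groups attached to each $\psi_{G_i}$ are trivial so that no spurious local systems intervene. This is precisely the content of (\ref{compgptriv}), which holds for each factor ${}^{\vee}\mathrm{R}_{\mathbb{C}/\mathbb{R}}\mathrm{GL}_{N_i}$, so the argument of \cite{aam}*{Corollary 6.2} carries over essentially verbatim.
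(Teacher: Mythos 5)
Your proof is correct and follows essentially the same route as the paper's: reduce to the irreducible case via the product decomposition (\ref{eq:Hdecomposition}), using the multiplicativity of the geometric parameter variety, the microlocal multiplicity, and the local Langlands correspondence, and invoking Proposition \ref{prop:irreducibleArthurparameter} on each factor. The paper simply delegates the externalization-of-products details to \cite{aam}*{Corollary 6.2}, whereas you spell them out; there is no substantive difference.
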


Finally, take ${}^{\vee}G$ as in (\ref{eq:Hdecomposition}), and
    take $s \in  Z({^\vee}G) \subset {^\vee}\mathrm{GL}_{N}\times {^\vee}\mathrm{GL}_{N}$
    to be as regular as possible so that its centralizer in
    ${^\vee}\mathrm{GL}_{N}\times {^\vee}\mathrm{GL}_{N}$ is equal to 
    ${^\vee}G$.  Then $(s,
    {^\vee}G^{\Gamma})$ is an endoscopic datum for
    ${}^{\vee}\mathrm{R}_{\mathbb{C}/\mathbb{R}}\mathrm{GL}_N^{\Gamma}$  (Section \ref{standend}). 
According to (\ref{ordmiclift}), Corollary \ref{cor:LeviABVpacket},
and Proposition \ref{injliftord}  we have  
    $$\eta^{\mathrm{mic}}_{\psi} = \mathrm{Lift}_{0}\left( \eta^{\mathrm{mic}}_{\psi_{G}}\right) =
\mathrm{Lift}_{0} \left(\pi(S_{\psi_{G}}, 1)\right) =
\mathrm{ind}_{G(\mathbb{R}, \delta_{q})}^{\mathrm{GL}_{N}(\mathbb{C})}
\pi(S_{\psi_{G}}, 1).$$ 
Consequently, $\Pi_{\psi}^{\mathrm{ABV}}$ consists of a single representation, and this representation is parabolically induced from the single unitary representation of $\Pi_{\psi_G}^{\mathrm{ABV}}$. 
Since parabolic induction for general linear groups takes irreducible
unitary representations to irreducible unitary representations
(\cite{Tadic}*{Proposition 2.1}), we have just proved

\begin{prop}
\label{prop:singletonGLN}
  Let $\psi$ be an A-parameter for $\mathrm{R}_{\mathbb{C}/\mathbb{R}}\mathrm{GL}_N$ as in (\ref{eq:Arthurparameterdecomposition}).
  Then the $\mathrm{ABV}$-packet $\Pi_{\psi}^{\mathrm{ABV}}$ consists of a single irreducible unitary representation  $\pi(S_{\psi}, 1)$. 
\end{prop}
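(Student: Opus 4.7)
The plan is to reduce the general case to the irreducible A-parameter case (Proposition \ref{prop:irreducibleArthurparameter}) via endoscopic lifting from a Levi subgroup, and then to invoke a classical result on parabolic induction to ensure unitarity. The decomposition $\psi=\boxplus_{i=1}^r \ell_i\psi_i$ from (\ref{eq:Arthurparameterdecomposition}) already identifies the Levi subgroup that will do the work, namely the group ${^\vee}G$ in (\ref{eq:Hdecomposition}) whose factors are indexed by the multiplicities $\ell_i$. The parameter $\psi$ factors through ${^\vee}G^{\Gamma}$ as the product parameter $\psi_G=\times_{i=1}^r \ell_i\psi_{G_i}$, with each $\psi_{G_i}$ irreducible.

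First I would handle $\psi_G$. Since ABV-packets for a direct product of groups decompose into tensor products of ABV-packets of the factors (this is immediate from the construction of $\chi^{\mathrm{mic}}_S$ and the product structure of $X({^\vee}\mathcal{O}_G,{^\vee}G^\Gamma)$, as in Section \ref{sec:ABV-packetsdef}), Corollary \ref{cor:LeviABVpacket} reduces us to Proposition \ref{prop:irreducibleArthurparameter}. Thus $\Pi_{\psi_G}^{\mathrm{ABV}}=\{\pi(S_{\psi_G},1)\}$ is a singleton consisting of a unitary representation.

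Next I would realize $({^\vee}G^\Gamma,s)$ as a standard endoscopic datum for ${^\vee}\mathrm{R}_{\mathbb{C}/\mathbb{R}}\mathrm{GL}_N^\Gamma$ in the sense of Section \ref{standend} by choosing $s\in Z({^\vee}G)$ sufficiently regular so that its centralizer in ${^\vee}\mathrm{GL}_N\times{^\vee}\mathrm{GL}_N$ is exactly ${^\vee}G$. Then the transfer identity (\ref{ordmiclift}) of \cite{ABV}*{Theorem 26.25} gives
\begin{equation*}
\eta^{\mathrm{mic}}_{\psi}=\mathrm{Lift}_0\bigl(\eta^{\mathrm{mic}}_{\psi_G}\bigr)
=\mathrm{Lift}_0\bigl(\pi(S_{\psi_G},1)\bigr),
\end{equation*}
and by Proposition \ref{injliftord}(b) this equals the parabolically induced representation $\mathrm{ind}_{G(\mathbb{R},\delta_q)}^{\mathrm{GL}_N(\mathbb{C})}\pi(S_{\psi_G},1)$. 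In particular $\eta^{\mathrm{mic}}_{\psi}$ is (the character of) a single irreducible representation, so $\Pi_\psi^{\mathrm{ABV}}$ is a singleton. Under the local Langlands correspondence (\ref{localLanglandspure}) this representation must be $\pi(S_\psi,1)$ since $\epsilon$ carries $S_{\psi_G}$ to $S_\psi$.

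Finally, unitarity follows because parabolic induction of irreducible unitary representations of a Levi of $\mathrm{GL}_N(\mathbb{C})$ remains irreducible and unitary by \cite{Tadic}*{Proposition 2.1}. I do not anticipate any serious obstacle: the main substantive input is Proposition \ref{prop:irreducibleArthurparameter}, which is already established via \cite{arancibia_characteristic}*{Theorem 4.11(d)} using that $\nu_N$ sends unipotents to principal unipotents; the rest of the argument is a bookkeeping exercise combining the tensor product behaviour of ABV-packets, the explicit formula for $\mathrm{Lift}_0$ on pseudopackets, and Tadić's unitarity theorem.
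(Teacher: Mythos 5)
Your proposal is correct and follows the same route as the paper: reduce to the irreducible case via Corollary \ref{cor:LeviABVpacket}, realize the Levi as a standard endoscopic group with a suitably regular $s\in Z({^\vee}G)$, apply (\ref{ordmiclift}) together with Proposition \ref{injliftord} to identify $\eta^{\mathrm{mic}}_\psi$ with a parabolic induction, and conclude irreducibility and unitarity from \cite{Tadic}*{Proposition 2.1}. Your extra remark identifying the singleton as $\pi(S_\psi,1)$ via Proposition \ref{injliftord}(a) is a detail the paper leaves implicit, but the argument is the same.
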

As a corollary, we have the next result. Its proof is the same as that of
\cite{aam}*{Corollary 6.4}.   
\begin{cor}
\label{etaplus2}
The stable virtual character $\eta_{\psi}^{\mathrm{mic}+}(\vartheta)$ defined in (\ref{etaplus}) is equal to 
 $(-1)^{l^{I}(\xi)-l^{I}_{\vartheta}(\xi)} \pi(\xi)^{+}$,
where $\xi = (S_{\psi}, 1)$.  In particular, 
$$\mathrm{Lift}_{0}\left(\eta^{\mathrm{mic}}_{\psi_{G}}(\vartheta)(\delta_{q})\right) = (-1)^{l^{I}(\xi)-l^{I}_{\vartheta}(\xi)} \pi(\xi)^{+}.$$ 
\end{cor}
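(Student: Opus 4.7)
The plan is to unpack the sum in equation (\ref{etaplus}) term by term, using Proposition \ref{prop:singletonGLN} as the essential input. That proposition tells us $\Pi_\psi^{\mathrm{ABV}}$ consists of the single representation $\pi(S_\psi,1)$, so by definition $\chi^{\mathrm{mic}}_{S_\psi}(P(\xi)) \neq 0$ precisely when $\xi = \xi_0 := (S_\psi,1)$. By Theorem \ref{miclocalsys}(b), $\chi^{\mathrm{mic}}_{S_\psi}(P(\xi))$ is the degree of the representation $\tau^{\mathrm{mic}}_{S_\psi}(P(\xi))$, so whenever $\xi \neq \xi_0$ this representation is zero and its extension $\tau^{\mathrm{mic}}_{S_\psi}(P(\xi)^+)$ also has vanishing trace at $\vartheta$. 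Hence every summand in (\ref{etaplus}) for $\xi \neq \xi_0$ is zero, and only the term indexed by $\xi_0$ survives.

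Next I would evaluate the surviving term. Since $S_{\xi_0} = S_\psi$, the prefactor $(-1)^{d(S_{\xi_0})-d(S_\psi)}$ equals $+1$. For the microlocal trace, Theorem \ref{miclocalsys}(c) identifies $\tau^{\mathrm{mic}}_{S_\psi}(P(\xi_0))$ with $\tau_{S_\psi} \circ i_{S_\psi} = 1 \circ i_{S_\psi}$, a trivial one-dimensional representation. The canonical extensions of the trivial local system by (\ref{oneextend}) and of the perverse sheaf $P(\xi_0)$ appearing in the definition of $\tau^{\mathrm{mic}}_{S_\psi}(P(\xi_0)^+)$ by (\ref{oneextend1}) are compatible (as recorded just after (\ref{oneextend1})), so that $\tau^{\mathrm{mic}}_{S_\psi}(P(\xi_0)^+)(\vartheta) = 1^+ \circ i_{S_\psi}$ and thus has trace $1$. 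Substituting these values into (\ref{etaplus}) yields
\[
\eta_\psi^{\mathrm{mic}+}(\vartheta) = (-1)^{l^I(\xi_0)-l^I_\vartheta(\xi_0)} \, \pi(\xi_0)^+,
\]
which is the first assertion of the corollary.

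The ``in particular'' statement is then an immediate application of Theorem \ref{thm:etaplus1}(c), which provides the identity $\mathrm{Lift}_0(\eta^{\mathrm{mic}}_{\psi_G}(\vartheta)(\delta_q)) = \eta_\psi^{\mathrm{mic}+}(\vartheta)$. I do not anticipate any genuine obstacle here: the result is essentially a bookkeeping consequence of Proposition \ref{prop:singletonGLN} together with the compatibility of canonical extensions built into Section \ref{twistendsec}. The only mild subtlety is to verify that the extension defined in (\ref{oneextend1}) really does restrict, via $i_{S_\psi}$, to the $1^+$ of (\ref{oneextend}); this is precisely the content of the compatibility identity $\tau^{\mathrm{mic}}_{S_\psi}(P(S_\psi,1)^+) = 1^+ \circ i_{S_\psi}$ stated in Section \ref{twistendsec}, which is exactly the twisted analogue of Theorem \ref{miclocalsys}(c) and requires no new argument beyond tracing through the constructions.
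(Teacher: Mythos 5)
Your proof is correct and follows the same route the paper implicitly intends (it refers to Corollary 6.4 of \cite{aam} without spelling out details): Proposition \ref{prop:singletonGLN} shows the sum in (\ref{etaplus}) collapses to the single term $\xi = (S_\psi,1)$, and the compatibility statements around (\ref{oneextend}) and (\ref{oneextend1}) give the trace $1$ there, after which Theorem \ref{thm:etaplus1}(c) yields the second assertion. Your unpacking is a faithful reconstruction of the intended argument.
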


\section{Whittaker extensions and their relationship to Atlas
  extensions}
\label{whitsec}

Recall that (\ref{canext}) defines a preferred extension to $\mathrm{GL}_{N}(\mathbb{C}) \rtimes \langle \vartheta \rangle$ of any irreducible representation of $\mathrm{GL}_{N}(\mathbb{C})$.  These are the extensions we have called ``Atlas extensions''.  Since $\mathrm{GL}_{N}(\mathbb{C})$ is quasisplit as a real group, there is another choice of preferred extension, which depends on a Whittaker datum.  This extension is introduced by Arthur (\cite{Arthur}*{Section 2.2}) and followed by Mok (\cite{Mok}*{Section 3.2}).  We call this alternative extension the \emph{Whittaker extension}.

Here is a summary of the definition of Whittaker extensions. 
We fix a unitary character $\chi$  \nomenclature{$\chi$}{unitary character of $U(\mathbb{R})$}
 on the
upper-triangular unipotent subgroup
$$U(\mathbb{R}) \subset
\mathrm{R}_{\mathbb{C}/\mathbb{R}}\mathrm{GL}_{N}(\mathbb{R}) = \mathrm{GL}_{N}(\mathbb{C}) \nomenclature{$U$}{upper-triangular unipotent subgroup}$$
which satisfies $\chi \circ \vartheta = \chi$.   In this manner   
$(U, \chi)$ \nomenclature{$(U, \chi)$}{Whittaker datum} is a
$\vartheta$-fixed Whittaker datum.  We work under the hypothesis of
(\ref{regintdom}) on an infinitesimal character $\lambda \in
{^\vee}\mathfrak{h}$ and set ${^\vee}\mathcal{O}$ to be its
${^\vee}\mathrm{R}_{\mathbb{C}/\mathbb{R}}\mathrm{GL}_{N}$-orbit.   
Let $\xi \in \Xi({^\vee}\mathcal{O},
{^\vee}\mathrm{R}_{\mathbb{C}/\mathbb{R}}\mathrm{GL}_{N}^{\Gamma})^\vartheta$ so that $\pi(\xi)$
is (an infinitesimal equivalence class of) an irreducible
representation of $\mathrm{GL}_{N}(\mathbb{C})$.   In
defining Whittaker extensions we must work with an actual
admissible group representation in this equivalence class which we 
also denote by $(\pi(\xi),V)$. If $\pi(\xi)$  is 
tempered then up to a scalar there is a unique Whittaker functional $\omega: V
\rightarrow \mathbb{C}$ satisfying
\begin{equation}
\label{whittfunctional}
\omega(\pi(\xi)(u)v) = \chi(u)\, \omega(v), \quad u \in U(\mathbb{R}),
\end{equation}
for all smooth vectors $v \in V$.  
It follows that there is a unique operator $\mathcal{I}^{\thicksim}$
which intertwines $\pi(\xi) \circ \vartheta$ with $\pi(\xi)$ and also satisfies
$\omega \circ \mathcal{I}^{\thicksim} = \omega$.  We extend
$\pi(\xi)$ to a representation $\pi(\xi)^{\thicksim}$ of
$\mathrm{GL}_{N}(\mathbb{C}) \rtimes \langle \vartheta \rangle$ 
by setting $\pi(\xi)^{\thicksim}(\vartheta) =
\mathcal{I}^{\thicksim}$.  We call this extension 
$\pi(\xi)^{\thicksim}\nomenclature{$\pi(\xi)^{\thicksim}$}{(equivalence class of) the  Whittaker extension of $\pi(\xi)$}$ 
the \emph{Whittaker extension} of $\pi(\xi)$.

If $\pi(\xi)$ is not tempered then we express it as the Langlands
quotient of a representation $M(\xi)$ induced from an essentially tempered
representation of a Levi subgroup. The $\vartheta$-stability of
$\pi(\xi)$  and the uniqueness statement in the Langlands
classification together imply the 
$\vartheta$-stability of the essentially tempered representation.  The
earlier argument for tempered representations has an obvious analogue
for the essentially tempered representation of the Levi subgroup.  We
may argue as above to extend the essentially tempered representation
to the semi-direct product of the Levi subgroup with $\langle
\vartheta \rangle$.  One then induces this extended representation to
$\mathrm{GL}_{N}(\mathbb{C}) \rtimes \langle \vartheta  \rangle$.  The
unique irreducible quotient of this representation is the canonical
extension of $\pi(\xi)$, namely the \emph{Whittaker extension}
$\pi(\xi)^{\thicksim}$ of $\pi(\xi)$.   If one omits the Langlands
quotient in this argument then we obtain, by definition, the Whittaker extension
$M(\xi)^{\thicksim}
\nomenclature{$M(\xi)^{\thicksim}$}{(equivalence class of) the Whittaker extension of
  $M(\xi)$}$ of the standard representation $M(\xi)$.
A Whittaker functional for the tempered representation of the Levi subgroup induces a Whittaker functional for $M(\xi)$ (\cite{Sha81}*{Proposition 3.2}). It is a simple exercise to prove that $M(\xi)^{\thicksim}(\vartheta)$ is the unique intertwining operator that fixes an induced  Whittaker functional as in (\ref{whittfunctional}).

How does the Whittaker extension $\pi(\xi)^{\thicksim}$ differ from the Atlas extension $\pi(\xi)^{+}$ of (\ref{canext})?  The operators $\pi(\xi)^{\thicksim}(\vartheta)$ and
$\pi(\xi)^{+}(\vartheta)$ are involutive, and  both intertwine
$\pi(\xi) \circ \vartheta$ with $\pi(\xi)$. Therefore they are equal up to a
sign, \emph{i.e.}
\begin{equation}
\label{normsigns}
\pi(\xi)^{\thicksim}(\vartheta) = \pm \, \pi(\xi)^{+}(\vartheta).
\end{equation}
A direct comparison of the two extensions is problematic in that the Whittaker extension is essentially analytic in nature, and the Atlas extension is essentially algebraic.

Happily, there is a special type of parameter $\xi \in \Xi({^\vee}\mathcal{O},
{^\vee}\mathrm{R}_{\mathbb{C}/\mathbb{R}}\mathrm{GL}_{N}^{\Gamma})^{\vartheta}$ for which the construction of the two extensions is directly comparable.  To describe this type, we must examine the construction of $\pi(\xi)$ in (\ref{canext}) in more detail.  In this construction we identify $\xi$ with its equivalent Atlas parameter $(x,y)$ (Lemma \ref{XXXi}), or its equivalent preferred extended parameter  (\ref{pextparam}).  Recall that $\theta_{x}$ is an automorphism of $H$ (\ref{thetax}).  To the parameter $(x,y)$ one associates the irreducible $(\mathfrak{h}, H^{\theta_{x}} \rtimes \langle \vartheta \rangle)$-module $\pi_{0}^{+}$ of differential $\lambda$ such that  $\pi_{0}^{+}(\vartheta) = 1$ (\cite{AVParameters}*{(20b), Lemma 5.1, Definition 5.6}).    Then to the module $\pi_{0}^{+}$ one applies the functor \cite{AVParameters}*{(20e)} to obtain $\pi(\xi)^{+} = J(x,y,\lambda)^{+}$ (\emph{cf.} \cite{Knapp-Vogan}*{(11.54b), (11.116b)}).  The functor depends on $\theta_{x}$.  In the particular circumstance that $\theta_{x}$ sends all positive roots (determined by  the Borel subgroup $B$ in the pinning of $\mathrm{R}_{\mathbb{C}/\mathbb{R}} \mathrm{GL}_{N}$) to negative roots, the Borel subalgebra $\mathfrak{b}$ is \emph{real} relative to $\theta_{x}$ and $B$ is a real parabolic subgroup (\cite{AvLTV}*{Proposition 13.12 (2)}).  Moreover, in this circumstance, the functor is equivalent to the (normalized) parabolic induction functor $\mathrm{ind}_{B(\mathbb{R}) \rtimes \langle \vartheta \rangle}^{\mathrm{GL}_{N}(\mathbb{C}) \rtimes \langle \vartheta \rangle}$ (\cite{Knapp-Vogan}*{Proposition 11.47}), and so
\begin{equation}
  \label{equext}
  M(\xi)^{+} =\mathrm{ind}_{B(\mathbb{R}) \rtimes \langle \vartheta \rangle}^{\mathrm{GL}_{N}(\mathbb{C}) \rtimes \langle \vartheta \rangle} \pi_{0}^{+}.
\end{equation}
The following lemma shows that the assumption of the $\vartheta$-stability of the parameter $\xi$ is not necessary when $\theta_{x}$ sends all positive roots to negative roots, and asserts the equality of the Atlas and Whittaker extensions.
\begin{lem}
  \label{prinsame}  
Suppose $\xi = (x,y) \in \Xi({^\vee}\mathcal{O},
{^\vee}\mathrm{R}_{\mathbb{C}/\mathbb{R}}\mathrm{GL}_{N}^{\Gamma})$ and $\theta_{x}$ sends all positive roots to negative roots. 
Then
\begin{enumerate}[label={(\alph*)}]
\item $M(\xi)$ and $\pi(\xi)$ are $\vartheta$-stable so that $\xi = (x,y) \in \Xi({^\vee}\mathcal{O},
{^\vee}\mathrm{R}_{\mathbb{C}/\mathbb{R}}\mathrm{GL}_{N}^{\Gamma})^{\vartheta}$.
\item The Whittaker and Atlas 
  extensions of  $M(\xi)$ and $\pi(\xi)$  are equal, \emph{i.e.}       $M(\xi)^{\thicksim} = M(\xi)^{+}$ and $\pi(\xi)^{\thicksim} = \pi(\xi)^{+}$.
  \end{enumerate}
\end{lem}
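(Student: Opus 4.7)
My plan reduces both parts of the lemma to a direct computation on the principal series realization of $M(\xi)^+$ furnished by (\ref{equext}).

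For part (a), the hypothesis $\theta_x(R^+) = R^-$ pins down the adjoint action $\theta_x = w\delta_0|_H$ uniquely: because $\delta_0$ preserves $R^+$ (being pinning-preserving), $w$ must equal the longest Weyl element $w_0$ of $W(\mathrm{R}_{\mathbb{C}/\mathbb{R}}\mathrm{GL}_{N},H)$. Since $\vartheta$ preserves $R^+$ it fixes $w_0$, and then the singleton property $|\mathcal{X}_{{^\vee}\rho}^{w_0}| = 1$ from the proof of Lemma \ref{XXXi} forces $\vartheta(x) = x$. The parallel argument for $y \in {^\vee}\mathcal{X}_\lambda^{w_0 w_0} = {^\vee}\mathcal{X}_\lambda^{1}$, invoking the $\vartheta$-invariance of $\lambda$ from (\ref{regintdom}) and the analogous singleton property on the dual side, yields $\vartheta(y) = y$. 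Consequently $\xi \in \Xi({^\vee}\mathcal{O}, {^\vee}\mathrm{R}_{\mathbb{C}/\mathbb{R}}\mathrm{GL}_{N}^{\Gamma})^{\vartheta}$, and both $M(\xi)$ and $\pi(\xi)$ are $\vartheta$-stable.

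For part (b), by (\ref{equext}) the Atlas extension may be realized as
\[
M(\xi)^+ \;=\; \mathrm{ind}_{B(\mathbb{R}) \rtimes \langle\vartheta\rangle}^{\mathrm{GL}_{N}(\mathbb{C}) \rtimes \langle\vartheta\rangle} \pi_0^+, \qquad \pi_0^+(\vartheta) = 1.
\]
On the usual model by covariant functions $f: \mathrm{GL}_{N}(\mathbb{C}) \to \mathbb{C}$, the Atlas intertwiner acts simply by $M(\xi)^+(\vartheta): f \mapsto f\circ\vartheta$. For $\pi_0$ tempered (and by meromorphic continuation in the inducing parameter otherwise), the Whittaker functional admits a Jacquet-integral representation
\[
\omega(f) \;=\; \int_{\bar U(\mathbb{R})} f(u)\,\chi(u)^{-1}\,du
\]
over the $\vartheta$-stable unipotent radical $\bar U$ opposite to $B$. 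Changing variables $u \mapsto \vartheta(u)$ and using (i) $\chi\circ\vartheta = \chi$ and (ii) the preservation of Haar measure on $\bar U(\mathbb{R})$ under the involutive automorphism $\vartheta$, one obtains $\omega(f\circ\vartheta) = \omega(f)$. Hence the Atlas intertwiner fixes $\omega$ and therefore coincides with the Whittaker intertwiner, giving $M(\xi)^{\thicksim} = M(\xi)^+$. The identity $\pi(\xi)^{\thicksim} = \pi(\xi)^+$ follows at once from uniqueness of the Langlands quotient.

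The principal obstacle is sign bookkeeping: one must confirm that the natural choices of Haar measure on $\bar U(\mathbb{R})$, the modular character $\delta^{1/2}$ appearing in induction, and the choice of opposite unipotent relative to the Whittaker datum are compatible so that the two intertwiners agree exactly, not merely up to $\pm 1$. That this works out is precisely the combinatorial content of having $t = \ell = 0$ in the preferred extended parameter (\ref{pextparam}), combined with the $\vartheta$-invariance of the Whittaker datum $(U,\chi)$.
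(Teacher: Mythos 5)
Your overall conclusion is correct, but you take a genuinely different route from the paper for both parts, and there is a gap in part (a).

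For part (a), the paper does not argue via Atlas parameters at all: it passes to the principal series realization $M(\xi)=\mathrm{ind}_{B(\mathbb{R})}^{\mathrm{GL}_N(\mathbb{C})}\pi_0$, observes that $\pi_0$ is a character of the connected group $H(\mathbb{R})\cong(\mathbb{C}^\times)^N$ determined by its differential $\lambda$, and concludes $\pi_0\circ\vartheta=\pi_0$ (because $\vartheta(\lambda)=\lambda$), hence $M(\xi)\circ\vartheta\cong M(\xi)$; the $\vartheta$-equivariance of (\ref{abvij}), cited before Corollary \ref{twistclass}, then upgrades this to $\vartheta$-fixedness of $\xi=(x,y)$. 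Your argument instead relies on the claim that ${^\vee}\mathcal{X}_\lambda^{1}$ is a singleton, which you describe as ``the analogous singleton property on the dual side.'' That assertion is not established in the paper: the proof of Lemma \ref{XXXi} only shows $|\mathcal{X}_{{^\vee}\rho}^w|=1$, and says nothing about the cardinality of ${^\vee}\mathcal{X}_\lambda^{ww_0}$. The dual singleton property is plausible for $\mathrm{R}_{\mathbb{C}/\mathbb{R}}\mathrm{GL}_N$ (by the same connectedness-of-Cartan reasoning applied in the other direction), but it requires a separate justification you do not supply. The paper's route is therefore more economical: it needs only that $H(\mathbb{R})$ is connected.

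For part (b), the paper's proof is also quite different and slicker. Since the Levi here is the torus $H(\mathbb{R})$, its Whittaker datum is trivial, so the Whittaker extension of $\pi_0$ is \emph{vacuously} determined by $\pi_0^{\thicksim}(\vartheta)=1=\pi_0^+(\vartheta)$; inducing via (\ref{equext}) then gives $M(\xi)^{\thicksim}=M(\xi)^{+}$ without any integral computation. Your Jacquet-integral argument proves the same thing by verifying the second characterization of the Whittaker extension (the one in terms of fixing an induced Whittaker functional), which is a legitimate alternative. It does, however, import extra technical baggage — meromorphic continuation of the Jacquet integral for non-tempered inducing data, and care about the exact form of the integral (which is typically over $w_0U$ rather than $\bar U$ as you wrote, and requires a $\vartheta$-fixed representative of $w_0$). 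These details are manageable but are exactly what the paper's ``vacuous'' observation is designed to avoid. Also, your closing paragraph misattributes the resolution of the sign issue: the choice $t=\ell=0$ in (\ref{pextparam}) is what makes $\pi_0^+(\vartheta)=1$; it has nothing to do with normalizations of Haar measure or the modulus character, which enter the Jacquet integral but not the definition of either extension.
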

\begin{proof}
  The assertions  for $\pi(\xi)$ follow by definition from the assertions for $M(\xi)$. For  $M(\xi)$ we imitate the process described before the lemma and thereby obtain an irreducible $(\mathfrak{h}, H^{\theta_{x}})$-module $\pi_{0}$ of differential $\lambda$.  In addition,
$$M(\xi) = \mathrm{ind}_{B(\mathbb{R})}^{\mathrm{GL}_{N}(\mathbb{C})}  \pi_{0}.$$
  As a representation of the connected Lie group $H(\mathbb{R}) \cong (\mathbb{C}^{\times})^{N}$, $\pi_{0}$ is completely determined by its differential $\lambda$. Obviously, $\pi_{0} \circ \vartheta$ has differential $\vartheta(\lambda) = \lambda$ (\ref{regintdom}).  Consequently $ \pi_{0} \circ \vartheta = \pi_{0}$ and one may verify that  
  $$M(\xi) \circ \vartheta \cong  \mathrm{ind}_{\vartheta(B(\mathbb{R}))}^{\mathrm{GL}_{N}(\mathbb{C})} \left( \pi_{0} \circ \vartheta \right) =  \mathrm{ind}_{B(\mathbb{R})}^{\mathrm{GL}_{N}(\mathbb{C})} \pi_{0} = M(\xi).$$
This proves (a).
  
For (b) we may return to (\ref{equext}) and note that, vacuously, $\pi_{0}^{+}(\vartheta)=1$ determines the Whittaker extension of $\pi_{0}$, \emph{i.e.}  $\pi_{0}^{+} = \pi_{0}^{\thicksim}$.   Therefore, by definition, $M(\xi)^{\thicksim}$ is equal to (\ref{equext}).  (The reader may wonder in what sense (\ref{equext}) is ``standard''.  An explanation may be found in \cite{Knapp-Vogan}*{Theorem 11.129 (a)}).
\end{proof}
Lemma \ref{prinsame} provides a simple solution for the comparison of Atlas and Whittaker extensions of some special representations.  As we shall soon see, an arbitrary irreducible representation appears as a subquotient of one of these special standard representations.  Remarkably, the twisted multiplicity (\ref{twistmult}) with which this subquotient appears may be used to determine the sign in (\ref{normsigns}).

We begin this line of reasoning by recalling how irreducible generic representations appear in the characters of standard representations.  Recall that a representation is \emph{generic} if it admits a non-zero Whittaker functional as in (\ref{whittfunctional}).
\begin{lem}
\label{uniquegeneric}
Suppose $\xi \in \Xi({^\vee}\mathcal{O}, {^\vee}\mathrm{R}_{\mathbb{C}/\mathbb{R}}
\mathrm{GL}_{N}^{\Gamma})^\vartheta$.  Then  
\begin{enumerate}[label={(\alph*)}]
\item (up to infinitesimal equivalence)
there is a unique irreducible generic
representation $\pi(\xi_{0}) = M(\xi_{0})$ which occurs in $M(\xi)$ as a subquotient.  Moreover $\pi(\xi_{0})$ is $\vartheta$-stable and occurs as
a subquotient with multiplicity one;   

\item (any representative in the class of) $\pi(\xi_{0})$ embeds as a
  subrepresentation of (any representative in the class of)  $M(\xi)$; 

\item (any representative in the class of) $\pi(\xi_{0})^{\thicksim}$
  embeds as a subrepresentation of (any representative in the class
  of)  $M(\xi)^{\thicksim}$. 
\end{enumerate}
\end{lem}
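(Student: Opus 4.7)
The plan is to combine three classical inputs---multiplicity-one for Whittaker functionals on standard modules, Vogan's theorem that an irreducible generic representation of a quasisplit real group equals its own standard module, and the uniqueness of an intertwining operator preserving a Whittaker functional---and to let the $\vartheta$-invariance of the fixed Whittaker datum $(U,\chi)$ propagate through each uniqueness statement. I expect part (c) to be the main obstacle, primarily because of the non-tempered case, where the Whittaker extensions are assembled out of essentially tempered data on a Levi.

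For part (a), I would start from the Kostant--Vogan--Shahidi multiplicity-one result: the space of continuous Whittaker functionals on the standard module $M(\xi)$ of the quasisplit group $\mathrm{GL}_{N}(\mathbb{C})$ is at most one-dimensional, and it is non-zero because $M(\xi)$ is an (essentially) principal series. Hence at most one irreducible subquotient of $M(\xi)$ is generic and it occurs with multiplicity one; call it $\pi(\xi_{0})$. Vogan's theorem then yields $\pi(\xi_{0})=M(\xi_{0})$. For $\vartheta$-stability, observe that $\xi$ being $\vartheta$-fixed forces $M(\xi)\circ\vartheta \cong M(\xi)$ (Corollary \ref{twistclass}), while $\chi\circ\vartheta=\chi$ makes $\pi(\xi_{0})\circ\vartheta$ a generic subquotient of $M(\xi)$ for the same Whittaker datum; uniqueness in (a) then forces $\pi(\xi_{0})\circ\vartheta\cong\pi(\xi_{0})$.

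For part (b), I would invoke the Casselman--Kostant--Vogan ``generic socle'' theorem: the unique generic irreducible constituent of a standard module for a quasisplit real group embeds as a subrepresentation. Concretely, one interprets the Whittaker functional $\omega$ on $M(\xi)$ as a morphism into the Whittaker-induced space from $(U(\mathbb{R}),\chi)$, and identifies the socle of its image with $M(\xi_{0})=\pi(\xi_{0})$; this can be verified on Jacquet modules using Casselman's sub-representation theorem.

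For part (c), let $\iota:\pi(\xi_{0})\hookrightarrow M(\xi)$ be the embedding from (b), and let $\omega_{\xi}$, $\omega_{\xi_{0}}$ be the Whittaker functionals used to define $M(\xi)^{\thicksim}$ and $\pi(\xi_{0})^{\thicksim}$. After rescaling $\iota$, we may arrange $\omega_{\xi}\circ\iota = \omega_{\xi_{0}}$. By construction, $M(\xi)^{\thicksim}(\vartheta)$ and $\pi(\xi_{0})^{\thicksim}(\vartheta)$ are the unique intertwiners $M(\xi)\circ\vartheta\to M(\xi)$ and $\pi(\xi_{0})\circ\vartheta\to \pi(\xi_{0})$ fixing $\omega_{\xi}$ and $\omega_{\xi_{0}}$, respectively. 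Both $M(\xi)^{\thicksim}(\vartheta)\circ\iota$ and $\iota\circ \pi(\xi_{0})^{\thicksim}(\vartheta)$ are then intertwiners $\pi(\xi_{0})\circ\vartheta \to M(\xi)$ whose composition with $\omega_{\xi}$ equals $\omega_{\xi_{0}}$; the essential uniqueness of such a map (another appeal to multiplicity one) forces them to coincide, which is exactly the statement that $\iota$ extends to a twisted embedding $\pi(\xi_{0})^{\thicksim}\hookrightarrow M(\xi)^{\thicksim}$. The non-tempered case reduces to the tempered case on a Levi subgroup via the inductive definition of the Whittaker extension recalled earlier in the section, using that parabolic induction preserves both generic subrepresentations and the normalization of Whittaker functionals; this last reduction is where I expect the bookkeeping to require the most care.
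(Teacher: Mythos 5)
Your strategy coincides with the paper's for parts (a) and (c), and differs mainly in how you would prove (b). For (a) the paper likewise invokes the Kostant--Vogan multiplicity-one result, observes that $\pi(\xi_0)\circ\vartheta$ is a generic subquotient of $M(\xi)\circ\vartheta\cong M(\xi)$ for the $\vartheta$-fixed Whittaker datum, and concludes $\vartheta$-stability by uniqueness; the equality $\pi(\xi_0)=M(\xi_0)$ is cited to Vogan's Theorem 6.2(f). For (c) the paper argues exactly as you do in spirit, though more tersely: the induced Whittaker functional $\omega$ on $M(\xi)$ (Shahidi) restricts nontrivially to $\pi(\xi_0)$, the operator $M(\xi)^{\thicksim}(\vartheta)$ is characterized by $\omega\circ M(\xi)^{\thicksim}(\vartheta)=\omega$, and restricting this to $\pi(\xi_0)$ immediately yields $\pi(\xi_0)^{\thicksim}(\vartheta)=M(\xi)^{\thicksim}(\vartheta)|_{\pi(\xi_0)}$. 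Your extra step --- comparing $M(\xi)^{\thicksim}(\vartheta)\circ\iota$ with $\iota\circ\pi(\xi_0)^{\thicksim}(\vartheta)$ and invoking one-dimensionality of $\mathrm{Hom}(\pi(\xi_0)\circ\vartheta, M(\xi))$ --- is a valid unpacking of the same observation.

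Where you genuinely diverge is part (b). You appeal to a ``Casselman--Kostant--Vogan generic socle theorem'' and suggest verifying the embedding via the Whittaker-induced space and Jacquet modules. The paper instead makes essential use of the specific shape of $\mathrm{GL}_N(\mathbb{C})$: writing $M(\xi)=\mathrm{ind}_{P(\mathbb{R})}^{\mathrm{GL}_N(\mathbb{C})}(\pi_M\otimes e^{\nu})$, it uses that $M(\mathbb{R})$ is a product of complex $\mathrm{GL}$'s so the tempered $\pi_M$ is itself fully induced from a character of $H(\mathbb{R})$ (Tadi\'c), reduces by induction in stages to $M(\xi)=\mathrm{ind}_{B(\mathbb{R})}^{\mathrm{GL}_N(\mathbb{C})}(\pi_0\otimes e^{\nu})$, and then cites Vogan's Theorem 6.2(e) to conclude that any irreducible \emph{sub}representation of such a module is generic; uniqueness from (a) then forces it to be $\pi(\xi_0)$. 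This is a genuinely different route from yours, and it leans heavily on the dominance hypotheses of (\ref{regintdom}) to fit Vogan's theorem. Your version buys generality (it is not tied to complex groups), but as written it is not a precise citation: for real groups the ``generic constituent is the socle'' statement is not an unconditional theorem, and whether the generic constituent is a sub or a quotient of a standard module depends delicately on the dominance convention and on hypotheses like those in Vogan 6.2(e)(i)--(ii). If you want to run your version of (b) you will need to pin down those hypotheses exactly as the paper does, and the ``Jacquet module'' mechanism you propose would have to be replaced (or supplemented) by the archimedean analogue that underlies Vogan's theorem. As a plan it is in the right spirit; the paper's concrete reduction to Borel induction is where the actual work happens.
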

\begin{proof}
A result due to Vogan and Kostant states that every standard
representation $M(\xi)$ contains a unique generic irreducible
subquotient occurring with multiplicity one (\cite{Kostant78}*{Theorems E and L}, \cite{Vogan78}*{Corollary 6.7}).  In the rest of the
proof we write $\pi(\xi_{0})$ for the actual generic representation
(not the equivalence class) for some $\xi_{0 } \in \Xi({^\vee}\mathcal{O},
{^\vee}\mathrm{R}_{\mathbb{C}/\mathbb{R}}\mathrm{GL}_{N}^{\Gamma})$.  It is straightforward to
verify that $\pi(\xi_{0}) \circ \vartheta$ satisfies
(\ref{whittfunctional}), just as $\pi(\xi_{0})$ does.  Therefore
$\pi(\xi_{0}) \circ \vartheta$ is the unique irreducible generic
subquotient of $M(\xi) \circ \vartheta \cong M(\xi)$.  By
uniqueness, $\pi(\xi_{0}) \circ \vartheta \cong \pi(\xi_{0})$ and so
$\xi_{0} \in \Xi({^\vee}\mathcal{O}, {^\vee}\mathrm{R}_{\mathbb{C}/\mathbb{R}}
\mathrm{GL}_{N}^{\Gamma})^\vartheta$. The equality $\pi(\xi_{0}) = M(\xi_{0})$ follows from \cite{Vogan78}*{Theorem 6.2 (f)}. 

For part (b), assume that $M(\xi)$ is an actual representation (not an equivalence class) with infinitesimal character $\lambda \in \mathfrak{h}$ satisfying the assumption of (\ref{regintdom}).  As a standard representation, we may write it as
$$M(\xi) = \mathrm{ind}_{P(\mathbb{R})}^{\mathrm{GL}_{N}(\mathbb{C})} \left( \pi_{M} \otimes e^{\nu} \right).$$
Here, $P$ is a  standard real parabolic subgroup with Levi subgroup $M$, $M$ has Langlands decomposition $M(\mathbb{R}) = M^{1}(\mathbb{R}) A(\mathbb{R})$, $\pi_{M}$ is an irreducible tempered representation of $M^{1}(\mathbb{R})$, and $\nu \in \mathfrak{a}^{*}$ has dominant real part. The Levi subgroup $M(\mathbb{R})$ is a product of smaller complex general linear groups (\cite{borel}*{Section 5.2}).  Since $M(\mathbb{R})$ is a direct product of complex general linear groups, \cite{Tadic}*{Proposition 2.1} allows us to write the tempered representation as a parabolically induced representation from a discrete series representation on the unique standard cuspidal Levi subgroup  $H(\mathbb{R})$.  By induction in stages we may write
$$M(\xi) = \mathrm{ind}_{B(\mathbb{R})}^{\mathrm{GL}_{N}(\mathbb{C})} \left( \pi_{0} \otimes e^{\nu}\right)$$
where we now regard $\nu$ as an element in the Lie algebra of the split component of $H(\mathbb{R})$.  The differential of $\pi_{0} \otimes e^{\nu}$ is $\lambda$.  Since $\lambda$ is integrally dominant and $\mathrm{Re}\, \nu$ is dominant, $M(\xi)$ satisfies the properties of \cite{Vogan78}*{Theorem 6.2 (e) (i)-(ii)}. 
From \cite{Vogan78}*{Theorem 6.2 (e)} we know that any irreducible subrepresentation of $M(\xi)$ is generic.  By part (a), we conclude that $M(\xi)$ has a unique irreducible subrepresentation and this subrepresentation is equivalent to $\pi(\xi_{0})$.

For part (c) we consider the standard representation
$M(\xi)$, which has 
a Whittaker functional $\omega$ induced from $\pi_{M}$ above (\cite{Sha80}*{Proposition 3.2}).  The
functional $\omega$  
restricts to a non-zero Whittaker functional on $\pi(\xi_{0})$.
It is simple to verify that  $M(\xi)^{\thicksim}(\vartheta)$ is the intertwining
operator which satisfies $\omega \circ M(\xi)^{\thicksim}(\vartheta) = \omega$.
Restricting this equation to the subrepresentation $\pi(\xi_{0})$
yields in turn that 
\begin{equation}
  \label{subrep}
\pi(\xi_{0})^{\thicksim}(\vartheta) =
M(\xi)^{\thicksim}(\vartheta)\mid_{\pi(\xi_{0})} \mbox{   and    }\pi(\xi_{0})^{\thicksim}
\hookrightarrow M(\xi)^{\thicksim}.
\end{equation}
\end{proof}
Lemma \ref{uniquegeneric}  tells us that the
multiplicity of $\pi(\xi_{0})^{\thicksim}$ in $M(\xi)^{\thicksim}$ is
one.  On the other hand the twisted multiplicity $m_{r}^{\vartheta}(\xi_{0}, \xi)$  of (\ref{twistmult}) tells us
about the ``signed multiplicity" of $\pi(\xi_{0})^{+}$ in
$M(\xi)^{+}$.  We investigate $m_{r}^{\vartheta}(\xi_{0}, \xi)$ further
before comparing the two kinds of multiplicities. 
\begin{prop}
  \label{conjq}
Suppose $\xi \in \Xi({^\vee}\mathcal{O}, {^\vee}\mathrm{R}_{\mathbb{C}/\mathbb{R}}
\mathrm{GL}_{N}^{\Gamma})^\vartheta$ and $\pi(\xi_{0})$ is the
generic subrepresentation of $M(\xi)$ (Lemma \ref{uniquegeneric}).
Then 
 \begin{equation}
  \label{qequation}m_{r}^{\vartheta}(\xi_{0}, \xi) = (-1)^{l^{I}(\xi) -
  l^{I}_{\vartheta}(\xi) + l^{I}(\xi_{0}) -
  l^{I}_{\vartheta}(\xi_{0})}.
\end{equation}
\end{prop}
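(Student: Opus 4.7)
The plan is to first show that $m_r^\vartheta(\xi_0,\xi) \in \{+1,-1\}$ and then identify the sign via a sheaf-theoretic reformulation followed by a Hecke-algebra induction.

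By Lemma \ref{uniquegeneric}(a), $\pi(\xi_0) = M(\xi_0)$ occurs in $M(\xi)$ with ordinary multiplicity one. Since the two extensions $\pi(\xi_0)^{\pm}$ have the same underlying representation of $\mathrm{GL}_N(\mathbb{C})$, one has $m_r(\xi_{0+},\xi_+) + m_r(\xi_{0-},\xi_+) = 1$, so by \eqref{twistmult} the twisted multiplicity $m_r^\vartheta(\xi_0,\xi)$ is $\pm 1$. The entire content of the proposition is therefore the determination of this sign.

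Next I would invoke Proposition \ref{p:twist} to translate \eqref{qequation} into the equivalent sheaf-theoretic assertion
$$c_g^\vartheta(\xi,\xi_0) \;=\; (-1)^{l^I(\xi) - l^I(\xi_0)},$$
using the length--dimension parity $d(\xi) \equiv l^I(\xi) \pmod 2$ implicit in \cite{AMR1}*{Proposition B.1} and already used in Lemma \ref{twistpairing2}. In the non-twisted setting, combining Proposition \ref{ordpairingequiv} with $m_r(\xi_0,\xi) = 1$ yields $c_g(\xi,\xi_0) = (-1)^{d(\xi) - d(\xi_0)}$, which matches the right-hand side above after applying this parity. Hence the proposition reduces to the identity $c_g^\vartheta(\xi,\xi_0) = c_g(\xi,\xi_0)$, or equivalently $c_g(\xi_{0-},\xi_+) = 0$. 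I would establish this by induction on the Bruhat order on $\Xi({^\vee}\mathcal{O},{^\vee}\mathrm{R}_{\mathbb{C}/\mathbb{R}}\mathrm{GL}_N^\Gamma)^{\vartheta}$: the base case $\xi = \xi_0$ is immediate, and for the inductive step I would act by a twisted Hecke operator $T_{w_\kappa}$ on $P(\xi)^+$ using the explicit formulas of \cite{AVParameters}*{Proposition 10.4, Table 5}. The key point is that Lemma \ref{cansheaf}(b) forces the canonical $\vartheta$-action to be trivial on the stalk of $\mu(\xi_0)^+$ at any $\vartheta$-fixed point of $S_{\xi_0}$, and this triviality is preserved by the sheaf-theoretic analogues of the Hecke operators, keeping the ``minus'' coefficient zero.

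The main obstacle is the case analysis across the four admissible types \eqref{glntypes}, namely \texttt{2C+,2C-,2Ci,2Cr}. This mirrors the argument in \cite{aam}*{Section 6} for $\mathrm{GL}_N(\mathbb{R})$, but is significantly simpler in the present setting because (i) every $\vartheta$-orbit of simple integral roots is of type 2, (ii) all integral roots are complex relative to the adjoint action of the Atlas parameter $x$, and (iii) the relevant component groups of L-parameters are trivial by \eqref{compgptriv}. These simplifications collapse much of the bookkeeping that is unavoidable for $\mathrm{GL}_N(\mathbb{R})$, leaving a direct verification of the sign propagation for each of the four types.
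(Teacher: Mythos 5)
Your first reduction is correct and mirrors the paper's: using Proposition \ref{p:twist} and the parity $d(\xi) \equiv l^I(\xi) \pmod 2$ from \cite{AMR1}*{Proposition B.1}, the proposition becomes an identity for $c_g^{\vartheta}$; the paper phrases this as ${^\vee}P^{\vartheta}({^\vee}\xi,{^\vee}\xi_0)(1)=1$ and then cites \cite{aam}*{Proposition 4.17}, while you unwind it to $c_g^{\vartheta}(\xi,\xi_0)=c_g(\xi,\xi_0)$. Incidentally, there is an index slip: unwinding \eqref{twistgmult} and the restriction relation $c_g(\xi,\xi_0)=c_g(\xi_+,\xi_{0+})+c_g(\xi_-,\xi_{0+})$ shows the needed vanishing is $c_g(\xi_-,\xi_{0+})=0$ (multiplicity of $\mu(\xi)^-$ in $P(\xi_0)^+$), not $c_g(\xi_{0-},\xi_+)=0$.

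The genuine gap is in the second half. Your proposed induction on the Bruhat order starting ``at $\xi=\xi_0$'' silently assumes that $\xi_0$ is the unique Bruhat-minimal parameter in its block. This is not automatic: it is exactly the content the paper establishes (via \cite{ABV}*{Proposition 1.11}, translating Bruhat order on parameters into closure order on orbits, and \cite{ABV}*{p.\ 19}, giving that $S_{\xi_0}$ is the unique open orbit in $X({^\vee}\mathcal{O},{^\vee}\mathrm{R}_{\mathbb{C}/\mathbb{R}}\mathrm{GL}_N^\Gamma)$). Without this, your induction has no anchor, and indeed it is precisely this hypothesis that \cite{aam}*{Proposition 4.17} requires. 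A second, smaller issue: you act with the twisted Hecke operators directly on $P(\xi)^+$, but the operators of \cite{AVParameters}*{Section 7} are defined on (twisted) characters, not sheaves; one must first pass through the Vogan-duality dictionary of Section \ref{duality} (or equivalently work on the KLV-polynomial side, as the paper does). Finally, the claim that ``triviality is preserved'' under the Hecke action is exactly the case analysis encapsulated in \cite{aam}*{Proposition 4.17}; asserting it without the type-by-type verification leaves the inductive step unproven. In short: the strategy is viable and parallel to the paper's, but the minimality of $\xi_0$ must be proved rather than asserted, and the Hecke-algebra step needs either the sheaf-to-character translation or the citation that carries it.
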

\begin{proof}
It follows from the definition of the KLV-polynomials (\cite{LV2014}*{Section 0.1}), (\ref{twistgmult}) and \cite{AMR1}*{Proposition B.1} that
$${^\vee}P^{\vartheta}(\, {^\vee}\xi', {^\vee}\xi)(1) = (-1)^{d(\xi)-d(\xi')} \,
c_{g}^{\vartheta}(\xi',\xi) = (-1)^{l^{I}(\xi)-l^{I}(\xi')} \,
c_{g}^{\vartheta}(\xi',\xi).$$
(Note that the definition of $P(\xi)$ in \cite{LV2014} differs from
ours by a shift in degree $d(\xi)$ \emph{cf}. \cite{ABV}*{(7.10)(d)}.)
According to Proposition \ref{p:twist} the above equation may be written as
$${^\vee}P^{\vartheta}(\, {^\vee}\xi', {^\vee}\xi)(1) =    (-1)^{l^{I}(\xi) -
  l^{I}_{\vartheta}(\xi) + l^{I}(\xi_{0}) -
  l^{I}_{\vartheta}(\xi_{0})} \, m_{r}^{\vartheta}(\xi_{0}, \xi)$$  
Therefore the proposition is equivalent to proving
$${^\vee}P^{\vartheta}(\,{^\vee}\xi, {^\vee}\xi_{0}) (1) =1.$$
This equation follows from \cite{aam}*{Proposition 4.17} once we
establish that  ${^\vee}\xi_{0}$ satisfies the stated hypotheses. Indeed, the proof of \cite{aam}*{Proposition 4.17} depends on the group $\mathrm{R}_{\mathbb{C}/\mathbb{R}} \mathrm{GL}_{N}$ only in terms of the types of roots (\ref{glntypes}).  The proof relies crucially on the absence of some nuisance types, which fortunately do not appear in (\ref{glntypes}) either.

The hypotheses of \cite{aam}*{Proposition 4.17} refer to the \emph{block} of $\xi$ (\cite{ICIV}*{Definition 1.14}).  The hypotheses of  \cite{aam}*{Proposition 4.17} are satisfied if we can prove that ${^\vee}\xi_{0}$  is the unique maximal parameter in the
block of $\pi({^\vee}\xi)$ with respect to the (dual) Bruhat order.  This is equivalent
to establishing that $\xi_{0}$ is the unique minimal parameter in the
block of $\pi(\xi)$ (\cite{ICIV}*{Theorem 1.15}). We use  \cite{ABV}*{Proposition 1.11} to convert the Bruhat order
for the representations of $\mathrm{GL}_{N}(\mathbb{C})$ into a
closure relation between ${^\vee}\mathrm{R}_{\mathbb{C}/\mathbb{R}} \mathrm{GL}_{N}$-orbits of
$X({^\vee}\mathcal{O}, {^\vee}\mathrm{R}_{\mathbb{C}/\mathbb{R}}\mathrm{GL}_{N}^{\Gamma})$.   Moreover, this
proposition  implies that the minimality  of
$\xi_{0} = (S_{0}, 1  )$ 
is equivalent to the ${^\vee}\mathrm{R}_{\mathbb{C}/\mathbb{R}}\mathrm{GL}_{N}$-orbit  $S_{0} \subset
X({^\vee}\mathcal{O}, {^\vee}\mathrm{R}_{\mathbb{C}/\mathbb{R}}\mathrm{GL}_{N}^{\Gamma})$ being maximal.  The uniqueness and maximality of this orbit follows from
\cite{ABV}*{\emph{p.} 19}.  
\end{proof}
Equation (\ref{subrep}) gives us information about the multiplicity of the Whittaker extension of a generic representation, and Equation (\ref{qequation}) gives us information about the multiplicity of the Atlas extension of a generic representation.  In the next theorem we combine this information  with Lemma \ref{prinsame} to determine the sign between the two extensions of a generic representation. This may then be leveraged to determine the sign between the two extensions of an arbitrary irreducible representation.  

\begin{thm}
\label{wasign1}
Suppose $\xi \in \Xi({^\vee}\mathcal{O}, {^\vee}\mathrm{R}_{\mathbb{C}/\mathbb{R}}
\mathrm{GL}_{N}^{\Gamma})^\vartheta$.  Then
  $$M(\xi)^{\thicksim}(\vartheta)  = (-1)^{l^{I}(\xi) -
  l^{I}_{\vartheta}(\xi)} \ M(\xi)^{+} (\vartheta)$$
and
$$\pi(\xi)^{\thicksim}(\vartheta)  = (-1)^{l^{I}(\xi) -
  l^{I}_{\vartheta}(\xi)} \ \pi(\xi)^{+} (\vartheta).$$
\end{thm}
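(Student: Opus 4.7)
The plan is to determine the sign $\epsilon(\xi)=\pm 1$ in $M(\xi)^{\thicksim}(\vartheta)=\epsilon(\xi)\,M(\xi)^{+}(\vartheta)$ by reading off the multiplicity of a distinguished generic subrepresentation in each of the two extensions. The assertion for $\pi(\xi)$ then follows at once from the one for $M(\xi)$, since $\pi(\xi)^{\thicksim}$ and $\pi(\xi)^{+}$ are the unique irreducible (Langlands) quotients of $M(\xi)^{\thicksim}$ and $M(\xi)^{+}$ respectively, and rescaling the action of $\vartheta$ on the standard module by $\epsilon(\xi)$ descends to the quotient.

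First I would invoke Lemma \ref{uniquegeneric} to produce the unique irreducible generic subrepresentation $\pi(\xi_{0})=M(\xi_{0})$ of $M(\xi)$, with $\xi_{0}\in\Xi({^\vee}\mathcal{O},{^\vee}\mathrm{R}_{\mathbb{C}/\mathbb{R}}\mathrm{GL}_{N}^{\Gamma})^{\vartheta}$. Part (c) of that lemma gives an embedding $\pi(\xi_{0})^{\thicksim}\hookrightarrow M(\xi)^{\thicksim}$ on which $M(\xi)^{\thicksim}(\vartheta)$ restricts to $\pi(\xi_{0})^{\thicksim}(\vartheta)$; combined with part (a), it follows that $\pi(\xi_{0})^{\thicksim}$ occurs in $M(\xi)^{\thicksim}$ with multiplicity $+1$ in the Grothendieck group $K\Pi({^\vee}\mathcal{O},\mathrm{GL}_{N}(\mathbb{C})\rtimes\langle\vartheta\rangle)$. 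The crucial observation is that $\xi_{0}$ satisfies the hypothesis of Lemma \ref{prinsame}: translating the Bruhat order on representations into the closure order on ${^\vee}\mathrm{R}_{\mathbb{C}/\mathbb{R}}\mathrm{GL}_{N}$-orbits via \cite{ABV}*{Proposition 1.11}, the generic $\pi(\xi_{0})$ is the unique minimal element of its block, and hence corresponds to the unique open orbit in $X({^\vee}\mathcal{O},{^\vee}\mathrm{R}_{\mathbb{C}/\mathbb{R}}\mathrm{GL}_{N}^{\Gamma})$, which is exactly the orbit on which $\theta_{x_{0}}$ sends every positive root to a negative one (a point already used in the proof of Proposition \ref{conjq}). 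Lemma \ref{prinsame} then gives $\pi(\xi_{0})^{\thicksim}=\pi(\xi_{0})^{+}$.

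Passing to the twisted character module $K\Pi({^\vee}\mathcal{O},\mathrm{GL}_{N}(\mathbb{C}),\vartheta)$ and writing $M(\xi)^{\thicksim}=\epsilon(\xi)\,M(\xi)^{+}$ in that module, the coefficient of $\pi(\xi_{0})^{+}$ on the left is $+1$, while on the right it is $\epsilon(\xi)\,m_{r}^{\vartheta}(\xi_{0},\xi)$ by (\ref{twistmult1}). Therefore $\epsilon(\xi)=m_{r}^{\vartheta}(\xi_{0},\xi)$, and Proposition \ref{conjq} evaluates this as
$$\epsilon(\xi)=(-1)^{l^{I}(\xi)-l^{I}_{\vartheta}(\xi)+l^{I}(\xi_{0})-l^{I}_{\vartheta}(\xi_{0})}.$$
The theorem thus reduces to the parity claim $l^{I}(\xi_{0})\equiv l^{I}_{\vartheta}(\xi_{0})\pmod{2}$, which I would verify directly from (\ref{intlength}) and (\ref{thetalength}): since $\theta_{x_{0}}$ reverses positivity on both $R(\lambda)$ and $R_{\vartheta}^{+}(\lambda)$, the two counting terms vanish, and the residual dimensional contributions $\dim H^{\theta_{x_{0}}}$ and $\dim(H^{\vartheta})^{\theta_{x_{0}}}$, together with the shifts $N/2$ and $\lceil N/2\rceil/2$, combine to an even integer via the product structure $H=H_{1}\times H_{2}$ of the torus of $\mathrm{R}_{\mathbb{C}/\mathbb{R}}\mathrm{GL}_{N}$. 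The main obstacle will be this final parity check combined with the identification of the minimal Bruhat parameter $\xi_{0}$ as the open-orbit principal-series parameter; the remaining steps are a direct assembly of Lemmas \ref{uniquegeneric} and \ref{prinsame} with Proposition \ref{conjq}.
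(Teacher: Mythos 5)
Your approach after establishing the anchor at $\xi_{0}$ — matching the coefficient of the generic constituent in the two decompositions and invoking Proposition \ref{conjq} — is essentially the paper's argument for the inductive step. The gap is in the anchor itself. You assert that the generic parameter $\xi_{0}$ satisfies the hypothesis of Lemma \ref{prinsame}, reasoning that the open orbit in $X({^\vee}\mathcal{O},{^\vee}\mathrm{R}_{\mathbb{C}/\mathbb{R}}\mathrm{GL}_{N}^{\Gamma})$ is "exactly the orbit on which $\theta_{x_{0}}$ sends every positive root to a negative one." This inference is backwards. The orbit $S$ attached to $\xi=(x,y)$ is governed by the $y$-component, and its maximality (openness) corresponds to $\theta_{y_{0}}$, not $\theta_{x_{0}}$, reversing positives. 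Since the adjoint action of $y$ is the \emph{negative} of that of $x$ (cf.\ the discussion of dual types in Section \ref{twisthmodule}, citing \cite{AVParameters}*{Definition 3.10}), the open-orbit parameter has $\theta_{x_{0}}$ \emph{preserving} all positive roots, which is the opposite extreme of the Bruhat order. The parameter with $\theta_{x}$ reversing positives is the $\xi_{p}$ that the paper constructs explicitly by a cross-action — a closed-orbit parameter at the \emph{maximal} end of the Bruhat order, generally distinct from $\xi_{0}$.

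The consequence is that your claimed identity $\pi(\xi_{0})^{\thicksim}=\pi(\xi_{0})^{+}$ is false in general, and your proposed parity check cannot rescue it. Taking $N=3$ and the block containing the (open-orbit) $\xi_{0}$ with $\theta_{x_{0}}=\delta_{0}$, a direct computation from (\ref{intlength}) and (\ref{thetalength}) gives $l^{I}(\xi_{0})=-\tfrac{1}{2}(6+3)+\tfrac{3}{2}=-3$ and $l^{I}_{\vartheta}(\xi_{0})=-\tfrac{1}{2}(3+3)+\tfrac{2}{2}=-2$, so $l^{I}(\xi_{0})-l^{I}_{\vartheta}(\xi_{0})=-1$ is odd, and the theorem's sign at the generic parameter is $-1$. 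Your "two counting terms vanish" step presupposes $\theta_{x_{0}}$ reverses positives; for the actual $\xi_{0}$ they do not vanish, and the residual terms do not combine to an even integer. What is actually needed — and what the paper supplies — is to determine the sign at $\xi_{0}$ indirectly, by first applying Lemma \ref{prinsame} to the separate parameter $\xi_{p}$, computing $l^{I}(\xi_{p})=l^{I}_{\vartheta}(\xi_{p})=0$, decomposing $M(\xi_{p})^{\thicksim}=M(\xi_{p})^{+}$ via (\ref{twistmult1}) and (\ref{qequation}), and reading off the coefficient of $\pi(\xi_{0})^{+}$ against the embedding of $\pi(\xi_{0})^{\thicksim}$ from (\ref{subrep}). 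This produces $\pi(\xi_{0})^{\thicksim}=(-1)^{l^{I}(\xi_{0})-l^{I}_{\vartheta}(\xi_{0})}\,\pi(\xi_{0})^{+}$ with the nontrivial sign intact, after which your coefficient-matching argument for general $\xi$ goes through exactly as you wrote.
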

\begin{proof}
We first prove that there exists
$\xi_{p} =(x_{p},y_{p})\in
\Xi({^\vee}\mathcal{O},{^\vee}\mathrm{R}_{\mathbb{C}/\mathbb{R}}\mathrm{GL}_{N}^{\Gamma})^\vartheta$ satisfying the hypothesis of Lemma \ref{prinsame} such that $\pi(\xi_{p})$ lies in the block of $\pi(\xi)$.  Let $(x,y)$ be the Atlas parameter corresponding to $\xi$ (Lemma \ref{XXXi}) and $\theta_{x}$ be as in (\ref{thetax}).  A short exercise using (\ref{twistinv}) and (\ref{e:p}) shows that
$$\theta_{x} = (w, w_{0} w^{-1} w_{0}^{-1}) \delta_{0}$$
for some Weyl group element $w$ for $\mathrm{R}_{\mathbb{C}/\mathbb{R}} \mathrm{GL}_{N}$, the long Weyl group element $w_{0}$ for $\mathrm{R}_{\mathbb{C}/\mathbb{R}} \mathrm{GL}_{N}$, and $\delta_{0}$ as in (\ref{del0action}).  According to \cite{ICIV}*{Theorem 8.8} there is a transitive action on the block of $\pi(\xi)$ given by cross actions (there are no Cayley transforms for $\mathrm{R}_{\mathbb{C}/\mathbb{R}}\mathrm{GL}_{N}$).  Cross actions are recorded in the Atlas parameter $(x,y)$ by conjugating both entries with an appropriate Weyl group element (\cite{Adams-Fokko}*{(9.11)(f)}). By taking the cross action of $(x,y)$ with respect to $(w_{0}w^{-1},1) \in W(\mathrm{R}_{\mathbb{C}/\mathbb{R}}\mathrm{GL}_{N},H)$ we arrive at an Atlas parameter $(x_{p},y_{p})$ such that
$$\theta_{x_{p}} = (w_{0}, w_{0})\delta_{0}.$$
Evidently, $\theta_{x_{p}}$ sends all positive roots to negative roots.  Setting $\xi_{p} = (x_{p}, y_{p})$, we see that $\pi(\xi_{p})$ lies in the block of $\pi(\xi)$ and satisfies the hypothesis of Lemma \ref{prinsame}.
Consequently, $M(\xi_{p})^{\thicksim} = M(\xi_{p})^{+}$.  It is straightforward to show
$l^{I}(\xi_{p}) = l^{I}_{\vartheta}(\xi_{p}) = 0$.  By
(\ref{twistmult1}) and (\ref{qequation}) 
\begin{align*}
  M(\xi_{p})^{\thicksim}&= m_{r}^{\vartheta}(\xi_{0},\xi_{p}) \,
\pi(\xi_{0})^{+} \  +  \sum_{\xi' \neq \xi_{0}} m_{r}^{\vartheta}(\xi',\xi_{p})\,
\pi(\xi')^{+}\\
\nonumber &= (-1)^{l^{I}(\xi_{0}) - l^{I}_{\vartheta}(\xi_{0})}
\, \pi(\xi_{0})^{+} \   + \sum_{\xi' \neq \xi_{0}} m_{r}^{\vartheta}(\xi',\xi_{p})\, 
\pi(\xi')^{+}.
\end{align*}
According to (\ref{subrep}), with $\xi = \xi_{p}$, this equation implies
\begin{equation}
  \label{conjgen}
(-1)^{l^{I}(\xi_{0}) - l^{I}_{\vartheta}(\xi_{0})}
  \pi(\xi_{0})^{+}(\vartheta) = \pi(\xi_{0})^{\thicksim}(\vartheta).
\end{equation}
Thus, the theorem holds for $\xi = \xi_{0}$.

It remains to prove that the theorem  holds when $\xi \neq \xi_{0}$.  We
compute, using (\ref{qequation}) and (\ref{conjgen}), that 
\begin{align*}
M(\xi)^{+} &= \sum_{\xi' \neq \xi_{0}} m_{r}^{\vartheta}(\xi',\xi)
\, \pi(\xi')^{+}+ m_{r}^{\vartheta}(\xi_{0},\xi) \, \pi(\xi_{0})^{+}\\
&=\sum_{\xi' \neq \xi_{0}} m_{r}^{\vartheta}(\xi',\xi) \,
\pi(\xi')^{+} + (-1)^{l^{I}(\xi) -
  l^{I}_{\vartheta}(\xi) + l^{I}(\xi_{0}) -
  l^{I}_{\vartheta}(\xi_{0})}   \, \pi(\xi_{0})^{+}\\
& = \sum_{\xi' \neq \xi_{0}} m_{r}^{\vartheta}(\xi',\xi)
\, \pi(\xi')^{+}  + (-1)^{l^{I}(\xi) -
  l^{I}_{\vartheta}(\xi)}   \pi(\xi_{0})^{\thicksim}.
\end{align*}
This equation and Lemma \ref{uniquegeneric}  imply
$$(-1)^{l^{I}(\xi) -
  l^{I}_{\vartheta}(\xi)} \pi(\xi_{0})^{\thicksim}(\vartheta) =
M(\xi)^{+} (\vartheta)\mid_{\pi(\xi_{0})}.$$
Combining this equation with (\ref{subrep}), we see in turn that
$$(-1)^{l^{I}(\xi) -
  l^{I}_{\vartheta}(\xi)}  M(\xi)^{+} (\vartheta)_{|\pi(\xi_{0})} =
\pi(\xi_{0})^{\thicksim}(\vartheta) = M(\xi)^{\thicksim}
(\vartheta)_{|\pi(\xi_{0})}$$
and $(-1)^{l^{I}(\xi) -
  l^{I}_{\vartheta}(\xi)}  M(\xi)^{+} (\vartheta) = M(\xi)^{\thicksim}
(\vartheta)$.  By taking Langlands quotients  we
obtain
$(-1)^{l^{I}(\xi) -
  l^{I}_{\vartheta}(\xi)}  \pi(\xi)^{+} (\vartheta) = \pi(\xi)^{\thicksim}
(\vartheta)$.
\end{proof}
Theorem \ref{wasign1} allows us to replace the Atlas extensions with signed Whittaker extensions in the twisted pairing (\ref{pair2}) and the endoscopic lifting (\ref{twistendlift}).  The results are recorded in the following two corollaries.  The proofs are obtained from the analogous corollaries indicated in \cite{aam}. 
\begin{cor}{\cite{aam}*{Corollary 7.9}}
  \label{twistpairingfinal}
  The pairing \eqref{pair2}, defined by \eqref{pairdef2},
  satisfies
$$\langle M(\xi)^{\thicksim}, \mu(\xi')^{+} \rangle = \delta_{\xi, \xi'}$$
and
  $$\langle \pi(\xi)^{\thicksim}, P(\xi')^{+} \rangle = (-1)^{d(\xi)} \,
  \delta_{\xi, \xi'}
$$
for $\xi, \xi' \in \Xi(\mathcal{O},{^\vee}\mathrm{R}_{\mathbb{C}/\mathbb{R}}\mathrm{GL}_{N}^{\Gamma})^\vartheta$. 
Equivalently (\emph{cf.} Proposition \ref{p:twist}),
\begin{equation}
  \label{twist15.13a}
m_{r}^{\thicksim}(\xi',\xi)  = (-1)^{d(\xi) - d(\xi')}
c_{g}^{\vartheta}(\xi, \xi'),
\nomenclature{$m_{r}^{\thicksim}(\xi',\xi)$}{}
\end{equation}
where $m_{r}^{\thicksim}(\xi', \xi)$ is defined by the decomposition
\begin{equation*}
M(\xi)^{\thicksim} = \sum_{\xi' \in \Xi(\mathcal{O},
{^\vee}\mathrm{R}_{\mathbb{C}/\mathbb{R}}\mathrm{GL}_{N}^{\Gamma})^{\vartheta}}
m^{\thicksim}_{r}(\xi',\xi) \, \pi(\xi')^{\thicksim}. 
\end{equation*}
in  $K\Pi(\mathcal{O}, \mathrm{GL}_{N}(\mathbb{C}), \vartheta)$.
\end{cor}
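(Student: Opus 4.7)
The plan is to deduce Corollary \ref{twistpairingfinal} directly from Theorem \ref{twistpairing} (the twisted pairing formulae for Atlas extensions) and Theorem \ref{wasign1} (the sign relating Whittaker to Atlas extensions). Theorem \ref{wasign1} tells us that, as elements of $K\Pi({^\vee}\mathcal{O}, \mathrm{GL}_{N}(\mathbb{C}), \vartheta)$,
\[ M(\xi)^{\thicksim} = (-1)^{l^{I}(\xi) - l^{I}_{\vartheta}(\xi)} \, M(\xi)^{+}, \qquad \pi(\xi)^{\thicksim} = (-1)^{l^{I}(\xi) - l^{I}_{\vartheta}(\xi)} \, \pi(\xi)^{+}. \]
Because the pairing (\ref{pair2}) is $\mathbb{Z}$-bilinear, these signs can simply be pulled out of the left-hand entry.

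The first two identities then follow from a one-line computation. Using Theorem \ref{twistpairing} and the definition (\ref{pairdef2}),
\begin{align*}
\langle M(\xi)^{\thicksim}, \mu(\xi')^{+} \rangle
  &= (-1)^{l^{I}(\xi) - l^{I}_{\vartheta}(\xi)} \, \langle M(\xi)^{+}, \mu(\xi')^{+} \rangle \\
  &= (-1)^{2(l^{I}(\xi) - l^{I}_{\vartheta}(\xi))} \, \delta_{\xi, \xi'} \;=\; \delta_{\xi, \xi'},
\end{align*}
and the identical manipulation applied to the second identity of Theorem \ref{twistpairing} yields $\langle \pi(\xi)^{\thicksim}, P(\xi')^{+} \rangle = (-1)^{d(\xi)} \delta_{\xi, \xi'}$. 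This disposes of the two displayed pairing identities in the corollary.

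For the equivalent identity (\ref{twist15.13a}), I would mirror the proof of Proposition \ref{p:twist}. First observe that $\{ M(\xi)^{\thicksim} \}$ and $\{ \pi(\xi)^{\thicksim} \}$ each differ from $\{ M(\xi)^{+} \}$ and $\{ \pi(\xi)^{+} \}$ only by signs, so both remain $\mathbb{Z}$-bases of $K\Pi({^\vee}\mathcal{O}, \mathrm{GL}_{N}(\mathbb{C}), \vartheta)$ and the transition matrix $m_{r}^{\thicksim}(\xi', \xi)$ is well-defined and invertible. Then I would evaluate $\langle M(\xi)^{\thicksim}, P(\xi')^{+} \rangle$ in two ways: expanding $M(\xi)^{\thicksim}$ via $m_{r}^{\thicksim}$ and using the newly established $\langle \pi(\xi'')^{\thicksim}, P(\xi')^{+} \rangle = (-1)^{d(\xi'')} \delta_{\xi'',\xi'}$; and expanding $P(\xi')^{+}$ in the basis $\{ \mu(\xi'')^{+} \}$ via $c_{g}^{\vartheta}$ and using $\langle M(\xi)^{\thicksim}, \mu(\xi'')^{+}\rangle = \delta_{\xi, \xi''}$. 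Equating the two expressions gives (\ref{twist15.13a}) after rearranging the sign $(-1)^{d(\xi')}$. There is no genuine obstacle: all the serious content has been invested in Theorem \ref{twistpairing} and Theorem \ref{wasign1}, and the corollary is essentially a cosmetic reformulation in which the length-sign has been absorbed into the chosen extension.
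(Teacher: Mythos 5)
Your argument is correct and coincides with the one the paper has in mind: it simply substitutes the sign relation of Theorem \ref{wasign1} into Theorem \ref{twistpairing} and (\ref{pairdef2}) using bilinearity, the two length-signs cancel, and the equivalent multiplicity identity follows by the standard two-ways evaluation of $\langle M(\xi)^{\thicksim}, P(\xi')^{+}\rangle$ as in Proposition \ref{p:twist}. This is exactly the content of \cite{aam}*{Corollary 7.9} to which the paper defers, so there is nothing to add.
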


\begin{cor}{\cite{aam}*{Corollary 7.10}}
  \label{cortrans}
  Suppose  that $S_G \subset
  X(\mathcal{O}_{G},{^\vee}G^{\Gamma})$ is a 
  ${^\vee}G$-orbit and let 
  $\epsilon(S_{G})  \subset X(\mathcal{O}, {^\vee}\mathrm{R}_{\mathbb{C}/\mathbb{R}}\mathrm{GL}_{N}^{\Gamma})$ be the
  ${^\vee}\mathrm{R}_{\mathbb{C}/\mathbb{R}}\mathrm{GL}_{N}$-orbit of the image of $S_{G}$ under
  $\epsilon$ (\ref{epinclusion1}). Then
  \begin{enumerate}[label={(\alph*)}]
  \item \begin{equation*}
    \mathrm{Lift}_{0} (\eta^{\mathrm{loc}}_{S_{G}}(\upsigma)(\delta_{q}))
    = M(\epsilon(S_{G}), 1)^{\thicksim},
    \end{equation*}

  \item  $$\mathrm{Lift}_0 =\mathrm{Trans}_{G(\mathbb{R})}^{\mathrm{GL}_{N}(\mathbb{C}) \rtimes \vartheta}$$
on $K_{\mathbb{C}}\Pi(\mathcal{O}_{G}, G(\mathbb{R}, \delta_{q}))^{\mathrm{st}}$.
    \end{enumerate}
  
\end{cor}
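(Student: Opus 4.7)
The plan is to recognize that part (a) is essentially a bookkeeping consequence of Proposition \ref{twistimlift} combined with the sign comparison Theorem \ref{wasign1}, and that part (b) follows by checking both maps on a basis of stable virtual characters and invoking \cite{AMR}*{(1.0.3)}. The main technical content has already been packaged into Theorem \ref{wasign1}; what remains is essentially an assembly of these pieces.

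For part (a), I would start from Proposition \ref{twistimlift}, which gives
\begin{equation*}
\mathrm{Lift}_{0}\!\left(\eta^{\mathrm{loc}}_{S_{G}}(\vartheta)(\delta_{q})\right) = (-1)^{l^{I}(\epsilon(S_G),1) - l^{I}_{\vartheta}(\epsilon(S_G),1)} \, M(\epsilon(S_G),1)^{+}.
\end{equation*}
Applying Theorem \ref{wasign1} to $\xi = (\epsilon(S_G),1) \in \Xi({^\vee}\mathcal{O},{^\vee}\mathrm{R}_{\mathbb{C}/\mathbb{R}}\mathrm{GL}_{N}^{\Gamma})^\vartheta$ converts the Atlas extension on the right into the Whittaker extension at the cost of exactly the same sign $(-1)^{l^{I}(\xi) - l^{I}_{\vartheta}(\xi)}$. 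The two signs cancel, yielding $M(\epsilon(S_G),1)^{\thicksim}$. This requires checking that $\epsilon(S_G)$ is indeed $\vartheta$-fixed, which is immediate since $\epsilon$ is the inclusion of an endoscopic datum with $s=1$, whose image is the $\vartheta$-fixed subgroup.

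For part (b), the strategy is to reduce to part (a) using a basis argument. The virtual characters $\{\eta^{\mathrm{loc}}_{S_{G}}(\delta_{q})\}_{S_G}$ form a basis of $K_{\mathbb{C}}\Pi({^\vee}\mathcal{O}_{G}, G(\mathbb{R}, \delta_{q}))^{\mathrm{st}}$ (see (\ref{etaloc}) and the discussion following it), and the identity $\eta^{\mathrm{loc}}_{S_{G}}(\vartheta)(\delta_{q}) = \eta^{\mathrm{loc}}_{S_{G}}(\delta_{q})$ that was already observed means part (a) computes $\mathrm{Lift}_{0}$ on this basis. On the other hand, the twisted endoscopic transfer map $\mathrm{Trans}_{G(\mathbb{R})}^{\mathrm{GL}_{N}(\mathbb{C})\rtimes\vartheta}$, as normalized via Whittaker data in \cite{Shelstad12}, \cite{Mezo}, \cite{Mezo2}, and computed in \cite{AMR}*{(1.0.3)}, sends $\eta^{\mathrm{loc}}_{S_{G}}(\delta_{q})$ to the Whittaker extension $M(\epsilon(S_{G}),1)^{\thicksim}$ (this is precisely the assertion that standard representations transfer to standard Whittaker-extended representations under twisted endoscopy for general linear groups). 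Since both $\mathbb{C}$-linear maps agree on a basis, they agree on the whole space.

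The only step that could have been delicate is the sign comparison in passing from $M^{+}$ to $M^{\thicksim}$, but this is precisely the content of Theorem \ref{wasign1}, which was proved using the generic-subrepresentation trick of Lemma \ref{uniquegeneric} and Proposition \ref{conjq}. With that work already done, the present corollary is purely formal, and the only remaining care is to verify that the normalization conventions for $\mathrm{Trans}$ in the cited references match those used for $\mathrm{Lift}_{0}$; this is straightforward since both are anchored to the same Whittaker datum $(U,\chi)$ fixed at the beginning of Section \ref{whitsec}.
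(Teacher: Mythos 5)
Your proposal is correct and matches the paper's approach: part (a) is Proposition \ref{twistimlift} combined with Theorem \ref{wasign1} (the two equal signs $(-1)^{l^{I}(\xi)-l^{I}_\vartheta(\xi)}$ cancel when converting $M^+$ to $M^{\thicksim}$), and part (b) is the basis comparison via (\ref{etaloc}), the identity $\eta^{\mathrm{loc}}_{S_G}(\vartheta)(\delta_q)=\eta^{\mathrm{loc}}_{S_G}(\delta_q)$, and the formula from \cite{AMR}*{(1.0.3)} for $\mathrm{Trans}$. This is precisely the route sketched in the introduction and indicated by the citation to \cite{aam}*{Corollary 7.10}.
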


\section{The equality of $\Pi_{\psi_{G}}^{\mathrm{Mok}}$ and $\Pi_{\psi_{G}}^{\mathrm{ABV}}$ for
  regular infinitesimal character}
\label{equalapacketreg}

In this section we prove the equality of the stable virtual characters $\eta^{\mathrm{Mok}}_{\psi_{G}}$
and $\eta^{\mathrm{ABV}}_{\psi_{G}}$ under a regularity condition on the infinitesimal character.
We shall work using the framework of Section \ref{twistendsec}.  In
particular, $\psi_{G}$ and $\psi = \epsilon \circ \psi_{G}$ are A-parameters
with respective infinitesimal characters ${^\vee}\mathcal{O}_{G}$ and
${^\vee}\mathcal{O}$.  The assumption on the infinitesimal characters is that
they are regular with respect to $\mathrm{R}_{\mathbb{C}/\mathbb{R}}\mathrm{GL}_N$.  This assumption
shall be removed in the next section.

The definition of $\eta^{\mathrm{Mok}}_{\psi_{G}}$ was outlined in the introduction. Let us follow \cite{Mok} and provide a few more details. 
All  we need is contained in the following lemma, which is a version of Lemma 8.1 \cite{aam} for unitary groups.
\begin{lem}
  \label{2.2.2}
  Let  $S_{\psi} \subset X({^\vee}\mathcal{O},{}^{\vee}\mathrm{R}_{\mathbb{C}/\mathbb{R}}\mathrm{GL}_N^{\Gamma})$ be the
  ${^\vee}\mathrm{R}_{\mathbb{C}/\mathbb{R}} \mathrm{GL}_{N}$-orbit corresponding to $\phi_{\psi}$ (\cite{ABV}*{Proposition 6.17}).
\begin{enumerate}[label={(\alph*)}]
\item  There
    exist integers $n_{S}$ such that 
    \begin{equation}
\label{piwhittdecomp}
\pi(S_{\psi}, 1)^{\thicksim} = \sum_{(S, 1)  \in
\Xi({^\vee}\mathcal{O},{}^{\vee}\mathrm{R}_{\mathbb{C}/\mathbb{R}}\mathrm{GL}_N^{\Gamma})^{\vartheta} } n_{S} \,
M(S, 1)^{\thicksim}
\end{equation}
    in $K\Pi({^\vee}\mathcal{O}, \mathrm{GL}_{N}(\mathbb{C}), \vartheta)$.
\item  For every $S$ such that $n_{S} \neq 0$ in
  (\ref{piwhittdecomp}) there exists a unique
${^\vee}G$-orbit $S_{G} \subset X({^\vee}\mathcal{O}_{G},
{^\vee}G^{\Gamma})$  which is carried to $S$ under $\epsilon$.

\item Writing
  $$S = \epsilon(S_{G})$$ for the orbits in part (b), we have
\begin{equation}\label{piwhittdecomp1} 
  \begin{aligned}
\pi(S_{\psi}, 1)^{\thicksim} &=
 \mathrm{Trans}_{G(\mathbb{R})}^{\mathrm{GL}_N(\mathbb{C}) \rtimes \vartheta}\left(
 \sum_{S_{G}}  n_{\epsilon(S_{G})} \, \eta_{S_{G}}^{\mathrm{loc}} (\delta_{q})
 \right)\\
  &=
 \mathrm{Lift}_{0} \left(
 \sum_{S_{G}}  n_{\epsilon(S_{G})} \, \eta_{S_{G}}^{\mathrm{loc}} (\delta_{q})
 \right) . 
\end{aligned}
\end{equation}

\end{enumerate}
\end{lem}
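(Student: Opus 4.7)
The plan is to treat (a) as a basis-theoretic existence statement, (b) via a combination of a closure-relation argument in the geometric parameter space and the injectivity of twisted orbit transfer, and (c) as a formal consequence of (a), (b) and Corollary~\ref{cortrans}. For (a), I invoke \eqref{compgptriv}: every complete geometric parameter for $\mathrm{R}_{\mathbb{C}/\mathbb{R}}\mathrm{GL}_N$ has trivial local system and so has the form $(S,1)$. By Corollary~\ref{twistclass2} the irreducibles $\{\pi(S,1)^+\}_{(S,1)\in\Xi^\vartheta}$ form a $\mathbb{Z}$-basis of $K\Pi({^\vee}\mathcal{O},\mathrm{GL}_N(\mathbb{C}),\vartheta)$, and by triangularity of the standard-to-irreducible transition (as recorded in \eqref{twistmult1}) the Atlas extensions $\{M(S,1)^+\}$ do as well. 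Theorem~\ref{wasign1} shows that the Whittaker extensions $M(S,1)^\thicksim$ differ from the $M(S,1)^+$ only by signs, so they likewise form a $\mathbb{Z}$-basis, and the required $n_S$ are just the integer coefficients of $\pi(S_\psi,1)^\thicksim$ in this basis.

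For (b), observe first that $\pi(S_\psi,1)^\thicksim$ lies in the $\vartheta$-fixed submodule, so only basis elements with $\vartheta$-fixed parameters can occur; triviality of local systems then forces $\vartheta(S)=S$ whenever $n_S\neq 0$. Via the Verdier-type duality between the KLV character expansion and the decomposition of perverse sheaves (Proposition~\ref{p:twist} together with Corollary~\ref{twistpairingfinal}), any such $S$ must satisfy $S\subset\overline{S_\psi}$ in the Bruhat order on $\Xi^\vartheta$. Since $\phi_\psi=\epsilon\circ\phi_{\psi_G}$ we have $S_\psi=\epsilon(S_{\psi_G})$; because $\epsilon$ is induced from the closed inclusion ${^\vee}G^\Gamma\hookrightarrow{^\vee}\mathrm{R}_{\mathbb{C}/\mathbb{R}}\mathrm{GL}_N^\Gamma$, it carries $\overline{S_{\psi_G}}$ onto $\overline{S_\psi}$ while respecting orbit decompositions, and consequently $S=\epsilon(S_G)$ for some ${^\vee}G$-orbit $S_G\subset\overline{S_{\psi_G}}$. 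Uniqueness of $S_G$ is precisely Proposition~\ref{injlift2}, which rests on \cite{ggp}*{Theorem~8.1}.

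For (c), apply Corollary~\ref{cortrans}(a) to each $S_G$ produced by (b) to obtain
$$\mathrm{Lift}_0\bigl(\eta^{\mathrm{loc}}_{S_G}(\delta_q)\bigr)=M(\epsilon(S_G),1)^\thicksim.$$
Summing against the coefficients $n_{\epsilon(S_G)}$ and invoking the decomposition of (a) yields the first equality of \eqref{piwhittdecomp1}, while the second equality is immediate from Corollary~\ref{cortrans}(b), which identifies $\mathrm{Lift}_0$ with $\mathrm{Trans}_{G(\mathbb{R})}^{\mathrm{GL}_N(\mathbb{C})\rtimes\vartheta}$ on the stable submodule. The principal obstacle will be the closure-and-image step in (b): one must verify not only that $\overline{S_\psi}$ is contained in the image of $\epsilon$, but that every orbit in this closure admits a unique preimage under $\epsilon$, so that the endoscopic reformulation in (c) is well-defined and independent of choices, rather than only holding at the top orbit $S_\psi$ itself.
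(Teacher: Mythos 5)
Your treatment of (a) is fine: by \eqref{compgptriv} all local systems are trivial, $\{M(S,1)^+\}$ is a $\mathbb{Z}$-basis of $K\Pi({^\vee}\mathcal{O},\mathrm{GL}_N(\mathbb{C}),\vartheta)$, and Theorem~\ref{wasign1} lets you pass to the Whittaker extensions. Part (c) is also correct once (b) is in place, as it is just Corollary~\ref{cortrans} applied term by term.

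Part (b) is where the argument breaks. Two separate problems occur. First, the closure relation is stated backwards. By Proposition~\ref{ordpairingequiv} the multiplicity of $\pi(\xi')$ in $M(\xi)$ is, up to sign, the multiplicity of $\mu(\xi)$ in $P(\xi')$, which is nonzero only when $S_\xi\subseteq\overline{S_{\xi'}}$; inverting the unitriangular transition matrix preserves this. Hence in $\pi(S_\psi,1)=\sum_S n_S M(S,1)$ the nonzero terms have $S_\psi\subseteq\overline{S}$, i.e.\ the orbits $S$ are \emph{larger} than $S_\psi$, not contained in its closure as you write. (This is consistent with the paper's own statement in the proof of Proposition~\ref{conjq} that minimality in the Bruhat order corresponds to maximality of the orbit.) Second, and more fundamentally, the existence of $S_G$ with $\epsilon(S_G)\subset S$ is not a closure statement at all: $\epsilon$ embeds the lower-dimensional space $X({^\vee}\mathcal{O}_G,{^\vee}G^\Gamma)$ into $X({^\vee}\mathcal{O},{^\vee}\mathrm{R}_{\mathbb{C}/\mathbb{R}}\mathrm{GL}_N^\Gamma)$, and there is no topological reason why an arbitrary $\vartheta$-fixed orbit containing $S_\psi$ in its closure should meet the image of $\epsilon$. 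Being $\vartheta$-fixed is not enough: a conjugate-self-dual $\mathrm{GL}_N(\mathbb{C})$-parameter factors through one of two inequivalent ``unitary'' L-groups depending on a sign, and it is precisely the constraint imposed by the A-parameter $\psi$ that forces all the constituent L-parameters to factor through the fixed ${^\vee}G^\Gamma$. This is exactly the content of the proof of \cite{Mok}*{Proposition 8.2.1}, which the paper cites for (b); your sketch should invoke that argument (or reproduce its key step on the spectral side) rather than attempt to derive the factoring from the geometry of $\overline{S_\psi}$. With that substitution, your use of Proposition~\ref{injlift2} for the uniqueness of $S_G$ is correct.
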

\begin{proof} The proof follows almost exactly as for \cite{aam}*{Lemma 8.1}.  The only difference is that the existence of the orbit $S_{G}$ in part (b) is established in the proof of \cite{Mok}*{Proposition 8.2.1}.
\end{proof}

Let us recall that Mok defines $\eta^{\mathrm{Mok}}_{\psi_{G}}$ through the identity 
$$
\pi(S_{\psi}, 1)^{\thicksim}=\mathrm{Trans}_{G(\mathbb{R})}^{\mathrm{GL}_N(\mathbb{C}) \rtimes \vartheta}\left(\eta^{\mathrm{Mok}}_{\psi_{G}}\right).
$$
Since $\mathrm{Trans}_{G(\mathbb{R})}^{\mathrm{GL}_N(\mathbb{C}) \rtimes \vartheta}=
\mathrm{Lift}_{0}$ (\ref{cortrans}) and
$\mathrm{Lift}_{0}$ is injective (Proposition \ref{injlift2}), it follows by 
Equation (\ref{piwhittdecomp1}) that
\begin{equation}
\label{Arthurstabchar}
\eta^{\mathrm{Mok}}_{\psi_{G}} = \sum_{S_{G}} n_{\epsilon(S_{G})} \,
\eta^{\mathrm{loc}}_{S_{G}}(\delta_{q})  \in K_{\mathbb{C}} 
\Pi({^\vee}\mathcal{O}_{G}, G(\mathbb{R}, \delta_{q}))^{\mathrm{st}}
\nomenclature{$\eta^{\mathrm{Mok}}_{\psi_{G}}$}{stable virtual character defining Arthur's A-packet}
\end{equation}
(\emph{cf.} \cite{Mok}*{Proposition 8.2.1}).  By definition, the A-packet
$\Pi_{\psi_{G}}^{\mathrm{Mok}}
\nomenclature{$\Pi_{\psi_{G}}^{\mathrm{Mok}}$}{Mok's A-packet}
$ consists of 
those irreducible representations in $\Pi({^\vee}\mathcal{O}_{G}, G(\mathbb{R},
\delta_{q}))$ which occur with non-zero multiplicity when
(\ref{Arthurstabchar}) is expressed as a linear combination of irreducible representations.

\begin{thm}
  \label{finalthm}
  Let $\psi_G$ be an A-parameter for $G$ with regular infinitesimal character. Then
  $$\eta^{\mathrm{Mok}}_{\psi_{G}} = \eta_{\psi_{G}}^{\mathrm{mic}}(\delta_{q}) =
  \eta^{\mathrm{ABV}}_{\psi_{G}}   \quad\mbox{ and }\quad 
  \Pi_{\psi_{G}}^{\mathrm{Mok}} = \Pi_{\psi_{G}}^{\mathrm{ABV}}.$$
  \end{thm}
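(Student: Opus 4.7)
The plan is to exploit the diagram built up in the preceding sections: both $\eta^{\mathrm{Mok}}_{\psi_G}$ and $\eta^{\mathrm{ABV}}_{\psi_G}$ live in $K_{\mathbb{C}}\Pi({^\vee}\mathcal{O}_G, G(\mathbb{R},\delta_q))^{\mathrm{st}}$ and the twisted endoscopic lift $\mathrm{Lift}_0$ is known to be injective on this space by Proposition \ref{injlift2}. Accordingly, I will prove the theorem by showing that both stable virtual characters lift to the same twisted character on $\mathrm{GL}_N(\mathbb{C})\rtimes\vartheta$; the identification $\eta^{\mathrm{mic}}_{\psi_G}(\delta_q)=\eta^{\mathrm{ABV}}_{\psi_G}$ is already built into definition (\ref{etapsiabv}), and the equality of packets follows at once from the equality of the underlying stable virtual characters.

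First I would compute $\mathrm{Lift}_0(\eta^{\mathrm{Mok}}_{\psi_G})$. By definition (see (\ref{Arthurstabchar})) this is $\sum_{S_G} n_{\epsilon(S_G)}\,\mathrm{Lift}_0(\eta^{\mathrm{loc}}_{S_G}(\delta_q))$. Using Corollary \ref{cortrans}(b) to identify $\mathrm{Lift}_0$ with $\mathrm{Trans}$ on stable virtual characters, and then Lemma \ref{2.2.2}(c) (equation (\ref{piwhittdecomp1})), I obtain
\begin{equation*}
\mathrm{Lift}_0(\eta^{\mathrm{Mok}}_{\psi_G}) = \pi(S_{\psi},1)^{\thicksim}.
\end{equation*}

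Next I would compute $\mathrm{Lift}_0(\eta^{\mathrm{ABV}}_{\psi_G})$. Using $\eta^{\mathrm{ABV}}_{\psi_G}=\eta^{\mathrm{mic}}_{\psi_G}(\delta_q)=\eta^{\mathrm{mic}}_{\psi_G}(\vartheta)(\delta_q)$, which holds since the component group of the centralizer of the image of $\psi$ in ${^\vee}\mathrm{R}_{\mathbb{C}/\mathbb{R}}\mathrm{GL}_N$ is trivial (see (\ref{nosigma}) together with the fact that the relevant $\tau^{\mathrm{mic}}$ representation takes the identity to the same trace as $\vartheta$ in our trivial extension), I appeal to Corollary \ref{etaplus2}, which produces
\begin{equation*}
\mathrm{Lift}_0(\eta^{\mathrm{ABV}}_{\psi_G}) \;=\; (-1)^{l^{I}(\xi)-l^{I}_{\vartheta}(\xi)}\,\pi(\xi)^{+}, \qquad \xi = (S_{\psi},1).
\end{equation*}
The crucial step is then to invoke Theorem \ref{wasign1}, which asserts precisely $\pi(\xi)^{\thicksim} = (-1)^{l^{I}(\xi)-l^{I}_{\vartheta}(\xi)}\,\pi(\xi)^{+}$ as twisted characters. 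This converts the right-hand side into $\pi(S_{\psi},1)^{\thicksim}$ and matches the Mok-side computation.

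Having shown $\mathrm{Lift}_0(\eta^{\mathrm{Mok}}_{\psi_G}) = \mathrm{Lift}_0(\eta^{\mathrm{ABV}}_{\psi_G})$, injectivity of $\mathrm{Lift}_0$ (Proposition \ref{injlift2}) yields $\eta^{\mathrm{Mok}}_{\psi_G} = \eta^{\mathrm{ABV}}_{\psi_G}$; and since the packets $\Pi^{\mathrm{Mok}}_{\psi_G}$ and $\Pi^{\mathrm{ABV}}_{\psi_G}$ are by definition the supports of these two stable virtual characters in $\Pi({^\vee}\mathcal{O}_G, G(\mathbb{R},\delta_q))$, the equality $\Pi^{\mathrm{Mok}}_{\psi_G}=\Pi^{\mathrm{ABV}}_{\psi_G}$ is immediate. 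The main conceptual obstacle in this program is not any of the three lifting calculations individually---they are all recorded in the results assembled above---but rather the correct bookkeeping of the two different canonical extensions $\pi(\xi)^{+}$ (Atlas) and $\pi(\xi)^{\thicksim}$ (Whittaker) at every stage; Theorem \ref{wasign1}, whose sign $(-1)^{l^I(\xi)-l^I_\vartheta(\xi)}$ was precisely engineered to cancel the identical sign produced by $\mathrm{Lift}_0$ in Corollary \ref{etaplus2}, is what makes the two sides meet.
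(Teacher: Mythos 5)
Your proposal is correct and takes essentially the same approach as the paper's proof: both sides are pushed through the injective map $\mathrm{Lift}_0$, with the Mok side evaluated via Lemma \ref{2.2.2} and Corollary \ref{cortrans}(b), the ABV side via (\ref{nosigma}) and Corollary \ref{etaplus2}, and Theorem \ref{wasign1} supplying exactly the sign needed to reconcile the Atlas and Whittaker extensions. The only difference from the paper is cosmetic (you compute the Mok side first and the ABV side second, while the paper runs the chain of equalities in a single line from the ABV side to the Mok side).
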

  \begin{proof}
The proof is the same as that of \cite{aam}*{Theorem 8.2 (a)}.
We repeat it here in the case of unitary groups for the convenience of the reader.    
Let $\xi=(S_{\psi},1)$
    as in Corollary \ref{etaplus2}.


    $$
    \begin{aligned}
\mathrm{Lift}_{0} \, (\eta^{\mathrm{mic}}_{\psi_{G}}(\delta_{q})) &=  \mathrm{Lift}_{0} \,(
\eta^{\mathrm{mic}}_{\psi_{G}}(\vartheta) (\delta_{q}))\quad(\text{by } \eqref{nosigma})\\ 
&=(-1)^{l^{I}(\xi)-l^{I}_{\vartheta}(\xi)} \pi(\xi)^{+}
\quad(\text{Corollary }\ref{etaplus2})
\\
&=\pi(\xi)^\thicksim\quad(\text{Theorem }\ref{wasign1})\\
&=\pi(S_{\psi},1)^\thicksim\\
&=  \mathrm{Trans}_{G(\mathbb{R})}^{\mathrm{GL}_{N}(\mathbb{C}) \rtimes
  \vartheta}\left( \sum_{S_{G}}  n_{\epsilon(S_{G})} \,
  \eta_{S_{G}}^{\mathrm{loc}} (\delta_{q}) \right)
\quad(\mathrm{Lemma }\, \ref{2.2.2}, (\ref{Arthurstabchar}))\\
& =  \mathrm{Lift}_{0}  \left( \eta^{\mathrm{Mok}}_{\psi_{G}} \right)\quad(\mathrm{Corollary }\ \ref{cortrans}(b)).\\ 
\end{aligned}
$$
The equality of the stable virtual characters follows from the injectivity of
$\mathrm{Lift}_{0}$  (Proposition \ref{injlift2}).
The equality of packets follows immediately.
\end{proof}

\section{The equality of $\Pi_{\psi_{G}}^{\mathrm{Mok}}$ and $\Pi_{\psi_{G}}^{\mathrm{ABV}}$ for
  singular infinitesimal character} 
\label{equalapacketsing}

To conclude our comparison of stable virtual characters,  we  retain
the setup of the previous 
section, but without the hypothesis of regularity on
the infinitesimal character.  In other words, the orbits ${^\vee}\mathcal{O}_{G}$ and  ${^\vee}\mathcal{O}$
 are now allowed to be orbits of singular
infinitesimal characters and the reader should think of them as such.
In order to prove $\eta^{\mathrm{Mok}}_{\psi_{G}} = \eta^{\mathrm{ABV}}_{\psi_{G}}$ for singular $\lambda \in {^\vee}\mathcal{O}_{G}$, we must extend the
pairing of Theorem \ref{twistpairing} and extend the twisted
endoscopic lifting (\ref{twistendlift}) to 
include representations with singular infinitesimal character.  
This was done in Section 9 of \cite{aam} through the use of the
\emph{Jantzen-Zuckerman translation principle}, to which we refer 
from now on simply as \emph{translation}. 
The  same arguments can be used to extend the results 
of the previous sections to singular representations of $\mathrm{GL}_N(\mathbb{C})$.
 For the convenience of the reader we are including it here again,  adding 
the needed modifications to the context of the current paper.

In essence, translation allows one to transfer results for regular infinitesimal character to results for singular infinitesimal character.  Applying this principle to the results of the previous section will allow us to
compare $\Pi_{\psi_{G}}$ with $\Pi_{\psi_{G}}^{\mathrm{ABV}}$ with no restriction on
the infinitesimal character.

The translation principle 
begins with the existence of a
regular orbit ${^\vee}\mathcal{O}' \subset {^\vee}\mathfrak{gl}_{N}\times {^\vee}\mathfrak{gl}_N$ and a \emph{translation datum} 
$\mathcal{T}$ from ${^\vee}\mathcal{O}$ to ${^\vee}\mathcal{O}'$ (\cite{ABV}*{Definition 8.6, Lemma 8.7}). 
\nomenclature{$\mathcal{T}$}{translation datum}
As ${^\vee}\mathcal{O}$ is the ${^\vee}\mathrm{R}_{\mathbb{C}/\mathbb{R}}\mathrm{GL}_{N}$-orbit
of $\lambda \in {^\vee}\mathfrak{h}^{\vartheta}$ we may take
${^\vee}\mathcal{O}'$ to be the  ${^\vee}\mathrm{R}_{\mathbb{C}/\mathbb{R}}\mathrm{GL}_{N}$-orbit of  
\begin{equation}
\label{lambdaprime}
\lambda' = \lambda + \lambda_{1} \in {^\vee}\mathfrak{h}
\end{equation}
 where $\lambda_{1} \in X_{*}(H)$ is regular and  dominant with respect to the
 positive system of $R^{+}(\mathrm{R}_{\mathbb{C}/\mathbb{R}}\mathrm{GL}_N,H)$.   We may and shall
 take $\lambda_{1}$ to be the sum of the positive roots.  In this way,
each of $\lambda$, $\lambda_{1}$ and $\lambda'$ are fixed by
$\vartheta$.  The translation 
 datum $\mathcal{T}$ 
 induces a ${^\vee}\mathrm{R}_{\mathbb{C}/\mathbb{R}}\mathrm{GL}_{N}$-equivariant morphism 
\begin{equation}
\label{ftee}
f_{\mathcal{T}} :  X({^\vee}\mathcal{O}', {}^{\vee}\mathrm{R}_{\mathbb{C}/\mathbb{R}}\mathrm{GL}_N^{\Gamma}) \rightarrow
X({^\vee}\mathcal{O},{}^{\vee}\mathrm{R}_{\mathbb{C}/\mathbb{R}}\mathrm{GL}_N^{\Gamma}) \nomenclature{$f_{\mathcal{T}}$}{}
\end{equation}
of geometric parameters  (\cite{ABV}*{Proposition 8.8}).  The morphism
has connected fibres of fixed dimension, a fact we shall use
when comparing orbit dimensions.  The
${^\vee}\mathrm{R}_{\mathbb{C}/\mathbb{R}}\mathrm{GL}_{N}$-equivariance of (\ref{ftee}) is tantamount to
a coset map commuting with left-multiplication by ${^\vee}\mathrm{R}_{\mathbb{C}/\mathbb{R}}\mathrm{GL}_{N}$
(\cite{ABV}*{(6.10)(b)}).
Since both $\lambda$ and $\lambda'$ are fixed by $\vartheta$, it is
just as easy to see that the action of $\vartheta$ commutes with the
same coset map.  We leave this exercise to the reader, taking for
granted the resulting ${^\vee}\mathrm{R}_{\mathbb{C}/\mathbb{R}}\mathrm{GL}_{N}$-equivariance of (\ref{ftee}).

According to \cite{ABV}*{Proposition 7.15}, the morphism $f_{\mathcal{T}}$ induces an inclusion 
\begin{equation*}
f^{*}_{\mathcal{T}}: \Xi({^\vee}\mathcal{O},{}^{\vee}\mathrm{R}_{\mathbb{C}/\mathbb{R}}\mathrm{GL}_N^{\Gamma})
\hookrightarrow \Xi({^\vee}\mathcal{O}',{}^{\vee}\mathrm{R}_{\mathbb{C}/\mathbb{R}}\mathrm{GL}_N^{\Gamma})\nomenclature{$f^{*}_{\mathcal{T}}$}{}
\end{equation*}
of complete geometric parameters.
The $\vartheta$-equivariance of
(\ref{ftee}) implies that this inclusion restricts to an inclusion (denoted by the same symbol)
$$f^{*}_{\mathcal{T}}: \Xi({^\vee}\mathcal{O},{}^{\vee}\mathrm{R}_{\mathbb{C}/\mathbb{R}}\mathrm{GL}_N^{\Gamma})^\vartheta
\hookrightarrow \Xi({^\vee}\mathcal{O}',{}^{\vee}\mathrm{R}_{\mathbb{C}/\mathbb{R}}\mathrm{GL}_N^{\Gamma})^\vartheta. $$
The (Jantzen-Zuckerman) translation functor (\cite{AvLTV}*{(17.8j)})
$$T_{\lambda'}^{\lambda} = T_{\lambda +  \lambda_{1}}^{\lambda}
\nomenclature{$T_{\lambda'}^{\lambda}$}{Jantzen-Zuckerman translation}
$$
is an exact functor on
a category of Harish-Chandra modules, which we shall often regard
as a homomorphism
\begin{equation}
  \label{transfunct}
  T_{\lambda + \lambda_{1}}^{\lambda}: K \Pi({^\vee}\mathcal{O}',
  \mathrm{GL}_{N}(\mathbb{C}) \rtimes \langle \vartheta \rangle )
  \rightarrow  K \Pi({^\vee}\mathcal{O},
  \mathrm{GL}_{N}(\mathbb{C}) \rtimes \langle \vartheta \rangle )
\end{equation}
of Grothendieck groups.
It is  surjective  (\cite{AvLTV}*{Corollary 17.9.8}).
This translation functor is an extended version of the usual
translation functor (\cite{AvLTV}*{(16.8f)}), which we
also  denote by
\begin{equation}
  \label{ordtrans}
  T_{\lambda + \lambda_{1}}^{\lambda}: K \Pi({^\vee}\mathcal{O}',
  \mathrm{GL}_{N}(\mathbb{C}) )
  \rightarrow  K \Pi({^\vee}\mathcal{O},
  \mathrm{GL}_{N}(\mathbb{C}) ).
\end{equation}
Let us take a moment to make (\ref{transfunct}) more precise.  The sum of the
positive roots $\lambda_{1}$ is the infinitesimal character of a
finite-dimensional representation of $\mathrm{GL}_{N}(\mathbb{C})$.
Therefore, $\lambda_{1}$ is the differential of a $\vartheta$-fixed
quasicharacter $\Lambda_{1}$ of the
real diagonal torus $H(\mathbb{R})$, which matches the weight of this
finite-dimensional representation. The quasicharacter
$\Lambda_{1}$ may be extended to a quasicharacter $\Lambda_{1}^{+}$ of the
semi-direct product 
$H(\mathbb{R}) \rtimes \langle \vartheta \rangle$  by
setting
\begin{equation*}
  \Lambda_{1}^{+}(\vartheta) = 1.
\end{equation*}

We define translation in the extended setting of (\ref{transfunct})
using this representation of the extended group.  Since the extension
is evident here we continue to write
$T_{\lambda+\lambda_1}^{\lambda}$ instead of $T_{\lambda+\Lambda_1^+}^{\lambda}$.

In the ordinary setting of (\ref{ordtrans}) we have
\begin{align*}
\pi(\xi) &= T_{\lambda + \lambda_{1}}^{\lambda} \left(
  \pi(f^{*}_{\mathcal{T}}(\xi)) \right),\\
  \nonumber  M(\xi) &= T_{\lambda + \lambda_{1}}^{\lambda} \left(
  M(f^{*}_{\mathcal{T}}(\xi)) \right), \quad \xi \in \Xi({^\vee}\mathcal{O},
  {}^{\vee}\mathrm{R}_{\mathbb{C}/\mathbb{R}}\mathrm{GL}_N^{\Gamma})
\end{align*}
(\cite{AvLTV}*{Corollary 16.9.4, 16.9.7 and 16.9.8}, or \cite{ABV}*{ Theorem 16.4 and
   Proposition 16.6}).
We \emph{define} the Atlas extensions of 
$\pi(\xi)$ and $M(\xi)$, with $\xi \in
\Xi({^\vee}\mathcal{O},{}^{\vee}\mathrm{R}_{\mathbb{C}/\mathbb{R}}\mathrm{GL}_N^{\Gamma})^\vartheta$, by
\begin{align*}
\pi(\xi)^{+} &= T_{\lambda + \lambda_{1}}^{\lambda}
  (\pi(f^{*}_{\mathcal{T}}(\xi))^{+})\\
   M(\xi)^{+} &= T_{\lambda + \lambda_{1}}^{\lambda}
  (M(f^{*}_{\mathcal{T}}(\xi))^{+}) .
\end{align*}

With the definition of Atlas extensions in place, the discussion of
Section \ref{grothchar} is valid, and we see that
$T_{\lambda + \lambda_{1}}^{\lambda}$ factors to a homomorphism of $K
\Pi({^\vee}\mathcal{O}, \mathrm{GL}_{N}(\mathbb{C}), \vartheta)$ (see
(\ref{twistgroth1})).  We use the same notation
$T_{\lambda + \lambda_{1}}^{\lambda}$ to denote the functor of Harish-Chandra modules,
and either of the earlier homomorphisms.  The reader will be reminded
of the context when it is important.

The definition of a Whittaker extension does not depend on the
regularity of the infinitesimal character.   The following proposition
shows that  translation sends Whittaker extensions to
Whittaker extensions.
\begin{prop}
  \label{whittowhit}
Suppose $\xi \in \Xi( {^\vee}\mathcal{O}, {}^{\vee}\mathrm{R}_{\mathbb{C}/\mathbb{R}}\mathrm{GL}_N^{\Gamma})^\vartheta$.  Then (as Harish-Chandra
modules)
$$T_{\lambda + \lambda_{1}}^{\lambda}\left(
M(f^{*}_{\mathcal{T}}(\xi))^{\thicksim} \right) =
M(\xi)^{\thicksim},$$
and
$$T_{\lambda + \lambda_{1}}^{\lambda} \left( \pi(f^{*}_{\mathcal{T}}(\xi))^{\thicksim}\right) =
\pi(\xi)^{\thicksim}.$$ 
\end{prop}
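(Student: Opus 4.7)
The plan is to establish the identity for the standard modules first and then pass to the Langlands quotients, exploiting the intrinsic characterization of the Whittaker extension: $M(f^{*}_{\mathcal{T}}(\xi))^{\thicksim}(\vartheta)$ is the unique intertwiner between $M(f^{*}_{\mathcal{T}}(\xi))\circ\vartheta$ and $M(f^{*}_{\mathcal{T}}(\xi))$ whose transpose fixes a chosen nonzero Whittaker functional $\omega'$ relative to the $\vartheta$-fixed datum $(U,\chi)$. Writing $\xi' = f^{*}_{\mathcal{T}}(\xi)$ for brevity, I would transport $\omega'$ to a nonzero Whittaker functional $\omega$ on $T_{\lambda+\lambda_1}^{\lambda}(M(\xi')) = M(\xi)$, verify that the translated operator $T_{\lambda+\lambda_1}^{\lambda}(M(\xi')^{\thicksim}(\vartheta))$ fixes $\omega$, and conclude by the uniqueness characterization of the Whittaker extension of $M(\xi)$.

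The first step is to check that $T_{\lambda+\lambda_1}^{\lambda}$ descends to a functor on admissible $(\mathfrak{g},K\rtimes\langle\vartheta\rangle)$-modules. Realizing $T_{\lambda+\lambda_1}^{\lambda}(V) = (V\otimes F)_\lambda$ with $F$ the finite-dimensional irreducible representation whose $\lambda_1$-weight line corresponds to $\Lambda_1$, the $\vartheta$-fixedness of $\lambda_1$ together with the prescription $\Lambda_1^+(\vartheta)=1$ endows $F$ with a canonical extension $F^+$ to $\mathrm{GL}_N(\mathbb{C})\rtimes\langle\vartheta\rangle$ acting trivially on the $\lambda_1$-line. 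Because $\lambda$ itself is $\vartheta$-fixed, the projection onto the $\lambda$-infinitesimal component is $\vartheta$-equivariant, so the extended translation functor is well defined.

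The heart of the proof is the construction of $\omega$ and the verification of its $\vartheta$-compatibility. Pairing $\omega'$ against the dual of an extremal weight vector $v\in F$ annihilated by $\mathrm{Lie}(U)$ produces a $(U(\mathbb{R}),\chi)$-equivariant functional on $M(\xi')\otimes F$; the tensor identity sends this to a Whittaker functional $\omega$ on the $\lambda$-component $M(\xi)$. The extension property $F^+(\vartheta)v = v$ immediately gives
\[
\omega\circ T_{\lambda+\lambda_1}^{\lambda}(M(\xi')^{\thicksim}(\vartheta)) \;=\; \omega'\circ M(\xi')^{\thicksim}(\vartheta)\;=\;\omega',
\]
which translates back to $\omega$, so uniqueness of the Whittaker extension of $M(\xi)$ yields the first identity. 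The identity for $\pi(\xi)^{\thicksim}$ then follows by exactness of $T_{\lambda+\lambda_1}^{\lambda}$ together with its compatibility with Langlands quotients and the inclusion $\pi(\xi_0)^{\thicksim}\hookrightarrow M(\xi)^{\thicksim}$ of (\ref{subrep}).

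The hard part will be the nonvanishing of the translated Whittaker functional $\omega$ at the possibly singular infinitesimal character $\lambda$: a priori the projection onto the $\lambda$-infinitesimal component could annihilate the functional inherited from $\omega'$. This obstacle is resolved using Lemma \ref{uniquegeneric} together with the preservation of the unique generic subrepresentation under translation --- a consequence of the Bruhat-theoretic characterization of $\xi_0$ exploited in Proposition \ref{conjq} combined with the compatibility of $f^{*}_{\mathcal{T}}$ with the Bruhat order --- which ensures that $T_{\lambda+\lambda_1}^{\lambda}$ carries the generic subrepresentation of $M(\xi')$ to that of $M(\xi)$, and hence that $\omega$ restricts nontrivially to a nonzero submodule.
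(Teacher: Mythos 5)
Your plan takes the same route as the cited \cite{aam}*{Proposition 9.1}: realize $T_{\lambda+\lambda_1}^{\lambda}$ as tensoring with a finite-dimensional $F$ and projecting, fix the $\vartheta$-action on $F$ via $\Lambda_1^+(\vartheta)=1$, transport the Whittaker functional, and invoke the uniqueness characterization of the Whittaker extension. You also correctly isolate nonvanishing of the transported functional as the central technical point, and the overall skeleton is right. Two issues.

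Minor: the $(U(\mathbb{R}),\chi)$-equivariant functional on $M(f^*_{\mathcal{T}}(\xi))\otimes F$ is $\omega'$ tensored with the functional on $F$ that kills $\mathfrak{n}F$, that is, the highest-weight vector of $F^*$, which is dual to the \emph{lowest}-weight vector of $F$. The highest-weight vector of $F$ (the one "annihilated by $\mathrm{Lie}(U)$") has the wrong equivariance — its dual functional is not $U$-invariant. This is just a convention slip, but as written the construction does not produce a $(U,\chi)$-equivariant functional.

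Substantive: your nonvanishing argument via the Bruhat order establishes that $T_{\lambda+\lambda_1}^{\lambda}$ carries the generic constituent $\pi(f^*_{\mathcal{T}}(\xi)_0)$ to $\pi(\xi_0)$, but this abstract identification does not by itself imply that the \emph{restriction} of the transported functional $\omega$ to the submodule $\pi(\xi_0)\hookrightarrow M(\xi)$ is nonzero — the restriction of a functional to a nonzero submodule can perfectly well vanish, so "hence $\omega$ restricts nontrivially" is unjustified. What closes this gap, and is exactly the simplification the paper's proof alludes to via Lemma \ref{prinsame}, is that for $\mathrm{GL}_N(\mathbb{C})$ every $\vartheta$-stable irreducible sits inside a genuine principal series induced from $B(\mathbb{R})$; there the Whittaker functional is the Jacquet integral, translation commutes with parabolic induction from $B$, and the nonvanishing of the translated integral is transparent. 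Your more abstract argument needs this (or an equivalent explicit computation) to be complete.
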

\begin{proof} The proof runs along the same lines as that of 
\cite{aam}*{Proposition 9.1}. The proof is even simpler in the current context, 
since every $\vartheta$-invariant irreducible representation of $\mathrm{GL}_N(\mathbb{C})$
appears as a subquotient of a principal series representation 
as given in Lemma \ref{prinsame}.
\end{proof}

Our translation datum $\mathcal{T}$ for $\mathrm{R}_{\mathbb{C}/\mathbb{R}}\mathrm{GL}_N$ is defined by
(\ref{lambdaprime}), in which both $\lambda$ and $\lambda'$ are fixed
by  $\vartheta$.  For this
reason  (\ref{lambdaprime}) also determines a translation datum
$\mathcal{T}_{G}$ from   
${^\vee}G$-orbits  ${^\vee}\mathcal{O}_{G}$ to   ${^\vee}\mathcal{O}'_{G}$ for
the twisted endoscopic group $G$ (\cite{ABV}*{Definition 8.6 (e)}).
Just as
for $\mathrm{R}_{\mathbb{C}/\mathbb{R}}\mathrm{GL}_N$, we have maps
\begin{align*}
 f_{\mathcal{T}_{G}} &:  X({^\vee}\mathcal{O}', {^\vee}G^{\Gamma}) \rightarrow
X({^\vee}\mathcal{O}, {^\vee}G^{\Gamma}) \\
\nonumber f^{*}_{\mathcal{T}_{G}}&: \Xi({^\vee}\mathcal{O},{^\vee}G^{\Gamma})
\hookrightarrow \Xi({^\vee}\mathcal{O}',{^\vee}G^{\Gamma})
\end{align*}
and the  translation functor $T_{\lambda +
  \lambda_{1}}^{\lambda}$ which satisfies
$$\pi(\xi) = T_{\lambda + \lambda_{1}}^{\lambda}\left(
\pi(f^{*}_{\mathcal{T}_{G}}(\xi)) \right), \quad \xi \in
\Xi({^\vee}\mathcal{O}_{G}, {^\vee}G^{\Gamma})$$
(\cite{ABV}*{Proposition 16.6}, \cite{AvLTV}*{Section 16}).

The translation data $\mathcal{T}$ and $\mathcal{T}_{G}$ allow us to
transport properties of our pairings at regular infinitesimal
character  to the same properties
for pairings at singular  infinitesimal character. More precisely, as explained
at the end of \cite{ABV}*{page 178}, the translation datum $\mathcal{T}_G$ applied to Proposition \ref{ordpairingequiv}
allows us to extend Theorem \ref{ordpairing} to any infinitesimal character.
In a similar fashion, the translation datum $\mathcal{T}$
applied this time to Equation (\ref{twist15.13a}),
allows us to transport Corollary \ref{twistpairingfinal}
from regular to singular infinitesimal character.  These two procedures have the following result.
\begin{prop}[\cite{aam}*{Proposition 9.2}]
  \label{finalpairing}
  Define the pairing
\begin{equation}\label{eq:finalpairing}
\langle \cdot, \cdot \rangle:  K \Pi({^\vee}\mathcal{O},
\mathrm{GL}_{N}(\mathbb{C}), \vartheta) 
\times K X({^\vee}\mathcal{O}, {}^{\vee}\mathrm{R}_{\mathbb{C}/\mathbb{R}}\mathrm{GL}_N^{\Gamma}, \vartheta)
\rightarrow \mathbb{Z}
\end{equation}
by
$$\langle M(\xi)^{\thicksim}, \mu(\xi')^{+} \rangle = \delta_{\xi, \xi'}.$$
Then
  $$\langle \pi(\xi)^{\thicksim}, P(\xi')^{+} \rangle = (-1)^{d(\xi)} \,
  \delta_{\xi, \xi'}$$
where $\xi, \xi' \in
\Xi({^\vee}\mathcal{O}, {}^{\vee}\mathrm{R}_{\mathbb{C}/\mathbb{R}}\mathrm{GL}_N^{\Gamma})^\vartheta$.
\end{prop}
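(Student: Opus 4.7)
The plan is to reduce to the regular-infinitesimal-character case (Corollary \ref{twistpairingfinal}) by translating both sides of the pairing via the translation data $\mathcal{T}$ and $\mathcal{T}_G$ introduced above. The key observation is that Corollary \ref{twistpairingfinal} has already been recast in the multiplicity form (\ref{twist15.13a}), namely $m_r^{\thicksim}(\xi',\xi) = (-1)^{d(\xi)-d(\xi')}\, c_g^{\vartheta}(\xi,\xi')$, and this identity is preserved separately by translation on the character side and by the inverse-image functor $f_{\mathcal{T}}^*$ on the sheaf side. Once the multiplicity identity is established at singular $\lambda$, unwinding (\ref{twist15.13a}) in reverse yields the pairing formula of Proposition~\ref{finalpairing}.

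First, I would verify the character-side invariance. For any $\xi \in \Xi({^\vee}\mathcal{O},{^\vee}\mathrm{R}_{\mathbb{C}/\mathbb{R}}\mathrm{GL}_N^{\Gamma})^{\vartheta}$, both $M(\xi)^{\thicksim}$ and the family $\{\pi(\xi')^{\thicksim}\}_{\xi'}$ are obtained by $T_{\lambda+\lambda_1}^{\lambda}$ applied to the corresponding objects at regular infinitesimal character, by Proposition~\ref{whittowhit}. Exactness of the translation functor on Harish-Chandra modules, together with the fact that $T_{\lambda+\lambda_1}^{\lambda}$ kills exactly the irreducibles $\pi(\xi')^{\thicksim}$ with $\xi'$ not in the image of $f_{\mathcal{T}}^*$ (this is \cite{AvLTV}*{Corollary 17.9.8} used in the form already invoked when defining Atlas extensions at singular infinitesimal character), gives the identity of multiplicities
\begin{equation*}
m_r^{\thicksim}(\xi',\xi) \;=\; m_r^{\thicksim}\bigl(f_{\mathcal{T}}^*(\xi'),\, f_{\mathcal{T}}^*(\xi)\bigr)
\end{equation*}
for all $\xi,\xi' \in \Xi({^\vee}\mathcal{O},{^\vee}\mathrm{R}_{\mathbb{C}/\mathbb{R}}\mathrm{GL}_N^{\Gamma})^{\vartheta}$.

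Next, I would handle the sheaf side. The morphism $f_{\mathcal{T}}$ of (\ref{ftee}) is a smooth, ${^\vee}\mathrm{R}_{\mathbb{C}/\mathbb{R}}\mathrm{GL}_{N}$- and $\vartheta$-equivariant map with connected fibres of fixed dimension $d_{\mathcal{T}}$. Consequently, up to the cohomological shift by $d_{\mathcal{T}}$, the pullback $f_{\mathcal{T}}^*$ carries $P(f_{\mathcal{T}}^*(\xi))^{+}$ to $P(\xi)^{+}$ and $\mu(f_{\mathcal{T}}^*(\xi))^{+}$ to $\mu(\xi)^{+}$ (the non-twisted statement is \cite{ABV}*{Lemma 7.14, Proposition 7.15}; the compatibility of the canonical $\vartheta$-structures of Lemma \ref{cansheaf} with $f_{\mathcal{T}}^*$ follows from the $\vartheta$-equivariance of $f_{\mathcal{T}}$ together with the uniqueness statement in that lemma). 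Since $d(f_{\mathcal{T}}^*(\xi)) = d(\xi) + d_{\mathcal{T}}$ is a uniform shift, the signed geometric multiplicities transform as
\begin{equation*}
c_g^{\vartheta}(\xi,\xi') \;=\; c_g^{\vartheta}\bigl(f_{\mathcal{T}}^*(\xi),\, f_{\mathcal{T}}^*(\xi')\bigr),
\end{equation*}
and the same constant $d_{\mathcal{T}}$ cancels in the exponent of $(-1)^{d(\xi)-d(\xi')}$ appearing in (\ref{twist15.13a}).

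Combining the two invariances with the validity of (\ref{twist15.13a}) at the regular orbit ${^\vee}\mathcal{O}'$ (which is Corollary~\ref{twistpairingfinal}) yields the analogue of (\ref{twist15.13a}) at ${^\vee}\mathcal{O}$. Defining the pairing (\ref{eq:finalpairing}) on the standard/constructible bases by $\langle M(\xi)^{\thicksim}, \mu(\xi')^{+}\rangle = \delta_{\xi,\xi'}$ and then expressing $\pi(\xi)^{\thicksim}$ and $P(\xi')^{+}$ through the transition matrices $m_r^{\thicksim}$ and $c_g^{\vartheta}$ produces the desired identity $\langle \pi(\xi)^{\thicksim}, P(\xi')^{+}\rangle = (-1)^{d(\xi)}\,\delta_{\xi,\xi'}$, exactly as in the equivalence of Theorem~\ref{twistpairing} and Proposition~\ref{p:twist}. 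The main obstacle is the sheaf-side step: one must confirm that the canonical $\vartheta$-action supplied by Lemma~\ref{cansheaf} on $P(\xi)^{+}$ and $\mu(\xi)^{+}$ coincides with the pullback along $f_{\mathcal{T}}$ of the canonical $\vartheta$-action on $P(f_{\mathcal{T}}^*(\xi))^{+}$ and $\mu(f_{\mathcal{T}}^*(\xi))^{+}$, so that the signs in $c_g^{\vartheta}$ (and not merely in $c_g$) are preserved; all other steps are direct consequences of results already recorded in the paper.
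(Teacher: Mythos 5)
Your proposal is correct and takes essentially the same route as the paper: both transport the multiplicity identity~\eqref{twist15.13a} (equivalent to Corollary~\ref{twistpairingfinal}) from the regular orbit ${^\vee}\mathcal{O}'$ to the singular one ${^\vee}\mathcal{O}$ by exploiting the translation datum, and then unwind back to the pairing statement; the paper compresses this into a one-line citation of the argument at the end of \cite{ABV}*{p.~178} and \cite{aam}*{Proposition 9.2}, while you spell out the two invariances (of $m_r^{\thicksim}$ and of $c_g^{\vartheta}$) that make it work. One small slip: the morphism $f_{\mathcal{T}}$ maps the regular variety $X({^\vee}\mathcal{O}',\cdot)$ onto the singular one $X({^\vee}\mathcal{O},\cdot)$, so the inverse-image functor $f_{\mathcal{T}}^*$ takes sheaves at singular infinitesimal character to (shifted) sheaves at regular infinitesimal character---i.e.\ it carries $P(\xi)^{+}$ to $P(f_{\mathcal{T}}^*(\xi))^{+}[d_{\mathcal{T}}]$, not the reverse as written---but since the invariance statement $c_g^{\vartheta}(\xi,\xi') = c_g^{\vartheta}(f_{\mathcal{T}}^*(\xi),f_{\mathcal{T}}^*(\xi'))$ is symmetric in form, this does not affect the substance of the argument. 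Your flagged obstacle about compatibility of the canonical $\vartheta$-structures under $f_{\mathcal{T}}^*$ is indeed the one point needing care, and the resolution you sketch (uniqueness in Lemma~\ref{cansheaf} plus $\vartheta$-equivariance of $f_{\mathcal{T}}$) is the right one.
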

Proposition \ref{finalpairing} is the final version of the twisted
pairing, and we use it to extend the definition of endoscopic lifting
$\mathrm{Lift}_{0}$ 
to include singular infinitesimal characters ((\ref{twistloweps}),
(\ref{twistendlift})).  In fact, all of the remaining results used in Section
\ref{equalapacketreg} easily carry over to the more general setting.
In particular, using the pairing 
(\ref{eq:finalpairing})
in the proof of 
Proposition    \ref{twistimlift}, we see that 
for any  ${^\vee}G$-orbit $S_G \subset
  X({^\vee}\mathcal{O}_{G},{^\vee}G^{\Gamma})$ we still have
$$\mathrm{Lift}_{0} \left(\eta^{\mathrm{loc}}_{S_{G}}(\vartheta)(\delta_{q})\right)
    = M(\epsilon(S_{G}), 1)^{\thicksim}.$$
Finally, since the injectivity of $\mathrm{Lift}_{0}$ still holds for singular infinitesimal character, the same argument used in the proof of Theorem \ref{finalthm}
allows us to conclude
  \begin{thm}
    \label{finalthm1}
Let $\psi_G$ be an A-parameter for $G$. Then
  $$\eta^{\mathrm{Mok}}_{\psi_{G}} = \eta_{\psi_{G}}^{\mathrm{mic}}(\delta_{q}) =
  \eta^{\mathrm{ABV}}_{\psi_{G}}   \quad\mbox{and}\quad 
  \Pi_{\psi_{G}}^{\mathrm{Mok}} = \Pi_{\psi_{G}}^{\mathrm{ABV}}.$$
  \end{thm}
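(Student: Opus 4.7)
The plan is to reproduce the chain of equalities used in the proof of Theorem \ref{finalthm}, but replacing each regular-infinitesimal-character input with its singular counterpart supplied by Jantzen-Zuckerman translation as set up in Section \ref{equalapacketsing}. Concretely, I would fix the translation datum $\mathcal{T}$ from ${^\vee}\mathcal{O}$ to the regular orbit ${^\vee}\mathcal{O}'$ of $\lambda' = \lambda + \lambda_1$ (with $\lambda_1$ the sum of the positive roots, hence $\vartheta$-fixed), together with its endoscopic counterpart $\mathcal{T}_G$ from ${^\vee}\mathcal{O}_G$ to ${^\vee}\mathcal{O}'_G$. These data transport all the main structural results across the translation: the pairing takes its final form as in Proposition \ref{finalpairing}, the Whittaker extensions behave well under translation by Proposition \ref{whittowhit}, and $\mathrm{Lift}_0$ extends to the singular setting via the extended twisted pairing.

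With those tools in place, I would first observe that the analogue of Lemma \ref{2.2.2} holds at singular infinitesimal character: the decomposition (\ref{piwhittdecomp}) of $\pi(S_{\psi},1)^{\thicksim}$ as a $\mathbb{Z}$-linear combination of Whittaker standard modules $M(S,1)^{\thicksim}$ still exists, and each orbit $S$ with nonzero coefficient is the image under $\epsilon$ of a unique ${^\vee}G$-orbit $S_G$. This either follows by applying $T^{\lambda}_{\lambda+\lambda_1}$ to the regular-case decomposition (using Proposition \ref{whittowhit} to identify the image term by term) or is quoted directly from \cite{Mok}*{Proposition 8.2.1}. This gives again the formula (\ref{Arthurstabchar}) for $\eta^{\mathrm{Mok}}_{\psi_G}$ in terms of the pseudopacket basis $\eta^{\mathrm{loc}}_{S_G}(\delta_q)$.

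Next, I would carry over Corollary \ref{etaplus2} and Theorem \ref{wasign1} to singular infinitesimal character. Both are proved in the regular case and transport across $T^{\lambda}_{\lambda+\lambda_1}$: for Theorem \ref{wasign1} the key point is Proposition \ref{whittowhit}, while the lengths $l^I(\xi) - l^I_\vartheta(\xi)$ are read off from $f^*_{\mathcal{T}}(\xi)$, and for Corollary \ref{etaplus2} one uses that translation commutes with $\mathrm{Lift}_0$ (as noted at the end of Section \ref{equalapacketsing}) together with the singleton ABV-packet result of Proposition \ref{prop:singletonGLN}, which is independent of any regularity hypothesis. Corollary \ref{cortrans}(b), asserting $\mathrm{Lift}_0 = \mathrm{Trans}^{\mathrm{GL}_N(\mathbb{C}) \rtimes \vartheta}_{G(\mathbb{R})}$, similarly extends using the singular version of the twisted pairing.

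Finally, I would assemble these ingredients exactly as in the proof of Theorem \ref{finalthm}: setting $\xi = (S_\psi,1)$,
\begin{align*}
\mathrm{Lift}_0(\eta^{\mathrm{mic}}_{\psi_G}(\delta_q))
&= \mathrm{Lift}_0(\eta^{\mathrm{mic}}_{\psi_G}(\vartheta)(\delta_q))
= (-1)^{l^I(\xi)-l^I_\vartheta(\xi)}\,\pi(\xi)^+ \\
&= \pi(\xi)^{\thicksim} = \pi(S_\psi,1)^{\thicksim}
= \mathrm{Trans}^{\mathrm{GL}_N(\mathbb{C}) \rtimes \vartheta}_{G(\mathbb{R})}(\eta^{\mathrm{Mok}}_{\psi_G})
= \mathrm{Lift}_0(\eta^{\mathrm{Mok}}_{\psi_G}).
\end{align*}
The injectivity of $\mathrm{Lift}_0$, whose proof (Proposition \ref{injlift2}) rests on the rigidity result \cite{ggp}*{Theorem 8.1} and does not require regularity, then yields $\eta^{\mathrm{Mok}}_{\psi_G} = \eta^{\mathrm{ABV}}_{\psi_G}$, and the equality of packets is immediate. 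The main obstacle, really the only nontrivial one, is confirming that injectivity of $\mathrm{Lift}_0$ survives translation: one must check that the linearly independent basis $\{\eta^{\mathrm{loc}}_{S_G}(\delta_q)\}$ at singular infinitesimal character is still sent to a linearly independent set of Whittaker standard modules. This follows because translation sends the basis $M(\epsilon(S_G),1)^\thicksim$ at regular infinitesimal character to the corresponding one at singular infinitesimal character (by Proposition \ref{whittowhit} applied to the image side of $\epsilon$), combined with the fact that distinct $S_G$ in the singular setting remain distinct after lifting by the same rigidity result used in Proposition \ref{injlift2}.
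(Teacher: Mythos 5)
Your proposal is correct and follows essentially the same route as the paper: transport the regular-infinitesimal-character machinery across the Jantzen--Zuckerman translation functor (via Propositions \ref{whittowhit} and \ref{finalpairing}), observe that the injectivity of $\mathrm{Lift}_0$ is unaffected since it rests on the orbit-level rigidity of \cite{ggp}*{Theorem 8.1}, and then repeat the chain of equalities from Theorem \ref{finalthm}. The only cosmetic difference is that you transport Theorem \ref{wasign1} and Corollary \ref{etaplus2} separately before assembling, whereas the paper transports the consolidated statement $\mathrm{Lift}_0(\eta^{\mathrm{loc}}_{S_G}(\vartheta)(\delta_q)) = M(\epsilon(S_G),1)^{\thicksim}$ directly via the translated pairing; this is the same idea packaged slightly differently.
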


\section{Some consequences for standard endoscopy}
  \label{consec}
  
  In this section we explore standard endoscopic lifting from an endoscopic group $G'$ of our quasisplit unitary group $G$. Both  \cite{ABV} and \cite{Mok}  provide formulae for this lifting, and it is not obvious why the two types of formulae should be equal.  Our goal is to show how Theorem \ref{finalthm1} implies the equality of the formulae, arguing  as in \cite{aam}*{Section 10}.

  A brief review of the definition of $G'$ may be found in \cite{ABV}*{Section 26}. The endoscopic group $G'$ is defined to
be a quasisplit form of a complex reductive group whose dual ${^\vee}G'$ is the identity component of the 
centralizer in ${^\vee}G$ of a semisimple element $s \in {^\vee}G$
(\emph{cf.} Section \ref{endosec}).
We further assume that  the element $s$ centralizes the
image of a fixed A-parameter $\psi_{G}$ as in (\ref{aparameter}).
There is a natural embedding
\begin{equation}
  \label{epsprime}
\epsilon': {^\vee}(G')^{\Gamma} \hookrightarrow
         {^\vee}G^{\Gamma}
\end{equation}
(\emph{cf.} \cite{Mok}*{(2.1.13)}), and an A-parameter $\psi_{G'}$ 
for $G'$ such that 
$$\psi_{G} = \epsilon' \circ \psi_{G'}$$
(\emph{cf.} \cite{Arthur}*{\emph{p.} 36}).
We write $\Pi_{\psi_{G}}$ for $\Pi_{\psi_{G}}^{\mathrm{Mok}} = \Pi_{\psi_{G}}^{\mathrm{ABV}}$.  For any $\pi \in \Pi_{\psi_{G}}$, we write $\tau^{\mathrm{mic}}_{\psi_{G}}(\pi) = \tau^{\mathrm{mic}}_{S_{\psi_{G}}}(\pi)$ for the representation of $A_{\psi_{G}}={^\vee}G_{\psi_{G}}/
({^\vee}G_{\psi_{G}})^{0}$ introduced in (\ref{miclocalsys}).

Let us first look at endoscopic lifting from the perspective of \cite{ABV}.  Let $\mathrm{Lift}_{G'(\mathbb{R})}^{G(\mathbb{R})}$ denote the endoscopic lifting map from stable virtual characters of the quasisplit group $G'(\mathbb{R})$ to the virtual characters of $G(\mathbb{R}) = G(\mathbb{R},\delta_{q})$ as given in \cite{ABV}*{Definition 26.18}.  According to \cite{ABV}*{Theorem 22.7 and Theorem 26.25} the stable virtual character $\eta^{\mathrm{ABV}}_{\psi_{G'}}$ of $G'(\mathbb{R})$ satisfies 
  \begin{equation}
    \label{abvordlift}
    \mathrm{Lift}_{G'(\mathbb{R})}^{G(\mathbb{R})} (\eta^{\mathrm{ABV}}_{\psi_{G'}}) = \sum_{\pi \in \Pi_{\psi_{G}}} (-1)^{d(\pi) - d(S_{\psi_{G}})}\,  \mathrm{Tr}\left( \tau^{\mathrm{mic}}_{\psi_{G}} (\pi) (\bar{s}) \right) \, \pi.
  \end{equation}
  Here, $d(S_{\psi_{G}})$ is the dimension of the ${^\vee}G$-orbit $S_{\psi_{G}}$, and  $\bar{s}$ is the coset of $s$ in $A_{\psi_{G}}$.
    
  The analogue of  formula (\ref{abvordlift}) from Mok's perspective is \cite{Mok}*{(8.2.4)}.  It describes the endoscopic lifting map $\mathrm{Trans}_{G'(\mathbb{R})}^{G(\mathbb{R})}$ defined by Shelstad (\cite{shelstad_endoscopy}) on a stable virtual character $\eta^{\mathrm{Mok}}_{\psi_{G'}}$ (\cite{Mok}*{Theorem 3.2.1}).  With this notation Mok's formula is
  \begin{equation}
    \label{mordlift}
\mathrm{Trans}_{G'(\mathbb{R})}^{G(\mathbb{R})} ( \eta^{\mathrm{Mok}}_{\psi_{G'}} ) = \sum_{\sigma \in \Sigma_{\psi_{G}} } \langle s_{\psi_{G}} \bar{s} , \sigma \rangle \, \sigma.
  \end{equation}
  Here, $\Sigma_{\psi_{G}}$ is a
finite set of non-negative integral linear combinations  
$$\sigma = \sum_{\pi \in  \Pi_{\mathrm{unit}}(G(\mathbb{R}))} m(\sigma, \pi) \, \pi$$
of irreducible unitary characters of $G(\mathbb{R})$.  Furthermore, there is an injective map from
$\Sigma_{\psi_{G}}$ into the set of those
quasicharacters of $A_{\psi_{G}}$ which are trivial on the centre of
${^\vee}G$.  The injection is denoted by 
$$\sigma \mapsto \langle \cdot, \sigma \rangle.$$
The element $s_{\psi_{G}}$ is the image of
\small
$$\psi_{G} \left(1, \begin{bmatrix} -1 & 0\\ 0 &
  -1 \end{bmatrix} \right)$$\normalsize 
in $A_{\psi_{G}}$.

If one takes $s = 1$ in  (\ref{mordlift}) then one recovers $G' = G$ and the equation reduces to 
\begin{align}
  \label{preveq}
  \eta^{\mathrm{Mok}}_{\psi_{G}}  &= \sum_{\sigma \in \Sigma_{\psi_{G}} } \langle s_{\psi_{G}}  , \sigma  \rangle \, \sigma \\
\nonumber  &=  \sum_{\sigma \in \Sigma_{\psi_{G}} } \langle s_{\psi_{G}}  , \sigma  \rangle \, \sum_{\pi \in  \Pi_{\mathrm{unit}}(G(\mathbb{R}))} m(\sigma, \pi) \, \pi\\
\nonumber  &= \sum_{\pi \in \Pi_{\psi_{G}}} \left( \sum_{\sigma \in \Sigma_{\psi_{G}} }  m(\sigma, \pi)\,   \langle s_{\psi_{G}}  , \sigma \rangle \right) \, \pi.
\end{align}
For each $\pi \in \Pi_{\psi_{G}}$ define
$$\tau_{\psi_{G}}(\pi) =  \sum_{\sigma \in \Sigma_{\psi_{G}} }  m(\sigma, \pi)\,   \langle \cdot  , \sigma \rangle,$$
which is apparently a finite sum of quasicharacters of $A_{\psi_{G}}$.
Moeglin and Renard have proven that $\tau_{\psi_{G}}(\pi)$ is actually
irreducible (\cite{MR4}*{Theorem 1.3}).  Therefore (\ref{preveq}) becomes
$$\eta^{\mathrm{Mok}}_{\psi_{G}} =  \sum_{\pi \in \Pi_{\psi_{G}}}  \tau_{\psi_{G}} (\pi) (s_{\psi_{G}})  \, \pi,$$ 
and more generally (\ref{mordlift}) becomes
\begin{equation}
  \label{mordlift1}
  \mathrm{Trans}_{G'(\mathbb{R})}^{G(\mathbb{R})}( \eta^{\mathrm{Mok}}_{\psi_{G'}}) = \sum_{\pi \in \Pi_{\psi_{G}}}   \tau_{\psi_{G}}(\pi) (s_{\psi_{G}} \bar{s})  \, \pi. 
\end{equation}

Our goal is to show that the two equations, (\ref{mordlift1}) and  (\ref{abvordlift}), are identical.  In order to compare the two equations, we must choose the element $s \in {^\vee}G$ defining the endoscopic group $G'$ carefully.   Recall that $s$ lies in the centralizer of the image of $\psi_{G}$.  We may suppose $\bar{s} \in A_{\psi_{G}}$ is not trivial. An explicit description of this centralizer is given in \cite{Mok}*{(2.4.13)} (following \cite{ggp}*{Section 4}).  In consideration of this description, it is not difficult to choose for each, $\bar{s} \in A_{\psi_{G}}$, a diagonal representative $\dot{s}$ in the centralizer with eigenvalues $\pm 1$.
The endoscopic group $G'(\dot{s})$ determined by $\dot{s}$ is a direct product $G'_{1}(\dot{s}) \times G'_{2}(\dot{s})$ in which each of the two factors is a quasisplit unitary group whose rank is less than $G$ (\emph{cf}.  \cite{rog}*{Proposition 4.6.1}).  The A-parameter $\psi_{G'(\dot{s})}$ decomposes accordingly as a product $\psi_{G'_{1}(\dot{s})} \times \psi_{G'_{2}(\dot{s})}$ of A-parameters. Similarly, Mok's stable virtual character $\eta_{\psi_{G'(\dot{s})}}^{\mathrm{Mok}}$ is defined as the tensor product $\eta_{\psi_{G'_{1}(\dot{s})}}^{\mathrm{Mok}} \otimes \eta_{\psi_{G'_{2}(\dot{s})}}^{\mathrm{Mok}}$.   For these choices of endoscopic data, (\ref{mordlift1}) reads as
\begin{equation}
  \label{mordliftsdot}
\mathrm{Trans}_{G'(\mathbb{R})}^{G(\mathbb{R})}(\eta_{\psi_{G'_{1}(\dot{s})}}^{\mathrm{Mok}} \otimes \eta_{\psi_{G'_{2}(\dot{s})}}^{\mathrm{Mok}}) = \sum_{\pi \in \Pi_{\psi_{G}}} \tau_{\psi_{G}}(\pi)(s_{\psi_{G}} \bar{s}) \pi.
\end{equation}
We now turn to rewriting the left-hand side of (\ref{mordliftsdot}) so as to match it with the left-hand side of (\ref{abvordlift}).
First, it is noted on  \cite{ABV}*{\emph{p.} 289} that
\begin{equation}
  \label{translift}
\mathrm{Trans}_{G'(\mathbb{R})}^{G(\mathbb{R})} =
\mathrm{Lift}_{G'(\mathbb{R})}^{G(\mathbb{R})}.
\end{equation}
Second, using the arguments in the proof of Corollary \ref{cor:LeviABVpacket}, we see that 
$$\eta_{\psi_{G'(\dot{s})}}^{\mathrm{ABV}} = \eta_{\psi_{G'_{1}(\dot{s})}}^{\mathrm{ABV}} \otimes \eta_{\psi_{G'_{2}(\dot{s})}}^{\mathrm{ABV}}.$$
Third, since $G'_{1}(\dot{s})$ and $G_{2}'(\dot{s})$ are both quasisplit unitary groups, Theorem \ref{finalthm1}  tells us that 
\begin{equation}
  \label{factorend}
\eta_{\psi_{G'_{j}(\dot{s})}}^{\mathrm{Mok}} =
\eta_{\psi_{G'_{j}(\dot{s})}}^{\mathrm{ABV}}, \quad j = 1,2.
\end{equation}
Taking these three observations together we conclude
\begin{align*}
\mathrm{Trans}_{G'(\mathbb{R})}^{G(\mathbb{R})}\left(\eta_{\psi_{G'(\dot{s})}}^{\mathrm{Mok}} \right)&= \mathrm{Lift}_{G'(\mathbb{R})}^{G(\mathbb{R})}\left(\eta_{\psi_{G_{1}'(\dot{s})}}^{\mathrm{Mok}} 
\otimes \eta_{\psi_{G_{2}'(\dot{s})}}^{\mathrm{Mok}} \right)\\ 
& = \mathrm{Lift}_{G'(\mathbb{R})}^{G(\mathbb{R})} \left(\eta_{\psi_{G'_{1}(\dot{s})}}^{\mathrm{ABV}} \otimes 
\eta_{\psi_{G'_{2}(\dot{s})}}^{\mathrm{ABV}}\right)\\
& = \mathrm{Lift}_{G'(\mathbb{R})}^{G(\mathbb{R})}\left(\eta_{\psi_{G'(\dot{s})}}^{\mathrm{ABV}}\right).
\end{align*}
It is now immediate from (\ref{mordliftsdot}) and (\ref{abvordlift}) that
$$ \sum_{\pi \in \Pi_{\psi_{G}}} \tau_{\psi_{G}}(\pi)(s_{\psi_{G}} \bar{s}) \pi
= \sum_{\pi \in \Pi_{\psi_{G}}}
(-1)^{d(\pi) - d(S_{\psi_{G}})}
\ \mathrm{Tr} \left(\tau^{\mathrm{ABV}}_{\psi_{G}}(\pi)(\bar{s}) \right) \, \pi$$
for any $\bar{s} \in A_{\psi}$.
By the linear independence of characters on $G(\mathbb{R})$
$$\tau_{\psi_{G}}(\pi)(s_{\psi_{G}} \bar{s})
= (-1)^{d(\pi) - d(S_{\psi_{G}})}
\ \mathrm{Tr} \left(\tau^{\mathrm{ABV}}_{\psi_{G}}(\pi)(\bar{s}) \right)$$ 
for any $\bar{s} \in A_{\psi}$.  This may be regarded as an equality between virtual characters on $A_{\psi}$. By appealing to the linear independence of these characters we conclude that
$$\tau_{\psi_{G}}(\pi)(s_{\psi_{G}}) = (-1)^{d(\pi) - d(S_{\psi_{G}})}$$
and
$$\tau_{\psi_{G}}(\pi) = \tau^{\mathrm{ABV}}_{\psi_{G}}(\pi).$$
In particular, $\tau_{\psi_{G}}^{\mathrm{ABV}}(\pi)$ is irreducible.
This completes our comparison of (\ref{abvordlift}) and
(\ref{mordlift1}).

\section{Pure inner forms}
\label{puresec}

Fix a pure strong involution $\delta$ of $G^{\Gamma}$ as given in
(\ref{pureinv}).  As $\delta$ runs over the equivalence classes of pure strong
involutions, the real forms $G(\mathbb{R}, \delta)$ run over
all isomorphism classes of indefinite unitary groups
$\mathrm{U}(p,q)$,  $p+q = N$ (\cite{adams11}*{Section 9}).  Recall that
$\delta_{q}$ is a pure strong involution.  We shall
extend the results of Section \ref{consec} by replacing
$G(\mathbb{R}) = G(\mathbb{R},\delta_{q})$ with any $G(\mathbb{R},
\delta)$.  The virtual characters $\eta^{\mathrm{Mok}}_{\psi_{G}}$
 for quasisplit unitary groups
are replaced by virtual characters $\eta^{\mathrm{MR}}_{\psi_{G}}$,
defined and studied by Moeglin and Renard (\cite{MR3}, \cite{MR4}).
With these virtual characters in place the arguments are more or less
the same as in Section \ref{consec}.

There is one additional wrinkle in the non-quasisplit setting, which
is to prove that the endoscopic maps of Shelstad and \cite{ABV} are
equal.  We were 
able to manage this in Section \ref{consec} by using  canonical
transfer factors for quasisplit groups and citing an exercise on
\cite{ABV}*{\emph{p.} 289} (\emph{cf.} (\ref{translift})).  For
non-quasisplit groups we refer to methods of Kaletha and \cite{ARM}.
Let us settle this matter straightaway.  We continue with the initial
setup of Section \ref{consec}, namely with an endoscopic group $G'$
obtained from a semisimple 
element $s \in {^\vee}G$, but temporarily without the assumption of $s$ being in
the centralizer of $\psi_{G}$.

The first significant departure from the previous
section arises in the nature of the spectral transfer map
$\mathrm{Trans}_{G'(\mathbb{R})}^{G(\mathbb{R}, \delta)}$  from stable virtual characters of the
quasisplit group $G'(\mathbb{R})$ to the virtual characters of
$G(\mathbb{R},\delta)$.  If $\delta = \delta_{q}$ so that
$G(\mathbb{R})=G(\mathbb{R}, \delta)$ is quasisplit, a  Whittaker
datum fixes a set of constants, the \emph{transfer factors}, which
specify the map $\mathrm{Trans}_{G'(\mathbb{R})}^{G(\mathbb{R})}$
(\cite{ShelstadIII}*{Corollary 11.7}).  When
$G(\mathbb{R}, \delta)$ is not quasisplit, Shelstad does not provide
canonical transfer factors, and as a result
$\mathrm{Trans}_{G'(\mathbb{R})}^{G(\mathbb{R}, \delta)}$ is defined
only up to a non-zero constant.  This ambiguity is rectified by
Kaletha in \cite{kal}, where canonical transfer factors 
for $G(\mathbb{R}, \delta)$ are again defined relative to a fixed
Whittaker datum.  This specifies the transfer map
$\mathrm{Trans}_{G'(\mathbb{R})}^{G(\mathbb{R},\delta)}$ on tempered
L-packets and agrees with
$\mathrm{Trans}_{G'(\mathbb{R})}^{G(\mathbb{R})}$ when $G(\mathbb{R},
\delta)$ is quasisplit (\cite{kal}*{(5.11), Proposition 5.10}).  The
extension  of
$\mathrm{Trans}_{G'(\mathbb{R})}^{G(\mathbb{R},\delta)}$ to the space
of stable virtual characters follows from the characterization of this
space (\cite{MW}*{Corollary IV.2.8}), and an
analytic continuation argument from the tempered setting (\emph{cf.}
Lemma 4.8 \cite{Adams-Johnson}).
More concretely, the map
$\mathrm{Trans}_{G'(\mathbb{R})}^{G(\mathbb{R},\delta)}$ is
characterized by
\begin{equation}
  \label{transid}
\mathrm{Trans}_{G'(\mathbb{R})}^{G(\mathbb{R},\delta)} \left(
\sum_{\tau_{S_{1}}} M(S_{1}, \tau_{S_{1}})  \right) =
e(\delta) \sum_{\tau_{S}} \tau_{S}(s)\, M(S,\tau_{S})
\end{equation}
(\emph{cf.} \cite{kal}*{(5.9)}).
On the left, $\mathrm{Trans}_{G'(\mathbb{R})}^{G(\mathbb{R},\delta)}$
is evaluated on a stable virtual character (\ref{etaloc}) for
$(G')^{\Gamma}$.  On the right, $e(\delta) = \pm 1$ is the
Kottwitz invariant of $G(\mathbb{R}, \delta)$ (\cite{ABV}*{Definition 15.8}), and the
${^\vee}G$-orbit $S \subset X({^\vee}\mathcal{O}_{G}, {^\vee}G^{\Gamma})$ is the one determined by
the image of the ${^\vee}G'$-orbit $S_{1}$ under (\ref{epsprime}).
The sum on the right runs over certain characters $\tau_{S}$ of
${^\vee}G_{p}/({^\vee}G_{p})^{0}$, for fixed $p \in S$ (see the discussion following (\ref{abv6.17})).  They are the
characters  which correspond to the pure strong involution
$\delta$ under \cite{kal}*{(5.13), Corollary 5.4, Theorem 5.2}.  It follows from
\cite{ARM}*{Theorem 1.1} that this correspondence between $\tau_{S}$
and $\delta$ coincides with the one given by
(\ref{localLanglandspure}) in \cite{ABV}.  (This is just another way of 
saying that we are justified in denoting the standard representations
on the right-hand side of (\ref{transid}) by $M(S,\tau_{S})$.)

Let $\mathrm{Lift}_{G'(\mathbb{R})}^{G(\mathbb{R},\delta)}$ denote the
endoscopic lifting map written as $\mathrm{Lift}_{0}(\delta)$ in
\cite{ABV}*{Definition 26.18}.  By \cite{ABV}*{Proposition 26.7},
$$\mathrm{Lift}_{G'(\mathbb{R})}^{G(\mathbb{R},\delta)} \left(
\sum_{\tau_{S_{1}}} M(S_{1}, \tau_{S_{1}})  \right) =
e(\delta) \sum_{\tau_{S}} \tau_{S}(s)\, M(S,\tau_{S}),$$
where the terms are identical to those of (\ref{transid}).  In
consequence,
\begin{equation}
  \label{translift1}
\mathrm{Trans}_{G'(\mathbb{R})}^{G(\mathbb{R},\delta)} =
\mathrm{Lift}_{G'(\mathbb{R})}^{G(\mathbb{R},\delta)}.
\end{equation}

We now adopt all of the assumptions of Section \ref{consec}, so
that  $s \in {^\vee}G$ is taken to centralize the image of a fixed
A-parameter $\psi_{G}$ and  $\bar{s}$ is the coset of $s$ in $A_{\psi_{G}}$.
In \cite{MR4} Moeglin and Renard prove
Arthur's local conjectures for all pure inner forms of unitary groups
(\emph{cf.} \cite{Kaletha-Minguez}*{Section 1.6.1}).
Let us give a brief description of their solution to the conjectures.  

For each $\bar{s}\in A_{\psi_G}$ we choose a representative $s \in
{^\vee}G$ and its endoscopic group $G'$ as in the paragraph following Equation (\ref{mordlift1}).  
Then there is an A-parameter  $\psi_{G'}$ for $G'$ such that
$$\psi_{G} = \epsilon' \circ \psi_{G'}.$$
For each $\bar{s}\in A_{\psi_G}$, Moeglin and Renard define the
virtual character
$\eta_{\psi_G}^{\mathrm{MR}}( \bar{s})$  of $G(\R,\delta)$
by
$$
e(\delta)\, \eta_{\psi_G}^{\mathrm{MR}}(\bar{s})=
\mathrm{Trans}_{G'(\R)}^{G(\R,\delta)}\left(\eta_{\psi_{G'}}^{\mathrm{Mok}}\right)
$$
(\cite{MR3}*{Sections 1 and 2.1}).
At first glance, the virtual character
$\eta_{\psi_G}^{\mathrm{MR}}( \bar{s})$ is a finite
linear combination of irreducible representations  with complex
coefficients. In fact, the coefficients are integers
and the irreducible representations are unitary. More precisely, in
\cite{MR4}*{Theorem 5.3}
the authors define a finite set 
\begin{align*}
\Pi_{\psi_G}^{\mathrm{MR}}(G(\R,\delta))
\end{align*}
of unitary representations, together with a mapping  
\begin{align*}
\pi \longmapsto \tau_{\psi_{G}}(\pi),
\end{align*} 
from $\Pi_{\psi_G}^{\mathrm{MR}}(G(\R,\delta))$ to the group of irreducible representations of $A_{\psi_G}$, such that
\begin{align*}
\eta_{\psi_G}^{\mathrm{MR}}(\bar{s})=
\sum_{\pi \in \Pi_{\psi_{G}}^{\mathrm{MR}}(G(\R,\delta))} \tau_{\psi_{G}}(\pi)(s_{\psi_{G}} \bar{s})\, \pi,\quad
\bar{s}\in A_{\psi_G}.
\end{align*}
In particular, for $s=1$, $G'(\mathbb{R}) = G(\mathbb{R}, \delta_{q})$
and 
$$e(\delta) \, \eta_{\psi_G}^{\mathrm{MR}}(1)=
\mathrm{Trans}_{G(\R,\delta_{q})}^{G(\R,\delta)}\left(\eta_{\psi_{G}}^{\mathrm{Mok}}
\right).$$
This virtual character is stable since $\mathrm{Trans}_{G(\R,\delta_{q})}^{G(\R,\delta)}$
carries stable virtual characters to stable virtual characters
(\emph{cf.} (\ref{transid}) with $s=1$).
The set 
$\Pi_{\psi_G}^{\mathrm{MR}}(G(\R,\delta))$
is the Arthur packet attached to the stable virtual character
$\eta_{\psi_G}^{\mathrm{MR}} = \eta_{\psi_G}^{\mathrm{MR}}(1)$.

We would like to compare these constructions of Moeglin and Renard
with their analogues in \cite{ABV}*{Theorem 22.7 and Definition 26.8}.
For this we return to the discussion preceding (\ref{abvordlift}) and
specialize to the real form $G(\mathbb{R},\delta)$.  The analogous
Arthur packet $\Pi^{\mathrm{ABV}}_{\psi_{G}}(G(\mathbb{R},\delta))$
was defined in (\ref{abvdefdelta}).  
We use the representations $\tau^{\mathrm{mic}}_{\psi_{G}}(\pi)$ given in
(\ref{miclocalsys}) to define
  \begin{equation}
    \eta^{\mathrm{ABV}}_{\psi_{G}}(\delta)(\bar{s}) = \sum_{\pi \in
      \Pi^{\mathrm{ABV}}_{\psi_{G}}(G(\mathbb{R},\delta))} e(\delta)
    \, (-1)^{d(\pi) - d(S_{\psi_{G}})}\,  \mathrm{Tr}\left( \tau^{\mathrm{mic}}_{\psi_{G}} (\pi) (\bar{s}) \right) \, \pi
  \end{equation}
(\cite{ABV}*{\emph{p.} 281}).  In particular, 
  $\eta^{\mathrm{ABV}}_{\psi_{G}}(\delta)(1)$ is the stable virtual character
$\eta_{\psi_{G}}^{\mathrm{ABV}}(\delta)$ defined in (\ref{etapsiabvdelta}).
  \begin{thm}
    \label{purethm}
Let $\psi_G$ be an A-parameter for $G$. Then for any pure inner form
$G(\mathbb{R},\delta)$ we have
$$
e(\delta)\, \eta_{\psi_G}^{\mathrm{MR}}(\bar{s})=
\eta_{\psi_G}^{\mathrm{ABV}}(\delta) (\bar{s}),\quad \bar{s}\in A_{\psi_G}.
$$
In particular,
$$
\Pi_{\psi_G}^{\mathrm{MR}}(G(\R,\delta))=
\Pi_{\psi_G}^{\mathrm{ABV}}(G(\R,\delta)), \quad \tau_{\psi_{G}}(\pi) =  \tau^{\mathrm{ABV}}_{\psi_G}(\pi)
$$
and
$$
 \tau_{\psi_{G}}(\pi)(s_{\psi_{G}}) = (-1)^{d(\pi) - d(S_{\psi_{G}})}.
$$
\end{thm}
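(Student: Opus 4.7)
The plan is to adapt the comparison argument of Section \ref{consec} to the pure inner form setting. For each $\bar{s} \in A_{\psi_G}$, I would first choose a diagonal representative $\dot{s} \in {^\vee}G$ with eigenvalues $\pm 1$, as in the discussion following (\ref{mordlift1}); the resulting endoscopic group $G'(\dot{s}) = G'_1(\dot{s}) \times G'_2(\dot{s})$ is a product of two smaller quasisplit unitary groups (\cite{rog}*{Proposition 4.6.1}), and the A-parameter $\psi_{G'(\dot{s})}$ factors through this product. Consequently, Mok's stable virtual character decomposes as
$$\eta^{\mathrm{Mok}}_{\psi_{G'(\dot{s})}} = \eta^{\mathrm{Mok}}_{\psi_{G'_1(\dot{s})}} \otimes \eta^{\mathrm{Mok}}_{\psi_{G'_2(\dot{s})}},$$
and the parallel tensor product decomposition holds on the ABV side by the same argument used to prove Corollary \ref{cor:LeviABVpacket} (the process of Section \ref{sec:ABV-packetsdef} carries direct products of groups to tensor products of virtual characters).

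Next, I would apply Theorem \ref{finalthm1} to each quasisplit factor, obtaining $\eta^{\mathrm{Mok}}_{\psi_{G'_j(\dot{s})}} = \eta^{\mathrm{ABV}}_{\psi_{G'_j(\dot{s})}}$ for $j = 1,2$, and hence $\eta^{\mathrm{Mok}}_{\psi_{G'(\dot{s})}} = \eta^{\mathrm{ABV}}_{\psi_{G'(\dot{s})}}$. Combined with the identification $\mathrm{Trans}_{G'(\R)}^{G(\R,\delta)} = \mathrm{Lift}_{G'(\R)}^{G(\R,\delta)}$ of (\ref{translift1}), the definition of $\eta^{\mathrm{MR}}_{\psi_G}(\bar{s})$ gives
$$e(\delta)\, \eta^{\mathrm{MR}}_{\psi_G}(\bar{s}) = \mathrm{Lift}_{G'(\R)}^{G(\R,\delta)}\bigl(\eta^{\mathrm{ABV}}_{\psi_{G'(\dot{s})}}\bigr).$$
The right-hand side is computed by \cite{ABV}*{Theorem 26.25}, restricted via (\ref{etapsiabvdelta}) to the submodule generated by representations of $G(\R,\delta)$, producing exactly the defining expression of $\eta^{\mathrm{ABV}}_{\psi_G}(\delta)(\bar{s})$ given in the paragraph preceding the theorem statement. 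This proves the first displayed identity.

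The consequences now follow formally. Taking supports of both sides yields the equality of packets $\Pi^{\mathrm{MR}}_{\psi_G}(G(\R,\delta)) = \Pi^{\mathrm{ABV}}_{\psi_G}(G(\R,\delta))$. Allowing $\bar{s}$ to range over $A_{\psi_G}$ and comparing coefficients of each $\pi$, the linear independence of irreducible characters on $G(\R,\delta)$ reduces the main identity to an equality of virtual characters on $A_{\psi_G}$, and a second application of linear independence (this time on $A_{\psi_G}$) combined with the irreducibility of $\tau_{\psi_G}(\pi)$ proved in \cite{MR4}*{Theorem 1.3} forces $\tau_{\psi_G}(\pi) = \tau^{\mathrm{ABV}}_{\psi_G}(\pi)$; setting $\bar{s} = 1$ in the resulting coefficient identity then yields $\tau_{\psi_G}(\pi)(s_{\psi_G}) = (-1)^{d(\pi) - d(S_{\psi_G})}$.

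The main obstacle is not conceptual but technical, and it lies in the setup rather than the argument itself: one must be sure that Kaletha's canonical transfer factors, which underpin (\ref{translift1}), are compatible both with the product decomposition over $G'_1(\dot{s}) \times G'_2(\dot{s})$ (so that factor-wise application of Theorem \ref{finalthm1} is legitimate) and with the analytic continuation from tempered to arbitrary A-parameters used to extend $\mathrm{Trans}_{G'(\R)}^{G(\R,\delta)}$ to the space of stable virtual characters. Once this is in hand, thanks to \cite{kal} and \cite{ARM}*{Theorem 1.1} as invoked in the paragraphs preceding the statement, the remainder of the argument is a direct transcription of the quasisplit case of Section \ref{consec}.
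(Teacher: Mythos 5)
Your proposal is correct and follows essentially the same route as the paper's proof: choose a diagonal representative $\dot{s}$ so that $G'(\dot{s})$ is a product of two smaller quasisplit unitary groups, apply Theorem \ref{finalthm1} to each factor via (\ref{factorend}), pass from $\mathrm{Trans}$ to $\mathrm{Lift}$ using (\ref{translift1}), and invoke \cite{ABV}*{Theorem 26.25} to identify the result with $\eta_{\psi_G}^{\mathrm{ABV}}(\delta)(\bar{s})$, after which the consequences follow by the linear-independence argument of Section \ref{consec}. The technical point you flag at the end (compatibility of Kaletha's transfer factors with the product decomposition and the extension to non-tempered stable characters) is exactly what the paragraphs preceding the theorem settle via \cite{kal}, \cite{MW}*{Corollary IV.2.8}, and \cite{ARM}*{Theorem 1.1}, so no additional work is needed.
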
 
\begin{proof} For any $\bar{s}\in A_{\psi_G}$, we have
\begin{align*}
e(\delta)\, \eta_{\psi_G}^{\mathrm{MR}}(\bar{s})&=
\mathrm{Trans}_{G'}^{G(\R,\delta)}(\eta_{\psi_{G'}}^{\mathrm{Mok}}) \\
&=\mathrm{Lift}_{G'(\R)}^{G(\R,\delta)}(\eta_{\psi_{G'}}^{\mathrm{Mok}}) \quad\text{(Equation (\ref{translift1}))}\\
&=\mathrm{Lift}_{G'(\R)}^{G(\R,\delta)}(\eta_{\psi_{G'}}^{\mathrm{ABV}})
\quad\text{(Theorem \ref{finalthm1} and Equation (\ref{factorend}))}\\
&=  \eta_{\psi_G}^{\mathrm{ABV}}(\delta)(\bar{s}) \quad
\text{(\cite{ABV}*{Theorem 26.25})}.
\end{align*}
The equality of packets follows immediately after taking $s =1$. 
The remaining equalities follow the argument used at the end of Section \ref{consec}.
\end{proof}

\printnomenclature


\begin{bibdiv}
\begin{biblist}

\bib{adams11}{incollection}{
      author={Adams, Jeffrey},
       title={Discrete series and characters of the component group},
        date={2011},
   booktitle={On the stabilization of the trace formula},
      series={Stab. Trace Formula Shimura Var. Arith. Appl.},
      volume={1},
   publisher={Int. Press, Somerville, MA},
       pages={369\ndash 387},
}

\bib{aam}{unpublished}{
      author={Adams, J.},
      author={Arancibia, N.},
      author={Mezo, P.},
       title={Equivalent definitions of {A}rthur packets for real classical
  groups},
        note={To appear in Mem. Amer. Math. Soc.},
}

\bib{Arthur84}{incollection}{
      author={Arthur, James},
       title={On some problems suggested by the trace formula},
        date={1984},
   booktitle={Lie group representations, {II} ({C}ollege {P}ark, {M}d.,
  1982/1983)},
      series={Lecture Notes in Math.},
      volume={1041},
   publisher={Springer, Berlin},
       pages={1\ndash 49},
}

\bib{Arthur89}{article}{
      author={Arthur, James},
       title={Unipotent automorphic representations: conjectures},
        date={1989},
        ISSN={0303-1179},
     journal={Ast\'erisque},
      number={171-172},
       pages={13\ndash 71},
        note={Orbites unipotentes et repr{\'e}sentations, II},
      review={\MR{1021499}},
}

\bib{Arthur}{book}{
      author={Arthur, James},
       title={The endoscopic classification of representations},
      series={American Mathematical Society Colloquium Publications},
   publisher={American Mathematical Society, Providence, RI},
        date={2013},
      volume={61},
        ISBN={978-0-8218-4990-3},
        note={Orthogonal and symplectic groups},
      review={\MR{3135650}},
}

\bib{ABV}{book}{
      author={Adams, Jeffrey},
      author={Barbasch, Dan},
      author={Vogan, David~A., Jr.},
       title={The {L}anglands classification and irreducible characters for
  real reductive groups},
      series={Progress in Mathematics},
   publisher={Birkh\"auser Boston, Inc., Boston, MA},
        date={1992},
      volume={104},
        ISBN={0-8176-3634-X},
         url={http://dx.doi.org/10.1007/978-1-4612-0383-4},
      review={\MR{1162533 (93j:22001)}},
}

\bib{Adams-Fokko}{article}{
      author={Adams, Jeffrey},
      author={du~Cloux, Fokko},
       title={Algorithms for representation theory of real reductive groups},
        date={2009},
        ISSN={1474-7480},
     journal={J. Inst. Math. Jussieu},
      volume={8},
      number={2},
       pages={209\ndash 259},
         url={https://doi.org/10.1017/S1474748008000352},
      review={\MR{2485793}},
}

\bib{Adams-Johnson}{article}{
      author={Adams, Jeffrey},
      author={Johnson, Joseph~F.},
       title={Endoscopic groups and packets of nontempered representations},
        date={1987},
        ISSN={0010-437X},
     journal={Compositio Math.},
      volume={64},
      number={3},
       pages={271\ndash 309},
         url={http://www.numdam.org/item?id=CM_1987__64_3_271_0},
      review={\MR{918414 (89h:22022)}},
}

\bib{ARM}{unpublished}{
      author={Arancibia, N.},
      author={Mezo, P.},
       title={Tempered {L}-packets over strong real forms},
}

\bib{AMR}{article}{
      author={Arancibia, Nicol\'{a}s},
      author={M{\oe}glin, Colette},
      author={Renard, David},
       title={Paquets d'{A}rthur des groupes classiques et unitaires},
        date={2018},
        ISSN={0240-2963},
     journal={Ann. Fac. Sci. Toulouse Math. (6)},
      volume={27},
      number={5},
       pages={1023\ndash 1105},
         url={https://doi.org/10.5802/afst.1590},
      review={\MR{3919547}},
}

\bib{AMR1}{misc}{
      author={Arancibia, Nicolás},
      author={Moeglin, Colette},
      author={Renard, David},
       title={Paquets d'{A}rthur des groupes classiques et unitaires},
        date={2017},
}

\bib{arancibia_characteristic}{article}{
      author={Arancibia~Robert, Nicol\'{a}s},
       title={Characteristic cycles, micro local packets and packets with
  cohomology},
        date={2022},
        ISSN={0002-9947},
     journal={Trans. Amer. Math. Soc.},
      volume={375},
      number={2},
       pages={997\ndash 1049},
         url={https://doi-org.proxy.library.carleton.ca/10.1090/tran/8492},
      review={\MR{4369242}},
}

\bib{Adams-Taibi}{article}{
      author={Adams, Jeffrey},
      author={Ta\"{i}bi, Olivier},
       title={Galois and {C}artan cohomology of real groups},
        date={2018},
     journal={Duke Math. J.},
      volume={167},
      number={6},
       pages={1057\ndash 1097},
}

\bib{AVParameters}{incollection}{
      author={Adams, Jeffrey},
      author={Vogan, David~A., Jr.},
       title={Parameters for twisted representations},
        date={2015},
   booktitle={Representations of reductive groups},
      series={Progr. Math.},
      volume={312},
   publisher={Birkh\"{a}user/Springer, Cham},
       pages={51\ndash 116},
      review={\MR{3495793}},
}

\bib{AvLTV}{article}{
      author={Adams, Jeffrey~D.},
      author={van Leeuwen, Marc A.~A.},
      author={Trapa, Peter~E.},
      author={Vogan, David~A., Jr.},
       title={Unitary representations of real reductive groups},
        date={2020},
        ISSN={0303-1179},
     journal={Ast\'{e}risque},
      number={417},
       pages={viii + 188},
         url={https://doi-org.proxy-um.researchport.umd.edu/10.24033/ast},
      review={\MR{4146144}},
}

\bib{bbd}{incollection}{
      author={Be{\u\i}linson, A.~A.},
      author={Bernstein, J.},
      author={Deligne, P.},
       title={Faisceaux pervers},
        date={1982},
   booktitle={Analysis and topology on singular spaces, {I} ({L}uminy, 1981)},
      series={Ast\'erisque},
      volume={100},
   publisher={Soc. Math. France, Paris},
       pages={5\ndash 171},
      review={\MR{751966}},
}

\bib{borel}{inproceedings}{
      author={Borel, A.},
       title={Automorphic {$L$}-functions},
        date={1979},
   booktitle={Automorphic forms, representations and {$L$}-functions ({P}roc.
  {S}ympos. {P}ure {M}ath., {O}regon {S}tate {U}niv., {C}orvallis, {O}re.,
  1977), {P}art 2},
      series={Proc. Sympos. Pure Math., XXXIII},
   publisher={Amer. Math. Soc., Providence, R.I.},
       pages={27\ndash 61},
}

\bib{Boreletal}{book}{
      author={Borel, A.},
      author={Grivel, P.-P.},
      author={Kaup, B.},
      author={Haefliger, A.},
      author={Malgrange, B.},
      author={Ehlers, F.},
       title={Algebraic {$D$}-modules},
      series={Perspectives in Mathematics},
   publisher={Academic Press, Inc., Boston, MA},
        date={1987},
      volume={2},
}

\bib{BMSZ}{misc}{
      author={Barbasch, Dan~M.},
      author={Ma, Jia-Jun},
      author={Sun, Binyong},
      author={Zhu, Chen-Bo},
       title={Special unipotent representations of real classical groups:
  construction and unitarity},
        date={2022},
}

\bib{Lunts}{book}{
      author={Bernstein, Joseph},
      author={Lunts, Valery},
       title={Equivariant sheaves and functors},
      series={Lecture Notes in Mathematics},
   publisher={Springer-Verlag, Berlin},
        date={1994},
      volume={1578},
}

\bib{Christie-Mezo}{incollection}{
      author={Christie, Aaron},
      author={Mezo, Paul},
       title={Twisted endoscopy from a sheaf-theoretic perspective},
        date={2018},
   booktitle={Geometric aspects of the trace formula},
      series={Simons Symp.},
   publisher={Springer, Cham},
       pages={121\ndash 161},
}

\bib{ggp}{incollection}{
      author={Gan, Wee~Teck},
      author={Gross, Benedict~H.},
      author={Prasad, Dipendra},
       title={Symplectic local root numbers, central critical {$L$} values, and
  restriction problems in the representation theory of classical groups},
        date={2012},
   booktitle={Sur les conjectures de {G}ross et {P}rasad. {I}},
       pages={1\ndash 109},
}

\bib{GM}{book}{
      author={Goresky, Mark},
      author={MacPherson, Robert},
       title={Stratified {M}orse theory},
      series={Ergebnisse der Mathematik und ihrer Grenzgebiete (3) [Results in
  Mathematics and Related Areas (3)]},
   publisher={Springer-Verlag, Berlin},
        date={1988},
      volume={14},
}

\bib{Hotta}{book}{
      author={Hotta, Ryoshi},
      author={Takeuchi, Kiyoshi},
      author={Tanisaki, Toshiyuki},
       title={{$D$}-modules, perverse sheaves, and representation theory},
      series={Progress in Mathematics},
   publisher={Birkh\"{a}user Boston, Inc., Boston, MA},
        date={2008},
      volume={236},
        ISBN={978-0-8176-4363-8},
         url={https://doi.org/10.1007/978-0-8176-4523-6},
        note={Translated from the 1995 Japanese edition by Takeuchi},
      review={\MR{2357361}},
}

\bib{kal}{article}{
      author={Kaletha, Tasho},
       title={Rigid inner forms of real and {$p$}-adic groups},
        date={2016},
     journal={Ann. of Math. (2)},
      volume={184},
      number={2},
       pages={559\ndash 632},
}

\bib{Kostant78}{article}{
      author={Kostant, Bertram},
       title={On {W}hittaker vectors and representation theory},
        date={1978},
     journal={Invent. Math.},
      volume={48},
      number={2},
       pages={101\ndash 184},
}

\bib{Kaletha-Minguez}{misc}{
      author={Kaletha, Tasho},
      author={Minguez, Alberto},
      author={Shin, Sug~Woo},
      author={White, Paul-James},
       title={Endoscopic classification of representations: Inner forms of
  unitary groups},
        date={2014},
}

\bib{KS}{article}{
      author={Kottwitz, Robert~E.},
      author={Shelstad, Diana},
       title={Foundations of twisted endoscopy},
        date={1999},
        ISSN={0303-1179},
     journal={Ast\'erisque},
      number={255},
       pages={vi+190},
      review={\MR{1687096}},
}

\bib{Knapp-Vogan}{book}{
      author={Knapp, Anthony~W.},
      author={Vogan, David~A., Jr.},
       title={Cohomological induction and unitary representations},
      series={Princeton Mathematical Series},
   publisher={Princeton University Press, Princeton, NJ},
        date={1995},
      volume={45},
        ISBN={0-691-03756-6},
         url={https://doi.org/10.1515/9781400883936},
      review={\MR{1330919}},
}

\bib{LV2014}{article}{
      author={Lusztig, George},
      author={Vogan, David~A., Jr.},
       title={Quasisplit {H}ecke algebras and symmetric spaces},
        date={2014},
        ISSN={0012-7094},
     journal={Duke Math. J.},
      volume={163},
      number={5},
       pages={983\ndash 1034},
         url={https://doi.org/10.1215/00127094-2644684},
      review={\MR{3189436}},
}

\bib{Mezo}{article}{
      author={Mezo, Paul},
       title={Character identities in the twisted endoscopy of real reductive
  groups},
        date={2013},
        ISSN={0065-9266},
     journal={Mem. Amer. Math. Soc.},
      volume={222},
      number={1042},
       pages={vi+94},
}

\bib{Mezo2}{article}{
      author={Mezo, Paul},
       title={Tempered spectral transfer in the twisted endoscopy of real
  groups},
        date={2016},
     journal={J. Inst. Math. Jussieu},
      volume={15},
      number={3},
       pages={569\ndash 612},
}

\bib{Mok}{article}{
      author={Mok, Chung~Pang},
       title={Endoscopic classification of representations of quasi-split
  unitary groups},
        date={2015},
        ISSN={0065-9266},
     journal={Mem. Amer. Math. Soc.},
      volume={235},
      number={1108},
       pages={vi+248},
         url={http://dx.doi.org/10.1090/memo/1108},
      review={\MR{3338302}},
}

\bib{MR3}{incollection}{
      author={Moeglin, Colette},
      author={Renard, David},
       title={Sur les paquets d'{A}rthur des groupes classiques et unitaires
  non quasi-d\'{e}ploy\'{e}s},
        date={2018},
   booktitle={Relative aspects in representation theory, {L}anglands
  functoriality and automorphic forms},
      series={Lecture Notes in Math.},
      volume={2221},
   publisher={Springer, Cham},
       pages={341\ndash 361},
      review={\MR{3839702}},
}

\bib{MR4}{article}{
      author={Moeglin, Colette},
      author={Renard, David},
       title={Sur les paquets d'{A}rthur des groupes unitaires et quelques
  cons\'{e}quences pour les groupes classiques},
        date={2019},
        ISSN={0030-8730},
     journal={Pacific J. Math.},
      volume={299},
      number={1},
       pages={53\ndash 88},
  url={https://doi-org.proxy.library.carleton.ca/10.2140/pjm.2019.299.53},
      review={\MR{3947270}},
}

\bib{MW}{book}{
      author={Moeglin, Colette},
      author={Waldspurger, Jean-Loup},
       title={Stabilisation de la formule des traces tordue. {V}ol. 1},
      series={Progress in Mathematics},
   publisher={Birkh\"{a}user/Springer, Cham},
        date={2016},
      volume={316},
}

\bib{rog}{book}{
      author={Rogawski, Jonathan~D.},
       title={Automorphic representations of unitary groups in three
  variables},
      series={Annals of Mathematics Studies},
   publisher={Princeton University Press, Princeton, NJ},
        date={1990},
      volume={123},
}

\bib{Sha80}{article}{
      author={Shahidi, Freydoon},
       title={Whittaker models for real groups},
        date={1980},
     journal={Duke Math. J.},
      volume={47},
      number={1},
       pages={99\ndash 125},
}

\bib{Sha81}{article}{
      author={Shahidi, Freydoon},
       title={On certain {$L$}-functions},
        date={1981},
     journal={Amer. J. Math.},
      volume={103},
      number={2},
       pages={297\ndash 355},
}

\bib{shelstad}{article}{
      author={Shelstad, D.},
       title={Characters and inner forms of a quasi-split group over {${\bf
  R}$}},
        date={1979},
     journal={Compositio Math.},
      volume={39},
      number={1},
       pages={11\ndash 45},
}

\bib{ShelstadIII}{article}{
      author={Shelstad, D.},
       title={Tempered endoscopy for real groups. {III}. {I}nversion of
  transfer and {$L$}-packet structure},
        date={2008},
     journal={Represent. Theory},
      volume={12},
       pages={369\ndash 402},
}

\bib{Shelstad12}{article}{
      author={Shelstad, D.},
       title={On geometric transfer in real twisted endoscopy},
        date={2012},
     journal={Ann. of Math. (2)},
      volume={176},
      number={3},
       pages={1919\ndash 1985},
}

\bib{shelstad_endoscopy}{incollection}{
      author={Shelstad, Diana},
       title={Orbital integrals, endoscopic groups and
  {$L$}-indistinguishability for real groups},
        date={1983},
   booktitle={Conference on automorphic theory ({D}ijon, 1981)},
      series={Publ. Math. Univ. Paris VII},
      volume={15},
   publisher={Univ. Paris VII},
     address={Paris},
       pages={135\ndash 219},
      review={\MR{MR723184 (85i:22019)}},
}

\bib{springer}{book}{
      author={Springer, T.~A.},
       title={Linear algebraic groups},
     edition={Second},
      series={Progress in Mathematics},
   publisher={Birkh\"{a}user Boston, Inc., Boston, MA},
        date={1998},
      volume={9},
        ISBN={0-8176-4021-5},
         url={https://doi.org/10.1007/978-0-8176-4840-4},
      review={\MR{1642713}},
}

\bib{Tadic}{incollection}{
      author={Tadi\'{c}, Marko},
       title={{${\rm GL}(n,\mathbb C)\sphat$} and {${\rm GL}(n,\mathbb
  R)\sphat$}},
        date={2009},
   booktitle={Automorphic forms and {$L$}-functions {II}. {L}ocal aspects},
      series={Contemp. Math.},
      volume={489},
   publisher={Amer. Math. Soc., Providence, RI},
       pages={285\ndash 313},
}

\bib{ICIII}{article}{
      author={Vogan, David~A.},
       title={Irreducible characters of semisimple {L}ie groups. {III}. {P}roof
  of {K}azhdan-{L}usztig conjecture in the integral case},
        date={1983},
     journal={Invent. Math.},
      volume={71},
      number={2},
       pages={381\ndash 417},
}

\bib{Vogan78}{article}{
      author={Vogan, David~A., Jr.},
       title={Gelfand-{K}irillov dimension for {H}arish-{C}handra modules},
        date={1978},
     journal={Invent. Math.},
      volume={48},
      number={1},
       pages={75\ndash 98},
}

\bib{greenbook}{book}{
      author={Vogan, David~A., Jr.},
       title={Representations of real reductive {L}ie groups},
      series={Progress in Mathematics},
   publisher={Birkh\"{a}user, Boston, Mass.},
        date={1981},
      volume={15},
}

\bib{ICIV}{article}{
      author={Vogan, David~A., Jr.},
       title={Irreducible characters of semisimple {L}ie groups. {IV}.
  {C}haracter-multiplicity duality},
        date={1982},
        ISSN={0012-7094},
     journal={Duke Math. J.},
      volume={49},
      number={4},
       pages={943\ndash 1073},
         url={http://projecteuclid.org/euclid.dmj/1077315538},
      review={\MR{683010}},
}

\bib{vogan_local_langlands}{incollection}{
      author={Vogan, David~A., Jr.},
       title={The local {L}anglands conjecture},
        date={1993},
   booktitle={Representation theory of groups and algebras},
      series={Contemp. Math.},
      volume={145},
   publisher={Amer. Math. Soc.},
     address={Providence, RI},
       pages={305\ndash 379},
      review={\MR{MR1216197 (94e:22031)}},
}

\end{biblist}
\end{bibdiv}

\end{document}